\tikzset{
    >=stealth',
    punkt/.style={
           rectangle,
           rounded corners,
           draw=black, very thick,
           text width=6.5em,
           minimum height=2em,
           text centered},
    pil/.style={
           ->,
           thick,
           shorten <=2pt,
           shorten >=2pt,}
}
\title{Large deviations of $\mathrm{SLE}_{0+}$ variants \\ in the capacity parameterization}  
\date{}
\author{Osama Abuzaid\thanks{Department of Mathematics and Systems Analysis, Aalto University, Finland. \protect\url{osama.abuzaid@aalto.fi}} 
\, and \,
Eveliina Peltola\thanks{Department of Mathematics and Systems Analysis, Aalto University, Finland; and \\ Institute for Applied Mathematics, University of Bonn, Germany.   \protect\url{eveliina.peltola@aalto.fi}}
}
\setlist[enumerate]{topsep = 1ex, leftmargin=1cm, itemsep= -2pt}
\let\OLDthebibliography\thebibliography
\renewcommand\thebibliography[1]{
\OLDthebibliography{#1}
\setlength{\parskip}{1pt}
\setlength{\itemsep}{2pt}
}
\newtheorem{thm}{Theorem}[section]
\crefname{thm}{theorem}{theorems}
\newtheorem{cor}[thm]{Corollary}
\crefname{cor}{corollary}{corollaries}
\crefname{conj}{conjecture}{conjectures}
\newtheorem{lem}[thm]{Lemma}
\crefname{lem}{lemma}{lemmas}
\newtheorem{prop}[thm]{Proposition}
\crefname{prop}{proposition}{propositions}
\crefname{mainresult}{result}{results}
\newtheorem{theorem}{Theorem}
\crefname{theorem}{theorem}{theorems}
\newtheorem{lemA}[theorem]{Lemma}
\crefname{lemA}{lemma}{lemmas}
\theoremstyle{definition} 
\newtheorem{df}[thm]{Definition}
\newtheorem{remark}[thm]{Remark}
\numberwithin{equation}{section}
\numberwithin{figure}{section}
\global\long\def\bR{\mathbb{R}}
\global\long\def\bN{\mathbb{N}}
\global\long\def\bC{\mathbb{C}}
\global\long\def\bH{\mathbb{H}}
\global\long\def\bD{\mathbb{D}}
\global\long\def\bP{\mathbb{P}}
\global\long\def\cX{\mathcal{X}}
\global\long\def\cL{\mathcal{L}}
\global\long\def\cK{\mathcal{K}}
\global\long\def\cC{\mathcal{C}}
\global\long\def\cT{\mathcal{T}}
\global\long\def\ud{\,\mathrm{d}}
\global\long\def\diam{\mathrm{diam}\,}
\global\long\def\dist{\mathrm{dist}\,}
\global\long\def\compact{F}
\global\long\def\ii{\mathrm{i}}
\renewcommand{\liminf}{\varliminf}
\renewcommand{\limsup}{\varlimsup}
\newcommand{\id}{\mathrm{\textnormal{id}}}
\renewcommand\Re{\operatorname{Re}}
\renewcommand\Im{\operatorname{Im}}
\global\long\def\osc{\textnormal{osc}}
\global\long\def\hcap{\textnormal{hcap}}
\global\long\def\rcap{\textnormal{cap}}
\newcommand{\SLE}[1][]{\operatorname{SLE}_{#1}}
\newcommand{\rSLE}[1][]{\operatorname{rSLE}_{#1}}
\newcommand{\cSLE}[1][]{\operatorname{cSLE}_{#1}}
\newcommand{\rLoewnerTransform}[1][]{\cL{\ifx&#1&\else_{#1}\fi}}
\newcommand{\cLoewnerTransform}[1][]{\mathscr{L}_{\ifx&#1&\else#1\fi}}
\newcommand{\LLoewnerTransform}[1][]{\mathfrak{L}_{\ifx&#1&\else#1\fi}}
\newcommand{\BMenergy}[1][]{I_{#1}}
\newcommand{\renergy}[1][]{{\mathcal I}_{#1}}
\newcommand{\cenergy}[1][]{{\mathscr I}_{#1}}
\newcommand{\lenergy}[2]{I_{#1;#2}}
\newcommand{\Xlenergy}[1][]{I_{#1}}
\newcommand{\mcenergy}[2]{I_{#1}^{#2}}
\newcommand{\potential}[2]{\mathcal H_{#1}^{#2}}
\newcommand{\Pkernel}[2]{P_{#1;#2}}
\newcommand{\prXpaths}[1]{\boldsymbol{\pi}_{#1}}
\newcommand{\prDpaths}[1]{\pi_{#1}}
\newcommand{\prHpaths}[1]{\varpi_{#1}}
\newcommand{\cev}[1]{\accentset{\leftarrow}{#1}}
\newcommand{\limembedding}{\smash{\cev\iota}}
\newcommand{\concat}{*}
\newcommand{\rbessel}[1][]{{\ifx&#1&\else{_{#1}}\fi}\Theta^{\kappa}}
\newcommand{\bessel}[1][]{{\ifx&#1&\else{^{#1 \!}}\fi}Z}
\newcommand{\Zprocess}[1][]{{\ifx&#1&\else{^{#1 \!}}\fi}Z}
\newcommand{\funs}[2][]{C^{0}_{#1}[0,#2}
\newcommand{\funsNodom}[1]{C^{0}_{#1}}
\newcommand{\filling}[2][]{\mathrm{Fill}_{#1}(#2)}
\newcommand\Dhulls[1][]{\cK_{#1}}
\newcommand{\Khulls}[1][]{\mathfrak{K}_{#1}}
\newcommand{\Hhulls}[1][]{\mathscr{K}_{#1}}
\newcommand{\Dpaths}[1][]{\cX_{#1}}
\newcommand{\Hpaths}[1][]{\mathscr{X}_{#1}} 
\newcommand{\Xpaths}[1][]{\mathfrak{X}_{#1}}
\newcommand{\Dpathscl}[1][]{\operatorname{cl}_{d}(\Dpaths[#1])}
\newcommand{\Hpathscl}[1][]{\operatorname{cl}_{d}(\Hpaths[#1])}
\newcommand{\Xpathscl}[1][]{\operatorname{cl}_{d}(\Xpaths[#1])}
\newcommand{\XTpathscl}{\overline{\Xpaths}_{<T}}
\newcommand{\Xfinpathscl}{\overline{\Xpaths}_{<\infty}}
\newcommand{\Dfinpathscl}{\overline{\Dpaths}_{<\infty}}
\newcommand{\unparampaths}{\Xpaths[]^\circ}
\newcommand{\unparampathsH}{\Hpaths[]^\circ}
\newcommand{\unparampathsHmulti}[1][]{\Hpaths[#1]^\circ}
\newcommand{\limDpaths}{\smash{\cev{\Dpaths}}}
\newcommand{\limXpaths}{\smash{\cev{\Xpaths}}}
\newcommand{\growinghulls}[1]{\cK[0,#1\ifthenelse{\equal{#1}{\infty}}
	{)}
	{]}}
\global\long\def\Dcompsets{\cC}
\global\long\def\Hcompsets{\mathscr{C}}
\global\long\def\Ccompsets{\mathfrak{C}}
\newcommand{\Bmetric}[3][]{B_{#1}(#2,#3)}
\newcommand{\dCcompsets}{d_{\Ccompsets}}
\newcommand{\norm}[1]{|\!|#1|\!|}
\newcommand{\normEuclid}[1]{\norm}
\newcommand{\normUniform}[1]{\norm{#1}_\infty}
\newcommand{\dD}{d_{\bD}}
\newcommand{\dH}{d_{\bH}}
\newcommand{\dX}{d_{\Ddomain}}
\newcommand{\dDpaths}[1][]{d_{\Dpaths[#1]}}
\newcommand{\dHpaths}[1][]{d_{\!\Hpaths[#1]}}
\newcommand{\dXpaths}[1][]{d_{\Xpaths[#1]}}
\newcommand{\dXpathsEucl}{d_{\Xpaths[]}^E}
\newcommand{\dHpathsEucl}{d_{\!\Hpaths[]}^E}
\newcommand{\dXpathsUnparam}{d_{\Xpaths}^\circ}
\newcommand{\dHpathsUnparam}{d_{\!\Hpaths}^\circ}
\newcommand{\dDpathsUnparam}{d_{\Dpaths}^\circ}
\newcommand{\dHpathsUnparamMulti}[1][]{d_{\!\Hpaths[#1]}^\circ}
\newcommand{\Ddomain}{\mathbf{D}}
\newcommand{\dDdomain}{d_{\Ddomain}}
\newcommand{\targetpoint}{\mathfrak{w}}
\newcommand{\beginpoint}{\mathfrak{z}}
\renewcommand{\dist}[1][]{\operatorname{dist}_{#1}}
\renewcommand{\diam}[1][]{\operatorname{diam}_{#1}}
\newcommand{\rhittingtime}[1][]{\rho_{#1}}
\newcommand{\chittingtime}[1][]{\tau_{#1}}
\newcommand{\rDmeas}[1]{\mathcal{P}^{#1}_{\!\bD}}
\newcommand{\cHmeas}[1]{\mathscr{P}^{#1}_{\!\bH}}
\newcommand{\cDmeas}[1]{\mathscr{P}^{#1}_{\!\bD}}
\newcommand{\rDmeasure}[2][]{\mathcal{P}^{#2}_{\!\bD;#1}}
\newcommand{\cHmeasure}[2][]{\mathscr{P}^{#2}_{\!\bH;#1}}
\newcommand{\cDmeasure}[2][]{\mathscr{P}^{#2}_{\!\bD;#1}}
\newcommand{\mcIndependent}[2]{\mathscr{Q}_{\!#1}^{#2}}
\newcommand{\mcMeasure}[2]{\mathscr{P}_{\!\!#1}^{#2}}
\newcommand{\SLEmeasure}[2]{\mathbf{P}^{#2}_{\!\!#1}}
\newcommand{\Xmeasure}[2][]{\mathbf{P}^{#2}_{\!#1}}
\newcommand{\BMmeasure}[2][]{\bP^{#2}_{\!#1}}
\newcommand{\limXmeasure}[1]{\smash{\cev{\mathbf P}^{#1}}}
\newcommand{\XmeasureReference}[1]{{\mathbf P}^{#1}}
\newcommand{\harmonicmeasure}[3]{\mathrm{hm}_{#1}(#2;#3)}
\newcommand{\crad}[2]{\mathrm{crad}_{#1}(#2)}
\newcommand{\rad}{\mathrm{rad}}
\newcommand{\component}[2]{(#1)_{#2}}
\newcommand{\loopterm}[1]{m_{#1}}
\newcommand{\returnevent}[2]{\mathcal{R}_{#1}^{#2}}
\newcommand{\invreturnevent}[2]{\smash{\cev{\mathcal{R}}_{#1}^{#2}}}
\newcommand{\creturnevent}[2]{\mathscr{R}_{#1}^{#2}}
\newcommand{\xreturnevent}[2]{\mathfrak{R}_{#1}^{#2}}
\newcommand{\invxreturnevent}[2]{\smash{\cev{\mathfrak{R}}_{#1}^{#2}}}
\newcommand{\cond}{\;|\;}
\newcommand{\condbig}{\;\big|\;}
\newcommand{\condBig}{\;\Big|\;}
\newcommand{\arc}[2]{\langle#1,#2\rangle}
\newcommand{\Eset}{F}
\begin{document}
\maketitle

\begin{abstract}
We prove large deviation principles (LDPs) for full chordal, radial, and multichordal $\SLE[0+]$ curves parameterized by capacity. The rate function is given by the appropriate variant of the Loewner energy. There are two key novelties in the present work. First, we strengthen the topology in the known chordal LDPs into the topology of full parameterized curves including all curve endpoints. We also obtain LDPs in the space of unparameterized curves. Second, we address the radial case, which requires in part different methods from the chordal case, due to the different topological setup. 

We establish our main results via proving an exponential tightness property and combining it with detailed curve escape probability estimates, in the spirit of exponentially good approximations in LDP theory. 
In the radial case, additional work is required to refine the estimates appearing in the literature. 
Notably, since we manage to prove a finite-time LDP in a better topology than in earlier literature, escape energy estimates follow as a consequence of the escape probability estimates. 
\end{abstract}

\newpage

\tableofcontents

\newpage
\section{Introduction}
Recently, there has been a lot of interest in large deviation theory for variants of \emph{Schramm-Loewner evolutions} (SLEs), 
and in the interconnections of the associated rate functions, termed \emph{Loewner energies}, with geometry and physics.
SLEs are universal conformally invariant random curves in two dimensions, satisfying a domain Markov property  
inherited from their Loewner driving process, Brownian motion.
They were introduced to describe scaling limits of critical lattice models~\cite{Schramm:Scaling_limits_of_LERW_and_UST, Schramm:ICM}  
and thereafter, due to their universality, 
they became central objects in random geometry in a much more general sense 
(see, e.g.,~\cite{Lawler:Conformal_invariance_universality_and_the_dimension_of_the_Brownian_frontier, 
Werner:Random_planar_curves_and_SLE, 
Werner:Conformal_restriction_and_related_questions,
Smirnov:Towards_conformal_invariance_of_2D_lattice_models,
Lawler:ICM_Conformally_invariant_loop_measures,Peltola:Towards_CFT_for_SLEs,
Wang:SLE_LDP_survey, Sheffield:ICM_What_is_a_random_surface} for a number of surveys on these topics). 
SLEs depend on a parameter $\kappa \geq 0$, the variance of the driving Brownian motion $\sqrt{\kappa} B$, 
which determines (among other things) the roughness of the fractal $\SLE[\kappa]$ curve~\cite{Rohde-Schramm:Basic_properties_of_SLE, Beffara:Dimension_of_SLE} and the universality class of the associated  models~\cite{Cardy:SLE_for_theoretical_physicists, Bauer-Bernard:2d_growth_processes_SLE_and_Loewner_chains}.

Large deviations describe asymptotic behavior of probabilities of (exponentially) rare events: 
a~\emph{large deviation principle} (LDP) refers to a concentration phenomenon associated to a collection of probability measures,
as formalized by Varadhan (see the textbooks~\cite{Deuschel-Stroock:Large_Deviations, Dembo-Zeitouni:Large_deviations_techniques_and_applications} for a thorough introduction).
Exact formulations of such phenomena are useful for many applications in, for instance, mathematical physics and probability theory.
In large deviation theory for SLEs, 
one considers the limiting behavior of the random curves as $\kappa \to 0$, or $\kappa \to \infty$. 
In the latter case, the $\SLE[\infty]$ measure just becomes the Lebesgue measure~\cite{APW:Large_deviations_of_radial_SLE_infinity},
while in the former case, it concentrates on hyperbolic 
geodesics~\cite{Dubedat:Commutation_relations_for_SLE, Wang:Energy_of_deterministic_Loewner_chain, 
Peltola-Wang:LDP}. 
The latter phenomenon is, in fact, rather subtle and more 
intriguing than one might think. 
The large deviations rate function is inherited from the Loewner description of the SLE curves, 
thus termed Loewner energy~\cite{Dubedat:Commutation_relations_for_SLE, Friz-Shekhar:Finite_energy_drivers, 
Wang:Energy_of_deterministic_Loewner_chain}.
It shares fascinating relations outside of probability theory, 
to a vast number of areas including geometric function theory~\cite{Takhtajan-Teo:Weil-Petersson_metric_on_the_universal_Teichmuller_space,Viklund-Wang:Loewner_Kufarev_energy_and_foliations_of_WP_quasicircles,Bishop:Weil_Petersson_curves_conformal_energies_beta-numbers_and_minimal_surfaces}, 
Teichm\"uller theory~\cite{Takhtajan-Teo:Weil-Petersson_metric_on_the_universal_Teichmuller_space, Wang:Equivalent_descriptions_of_Loewner_energy},
minimal surfaces~\cite{BBVW:Universal_Liouville_action_as_renormalized_volume_and_its_gradient_flow,Bishop:Weil_Petersson_curves_conformal_energies_beta-numbers_and_minimal_surfaces},
enumerative geometry~\cite{Peltola-Wang:LDP}, 
Coulomb gas~\cite{Johansson:Strong_Szego_theorem_on_a_Jordan_curve},
and conformal field theory~\&~string theory
(see, e.g.,~\cite{Bowick-Rajeev:String_theory_as_the_Kahler_geometry_of_loop_space, Takhtajan-Teo:Weil-Petersson_metric_on_the_universal_Teichmuller_space,RSS:Quasiconformal_Teichmuller_theory_as_analytical_foundation_for_CFT, Maibach-Peltola:From_the_conformal_anomaly_to_the_Virasoro_algebra, Baverez-Jego:The_CFT_of_SLE_loop_measures_and_the_Kontsevich-Suhov_conjecture}).

However, there are not many precise results regarding this large deviations phenomenon as such. 
In~\cite{Wang:Energy_of_deterministic_Loewner_chain}, 
Wang considered the $\kappa \to 0+$ behavior of a single chordal $\SLE[\kappa]$ curve 
in the sense of ``finite-dimensional distributions'' 
(in terms of the probability for the curve to pass between given points). 
In~\cite{Peltola-Wang:LDP}, 
Peltola~\&~Wang proved a strong LDP where chordal curves are considered 
as closed sets with the Hausdorff metric. 
Later, Guskov proved a single chordal curve LDP in the space of capacity-parameterized curves 
with the topology of uniform convergence on compact time intervals~\cite{Guskov:LPD_for_SLE_in_the_uniform_topology}. 
(This is stronger than the Hausdorff metric in finite time, but has no control over the infinite-time tails of the curves.) 
In the companion article~\cite{AHP:Large_deviations_of_DBM_and_multiradial_SLE} with Healey, 
we consider multiple radial SLEs in the Hausdorff metric in finite time, 
building on an LDP for Dyson Brownian motion, which is the Loewner driving process for this SLE variant.
In the recent~\cite{Krusell:in_prep}, Krusell considers an LDP in the Hausdorff metric for $\SLE[0+]$ with a force point.

In the present work, we will prove LDPs for variants of $\SLE[0+]$, 
strengthening and complementing the above recent results:
\begin{itemize}[leftmargin=*]
\item We prove an LDP for a full radial $\SLE[0+]$ curve parameterized by capacity (\Cref{thm:radial-LDP}),
and for full (multi)chordal $\SLE[0+]$ curves parameterized by capacity (\Cref{thm:multichordal-LDP}).

\item As a consequence of our methods, we also obtain the corresponding LDPs in the space of unparameterized curves (cf.~\cite{Kemppainen-Smirnov:Random_curves_scaling_limits_and_Loewner_evolutions} and \Cref{cor:unparam_LDP}).

\end{itemize}
There are two key novelties in the present work.
First, we strengthen the topology in the aforementioned \emph{chordal} LDPs into, arguably, 
the most natural topologies including all curve endpoints. 
Second, we address the \emph{radial} case, 
which requires different methods than before and is technically substantially more challenging.
To keep this article concise, we defer the consideration of the multiple radial case 
(which would strengthen~\cite{AHP:Large_deviations_of_DBM_and_multiradial_SLE}) to future work 
--- namely, the tail estimates derived in the present article are already rather involved, 
and addressing curves with common endpoints will involve further regularization.

\subsection{Prelude: What is an LDP?}

Given a collection $(\xi^\varepsilon)_{\varepsilon > 0}$ of random variables on a Hausdorff topological space $X$, 
an LDP gives bounds for the exponential tails of probabilities that $\xi^\varepsilon$ stays close to $x \in X$ as $\varepsilon \to 0$,
in terms of a rate function $I \colon X \to [0,+\infty]$. 
Informally, this can be written as
\begin{align} \label{eq:asy}
\bP[\xi^\varepsilon \textnormal{ lies in a neighborhood of } x] \quad \sim \quad 
\exp\bigg(\! -\frac{I(x)}{\varepsilon}\bigg), \qquad \textnormal{as } \varepsilon \to 0\!+.
\end{align}
The precise sense of the asymptotics~\eqref{eq:asy} depends on the topology of the space $X$, 
with finer topology giving stronger estimates (see \Cref{def:LDP}). 
This motivates detailed investigations of the phenomenon~\eqref{eq:asy},
carried out in this article in the case of SLE curves. 

\begin{df} \label{def:LDP}
Let $X$ be a Hausdorff topological space. A map $I \colon X \to [0,+\infty]$ is called 
a \emph{rate function} if it is lower semicontinuous, 
and a \emph{good rate function} if it has compact level sets $I^{-1}[0,M] \subset X$ for all $M \geq 0$. 
For (good) rate function $I$ and set $A \subset X$, we write\footnote{As usual, we use the convention that $\inf\emptyset = -\infty$.}
\begin{align*}
I(A) := \inf_{x \in A}I(x) .
\end{align*}
A family $(P^\varepsilon)_{\varepsilon>0}$ of probability measures on $X$ is said to satisfy a \emph{large deviation principle} (LDP) with rate function $I$ 
if for every open set $G \subset X$ and closed set $F \subset X$, we have
\begin{align*}
\liminf_{\varepsilon \to 0+}\varepsilon \log P^\varepsilon[G] \, \ge \, - I(G) 
\qquad \textnormal{and} \qquad 
\limsup_{\varepsilon \to 0+}\varepsilon \log P^\varepsilon[F] \, \le \, - I(F).
\end{align*}
\end{df}

Classical Schilder's theorem asserts that, up to a finite time $T$, 
scaled Brownian motions $(\sqrt{\kappa}B)_{\kappa > 0}$ satisfy an LDP 
in the space $\funs{T]}$ of continuous functions 
$W \colon [0,T] \to \bR$ such that $W_0 = 0$, 
with the Dirichlet energy~\eqref{eq:Dirichlet energy} as the (good) rate function. 
Since Brownian motion gives the Loewner driving function for $\SLE[\kappa]$ curves, 
one might guess that they also satisfy an LDP with rate function 
just given by the Dirichlet energy of the driving function of the curve.
However, to actually prove an LDP in any topology capturing geometric behavior of the curves themselves requires quite a lot of delicate work. 
We will summarize known SLE results derived from Schilder's theorem in \Cref{sec:large-deviations-preliminaries}.

\begin{lemA}[{Schilder's theorem}] \textnormal{(See, e.g.,~\cite[Theorem~5.2.3]{Dembo-Zeitouni:Large_deviations_techniques_and_applications}.)}
\label{thm:Schilders-theorem}
Fix $T \in (0,\infty)$. 
The family of laws of $(\sqrt{\kappa}B|_{[0,T]})_{\kappa > 0}$ 
satisfy an LDP on $(\funs{T]},\normUniform{\cdot})$ with good rate function 
\begin{align}\label{eq:Dirichlet energy}
\BMenergy[T](W) := \; & \begin{cases}
\frac{1}{2}\int_0^T (\frac{\ud}{\ud t} W_t)^2 \ud t, & W \textnormal{ absolutely continuous},\\
+ \infty , & \textnormal{otherwise}.
\end{cases}
\end{align}
\end{lemA}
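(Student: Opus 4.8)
The plan is to run the classical argument: establish the large deviation bounds for the finite-dimensional marginals, lift them to the topology of pointwise convergence by a projective limit theorem, and then strengthen to the uniform topology using exponential tightness. First I would fix times $0 = t_0 < t_1 < \cdots < t_\np \leq T$ and observe that the increments $\sqrt{\kappa}(B_{t_j} - B_{t_{j-1}})$ are independent centered Gaussians of variance $\kappa(t_j - t_{j-1})$; since an LDP for $N(0,\sigma^2\kappa)$ holds with rate $x \mapsto x^2/(2\sigma^2)$ (Cramér's theorem, or a direct Legendre computation) and LDPs for independent families add up, the vector $(\sqrt{\kappa}B_{t_1},\ldots,\sqrt{\kappa}B_{t_\np})$ satisfies an LDP on $\bR^\np$ with good rate function $(x_1,\ldots,x_\np) \mapsto \tfrac12 \sum_{j=1}^{\np}(x_j-x_{j-1})^2/(t_j-t_{j-1})$, with the convention $x_0 = 0$. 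Since $\funs{T]}$ equipped with the topology of pointwise convergence on $[0,T]$ is the projective limit of these finite-dimensional spaces, the Dawson--Gärtner theorem promotes this to an LDP in that (weak) topology, with rate function $W \mapsto \sup \tfrac12\sum_j (W_{t_j}-W_{t_{j-1}})^2/(t_j-t_{j-1})$, the supremum over all finite subsets $\{t_1 < \cdots < t_\np\}$ of $[0,T]$.

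The next point is to identify this supremum with $\BMenergy[T]$, a classical real-analysis fact: by Cauchy--Schwarz the discretized energies are nondecreasing under refinement and bounded above by $\tfrac12\int_0^T(\frac{\ud}{\ud t}W_t)^2\ud t$ when $W$ is absolutely continuous with square-integrable derivative, with equality in the limit along a refining sequence of partitions by density of step functions in $L^2$; conversely, if the supremum is finite then the difference quotients of $W$ are bounded in $L^2$, which forces an $L^2$ weak derivative, so the supremum is $+\infty$ for every non--absolutely continuous $W$. It then remains to pass from pointwise to uniform convergence, for which I would establish that $(\sqrt{\kappa}B|_{[0,T]})_{\kappa>0}$ is exponentially tight in $(\funs{T]},\normUniform{\cdot})$: fixing $\alpha \in (0,\tfrac12)$, a Fernique-type bound shows the $\alpha$-Hölder norm $\norm{B}_{C^\alpha[0,T]}$ has a Gaussian tail, whence $\bP[\norm{\sqrt{\kappa}B}_{C^\alpha[0,T]} > L] \leq C\exp(-cL^2/\kappa)$, and since $C^\alpha$-balls are compact in $(\funs{T]},\normUniform{\cdot})$ by Arzelà--Ascoli this yields exponential tightness with $\limsup_{\kappa\to 0+}\kappa\log\bP[\,\cdot\,] \to -\infty$ as $L\to\infty$. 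The identity map from $(\funs{T]},\normUniform{\cdot})$ to $\funs{T]}$ with the pointwise topology is a continuous bijection, so the inverse contraction principle together with this exponential tightness transfers the LDP to the uniform topology with the same rate function $\BMenergy[T]$. Finally $\BMenergy[T]$ is good: Cauchy--Schwarz gives $|W_t - W_s| \leq (2\BMenergy[T](W))^{1/2}|t-s|^{1/2}$, so each level set $\BMenergy[T]^{-1}[0,M]$ is bounded and equicontinuous, hence precompact by Arzelà--Ascoli and closed by lower semicontinuity of $\BMenergy[T]$.

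I expect the main obstacle to be the exponential-tightness estimate --- it is precisely there that one must produce a bound on the Brownian oscillation that is quantitative and uniform in $\kappa$ on the exponential scale $1/\kappa$ --- together with the (classical but not purely formal) identification of the variational rate function. As an alternative for the \emph{lower} bound that bypasses Dawson--Gärtner, one can argue directly: for absolutely continuous $W$, tilting the driving Brownian motion by the Cameron--Martin element $W$ via Girsanov produces the Radon--Nikodym cost $\exp(-\BMenergy[T](W)/\kappa)$, while the residual stochastic-integral term and the small-ball event $\{\normUniform{\sqrt{\kappa}B} < \delta\}$ contribute only subexponentially in $1/\kappa$; the matching upper bound, however, still requires the finite-dimensional upper bound together with exponential tightness in order to pass from compact sets to all closed sets.
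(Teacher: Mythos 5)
The paper does not prove this lemma at all: Schilder's theorem is imported as a black box, with the proof delegated to \cite[Theorem~5.2.3]{Dembo-Zeitouni:Large_deviations_techniques_and_applications}, so there is no in-paper argument to compare yours against. Your outline is a correct rendition of the other standard route: finite-dimensional Gaussian LDPs for the increments, the Dawson--G\"artner theorem to reach a weak topology, identification of the variational formula with $\BMenergy[T]$, and exponential tightness in the sup-norm (Gaussian tails of the H\"older norm plus Arzel\`a--Ascoli) fed into the inverse contraction principle; the Girsanov tilting you mention for the lower bound is likewise classical. The cited textbook instead combines the finite-dimensional LDP with exponentially good piecewise-linear approximations and modulus-of-continuity estimates. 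Both work; your route has the incidental merit of exercising exactly the toolkit (\Cref{thm:Dawson-Gartner}, \Cref{thm:contraction-principle}, exponential tightness) that the present paper later deploys for the SLE curves themselves.

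One imprecision worth fixing: the projective limit of the spaces $\bR^{\{t_1,\dots,t_{\np}\}}$ over finite subsets of $(0,T]$ is the full product $\bR^{(0,T]}$ with the product topology, not $\funs{T]}$ with the pointwise topology, so the Dawson--G\"artner step a priori gives an LDP on a space strictly larger than the one you want. The clean repair is to note that the inverse contraction principle goes through for a continuous \emph{injection} of $(\funs{T]},\normUniform{\cdot})$ into the product space, provided the limiting rate function is $+\infty$ off the image --- which it is, because finiteness of the supremum over partitions forces $|W_t-W_s|\le (2\BMenergy[T](W))^{1/2}|t-s|^{1/2}$ and hence continuity; running the projective system over finite subsets of a countable dense set of times also sidesteps measurability quibbles on the uncountable product. (Simply ``restricting'' the LDP to the continuous paths is not justified by the standard restriction lemma, since they are not closed in the product topology.) With this patched, your argument is complete and correct.
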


\subsection{Large deviations for single-curve SLE variants}

Throughout, we consider a simply connected domain $D \subsetneq \bC$ 
with two distinct marked points $x \in \partial D$ and $y \in \overline{D}$
such that $\partial D$ is smooth\footnote{Here, a much weaker condition would also suffice --- the smoothness certainly guarantees that, for instance, the uniformizing map is continuous and its derivatives exist at the marked points (e.g., by the Kellogg-Warschawski theorem~\cite[Thm.~3.6]{Pommerenke:Boundary_behaviour_of_conformal_maps}).} at their neighborhoods.
We shall refer to the case where $y \in \partial D$ the \emph{chordal case}, 
and the case where $y \in D$ the \emph{radial case}.  
We also fix a uniformizing map $\varphi_D \colon D \to \bH$ (resp.~$\Phi_D \colon D \to \bD$) 
sending $x \mapsto 0$ and $y \mapsto \infty$
(resp.~$x \mapsto 1$ and $y \mapsto 0$) in these two cases.
We endow each domain $D$ with the metric $d_D$ 
induced by the pullback of the Euclidean metric from $\bD$ by the map $\Phi_D$.

We will consider curves parameterized by capacity; 
in the chordal case the \emph{half-plane} capacity, and in the radial case the \emph{logarithmic} capacity.
We denote by $\Xpaths(D;x,y)$ the collection of simple capacity-parameterized curves on $D$ from $x$ to $y$.
We use the metric 
\begin{align*}
\dXpaths(\gamma, \tilde{\gamma}) 
= d_{\Xpaths(D;x,y)}(\gamma, \tilde{\gamma} ) 
:= \sup_{t > 0} d_D(\gamma(t),\tilde{\gamma}(t)) 
, \qquad \gamma, \tilde{\gamma}  \in \Xpaths(D;x,y) .
\end{align*}

Loewner's theory assigns to each simple curve a unique continuous driving function.
For $\kappa \leq 4$, the $\SLE[\kappa]$ curves are simple, 
and their driving function is given by $\sqrt\kappa B$~\cite{Rohde-Schramm:Basic_properties_of_SLE}.
We denote by $\SLEmeasure{D;x,y}{\kappa}$ the $\SLE[\kappa]$ law on $\Xpaths(D; x, y)$.
By conformal invariance, $\SLEmeasure{D;x,y}{\kappa}$ 
can be pulled back via a conformal map from any preferred domain, such as $\bD$ or $\bH$.

The rate function for a single SLE curve in $(D; x, y)$ is given by the \emph{Loewner energy}
\begin{align}\label{eqn:radial-energy-definition}
\lenergy{D}{x,y}(\gamma) 
\; := \;
\begin{cases}
\lenergy{\bH}{0, \infty}(\varphi_D(\gamma))  \\
\lenergy{\bD}{1, 0}(\Phi_D(\gamma))
\end{cases}
\hspace*{-2mm} := \; 
\begin{cases}
\frac{1}{2}\int_0^\infty (\frac{\ud}{\ud t} W_t)^2 \ud t, & W \textnormal{ absolutely continuous},\\
+ \infty , & \textnormal{otherwise},
\end{cases}
\end{align}
where $t \mapsto W_t$ is either the chordal ($W_t = \lambda_t$) or the radial ($W_t = \omega_t$) Loewner driving function of
the chordal curve $\varphi_D(\gamma)$ or the radial curve $\Phi_D(\gamma)$, respectively.

As one of the main results of the present article, 
we prove an LDP for \emph{full radial} and for \emph{full chordal} $\SLE[0+]$ curves parameterized by capacity (\Cref{thm:radial-LDP}). 
The steps of the proof are summarized in the diagram depicted in \Cref{fig:steps}, and discussed below. 

\begin{restatable}{thm}{radialLDP} \label{thm:radial-LDP}
The family $(\SLEmeasure{D;x,y}{\kappa})_{\kappa > 0}$ of laws of the $\SLE[\kappa]$ curves 
satisfy an LDP on $(\Xpaths(D;x,y),\dXpaths)$ 
with good rate function $\lenergy{D}{x,y}$ defined by~\eqref{eqn:radial-energy-definition}. 
\end{restatable}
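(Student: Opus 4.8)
The plan is to bootstrap the infinite-time LDP from Schilder's theorem (\Cref{thm:Schilders-theorem}) through three stages: (i) a finite-time curve LDP, obtained by pushing Schilder forward along the radial Loewner transform; (ii) exponential tightness of $(\SLEmeasure{D;x,y}{\kappa})_{\kappa > 0}$ on $(\Xpaths(D;x,y),\dXpaths)$; and (iii) a finite-to-infinite time step, in which the capacity-$T$ initial segment of a curve is shown to be an exponentially good approximation of the whole curve in the metric $\dXpaths$, with approximation error governed by curve return estimates. Stage~(i) rests on \Cref{thm:Schilders-theorem} and finite-energy Loewner regularity; stage~(ii) is the content of \Cref{sec:exponential-tightness}; stage~(iii) is the content of \Cref{sec:LDP-infinite-time}; and the three are then combined by the standard large-deviation machinery.

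\emph{Stage (i).} Fix $T\in(0,\infty)$ and work in $\bD$ via $\Phi_D$, normalizing so that the radial Loewner driving function starts at $0$ and hence, after lifting to $\bR$, lies in $\funs{T]}$. I would show that the radial Loewner transform $\cL_T$, sending a driving function $W\in\funs{T]}$ to the capacity-$T$ curve it generates (pulled back to $D$), is continuous from $(\funs{T]},\normUniform{\cdot})$ into the space of capacity-$T$ curves equipped with the metric $\sup_{0<t\le T}d_D(\cdot,\cdot)$, when restricted to each sublevel set $\{\BMenergy[T]\le M\}$. This is the finite-energy regularity of Loewner chains (in the spirit of~\cite{Friz-Shekhar:Finite_energy_drivers,Wang:Energy_of_deterministic_Loewner_chain,Peltola-Wang:LDP,Guskov:LPD_for_SLE_in_the_uniform_topology}) transcribed to the radial equation; the delicate point is uniform control of the tip up to the closed endpoint $t=T$ and near $t=0$, where smoothness of $\partial D$ at $x$ enters. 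Since $\BMenergy[T]$ is a good rate function and the laws of $\sqrt\kappa B|_{[0,T]}$ are exponentially tight, continuity on sublevel sets suffices --- by a routine truncation together with the contraction principle --- to transport the LDP of \Cref{thm:Schilders-theorem}: the capacity-$T$ $\SLE[\kappa]$ curves satisfy an LDP with good rate function equal to the truncation $\gamma\mapsto\tfrac12\int_0^T(\tfrac{\ud}{\ud t}W_t)^2\,\ud t$ of $\lenergy{D}{x,y}$, with $W$ the radial driving function of $\Phi_D(\gamma)$.

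\emph{Stage (ii).} By \Cref{sec:exponential-tightness}, for every $M>0$ there is a compact $\cK\subset\Xpaths(D;x,y)$ with $\limsup_{\kappa\to0+}\kappa\log\SLEmeasure{D;x,y}{\kappa}[\Xpaths(D;x,y)\setminus\cK]\le-M$; I expect this from a uniform modulus-of-continuity estimate for the $\SLE[\kappa]$ curves in the $d_D$-metric together with a uniform tail bound forcing $\gamma(t)$ into arbitrarily small $d_D$-neighborhoods of $y$ for large $t$, the two combining into relative $\dXpaths$-compactness via an Ascoli-type criterion. \emph{Stage (iii).} Comparing a curve with a fixed extension of its capacity-$T$ segment, $\dXpaths$ is bounded by the $[0,T]$-sup-distance plus $\sup_{t\ge T}d_D(\gamma(t),y)+\sup_{t\ge T}d_D(\tilde\gamma(t),y)$, so it suffices to establish, as in \Cref{sec:LDP-infinite-time}, the return estimate
\[
\limsup_{T\to\infty}\;\limsup_{\kappa\to0+}\;\kappa\log\SLEmeasure{D;x,y}{\kappa}\Bigl[\,\sup_{t\ge T}d_D(\gamma(t),y)>\varepsilon\Bigr]=-\infty\qquad\text{for every }\varepsilon>0,
\]
i.e.\ that the capacity-$T$ segment is an exponentially good approximation of the full curve in $\dXpaths$. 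This is where the radial case is genuinely harder than the chordal one: since $y$ is an interior point, I would need Beurling-type and harmonic-measure estimates bounding how far the radial $\SLE[\kappa]$ tip can stray from $y$ once its conformal radius seen from $y$ is small, controlling simultaneously the decay of that conformal radius and the angular winding of the curve --- a Loewner-equation a priori bound as in~\cite{Guskov:LPD_for_SLE_in_the_uniform_topology} or a Girsanov comparison being the natural tools. Because Stage~(i) already delivers the finite-time LDP in the strong metric $\dXpaths$ rather than merely the Hausdorff metric, the matching lower-bound input --- that finite-energy curves satisfy $\sup_{t\ge T}d_D(\gamma(t),y)\to0$ uniformly on sublevel sets of $\lenergy{D}{x,y}$ --- comes out of the very same return estimate, as remarked in the introduction.

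Combining Stages~(i)--(iii) with the exponentially-good-approximation theorems (e.g.~\cite[Theorems~4.2.13 and~4.2.16]{Dembo-Zeitouni:Large_deviations_techniques_and_applications}) and the exponential tightness of Stage~(ii) yields the LDP on $(\Xpaths(D;x,y),\dXpaths)$ with rate function $\lim_{T\to\infty}\tfrac12\int_0^T(\tfrac{\ud}{\ud t}W_t)^2\,\ud t=\lenergy{D}{x,y}(\gamma)$; the limit is monotone, whence the rate function is lower semicontinuous, and its goodness follows from exponential tightness. I expect the decisive obstacle to be the radial return estimate of Stage~(iii).
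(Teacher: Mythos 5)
Your high-level architecture (a finite-time LDP in the capacity-parameterized topology, then passage to infinite time via return estimates) does parallel the paper's, but Stage~(i) contains a genuine gap. You cannot get the finite-time LDP on $(\Xpaths[T],\dXpaths)$ from Schilder's theorem by ``continuity of the Loewner transform on sublevel sets of $\BMenergy[T]$, exponential tightness of the laws of $\sqrt\kappa B|_{[0,T]}$, and a routine truncation plus the contraction principle.'' The Loewner transform is \emph{not} continuous from $(\funs{T]},\normUniform{\cdot})$ to curves in the metric $\dXpaths$ (nor even to the Hausdorff metric; see \Cref{rem:continuity-properties-of-loewner-transform}), and Brownian drivers almost surely have infinite Dirichlet energy, so the measures $\BMmeasure[T]{\kappa}$ carry no mass on the sublevel sets where you claim continuity; no contraction-type theorem transfers an LDP through a map that is continuous only on sets of zero (indeed, exponentially negligible in no quantified sense) mass, and exponential tightness of the \emph{driver} laws does not help because the transform is discontinuous on those sup-norm compacts as well. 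What is actually required --- and is the main technical content of \Cref{sec:exponential-tightness} --- is exponential tightness of the \emph{curve} laws in $(\Xpaths[T],\dXpaths)$, combined with the Hausdorff-metric LDP of \Cref{thm:LDP-compact} (which itself needs that finite-energy curves are simple) and the inverse contraction principle. In the radial case this tightness is far from an Ascoli-plus-modulus routine: the paper needs Guskov/Lawler--Viklund-type derivative estimates for the drivers, Bessel-process estimates to force the compact sets into the space of \emph{simple} curves, a Radon--Nikodym comparison of radial with chordal SLE with uniform-in-$\kappa$ control of the density, and continuity of conformal concatenation (\Cref{thm:convergence-of-concatenations}), which is precisely why simplicity is indispensable.

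For Stage~(iii) you correctly isolate the radial return estimate as the decisive input and your reduction of infinite-time $\dXpaths$-closeness to it is sound in spirit, but it is left entirely open, and it is not a soft Beurling/harmonic-measure argument: the paper's proof of \Cref{thm:return-estimates} reworks Lawler's crosscut estimates so that all constants are uniform as $\kappa\to0+$, bounds the radial-vs-chordal Radon--Nikodym derivative up to a stopping time, and --- notably --- invokes the finite-time LDP itself together with \Cref{thm:open-sets-have-finite-energy} to control the conditioning probability $1-p^\kappa$. There is also an unaddressed wrinkle in your exponentially-good-approximation assembly: the capacity-$T$ segment is not an element of the infinite-time space, so one must specify an extension map into $\Xpaths(D;x,y)$ whose tail is uniformly close to $y$ and which is compatible with the LDP machinery; controlling such an extension is nontrivial, and the paper sidesteps it by instead passing through the projective limit (Dawson--G\"artner, \Cref{thm:projective-LDP}), showing that the identification map is a homeomorphism on the closed non-returning sets $\Eset_{\vec N}$, and applying the generalized contraction principle (\Cref{thm:contraction-principle-generalized}); goodness of the rate function is then obtained via \Cref{thm:goodness-energy} rather than from full-space exponential tightness, which you assert in Stage~(ii) but do not establish.
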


\Cref{thm:radial-LDP} also holds for unparameterized curves with the following modifications. 
Let $\unparampaths(D)$ denote the collection of all continuous simple 
curves in $D$ modulo reparameterization. 
We use the metric 
\begin{align*}
\dXpathsUnparam(\gamma, \tilde{\gamma} ) 
= d_{\unparampaths(D)}(\gamma, \tilde{\gamma} ) 
:= \inf_{\sigma} \; \sup_{t \in I} \; d_D(\gamma(t),\tilde{\gamma}(\sigma(t)))
, \qquad \gamma, \tilde{\gamma}  \in \unparampaths(D) ,
\end{align*}
where the infimum is taken over all increasing homeomorphisms $\sigma \colon I \to J$,
with $\gamma \colon I \to \overline{D}$ and $\tilde{\gamma} \colon J \to \overline{D}$. 
We then denote by $\unparampaths(D;x,y) \subset \unparampaths(D)$ the collection of simple curves on $D$ from $x$ to $y$ 
endowed with the topology induced from $\dXpathsUnparam$. 

\begin{restatable}{cor}{unparamLDP}\label{cor:unparam_LDP}
The family $(\SLEmeasure{D;x,y}{\kappa})_{\kappa > 0}$ of laws of the $\SLE[\kappa]$ curves 
satisfy an LDP on $(\unparampaths(D;x,y),\dXpathsUnparam)$ 
with good rate function $\lenergy{D}{x,y}$. 
\end{restatable}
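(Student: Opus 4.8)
The plan is to deduce \Cref{cor:unparam_LDP} from \Cref{thm:radial-LDP} via the contraction principle (see, e.g.,~\cite[Theorem~4.2.1]{Dembo-Zeitouni:Large_deviations_techniques_and_applications}), applied to the ``forget the parameterization'' map
\begin{align*}
\pi \colon (\Xpaths(D;x,y),\dXpaths) \longrightarrow (\unparampaths(D;x,y),\dXpathsUnparam) ,
\end{align*}
which sends a capacity-parameterized curve to its reparameterization class. Since $\SLEmeasure{D;x,y}{\kappa}\circ\pi^{-1}$ is precisely the law of the unparameterized $\SLE[\kappa]$ curve, it suffices to check three things: that $\pi$ is continuous, that it is onto $\unparampaths(D;x,y)$, and that the rate function $\bar\gamma\mapsto\inf\{\lenergy{D}{x,y}(\gamma):\pi(\gamma)=\bar\gamma\}$ produced by the contraction principle coincides with $\lenergy{D}{x,y}$. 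Goodness of the resulting rate function is then automatic, as $\lenergy{D}{x,y}$ is a good rate function on $\Xpaths(D;x,y)$ by \Cref{thm:radial-LDP}.

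First I would verify that $\pi$ is continuous; in fact it is $1$-Lipschitz. For $\gamma,\tilde\gamma\in\Xpaths(D;x,y)$, both parameterized by capacity over $[0,\infty)$, taking $\sigma$ to be the identity map of $[0,\infty)$ as an admissible increasing homeomorphism in the infimum defining $\dXpathsUnparam$ gives
\begin{align*}
\dXpathsUnparam(\pi(\gamma),\pi(\tilde\gamma)) \;\leq\; \sup_{t\geq 0}d_D(\gamma(t),\tilde\gamma(t)) \;=\; \dXpaths(\gamma,\tilde\gamma) .
\end{align*}
Next, $\pi$ is a bijection onto $\unparampaths(D;x,y)$: every simple curve from $x$ to $y$ has a unique parameterization by capacity (half-plane capacity in the chordal case, logarithmic capacity in the radial case, computed after transporting to $\bH$ or $\bD$ via $\varphi_D$ or $\Phi_D$), because the capacity of the initial segments of such a curve is continuous and strictly increasing and exhausts $[0,\infty)$ as the curve is traced from $x$ to $y$. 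This furnishes a two-sided inverse to $\pi$ on reparameterization classes.

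Finally, I would note that $\lenergy{D}{x,y}$ depends only on the reparameterization class of a curve: by~\eqref{eqn:radial-energy-definition} it is read off from the capacity driving function, which in turn is determined by the trace rather than by the initial parameterization. Hence $\lenergy{D}{x,y}$ descends to a function on $\unparampaths(D;x,y)$, and since each fiber $\pi^{-1}(\bar\gamma)$ is the single capacity-parameterized representative of $\bar\gamma$, the contraction-principle rate function equals $\lenergy{D}{x,y}$, as needed. I do not expect any serious obstacle here: the substance of the statement is entirely contained in \Cref{thm:radial-LDP}, and the only mild point requiring attention is the standard fact that the capacity of the initial segments of a simple curve reaching $y$ really does run through all of $[0,\infty)$ --- in particular, that it blows up as the curve approaches $y$, which in the chordal case follows from the smoothness of $\partial D$ near $y$ (so that $\varphi_D$ sends a neighborhood of $y$ to a neighborhood of $\infty$ in $\bH$ and the image curve is unbounded), and in the radial case from the conformal radius of the remaining complementary component, seen from $y$, tending to $0$.
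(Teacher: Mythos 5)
Your proposal is correct and follows essentially the same route as the paper: the paper's proof simply applies the contraction principle (\Cref{thm:contraction-principle}\ref{item:Contraction_principle}) to the forgetting projection $(\Xpaths,\dXpaths)\to(\unparampaths,\dXpathsUnparam)$, observing that it is a contraction and hence continuous, exactly as you do. Your additional verifications (uniqueness/existence of the capacity parameterization, so that the fibered infimum is the energy itself) are details the paper leaves implicit, and they are fine.
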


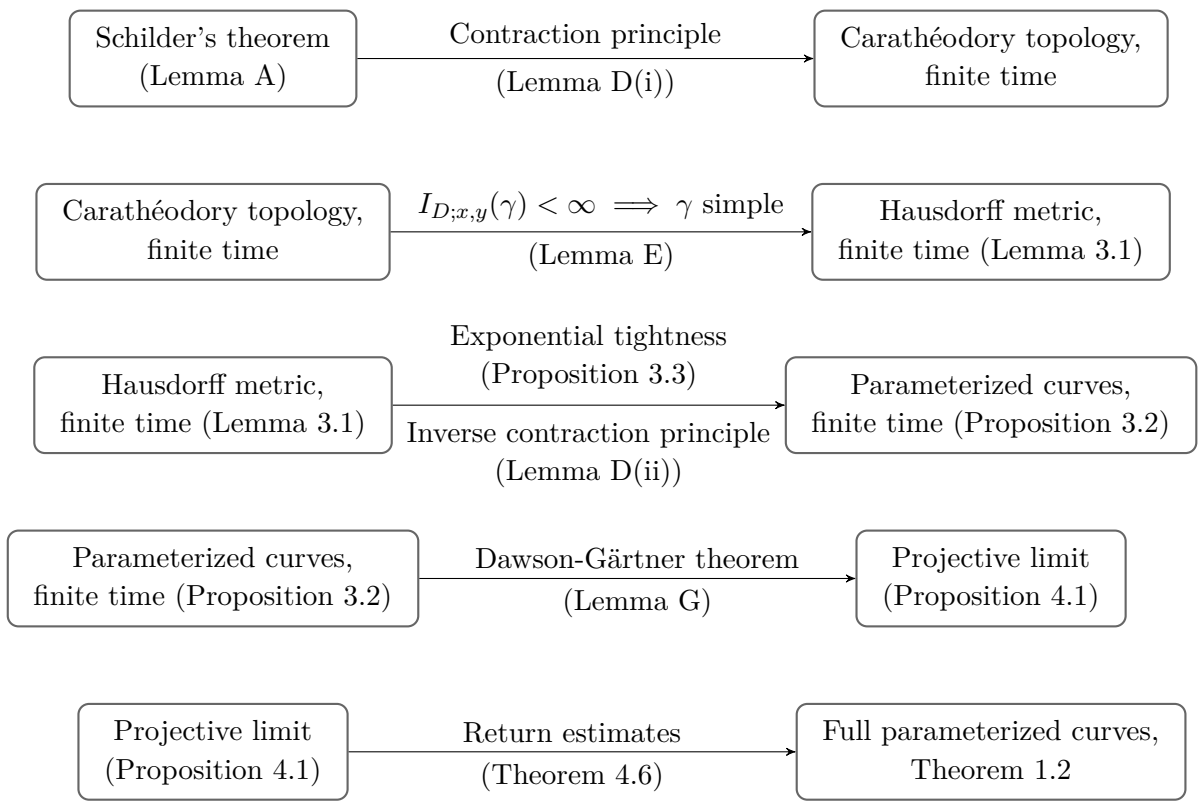
\begin{figure}[h] 
\begin{center}
\begin{tikzpicture}
[node distance=1.0cm and 6.0cm,
squarednode/.style={rectangle,rounded corners, draw=black!60, thick, minimum size=10mm}]
\node[squarednode]  (Schilder)  {\begin{tabular}{c} Schilder's theorem \\
(\Cref{thm:Schilders-theorem}) \end{tabular}};
\node[squarednode]   (Cara) [right=of Schilder]  {\begin{tabular}{c} Carath\'eodory topology, \\
finite time\end{tabular}};
\draw[->]  (Schilder) to node[above,align=center] {Contraction principle} node[below,align=center] {(\Cref{thm:contraction-principle}\ref{item:Contraction_principle})} (Cara);
\node[squarednode]  (Hausdorff) [below=of Cara]  {\begin{tabular}{c} Hausdorff metric, \\
finite time (\Cref{thm:LDP-compact}) \end{tabular}};
\node[squarednode]   (Cara2) [below=of Schilder]  {\begin{tabular}{c} Carath\'eodory topology, \\
finite time\end{tabular}};
\draw[->]  (Cara2) to node[above,align=center] 
{$\lenergy{D}{x,y}(\gamma) < \infty \implies \gamma \textnormal{ simple}$} 
node[below,align=center] {(\Cref{thm:finite-energy-is-simple})} (Hausdorff);
\node[squarednode]  (Hausdorff2) [below=of Cara2]  {\begin{tabular}{c} Hausdorff metric, \\
finite time (\Cref{thm:LDP-compact}) \end{tabular}};
\node[squarednode]  (Param) [below=of Hausdorff]  {\begin{tabular}{c} Parameterized curves, \\
finite time (\Cref{thm:radial-LDP-finite}) \end{tabular}};
\draw[->]  (Hausdorff2) to node[above,align=center] {\begin{tabular}{c} Exponential tightness \\ (\Cref{thm:exponential-tightness})\end{tabular}} node[below,align=center] {\begin{tabular}{c} Inverse contraction principle \\ (\Cref{thm:contraction-principle}\ref{item:Inverse_contraction_principle})\end{tabular}} (Param); 
\node[squarednode]  (Param2) [below=of Hausdorff2]  {\begin{tabular}{c} Parameterized curves, \\
finite time (\Cref{thm:radial-LDP-finite}) \end{tabular}};
\node[squarednode]  (Proj) [below=of Param]  {\begin{tabular}{c} Projective limit \\
(\Cref{thm:projective-LDP}) \end{tabular}};
\draw[->]  (Param2) to node[above,align=center] {Dawson-G\"artner theorem} node[below,align=center] {(\Cref{thm:Dawson-Gartner})} (Proj);
\node[squarednode]  (Proj2) [below=of Param2]  {\begin{tabular}{c} Projective limit \\
(\Cref{thm:projective-LDP}) \end{tabular}};
\node[squarednode]  (Main) [below=of Proj]  {\begin{tabular}{c} Full parameterized curves, \\
\Cref{thm:radial-LDP} \end{tabular}};
\draw[->]  (Proj2) to node[above,align=center] {Escape estimates} node[below,align=center] {(\Cref{thm:return-estimates})} (Main);
\end{tikzpicture}
\end{center}
\caption{The proof of \Cref{thm:radial-LDP}.
The right column concerns different LDPs.}
\label{fig:steps}
\end{figure}

Let us briefly outline the proof of \Cref{thm:radial-LDP} and \Cref{cor:unparam_LDP}, 
and highlight the differences to the earlier results. 
By conformal invariance, it suffices to prove the asserted LDP on the preferred domains 
$(\bD; 1, 0)$ in the radial case and $(\bH; 0, \infty)$ in the chordal case.

It is immediate from the contraction principle and the continuity of the Loewner transform 
(sending the driving functions to the curves, or more precisely, hulls; see \Cref{sec:preliminaries})  
in the \emph{Carath\'eodory topology} (see \Cref{thm:caratheodory-kernel-convergence} in \Cref{sec:preliminaries}) that Schilder's theorem yields an LDP for SLE curves in this topology. 
However, the Carath\'eodory topology fails to see a lot of geometry of the curves, 
because it is rather characterized in terms of convergence of the uniformizing maps of the complementary domains uniformly on compacts. 
Therefore, it is highly desirable to establish the LDP in a much stronger sense.

The first step is to derive a finite-time LDP with respect to the Hausdorff metric.
Knowing that finite-energy curves are simple 
(by \Cref{thm:finite-energy-is-simple}, which is a known fact; 
see~\cite{Friz-Shekhar:Finite_energy_drivers, Wang:Energy_of_deterministic_Loewner_chain} and~\cite[Theorem~3.9]{AHP:Large_deviations_of_DBM_and_multiradial_SLE}), 
we can directly use the original strategy employed 
in~\cite[Proposition~5.3]{Peltola-Wang:LDP}, 
involving topological arguments (which yields \Cref{thm:LDP-compact}).

Next, we establish \emph{exponential tightness} of SLEs up to finite time as parameterized curves (\Cref{thm:exponential-tightness}). 
This allows us to apply the inverse contraction principle (\Cref{thm:contraction-principle}\ref{item:Inverse_contraction_principle}) 
to improve the topology of the finite-time LDP 
to that for parameterized curves (\Cref{thm:radial-LDP-finite}). 
Establishing the exponential tightness for \emph{simple curves} is the gist of \Cref{sec:exponential-tightness}.
For non-simple curves in the chordal case, 
the exponential tightness 
follows from the derivative estimates for the Loewner uniformizing map~\cite{Lawler-Viklund:Optimal_Holder_exponent_for_the_SLE_path, Tran:Convergence_of_an_algorithm_simulating_Loewner_curves} 
(see \Cref{subsec:exp_tight_chordal}).
However, establishing the (stronger) exponential tightness for simple curves is substantially more difficult. 
To this end, we use detailed estimates on Bessel processes, uniform in space and time (\Cref{sec:proof-of-exponentially-tight-function-set}). 
We derive the radial case by comparison to the chordal case. The main technical difficulty here is that 
the chordal case approximates the radial case exponentially well only for very small times. 
This still allows us to conformally concatenate small exponentially tight pieces of chordal curves 
and establish the exponential tightness in the radial case, up to any finite time. 
For this, it is crucial that the chordal exponential tightness holds for \emph{simple curves}, 
because the conformal concatenation operation is continuous only for simple curves (see \Cref{thm:convergence-of-concatenations} in \Cref{app:conformal concatenation}).

We can immediately push the LDP to infinite time in a weak sense as a projective limit, using the Dawson-G\"artner theorem as in~\cite{Guskov:LPD_for_SLE_in_the_uniform_topology}.
Thereafter, the aim is to use escape probability estimates to improve the topology from projective limits to the much stronger one of parameterized curves. 
The necessary escape estimates (\Cref{thm:return-estimates}) are rather elaborate to derive in the radial case, however. To this end,  
we shall employ a strategy inspired by~\cite{Lawler:Continuity_of_radial_and_two-sided_radial_SLE_at_the_terminal_point, Field-Lawler:Escape_probability_and_transience_for_SLE}. 
Note that the estimates 
in~\cite{Lawler:Continuity_of_radial_and_two-sided_radial_SLE_at_the_terminal_point, 
Field-Lawler:Escape_probability_and_transience_for_SLE} concern a fixed value of $\kappa > 0$, while we need to track the constants over all $\kappa \to 0+$. 
The proof of \Cref{thm:radial-LDP} (and \Cref{cor:unparam_LDP}) is summarized in \Cref{subsec:radialLDP} and concluded by the results in \Cref{subsec:return-estimates}.  
Notably, thanks to the finite-time LDP in the strong topology of (un)parametrized curves, we do not need to use escape energy estimates to prove \Cref{thm:radial-LDP}. Instead, the latter 
follow as a consequence of \Cref{thm:radial-LDP} and \Cref{thm:return-estimates} (\Cref{thm:energy-return-estimate}). 

\subsection{Large deviations for multichordal SLE}

We also prove a strengthening of~\cite[Theorem~1.5]{Peltola-Wang:LDP} concerning an LDP for
full chordal $\SLE[0+]$ curves parameterized by (half-plane) capacity (\Cref{thm:multichordal-LDP}, proven in \Cref{subsec:multichordal}). 

Let $D \subsetneq \bC$ be a simply connected domain 
and $x_1, \ldots, x_{2n} \in \partial D$ distinct boundary points appearing counterclockwise along the boundary $\partial D$, 
such that $\partial D$ is smooth at neighborhoods of each $x_j$. 
An $n$-link pattern $\alpha = \{\{a_1, b_1\}, \ldots, \{a_n, b_n\}\}$ is a planar pair partition of 
the indices $\{1, \ldots, 2n\}$. For each $n$-link pattern $\alpha$, we define the curve space
\begin{align*}
\Hpaths[\alpha](D;x_1, \ldots, x_{2n}) 
\, := \, \Big\{\vec\gamma := (\gamma_1, \ldots, \gamma_n) \in \prod_{j=1}^n \Hpaths(D;x_{a_j}, x_{b_j}) \;|\; \gamma_i \cap \gamma_j = \emptyset \; \forall \, i\ne j\Big\} .
\end{align*}
Thus, the link pattern $\alpha$ represents the endpoints of the curves. 
We will leave the dependencies on $D$ and $x_j$'s implicit, writing 
$\Hpaths[\alpha] := \Hpaths[\alpha](D; x_1, \ldots, x_{2n})$.
We endow $\Hpaths[\alpha]$ with the metric $\dHpaths[\alpha]$  induced from the product metric 
on $\prod_{j=1}^n\Hpaths(D;x_{a_j},x_{b_j})$.

Denote by $\mcIndependent{\alpha}{\kappa} := \otimes_{j=1}^n \SLEmeasure{D;x_{a_j},x_{b_j}}{\kappa}$ the product measure of independent chordal $\SLE[\kappa]$ curves associated to the link pattern $\alpha$ on $\prod_{j=1}^n\Hpaths(D;x_{a_j}, x_{b_j})$. 
The \emph{multichordal} $\SLE[\kappa]$ measure $\mcMeasure{\alpha}{\kappa}$ associated to $\alpha$ is defined through absolute continuity to $\mcIndependent{\alpha}{\kappa}$ with the explicit Radon-Nikodym derivative
\begin{align} \label{eq:Radon-Nikodym_derivative}
\frac{\ud\mcMeasure{\alpha}{\kappa}}{\ud\mcIndependent{\alpha}{\kappa}}(\vec\gamma) := \frac{\exp(\frac{1}{\kappa}\Phi_\kappa(\vec\gamma))}{\mathbb E_\alpha^\kappa[\exp(\frac{1}{\kappa}\Phi_\kappa)]}, \quad \textnormal{ where }\quad \Phi_\kappa(\vec\gamma) := \frac{\kappa \, c(\kappa)}{2}\,\loopterm{D}(\vec\gamma), \qquad \kappa \leq 8/3 ,
\end{align}
where $c(\kappa) = \frac{1}{2\kappa} (3\kappa-8)(6-\kappa)$ is the central charge, 
and $\loopterm{D}(\vec\gamma)$ is a Brownian loop measure term representing the interaction\footnote{In particular, for $n=1$ we have $\loopterm{D}(\gamma) = 0$, so $\mcMeasure{\{\{1,2\}\}}{\kappa} = \mcIndependent{\{\{1,2\}\}}{\kappa} = \SLEmeasure{D;x_1,x_2}{\kappa}$.} 
of the chords 
(for more details, we refer to~\cite{Lawler:Partition_functions_loop_measure_and_versions_of_SLE, 
Peltola-Wu:Global_and_local_multiple_SLEs_and_connection_probabilities_for_level_lines_of_GFF} 
and~\cite[Section~3.1]{Peltola-Wang:LDP}). 

The \emph{multichordal Loewner energy} $\mcenergy{D}{\alpha} \colon \Hpaths[\alpha] \to [0,+\infty]$ is defined as
\begin{align} \label{eqn:multichordal-energy-definition}
\mcenergy{D}{\alpha}(\vec \gamma) :=\;& \frac{1}{12} \big( \potential{D}{\alpha}(\vec \gamma) - \inf_{\vec\gamma}\potential{D}{\alpha}(\vec\gamma) \big) ,
\end{align}
where $\potential{D}{\alpha} \colon \Hpaths[\alpha] \to \bR\cup\{\infty\}$ is the ``Loewner potential,''
introduced in~\cite[Section~3.2]{Peltola-Wang:LDP},
\begin{align*}
\potential{D}{\alpha}(\vec\gamma) 
:= \;& \frac{1}{12}\sum_{j=1}^n \lenergy{D}{x_{a_j}, x_{b_j}}(\gamma_j) + \loopterm{D}(\vec\gamma) - \frac{1}{4} \sum_{j=1}^n\log \Pkernel{D}{x_{a_j}, x_{b_j}},
\end{align*}
where $\Pkernel{D}{x_{a_j}, x_{b_j}}$ is the Poisson excursion kernel in $D$ between the endpoints $x_{a_j}$ and $x_{b_j}$ of the $j$:th chord. 
The quantity $\potential{D}{\alpha}$ is lower-bounded and its minimum exists by~\cite[Proposition~3.13]{Peltola-Wang:LDP} 
--- and therefore, the multichordal Loewner energy is non-negative and attains the minimum zero.
Let us mention that minimizers of the energy have interesting connections to geometry and mathematical physics;  see~\cite{ABKM:Pole_dynamics_and_an_integral_of_motion_for_multiple_SLE0,Peltola-Wang:LDP} for details.

\begin{restatable}{thm}{multichordalLDP}\label{thm:multichordal-LDP}
Fix an $n$-link pattern $\alpha$. 
The family $(\mcMeasure{\alpha}{\kappa})_{\kappa > 0}$ of laws of the multichordal $\SLE[\kappa]$ curves 
satisfy an LDP on $(\Hpaths[\alpha], \dHpaths[\alpha])$
with good rate function $\mcenergy{D}{\alpha}$ defined by~\eqref{eqn:multichordal-energy-definition}. 
\end{restatable}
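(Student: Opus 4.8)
The plan is to deduce \Cref{thm:multichordal-LDP} from the single-chord LDP (the chordal case of \Cref{thm:radial-LDP}) by a change-of-measure argument, following the strategy of~\cite{Peltola-Wang:LDP} but now carried out in the finer topology of the metric $\dHpaths[\alpha]$. By conformal invariance we may take $D=\bH$ with $x_1,\dots,x_{2n}\in\bR$. The first step is the LDP for the \emph{independent} product measures: applying the chordal case of \Cref{thm:radial-LDP} to each factor $\SLEmeasure{D;x_{a_j},x_{b_j}}{\kappa}$ on $\Hpaths(D;x_{a_j},x_{b_j})$, and invoking the standard fact that an independent product of families satisfying LDPs with good rate functions satisfies the product LDP with the summed rate function (see, e.g.,~\cite{Dembo-Zeitouni:Large_deviations_techniques_and_applications}), we obtain that $(\mcIndependent{\alpha}{\kappa})_{\kappa>0}$ satisfies an LDP on $\prod_{j=1}^n\Hpaths(D;x_{a_j},x_{b_j})$ --- with respect to the product of the metrics $\dHpaths$ --- with good rate function $I_0(\vec\gamma):=\sum_{j=1}^n\lenergy{D}{x_{a_j},x_{b_j}}(\gamma_j)$.

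The second step incorporates the Radon--Nikodym tilt~\eqref{eq:Radon-Nikodym_derivative} via a Varadhan-type (Laplace-principle) argument. Since $\tfrac{\kappa\,c(\kappa)}{2}=\tfrac14(3\kappa-8)(6-\kappa)\to-12$ as $\kappa\to0+$, the tilting exponent converges, $\Phi_\kappa(\vec\gamma)\to\Phi_0(\vec\gamma):=-12\,\loopterm{D}(\vec\gamma)$, and on every sublevel set of $\vec\gamma\mapsto I_0(\vec\gamma)+12\,\loopterm{D}(\vec\gamma)$ the loop term $\loopterm{D}$ is bounded (using that it is bounded below, see~\cite[Proposition~3.13]{Peltola-Wang:LDP}), whence $\sup|\Phi_\kappa-\Phi_0|=O(\kappa)$ there; a standard truncation argument then shows that $(\Phi_\kappa)$ may be replaced by $\Phi_0$ up to exponentially negligible error. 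The remaining ingredients are topology-independent or transfer immediately: the continuity of $\loopterm{D}$ on $\Hpaths[\alpha]$ and its divergence $\loopterm{D}(\vec\gamma^{(m)})\to+\infty$ as $\vec\gamma^{(m)}$ approaches a configuration with intersecting chords are established in~\cite{Peltola-Wang:LDP} for the Hausdorff metric and carry over to $\dHpaths[\alpha]$ since convergence in $\dHpaths[\alpha]$ implies Hausdorff convergence of the traces (and a set closed in the coarser topology stays closed in the finer one, so the relevant semicontinuities only improve); and $\Phi_\kappa$ is bounded above uniformly in $\kappa$ near $0$ (as $\tfrac{\kappa c(\kappa)}{2}<0$ for $\kappa<8/3$ and $\loopterm{D}$ is bounded below), so the exponential-moment hypothesis of Varadhan's lemma holds trivially. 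Combining Step~1 with Varadhan's lemma then yields an LDP for $(\mcMeasure{\alpha}{\kappa})_{\kappa>0}$ on $\prod_{j=1}^n\Hpaths(D;x_{a_j},x_{b_j})$ with good rate function
\begin{align*}
\tilde I(\vec\gamma)\;:=\;\big(I_0(\vec\gamma)-\Phi_0(\vec\gamma)\big)-\inf_{\vec\gamma'}\big(I_0(\vec\gamma')-\Phi_0(\vec\gamma')\big).
\end{align*}

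The third step is bookkeeping. Writing $I_0-\Phi_0=\sum_{j=1}^n\lenergy{D}{x_{a_j},x_{b_j}}(\gamma_j)+12\,\loopterm{D}(\vec\gamma)$ and comparing with~\eqref{eqn:multichordal-energy-definition} --- noting that the $\vec\gamma$-independent term built from the Poisson excursion kernels cancels upon subtracting the infimum --- identifies $\tilde I$ with $\mcenergy{D}{\alpha}$. Finally, $\Hpaths[\alpha]$ is open in $\prod_{j=1}^n\Hpaths(D;x_{a_j},x_{b_j})$ (pairwise disjoint compact traces lie at positive distance, a property preserved under small perturbations in $\dHpaths[\alpha]$), for $\kappa<8/3$ the measure $\mcMeasure{\alpha}{\kappa}$ is supported on $\Hpaths[\alpha]$ (the density in~\eqref{eq:Radon-Nikodym_derivative} vanishes where $\loopterm{D}=+\infty$), and $\mcenergy{D}{\alpha}\equiv+\infty$ off $\Hpaths[\alpha]$; hence the LDP on the product space restricts to the asserted LDP on $(\Hpaths[\alpha],\dHpaths[\alpha])$.

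I expect essentially all the work of this proof to be imported: from \Cref{thm:radial-LDP} (the single-chord LDP in the strong topology) and from~\cite{Peltola-Wang:LDP} (the Brownian-loop-measure estimates and the lower bound on $\potential{D}{\alpha}$). The one place where the stronger topology genuinely interacts with the change of measure is the regularity of $\loopterm{D}$ in Step~2, and there it causes no new difficulty precisely because $\dHpaths[\alpha]$ is finer than the Hausdorff metric on traces; the genuine difficulty of \Cref{thm:multichordal-LDP} therefore lies entirely in its single-chord input \Cref{thm:radial-LDP}.
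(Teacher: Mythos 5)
Your overall route is the same as the paper's: the paper proves \Cref{thm:multichordal-LDP} by observing that the tilting argument of \cite[Theorem~5.11]{Peltola-Wang:LDP} applies verbatim once one supplies (i) the $n=1$ case, i.e.\ the chordal case of \Cref{thm:radial-LDP}, (ii) continuity of $\loopterm{D}$ in the finer topology (immediate, since $\dHpaths[\alpha]$ refines the Hausdorff metric), and (iii) goodness of $\mcenergy{D}{\alpha}$ with respect to this topology. Your Steps 2--3 are essentially a re-derivation of that tilting argument with exactly these inputs, and your handling of the restriction to the open set $\Hpaths[\alpha]$ (rate function infinite off it, tilted measures supported on it) and of the cancellation of the excursion-kernel constants is sound; your identification of the rate function matches the convention $12\,(\potential{D}{\alpha}-\inf\potential{D}{\alpha})$ of \cite{Peltola-Wang:LDP}, so double-check the prefactor against~\eqref{eqn:multichordal-energy-definition}.

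There is, however, one genuine gap, located in Step~1 and propagating into Step~2: the LDP for the independent product $(\mcIndependent{\alpha}{\kappa})_{\kappa>0}$ --- specifically the upper bound for \emph{general closed subsets} of the product, which you also need when applying Varadhan's lemma (both for closed sets intersected with $\{\loopterm{D}\le R\}$ and for the normalization $\mathbb E^\kappa_\alpha[\exp(\tfrac1\kappa\Phi_\kappa)]$) --- is not an unconditional ``standard fact.'' In \cite{Dembo-Zeitouni:Large_deviations_techniques_and_applications} the product LDP requires exponential tightness of the factor families (or Polish spaces, where goodness of the rate function yields tightness). In \cite{Peltola-Wang:LDP} this is free because curves are viewed in the compact space $(\Hcompsets,\dHcompsets)$; in the present topology $(\Hpaths(D;x_{a_j},x_{b_j}),\dHpaths)$ the space is neither compact nor obviously Polish, and the paper establishes exponential tightness only up to finite capacity (\Cref{thm:exponential-tightness}) --- the infinite-time single-chord LDP of \Cref{thm:radial-LDP} is obtained via projective limits and the return estimates of \Cref{thm:return-estimates}, not via tightness. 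So you must either assemble infinite-time exponential tightness of the single-chord laws on $(\Hpaths,\dHpaths)$ (this is doable: intersect the finite-time compacta with the complements of the return events $\creturnevent{n}{N_n}$, and use that a hull of half-plane capacity $T$ has diameter of order $\sqrt T$ to control the behaviour of the curves near the target point, so that the resulting sets are compact and their complements have the required exponentially small probability), or you must organize the closed-set upper bounds differently, as is effectively done in \cite{Peltola-Wang:LDP}. In particular, your closing claim that the finer topology causes ``no new difficulty'' beyond the single-chord input is too quick: this product/tightness step is precisely where the non-compactness of the strong topology has to be paid for.
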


Alternatively, we may also consider the metric $\dHpathsUnparamMulti[\alpha]$ induced from the product metric on $\prod_{j=1}^n\unparampathsH(D;x_{a_j},x_{b_j})$ defined in~\eqref{eq:dXpathsUnparam}, to get a multichordal version of \Cref{cor:unparam_LDP}.

\begin{restatable}{cor}{multichordalLDPunparam}\label{thm:multichordal-LDP-unparam}
Fix an $n$-link pattern $\alpha$. 
The family $(\mcMeasure{\alpha}{\kappa})_{\kappa > 0}$ of laws of the multichordal $\SLE[\kappa]$ curves 
satisfy an LDP on $(\unparampathsHmulti[\alpha],\dHpathsUnparamMulti[\alpha])$ with good rate function $\mcenergy{D}{\alpha}$. 
\end{restatable}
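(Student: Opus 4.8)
The plan is to deduce \Cref{thm:multichordal-LDP-unparam} from \Cref{thm:multichordal-LDP} via the contraction principle, in exact parallel with the way \Cref{cor:unparam_LDP} is obtained from \Cref{thm:radial-LDP}. Consider the ``forget the parameterization'' map $\Pi \colon \Hpaths[\alpha] \to \unparampathsHmulti[\alpha]$ that sends a tuple $\vec\gamma = (\gamma_1,\dots,\gamma_n)$ of capacity-parameterized chords to the tuple of their reparameterization classes. Since the multichordal $\SLE[\kappa]$ measure $\mcMeasure{\alpha}{\kappa}$ is by construction a law on $\Hpaths[\alpha]$, its pushforward $\mcMeasure{\alpha}{\kappa} \circ \Pi^{-1}$ is precisely the law of the multichordal $\SLE[\kappa]$ curves regarded modulo reparameterization, so it suffices to transport the LDP of \Cref{thm:multichordal-LDP} along $\Pi$.

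First I would record that $\Pi$ is continuous: componentwise, the metric $\dHpathsUnparam$ is obtained from $\dHpaths$ by additionally infimizing over increasing homeomorphisms $\sigma$, so choosing $\sigma = \id$ shows each component map is $1$-Lipschitz, and hence so is $\Pi$ for the product metrics $\dHpaths[\alpha]$ and $\dHpathsUnparamMulti[\alpha]$. Next I would note that $\Pi$ is a bijection: every simple chord from $x_{a_j}$ to $x_{b_j}$ admits a unique half-plane-capacity (Loewner) parameterization, pairwise disjointness is a property of the traces, and thus each point of $\unparampathsHmulti[\alpha]$ has exactly one preimage in $\Hpaths[\alpha]$. (We will not need $\Pi^{-1}$ to be continuous.) Then the contraction principle (cf.\ \Cref{thm:contraction-principle}), applied to the good rate function $\mcenergy{D}{\alpha}$ from \Cref{thm:multichordal-LDP} and the continuous map $\Pi$ into the metric (hence Hausdorff) space $(\unparampathsHmulti[\alpha],\dHpathsUnparamMulti[\alpha])$, yields that $(\mcMeasure{\alpha}{\kappa} \circ \Pi^{-1})_{\kappa > 0}$ satisfies an LDP there with good rate function $\vec\gamma^\circ \mapsto \inf\{\mcenergy{D}{\alpha}(\vec\gamma) : \Pi(\vec\gamma) = \vec\gamma^\circ\}$.

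It remains to identify this rate function with $\mcenergy{D}{\alpha}$. Since the fibers of $\Pi$ are singletons, the infimum above is just the value of $\mcenergy{D}{\alpha}$ at the unique capacity-parameterized representative of $\vec\gamma^\circ$; equivalently, $\mcenergy{D}{\alpha}$ is invariant under reparameterization, because each ingredient of the Loewner potential $\potential{D}{\alpha}$ --- the component Loewner energies $\lenergy{D}{x_{a_j},x_{b_j}}$, the Brownian loop term $\loopterm{D}$, and the Poisson excursion kernels $\Pkernel{D}{x_{a_j},x_{b_j}}$ --- depends only on the traces $\gamma_1,\dots,\gamma_n \subset \overline{D}$, and so does $\inf_{\vec\gamma}\potential{D}{\alpha}$; thus $\mcenergy{D}{\alpha}$ descends to $\unparampathsHmulti[\alpha]$ and coincides with the rate function produced above. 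There is no serious obstacle here: the substantive work is already contained in \Cref{thm:multichordal-LDP}, and the only points requiring (routine) care are the continuity of $\Pi$ and the reparameterization-invariance of the rate function.
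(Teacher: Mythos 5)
Your proof is correct and is precisely the alternative argument the paper itself records for \Cref{thm:multichordal-LDP-unparam}: start from \Cref{thm:multichordal-LDP} and push forward along the (contractive, hence continuous) parameterization-forgetting map via the contraction principle (\Cref{thm:contraction-principle}\ref{item:Contraction_principle}), identifying the rate function through the reparameterization-invariance of $\mcenergy{D}{\alpha}$ (equivalently, the uniqueness of the capacity parameterization). The paper's first-listed route instead reruns the proof of \Cref{thm:multichordal-LDP} with \Cref{cor:unparam_LDP} as the single-curve input, but the two arguments are interchangeable here and yours requires no additional work.
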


Note that for small enough $\kappa$, the multichordal $\SLE[\kappa]$ curves are simple 
and the endpoints of the chords are disjoint by assumption. 
Therefore, the Radon-Nikodym derivative~\eqref{eq:Radon-Nikodym_derivative} is finite and nonzero.
The total mass of the multichordal $\SLE[\kappa]$ measure, however, 
decays to zero exponentially fast as $\kappa\to 0+$, see~\cite[Equation~(1.10)~\&~Theorem~1.5]{Peltola-Wang:LDP}.

In contrast, if one considers multiradial $\SLE[\kappa]$ as in~\cite{Healey-Lawler:N_sided_radial_SLE, AHP:Large_deviations_of_DBM_and_multiradial_SLE}, 
the radial curves are targeted to a common endpoint (usually chosen to be the origin in $\bD$).
For this reason, the techniques from~\cite{Peltola-Wang:LDP} do not apply to even establish an LDP in 
the weakest reasonable sense, say with the Hausdorff metric.  
In~\cite[Theorem~1.14]{AHP:Large_deviations_of_DBM_and_multiradial_SLE} with Healey, 
we consider multiple radial $\SLE[0+]$ in the Hausdorff metric in finite time, 
directly building on an LDP for their Loewner driving process, Dyson Brownian motion.
We accomplish there the first two steps of the diagram in \Cref{fig:steps},
after replacing Schilder's theorem by its generalization for Dyson Brownian motion~\cite[Theorem~1.3]{AHP:Large_deviations_of_DBM_and_multiradial_SLE}; 
proving that multiradial finite-energy curves are simple and mutually nonintersecting~\cite[Theorem~1.15]{AHP:Large_deviations_of_DBM_and_multiradial_SLE}  
(which in the radial case is more complicated for several curves than for one curve); 
and slightly refining the topological results needed for the second step in \Cref{fig:steps} \cite[Section~3.4]{AHP:Large_deviations_of_DBM_and_multiradial_SLE}.

The results in~\cite{AHP:Large_deviations_of_DBM_and_multiradial_SLE} 
do not extend to infinite time without significant additional estimates and renormalization. 
To address this, one may take inspiration from the proofs of 
\Cref{thm:multichordal-LDP} and \Cref{thm:radial-LDP}. 
Namely, instead of trying to establish the third step in~\Cref{fig:steps}, using \Cref{thm:radial-LDP} and absolute continuity of 
the truncated multiradial $\SLE[\kappa]$ measure with respect to independent truncated radial $\SLE[\kappa]$ curves, 
one may apply Varadhan's lemma~\cite[Lemmas~4.3.4~\&~4.3.6]{Dembo-Zeitouni:Large_deviations_techniques_and_applications}  
to prove an LDP for multiradial $\SLE[\kappa]$ as parameterized curves up to finite time. 
This stronger LDP can then be pushed to the projective limit via the fourth step in~\Cref{fig:steps}. 
To improve the topology to full parameterized curves, 
it thus remains to establish escape probability estimates for multiradial $\SLE[\kappa]$ analogous to \Cref{thm:return-estimates}.
We establish this in the follow-up article~\cite{AHP:In_prep}.

\subsection*{Acknowledgments}

O.A.~is supported by the Academy of Finland grant number 340461 ``Conformal invariance in planar random geometry.''
This material is part of a project that has received funding from the  European Research Council (ERC) under the European Union's Horizon 2020 research and innovation programme (101042460): 
ERC Starting grant ``Interplay of structures in conformal and universal random geometry'' (ISCoURaGe) 
and from the Academy of Finland grant number 340461 ``Conformal invariance in planar random geometry.''
E.P.~is also supported by 
the Academy of Finland Centre of Excellence Programme grant number 346315 ``Finnish centre of excellence in Randomness and STructures (FiRST)'' 
and by the Deutsche Forschungsgemeinschaft (DFG, German Research Foundation) under Germany's Excellence Strategy EXC-2047/1-390685813, 
as well as the DFG collaborative research centre ``The mathematics of emerging effects'' CRC-1060/211504053.

We would like to most cordially thank Laurie Field and Greg Lawler for useful discussions concerning the return estimates 
in~\cite{Lawler:Continuity_of_radial_and_two-sided_radial_SLE_at_the_terminal_point, Field-Lawler:Escape_probability_and_transience_for_SLE}
(which we strengthen slightly in \Cref{thm:return-estimates}). 
We thank Vlad Guskov, Vivian~O.~Healey, Ellen Krusell, and Steffen Rohde for interesting discussions. 
O.A.~also wishes to thank Luis Brummet for useful discussions and pointing out \cite[Proposition~6.5]{Berestrycki-Norris:Lecture_notes_on_SLE}, 
which was helpful in the proof of \Cref{thm:topology-independent-of-parameterization}.

Last but not least, we wish to thank two anonymous referees for their careful reading of an earlier version of this article and their very insightful comments and suggestions. 

\section{Preliminaries} 
\label{sec:preliminaries}
In this section, we fix notation and recall basic results. 
Throughout this article, we shall write 
$(\LLoewnerTransform, \Ddomain, \dX, \Xpaths, \Khulls, \Ccompsets, \beginpoint, \targetpoint, \Xmeasure{}) \in \{
(\rLoewnerTransform, \bD, \dD, \Dpaths, \Dhulls, \Dcompsets, 1, 0, \rDmeas{}), \; (\cLoewnerTransform, \bH, \dH, \Hpaths, \Hhulls, \Hcompsets, 0, \infty, \cHmeas{})\}$; 
the~purpose of this is to be able to combine the radial and chordal cases, when possible.

\subsection{Topological spaces}

Throughout the present article, we will encounter several related topological spaces. 
In this section, we gather these spaces and relations between them.

\begin{itemize}[leftmargin=*]
\item \textbf{Metric spaces and the complex plane.} 
For a metric space $(X, d)$, a real number $r > 0$, a point $x \in X$, and a subset $A \subset X$, 
we write\footnote{The super/subscript $d$ will be omitted in the case $(X,d) = (\bC,|\cdot|)$ with 
the Euclidean norm $|\cdot|$.}
\begin{align*}
\Bmetric[d]{x}{r} =\;& \{y \in X \cond d(x,y) < r\},  & \Bmetric[d]{A}{r} =\;& \bigcup_{x \in A} \Bmetric[d]{x}{r},\\
\dist[d](x, A) =\;& \inf_{y \in A} d(x,y) , 
& \diam[d](A) =\;& \sup_{y, z \in A}d(y,z).
\end{align*}
For $A \subset \bC$ and $x \in A$, we denote by $\component{A}{x}$ the connected component of $A$ containing $x$.

\item \textbf{Simply connected domains.} 
Throughout, we consider a simply connected domain $D \subsetneq \bC$ 
with two distinct marked points $x \in \partial D$ and $y \in \overline{D}$
with smooth boundary at their neighborhoods.
We take the \emph{unit disk} $\bD = \{z \in \bC \cond |z| < 1\}$ 
and the \emph{upper half-plane} $\bH = \{z \in \bC \cond \Im(z) > 0\}$ as our two preferred reference domains.

We also fix throughout a uniformizing map $\varphi_D \colon D \to \bH$ (resp.~$\Phi_D \colon D \to \bD$) 
sending $x \mapsto 0$ and $y \mapsto \infty$
(resp.~$x \mapsto 1$ and $y \mapsto 0$). 
We endow $(D,d_D)$ with the metric\footnote{The metric $d_D$ also naturally extends to the closure of $D$ (taken in terms of prime ends, if necessary).} 
\begin{align*}
d_D(z, w) := |\Phi_D(z)-\Phi_D(w)| , \qquad  z, w \in D ,
\end{align*}
induced by the pullback of the Euclidean metric from $\bD$. 
Note that the induced topology does not depend on the choice of $\Phi_D$, though the metric does; 
it coincides with the topology induced by the spherical metric. 
We also fix $\Phi_\bD$ as the identity map, 
and $\Phi_\bH$ as the unique M\"obius map from $\bH$ onto $\bD$ 
sending $0 \mapsto 1$ and $2\ii \mapsto 0$, 
thereby fixing the metrics $\dD$ and $\dH$. 
The choice of $\Phi_{\bH}$ is motivated by \Cref{sec:exponential-tightness}.

\item \textbf{Functions.} 
We denote by $\funs{T]}$ (resp.~$\funs{\infty)}$) the sets of \emph{continuous functions} 
$f \colon [0,T] \to \bR$ (resp.~$f \colon [0,\infty) \to \bR$)  
such that $f_0 = 0$. 
We endow the former space $(\funs{T]},\normUniform{\cdot})$ with the induced topology of the \emph{uniform norm} 
$\normUniform{f-\tilde{f}} := \underset{t \in [0,T]}{\sup} \, |f_t - \tilde{f}_t|$.

\item \textbf{Compact sets and Hausdorff metric.} 
For $\Ddomain \in \{\bD, \bH\}$, 
we denote by $\Ccompsets \in \{\Dcompsets, \Hcompsets\}$ the set of non-empty compact subsets of $\overline \Ddomain$. 
We endow $\Ccompsets$ with the \emph{Hausdorff metric} 
\begin{align*}
\dCcompsets(F, \tilde{F}) = \inf\Big\{\delta > 0 \;\Big|\; F \subset \Bmetric[\dDdomain]{\tilde{F}}{\delta} \textnormal{ and } \tilde{F} \subset \Bmetric[\dDdomain]{F}{\delta}\Big\} , \qquad F, \tilde{F} \in \Ccompsets .
\end{align*}
Since $(\overline \Ddomain,\dDdomain)$ is compact, $(\Ccompsets,\dCcompsets)$ is also compact with the Hausdorff metric. 

\item \textbf{Hulls.} 
Set $K \subset \overline \Ddomain$ is called a ($\Ddomain$-)\emph{hull} if $K$ is compact in the Euclidean metric, 
$\Ddomain\smallsetminus K$ is simply connected, and $K \cap \overline \Ddomain = K$. 
We denote by $\Khulls \in \{\Dhulls, \Hhulls\}$ the set of $\Ddomain$-hulls containing the initial point $\beginpoint$ and not containing 
the target point $\targetpoint$ (with $\beginpoint = 1 \in \partial \bD$ and $\targetpoint= 0 \in \bD$ in the radial case, and $\beginpoint = 0 \in \partial \bH$ and $\targetpoint=\infty \in \partial\bH$ in the chordal case). 
The \emph{mapping-out function} of the hull $K \in \Khulls$ is defined as the unique conformal map 
$g_K \colon \Ddomain\smallsetminus K \to \Ddomain$ satisfying
\begin{align*}
\begin{cases}
g_K(0) = 0 \; \textnormal{ and } \; g_K'(0) > 0, & \textnormal{if } (\Ddomain,\Khulls) = (\bD, \Dhulls),\\[.5em]
g_K(z)-z \; \xrightarrow[|z|\to\infty]{} \; 0, & \textnormal{if } (\Ddomain, \Khulls) = (\bH, \Hhulls).
\end{cases}
\end{align*} 
For a cutoff-time $T \ge 0$, we define the \emph{truncated hulls}
\begin{align*}
\Dhulls[T] := \{K \in \Dhulls \cond \crad{\bD\smallsetminus K}{0} = e^{-T}\}, \qquad 
\Hhulls[T] := \{K \in \Hhulls \cond \hcap(K) = 2T\},
\end{align*}
where $\crad{\bD\smallsetminus K}{0} = 1/g'_K(0)$ is the \emph{conformal radius} of $\bD\smallsetminus K$ seen from $0$, 
and $\hcap(K) := \smash{\underset{|z| \to \infty}{\lim} \, z(g_K(z)-z)}$ is the \emph{half-plane capacity} of $K$.
The quantity 
\begin{align*}
\rcap(K) := - \log \crad{\bD\smallsetminus K}{0} 
\end{align*}
is also called the \emph{logarithmic capacity} of $K$.
Note that capacity is an increasing function: 
\begin{align}\label{eq:monotonicity_capacity}
\begin{split}
K, \tilde K \in \Dhulls \textnormal{ and } K \subset \tilde K \quad 
\implies \; & \quad \rcap(K) \leq \rcap(\tilde K) . 
\\
K, \tilde K \in \Hhulls \textnormal{ and } K \subset \tilde K \quad 
\implies \; & \quad \hcap (K) \leq \hcap (\tilde K) . 
\end{split}
\end{align}

\item \textbf{Filling.} 
For a compact set $F \subset \overline \Ddomain\smallsetminus\{\targetpoint\}$, 
we write $\filling{F} = \overline{\Ddomain\smallsetminus\component{\Ddomain\smallsetminus F}{\targetpoint}}$ for the \emph{filling} of $F$. 
In the case where $\filling{F} \in \Khulls$, we also write $g_F := g_{\filling{F}}$.

\item \textbf{Carath\'eodory topology.} 
We endow the space $\{g_K \cond K \in \Khulls\}$ of maps 
with the (\emph{locally uniform})
topology of \emph{uniform convergence on compact( subset)s} of $\overline\Ddomain$,  
and the set $\Khulls$ of hulls with the topology $\cT_{\textnormal{Car}}$ induced by the map $K \mapsto g_K$. 
Convergence of hulls in this topology is characterized by
the Carath\'eodory kernel convergence of the complementary domains
(see, e.g.,~\cite[Theorem~1.8]{Pommerenke:Boundary_behaviour_of_conformal_maps}): 
\end{itemize}

\begin{lemA}\label{thm:caratheodory-kernel-convergence}
Let $(K^n)_{n \in \bN}$ and $K$ be hulls in $\Khulls$. 
Then, as $n \to \infty$, 
the maps $g_{K^n}$ converge to $g_K$ uniformly on compacts if and only if
the domains $D^n := \Ddomain\smallsetminus K^n$ 
converge to $D^\infty := \Ddomain\smallsetminus K$ in the Carath\'eodory kernel sense: 
for any subsequence $(D^{n_j})_{j \in \bN}$, we have 
\begin{align*}
D^\infty = \bigcup_{j\ge 1} \component{\bigcap_{i \ge j} D^{n_j}}{\targetpoint}.
\end{align*}
\end{lemA}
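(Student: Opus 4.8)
The plan is to identify this statement with the classical Carath\'eodory kernel theorem \cite[Theorem~1.8]{Pommerenke:Boundary_behaviour_of_conformal_maps} after a routine reduction. By definition, $\cT_{\textnormal{Car}}$ is the topology on $\Khulls$ pulled back by $K \mapsto g_K$ from locally uniform convergence on $\overline{\Ddomain}$, so the claim is that $g_{K^n} \to g_K$ locally uniformly if and only if $D^n \to D^\infty$ in the Carath\'eodory kernel sense. First I would pass to the inverse maps $f_K := g_K^{-1}$, which are conformal maps of the \emph{fixed} reference domain $\Ddomain$ onto the \emph{varying} complementary domains $D := \Ddomain\smallsetminus K$, normalized at the target point $\targetpoint$. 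In the radial case $\targetpoint = 0$ is an interior point of $\bD$ and $f_K \colon \bD \to \bD\smallsetminus K$ satisfies $f_K(0) = 0$ and $f_K'(0) = 1/g_K'(0) > 0$, which is exactly the normalization of the kernel theorem as stated in \cite[Theorem~1.8]{Pommerenke:Boundary_behaviour_of_conformal_maps}. In the chordal case $\targetpoint = \infty$ is a boundary point of $\bH$, $f_K \colon \bH \to \bH\smallsetminus K$ carries the hydrodynamic normalization $f_K(z) = z + o(1)$ as $z \to \infty$, and one invokes instead the boundary-point analog of the kernel theorem for domains with a marked boundary point (equally classical). Thus both cases are instances of the same situation: a sequence of conformal maps of $\Ddomain$ onto the domains $D^n$, with a common normalization at $\targetpoint$.

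The kernel theorem then gives directly that $f_{K^n} \to f_K$ locally uniformly on $\Ddomain$ if and only if the domains $D^n$ converge to $D^\infty$ in the Carath\'eodory kernel sense seen from $\targetpoint$ --- which, unwinding the definition of the kernel as the $\targetpoint$-component of $\bigcup_{j}\bigcap_{i \geq j} D^{n_i}$ along every subsequence, is exactly the condition displayed in the statement. (The degenerate ``kernel $= \{\targetpoint\}$'' alternative cannot occur here, as each $D^n$ is a simply connected domain attached to $\targetpoint$ and the prospective limit $f_K = g_K^{-1}$ is univalent onto the open domain $D^\infty$.) It then remains to replace ``$f_{K^n} \to f_K$ locally uniformly on $\Ddomain$'' by ``$g_{K^n} \to g_K$ locally uniformly on $\overline{\Ddomain}$'', using the standard fact that a locally uniformly convergent sequence of univalent maps with univalent limit has locally uniformly convergent inverses on the limiting image: in one direction, Hurwitz's theorem yields $f_{K^n}(w) \to f_K(w)$ for each $w$, upgraded to local uniformity by normal families; in the other, for a compact $L \subset D^\infty$ one sets $L' := g_K(L)$, a compact subset of $\Ddomain$, picks an open $U$ with $L' \subset U \subset \overline{U} \subset \Ddomain$, and deduces from uniform convergence $f_{K^n} \to f_K$ on $\overline{U}$ that $L \subset f_{K^n}(U) \subset D^n$ for all large $n$ and that $g_{K^n} \to g_K$ uniformly on $L$; the extension of this convergence up to the analytic boundary arc $\partial\Ddomain\smallsetminus K$ follows by Schwarz reflection. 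Together with the kernel theorem this establishes the equivalence.

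I expect all the genuine content to sit in the classical kernel theorem, so the work is entirely bookkeeping of two kinds: (i) passing between the maps $g_{K^n}$, whose domains $\Ddomain\smallsetminus K^n$ vary with $n$, and their inverses $f_{K^n}$, which share the fixed domain $\Ddomain$ --- this is where one must phrase ``locally uniform convergence'' correctly for a sequence of maps with moving domains; and (ii) carrying the radial (interior basepoint $0 \in \bD$) and chordal (boundary basepoint $\infty \in \partial\bH$) cases in parallel under the single displayed kernel condition. Point (ii) is the subtler of the two: no conformal automorphism of $\bH$ sends the boundary target $\infty$ to an interior point, so the chordal case genuinely requires the boundary-point version of Carath\'eodory convergence rather than a reduction to the radial one, and this is the part I would be most careful about.
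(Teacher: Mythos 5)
Your proposal matches the paper's treatment: the paper offers no proof of this lemma at all, simply recording it as the classical Carath\'eodory kernel theorem with the citation~\cite[Theorem~1.8]{Pommerenke:Boundary_behaviour_of_conformal_maps}, and your reduction (pass to the inverses $f_K = g_K^{-1}$ normalized at $\targetpoint$, apply the kernel theorem, then transfer local uniform convergence back to the $g_{K^n}$ via Hurwitz/normal families and the compact-exhaustion argument) is exactly the standard way to unpack that citation. The one soft spot you flag yourself --- the ``boundary-point analog'' needed in the chordal case --- is usually resolved not by a separate boundary version but by Schwarz reflection across $\bR$: the hydrodynamically normalized maps extend conformally to the reflected domains, $\infty$ becomes an interior fixed point with $g_K(z)=z+o(1)$, and the interior-point kernel theorem then applies verbatim.
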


\begin{itemize}[leftmargin=*]
\item For $T \in [0,\infty)$ 
we denote by $\Khulls[T]$ the hulls of capacity $T$, so that $\smash{\Khulls = \underset{T \ge 0}{\bigcup} \Khulls[T]}$, and set
\begin{align*}
\Khulls[\infty] := \bigg\{\overline{\bigcup_{T \ge 0} K_T} \condBig K_T \in \Khulls[T] \textnormal{ such that } K_s \subset K_t \textnormal{ for } s \le t  \bigg\} 
\end{align*}
(we do not need to consider any topology for this set). 
In particular, $\Khulls[\infty] \cap \Khulls = \emptyset$.
\end{itemize}

Although we have $\Khulls \subset \Ccompsets$, the topologies of $\Khulls$ and $\Ccompsets$ are not comparable. 
Instead:
\begin{lem}\label{thm:caratheodory-vs-hausdorff-convergence}
Suppose that a sequence $(K^n)_{n \in \bN}$ of $\Ddomain$-hulls converges to $K \in \Khulls$ in the Carath\'eodory sense and to $\tilde K \in \Ccompsets$ in the Hausdorff metric. 
Then, we have 
$\Ddomain\smallsetminus K = \component{\Ddomain\smallsetminus \tilde K}{\targetpoint}$. 
In particular, we have $K\cap \Ddomain = \tilde K\cap \Ddomain$ if and only if $\Ddomain\smallsetminus \tilde K$ is connected.
\end{lem}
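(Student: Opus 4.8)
The plan is to unpack both convergence hypotheses at the level of the defining maps and sets, and to reconcile them using \Cref{thm:caratheodory-kernel-convergence}. First I would observe that Carathéodory convergence $K^n \to K$ means $g_{K^n} \to g_K$ locally uniformly, which by \Cref{thm:caratheodory-kernel-convergence} is equivalent to $\Ddomain \smallsetminus K^n \to \Ddomain \smallsetminus K$ in the kernel sense with respect to $\targetpoint$; in particular $\Ddomain \smallsetminus K$ is exactly the connected component containing $\targetpoint$ of $\liminf_n (\Ddomain \smallsetminus K^n)$ (along every subsequence). Meanwhile, Hausdorff convergence $K^n \to \tilde K$ in $\Ccompsets$ gives control of the sets themselves: any point of $\tilde K$ is a limit of points of $K^n$, and any point not in $\tilde K$ has a Euclidean neighborhood disjoint from $K^n$ for $n$ large. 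The goal is the identity of open sets $\Ddomain \smallsetminus K = \component{\Ddomain \smallsetminus \tilde K}{\targetpoint}$.

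The argument then splits into two inclusions. For $\component{\Ddomain \smallsetminus \tilde K}{\targetpoint} \subset \Ddomain \smallsetminus K$: if $z$ lies in the connected component of $\Ddomain \smallsetminus \tilde K$ containing $\targetpoint$, pick a compact connected set $A \subset \Ddomain \smallsetminus \tilde K$ joining $z$ to $\targetpoint$; Hausdorff convergence forces $A \subset \Ddomain \smallsetminus K^n$ for all large $n$ (since $A$ has positive Euclidean distance to $\tilde K$, hence eventually to $K^n$), so $A$ sits inside $\component{\bigcap_{i \ge n}(\Ddomain \smallsetminus K^i)}{\targetpoint}$, and by the kernel characterization $A \subset \Ddomain \smallsetminus K$, in particular $z \notin K$. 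For the reverse inclusion $\Ddomain \smallsetminus K \subset \component{\Ddomain \smallsetminus \tilde K}{\targetpoint}$: since $\Ddomain \smallsetminus K$ is connected and contains $\targetpoint$, it suffices to show $\Ddomain \smallsetminus K \subset \Ddomain \smallsetminus \tilde K$, i.e.\ $\tilde K \cap \Ddomain \subset K$. Here I would use that $g_{K^n} \to g_K$ locally uniformly on $\Ddomain \smallsetminus K$ together with the fact that the $g_{K^n}$ are the uniformizing maps: a point $z \in \tilde K \cap \Ddomain$ is approached by points $z^n \in K^n$, and any nearby point $w \in \Ddomain \smallsetminus K$ would have $g_{K^n}(w)$ staying at bounded distance from $\partial \Ddomain$ is contradicted because $w$ being close to $z^n \in K^n$ forces $g_{K^n}(w)$ toward $\partial \Ddomain$ (boundary behaviour of the mapping-out function); alternatively, and more cleanly, invoke the kernel characterization directly: if $z \in \Ddomain \smallsetminus K$ then $z$ lies in $\component{\bigcap_{i \ge j}(\Ddomain \smallsetminus K^i)}{\targetpoint}$ for some $j$, so $z \notin K^i$ for all $i \ge j$, and then $\dist(z, K^i) > 0$ uniformly is not automatic, but $z$ together with a path to $\targetpoint$ inside $\Ddomain \smallsetminus K$ stays off $K^i$ eventually, and Hausdorff convergence upgrades this to $z \notin \tilde K$. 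Combining the two inclusions yields the first assertion.

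For the ``in particular'' clause: from $\Ddomain \smallsetminus K = \component{\Ddomain \smallsetminus \tilde K}{\targetpoint}$ we get $K \cap \Ddomain = \Ddomain \smallsetminus \component{\Ddomain \smallsetminus \tilde K}{\targetpoint}$, whereas $\tilde K \cap \Ddomain = \Ddomain \smallsetminus (\Ddomain \smallsetminus \tilde K)$. These two coincide precisely when $\component{\Ddomain \smallsetminus \tilde K}{\targetpoint} = \Ddomain \smallsetminus \tilde K$, i.e.\ when the open set $\Ddomain \smallsetminus \tilde K$ has only one connected component, which is the stated criterion. (One should note $\tilde K$ may genuinely differ from $K$ on the boundary $\partial \Ddomain$ and may contain extra ``filled-in'' pieces in $\Ddomain$ when $\Ddomain \smallsetminus \tilde K$ is disconnected — this is exactly why the topologies are not comparable.)

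The main obstacle I anticipate is the reverse inclusion, i.e.\ ruling out that $\tilde K$ has interior points in $\Ddomain$ that escape $K$. The kernel convergence only controls the $\targetpoint$-component of the $\liminf$, so a priori a ``bubble'' could form: points that are in every $K^n$ eventually but whose complementary pocket is pinched off from $\targetpoint$. The resolution is that such a bubble, being a limit (in Hausdorff sense) of subsets of the $K^n$, is itself separated from $\targetpoint$ by $\tilde K$, hence does not lie in $\component{\Ddomain \smallsetminus \tilde K}{\targetpoint}$ — so consistency is maintained rather than contradicted, and the inclusion $\Ddomain \smallsetminus K \subset \component{\Ddomain \smallsetminus \tilde K}{\targetpoint}$ is what actually needs the careful path-connectedness argument above, using that $\Ddomain \smallsetminus K$ is connected by definition of a hull. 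I would make sure to state explicitly which properties of hulls (simple connectivity of the complement, compactness in the Euclidean metric) are being used, and to handle the prime-end subtleties on $\partial \Ddomain$ only insofar as the statement concerns intersections with the open domain $\Ddomain$, where no such subtlety arises.
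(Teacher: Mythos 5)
Your overall architecture is the right one, and is essentially the argument behind \cite[Lemma~2.3]{Peltola-Wang:LDP} that the paper invokes: one inclusion via a connected compact set joining $z$ to $\targetpoint$ inside $\Ddomain\smallsetminus\tilde K$ plus the kernel identity of \Cref{thm:caratheodory-kernel-convergence}, the other via $(\Ddomain\smallsetminus K)\cap\tilde K=\emptyset$ together with connectedness of $\Ddomain\smallsetminus K$; the ``in particular'' step is also fine. The genuine gap is in your reverse inclusion $\Ddomain\smallsetminus K\subset\component{\Ddomain\smallsetminus\tilde K}{\targetpoint}$. To conclude $z\notin\tilde K$ for $z\in\Ddomain\smallsetminus K$ from Hausdorff convergence you need $\liminf_n \dist(z,K^n)>0$, but the set-theoretic kernel identity you rely on only yields $z\notin K^i$ for all large $i$ --- exactly the non-uniformity you flag yourself --- and nothing in it prevents points $z_i\in K^i$ with $z_i\to z$ (each fixed point of the kernel can be eventually avoided while the hulls send ever thinner tentacles toward $z$), in which case $z\in\tilde K$ and the inclusion fails. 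Neither of your two bridges closes this: the path argument gives only disjointness of a fixed set from $K^i$, not a distance bound, so ``Hausdorff convergence upgrades this to $z\notin\tilde K$'' is a non sequitur; and the claim that $w$ close to $z^n\in K^n$ forces $g_{K^n}(w)$ toward $\partial\Ddomain$ does not follow from Koebe alone, since $\dist(w,\partial(\Ddomain\smallsetminus K^n))\asymp (1-|g_{K^n}(w)|)\,|f_n'(g_{K^n}(w))|$, so one also needs a lower bound on $|f_n'|$ near $g_K(z)$ --- i.e.\ precisely the locally uniform convergence of the inverse maps that you never invoke. The missing (standard) ingredient is that Carath\'eodory convergence gives more than the pointwise identity: every compact subset of $\Ddomain\smallsetminus K$ lies in $\Ddomain\smallsetminus K^n$ for all large $n$ (this is built into locally uniform convergence of $g_{K^n}$ on compacts of the kernel, or follows from $f_n\to f$ locally uniformly). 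With it the step is immediate: take $\overline{B}(z,r)\subset\Ddomain\smallsetminus K$, get $K^n\cap\overline{B}(z,r)=\emptyset$ eventually, hence $z\notin\tilde K$, and connectedness finishes.

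Two smaller points you should make explicit. First, the statement tacitly requires $\targetpoint\notin\tilde K$; in the radial case this follows from convergence of the conformal radii plus the Koebe quarter theorem, which keeps a fixed ball around $0$ disjoint from all $K^n$. Second, in the chordal case your first inclusion's ``compact connected $A$ joining $z$ to $\targetpoint=\infty$ at positive distance from $\tilde K$'' breaks down when pieces of $K^n$ escape to infinity, so that $\infty\in\tilde K$ in the metric $\dH$: an unbounded path then has zero $\dH$-distance to $\tilde K$, and far-away components of $K^n$ may meet it. This is repaired by truncating the path at a radius $R$ with $K\subset B(0,R)$ (the truncated piece is compact and at positive $\dH$-distance from $\tilde K$, hence eventually disjoint from all $K^i$) and reattaching it to the kernel through a point $w$ with $|w|=2R$, which already lies in $\component{\bigcap_{i\ge j}(\Ddomain\smallsetminus K^i)}{\targetpoint}$ for some $j$ by the kernel identity.
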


\begin{proof}
This follows by the same proof as~\cite[Lemma~2.3]{Peltola-Wang:LDP}.
\end{proof}

\begin{itemize}[leftmargin=*]
\item \textbf{Parameterized curves.} 
For $T \in [0,+\infty]$, we denote by $\Xpaths[T] \subset \Khulls[T]$ hulls which are also simple curves; 
by a ``simple curve in $\Ddomain$,'' we refer to an injective curve in $\overline{\Ddomain}$ that may only touch $\partial \Ddomain$ at its endpoints. 
We parameterize curves by capacity, so that $\gamma[0,t] \in \Xpaths[t]$ for all $t \geq 0$; in particular, $\gamma(0) = \beginpoint$. 
We will consider the following 
metrics\footnote{Note that for unbounded domains $\Ddomain$, the quantity $\dXpathsEucl$ may take infinite values.}
on the space $\smash{\Xpaths := \underset{T \in [0,+\infty]}{\bigcup} \, \Xpaths[T]}$:
\begin{align} \label{eq:dXpaths}
\begin{split}
\dXpaths(\gamma, \tilde{\gamma}) := \;& \sup_{t \ge 0} \dDdomain(\gamma(t \wedge T), \, \tilde{\gamma}(t \wedge \tilde{T})), \\
\dXpathsEucl(\gamma, \tilde \gamma) := \;& \sup_{t \ge 0} |\gamma(t \wedge T) - \tilde \gamma(t \wedge \tilde T)|,
\end{split} && \textnormal{where }\gamma \in \Xpaths[T], \, \tilde{\gamma} \in \Xpaths[\tilde{T}] .
\end{align}
We denote by $(\Xpathscl,\dXpaths)$ the metric closure of $(\Xpaths,\dXpaths)$, and write, for each $T \in [0, +\infty]$,
\begin{align*}
\Xpaths[<T] := \underset{t \in [0,T)}{\bigcup} \, \Xpaths[t], \qquad \textnormal{and} \qquad
\XTpathscl := \underset{t \in [0,T)}{\bigcup} \, \Xpathscl[t] ,
\end{align*}
where the closure $\Xpathscl[t]$ of $\Xpaths[t]$ under the metric $\dXpaths$ comprises curves, but not necessarily simple.
Therefore, also the metric $\dXpathsEucl$ extends naturally to the curve space $\Xpathscl$.
In~particular, $\Xfinpathscl$ coincides with the set of curves in $\Xpathscl$ with finite capacity. 
The map $\gamma \mapsto W_T$ sending a curve $\gamma \in \Xpathscl[T]$ 
to the image $W_T := g_T(\gamma(T))$ of its tip under its mapping-out function $g_T := g_{\gamma[0,T]}$ is continuous
from $(\Xfinpathscl,\dXpaths)$ to $(\partial\Ddomain,\dX)$ 
(this can be seen, for example, similarly as in the proof of~\cite[Theorem~6.2]{Kemppainen:SLE_book}).

\item \textbf{Unparameterized curves.} 
We will also consider the space $(\unparampaths(\Ddomain), \dXpathsUnparam)$
of continuous simple unparameterized curves in $\Ddomain$ with metric 
\begin{align}\label{eq:dXpathsUnparam}
\dXpathsUnparam(\gamma, \eta ) := \inf_{\sigma} \; \sup_{t \in I} \; \dDdomain(\gamma(t),\eta(\sigma(t)))
, \qquad \gamma, \eta  \in \unparampaths(\Ddomain) ,
\end{align}
for $\gamma \colon I \to \overline{\Ddomain}$ and $\eta \colon J \to \overline{\Ddomain}$ 
with infimum over all increasing homeomorphisms $\sigma \colon I \to J$.
Note that we can also regard $\dXpathsUnparam$ as a metric on $\Xfinpathscl$ by forgetting the parameterization.
\end{itemize} 

In fact, the choice of metric for the curves up to finite time is not that important:

\begin{restatable}{lem}{topology}\label{thm:topology-independent-of-parameterization}
The metrics $\dXpaths$, $\dXpathsEucl$, and $\dXpathsUnparam$ induce the same topology on 
$\smash{{\Xfinpathscl}= \underset{T < \infty}{\bigcup} \, \Xpathscl[T]}$. 
\end{restatable}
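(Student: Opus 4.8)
The plan is to prove that the identity map on $\Xfinpathscl$ is a homeomorphism between each pair of the three metric topologies; since all three are metrizable it suffices to compare convergent sequences, and by transitivity we only need to compare $\dXpaths$ with $\dXpathsEucl$ and $\dXpaths$ with $\dXpathsUnparam$. The recurring tool I would record first is a \emph{normalization fact}: every $\gamma\in\Xfinpathscl$ still satisfies $\hcap(\filling{\gamma[0,t]})=2t$ (chordal) resp. $\rcap(\filling{\gamma[0,t]})=t$ (radial) for all $t$ up to its total time. This holds on $\Xpaths$ by definition of the capacity parameterization, and it passes to the metric closure because a $\dXpaths$-convergent sequence of simple curves, restricted to $[0,t]$, has images in a common compact set, hence its hulls form a precompact family in the Carath\'eodory topology; any Carath\'eodory subsequential limit agrees with the filling of the (Hausdorff) limit curve by \Cref{thm:caratheodory-vs-hausdorff-convergence}, so the full sequence converges in the Carath\'eodory sense, and $\hcap$ (resp.\ $\rcap$) is continuous there.

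\emph{Comparing $\dXpaths$ and $\dXpathsEucl$.} In the radial case $\Phi_\bD=\id$, so $\dDdomain$ is literally the Euclidean metric on $\bD$ and the two metrics coincide. In the chordal case, fix $\gamma\in\Xfinpathscl$; its image is a compact subset of $\bC$, hence contained in some half-disk $M_R:=\overline{B}_R(0)\cap\overline{\bH}$ on which the M\"obius map $\Phi_\bH$ is bi-Lipschitz onto its image (its pole lies in the lower half-plane, and $\Phi_\bH(M_R)$ is a compact subset of $\overline\bD$ avoiding $\Phi_\bH(\infty)$). If $\gamma_n\to\gamma$ in $\dXpaths$ or in $\dXpathsEucl$, the stopped curves converge uniformly, so the $\gamma_n$ eventually lie in a common $M_{R'}$ with $R'<\infty$; the bi-Lipschitz bound on $M_{R'}$ then transfers convergence in either direction.

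\emph{Comparing $\dXpaths$ and $\dXpathsUnparam$.} The crux is the following sub-claim: if $\gamma_n,\gamma\in\Xfinpathscl$ have total times $T_n,T$ and if $\delta_n:=\sup_{t\in[0,T_n]}\dDdomain(\gamma_n(t),\gamma(\sigma_n(t)))\to 0$ for some increasing homeomorphisms $\sigma_n\colon[0,T_n]\to[0,T]$, then $T_n\to T$ and $\sup_t|\sigma_n(t)-t|\to 0$. Granting this, the unparameterized-to-parameterized direction follows: one has $\dDdomain(\gamma_n(t),\gamma(t\wedge T))\le\delta_n+\omega_\gamma(|\sigma_n(t)-t\wedge T|)$ for $t\le T_n$ (with $\omega_\gamma$ the modulus of continuity of $\gamma$ on $[0,T]$), and the contribution of $t>T_n$ is handled similarly using $\sigma_n(T_n)=T$ and $T_n\to T$, whence $\dXpaths(\gamma_n,\gamma)\to 0$. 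Conversely, $\dXpaths$-convergence gives $T_n\to T$ by the Carath\'eodory/capacity observation above; taking the linear reparameterizations $\sigma_n(t)=(T/T_n)t$ (for $n$ large; the case $T=0$ is immediate), one checks $\sup_t|t\wedge T-\sigma_n(t)|\to 0$ elementarily, and then $\dXpathsUnparam(\gamma_n,\gamma)\le\sup_t\dDdomain(\gamma_n(t),\gamma(\sigma_n(t)))\le\dXpaths(\gamma_n,\gamma)+\omega_\gamma(\textnormal{small})\to 0$. To prove the sub-claim itself: the curves converge as sets, so restricting to each window $[0,t]$, the hulls $\gamma_n[0,t]$ converge in the Carath\'eodory sense to $\filling{\gamma[0,s_\infty]}$ whenever $\sigma_n(t)\to s_\infty$ along a subsequence; continuity of capacity then forces $t=\rcap(\gamma_n[0,t])\to\rcap(\filling{\gamma[0,s_\infty]})=s_\infty$, so $\sigma_n(t)\to t$ for every $t$, and $T_n=\sigma_n^{-1}(T)$-type bookkeeping (or the case $t=T$) gives $T_n\to T$; finally, since $\sigma_n$ and the identity are increasing and the limit $t\mapsto t$ is continuous, a Dini/P\'olya argument upgrades pointwise convergence of $\sigma_n$ to uniform convergence.

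The step I expect to be the main obstacle is precisely the continuity of the capacity parameterization packaged in the sub-claim --- i.e.\ $T_n\to T$ and $\sigma_n\to\id$. Its delicacy is that capacity is continuous only in the Carath\'eodory topology, so one must first argue that Hausdorff/unparameterized convergence of simple curves and of their $\dXpaths$-limits forces Carath\'eodory convergence of the complementary domains (to the filling), using precompactness of bounded-capacity hulls together with \Cref{thm:caratheodory-kernel-convergence} and \Cref{thm:caratheodory-vs-hausdorff-convergence}; this is the point at which input of the type in~\cite[Proposition~6.5]{Berestrycki-Norris:Lecture_notes_on_SLE} enters, and where the radial case must be treated separately from the chordal one. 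Everything else --- the bi-Lipschitz comparison on half-disks, the Dini/P\'olya upgrade, the linear reparameterization estimate, and the terminal-time bookkeeping --- is routine.
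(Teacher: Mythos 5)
Your proposal is correct in substance but follows a genuinely different route from the paper for the key step. The paper also reduces everything to controlling the time-change $u_\gamma=\sup_t|\sigma(t)-t|$, but it does so \emph{quantitatively} and locally uniformly: it first bounds $\dXpathsEucl(\gamma,\eta)\le 5\delta+2\,\osc(\eta,u_\gamma)+\osc(\eta,3u_\gamma)$ on the ball $\Bmetric[\dXpathsUnparam]{\eta}{\delta}$, and then shows $u_\gamma\to 0$ \emph{uniformly} over that ball via capacity-of-neighborhood estimates --- a Koebe distortion argument in the radial case and precisely \cite[Proposition~6.5]{Berestrycki-Norris:Lecture_notes_on_SLE} (the $\hcap$ of a $\delta$-neighborhood of a hull) in the chordal case --- so no compactness or Carath\'eodory-limit arguments are needed at all. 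You instead argue sequentially and qualitatively: capacity is treated as a Carath\'eodory-continuous functional, giving pointwise convergence $\sigma_n\to\id$ and $T_n\to T$, upgraded to uniform convergence by a P\'olya-type monotonicity argument; this suffices since all three spaces are metric, and your ``normalization fact'' that curves in the closure remain capacity-parameterized is indeed needed (the paper uses it implicitly too). The one step you should spell out is the passage from Hausdorff convergence of the curve images $\gamma_n[0,t]\to\gamma[0,s_\infty]$ to Carath\'eodory convergence of the fillings towards $\filling{\gamma[0,s_\infty]}$: \Cref{thm:caratheodory-vs-hausdorff-convergence} applies to the hulls $\filling{\gamma_n[0,t]}$ and \emph{their own} Hausdorff subsequential limit $\tilde K$, which a priori only contains $\gamma[0,s_\infty]$, and since closure curves need not be simple the ``in particular'' clause of that lemma does not apply directly; you need the short path-avoidance argument showing that every point of $\component{\Ddomain\smallsetminus\gamma[0,s_\infty]}{\targetpoint}$ eventually lies outside $\filling{\gamma_n[0,t]}$, so that $\component{\Ddomain\smallsetminus\tilde K}{\targetpoint}=\component{\Ddomain\smallsetminus\gamma[0,s_\infty]}{\targetpoint}$. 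With that filled in your argument is complete; the trade-off is that the paper's route yields an explicit modulus $\delta\mapsto\varepsilon$ uniform on balls, while your compactness route is softer and avoids distortion estimates (in particular, contrary to your closing remark, it does not actually need \cite[Proposition~6.5]{Berestrycki-Norris:Lecture_notes_on_SLE}).
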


We prove \Cref{thm:topology-independent-of-parameterization} in \Cref{app:lemma_topo}.

\subsection{Loewner flows and Schramm-Loewner evolutions}\label{sec:Loewner-flows-and-SLEs}

A locally growing family $(K_t)_{t \ge 0}$ of $\Ddomain$-hulls with 
$K_0 = \{\beginpoint\}$ (having starting point $\beginpoint= 1 \in \partial\bD$ in the radial case and $\beginpoint = 0 \in \partial\bH$ in the chordal case) satisfying $K_t \in \Khulls[t]$ for all $t$ 
admits a unique continuous function $W \in \funs{\infty)}$, the (Loewner) \emph{driving function}, such that 
the mapping-out functions $g_t := g_{K_t} \colon \Ddomain\smallsetminus K_t \to \Ddomain$ 
solve the radial/chordal \emph{Loewner equation} 
\begin{align} \label{eq:Loewner equation}
\begin{split}
\partial_t g_t(z) = \; &
\begin{cases}
g_t(z) \, \dfrac{e^{\ii \omega_t}+g_t(z)}{e^{\ii \omega_t}-g_t(z)}, & \textnormal{if } (\Ddomain, \Khulls) = (\bD, \Dhulls), \; \textnormal{writing } W = \omega ,
\\[1em]
\dfrac{2}{g_t(z) - \lambda_t}, & \textnormal{if } (\Ddomain, \Khulls) = (\bH, \Hhulls), \; \textnormal{writing }  W = \lambda ,
\end{cases} 
\\
g_0(z) = \; & z ,
\end{split}
\end{align}
for times 
\begin{align*}
0 \leq t \leq 
T_z := \; &
\begin{cases}
\inf \Big\{s \ge 0 \colon \displaystyle\liminf_{u \to s-} |e^{\ii \omega_u}-g_u(z)| = 0 \Big\} , & \textnormal{if } (\Ddomain, \Khulls) = (\bD, \Dhulls),
\\[1em]
\inf \Big\{s \ge 0 \colon \displaystyle\liminf_{u \to s-} |g_u(z) - \lambda_u| = 0 \Big\} , & \textnormal{if } (\Ddomain, \Khulls) = (\bH, \Hhulls), 
\end{cases} 
\qquad z \in \Ddomain .
\end{align*}
(See, e.g.,~\cite[Theorem~4.2]{Kemppainen:SLE_book} for the modern definition of local growth, 
and~\cite[Chapter~4]{Kemppainen:SLE_book}~\&~\cite[Chapter~4]{Lawler:Conformally_invariant_processes_in_the_plane} 
for background on Loewner evolutions.)
The family $(g_t)_{t \ge 0}$ of conformal maps is called the \emph{Loewner flow} driven by $W$. 
The hulls can be recovered from the driving function via the 
\emph{Loewner transform}\footnote{Conventions for terming this ``Loewner transform '' or ``Loewner inverse transform'' vary in the literature. 
While the latter would be historically appropriate, we find the former term more intuitive and convenient.} 
$\LLoewnerTransform[T] \colon \funs{T]} \to \Khulls[T]$ as
\begin{align} \label{eq:Loewner transform}
K_T = \LLoewnerTransform[T](W) := \{z \in \overline \Ddomain \cond T_z(W) \le T\} .
\end{align}

The Loewner transform $\LLoewnerTransform[T]$ is a continuous map from $(\funs{T]},\normUniform{\cdot})$ to $\Khulls$ endowed with the Carath\'eodory topology 
(i.e., the topology of uniform convergence on compacts of $\overline\Ddomain$ as in Lemma~\ref{thm:caratheodory-kernel-convergence}), e.g., by~\cite[Proposition~6.1]{Sheffield-Miller:QLE}.
We can also extend it to infinite time:
\begin{align*}
\LLoewnerTransform \colon \funs{\infty)} \to \Khulls[\infty],
\qquad \LLoewnerTransform(W) := \overline{\underset{T \ge 0}{\bigcup} \, \LLoewnerTransform[T](W)} . 
\end{align*}
For $\Ddomain \in \{\bD, \bH\}$, 
we denote $\LLoewnerTransform[] \in \{\rLoewnerTransform[], \cLoewnerTransform[]\}$, respectively. 

The growing family $(K_t)_{t \ge 0}$ of hulls is said to be \emph{generated by a curve} 
$\gamma \in \Xpathscl$ if $K_t = \filling{\gamma[0,t]}$ for every $t \ge 0$. 
We will be primarily interested in hulls generated by simple curves, 
but we also cautiously note that sequences of 
simple curves in $(\Xpaths,\dXpaths)$ may converge to a curve in $\Xpathscl \smallsetminus \Xpaths$, 
or they may not converge to a curve at all --- even if the hulls generated by them would converge in the Carath\'eodory topology.

\begin{remark}\label{rem:continuity-properties-of-loewner-transform} 
Denote by $\funsNodom{\Xpaths}$ the set of continuous functions $W \in \funs{\infty)}$ 
for which the hulls $(\LLoewnerTransform[t](W))_{t \ge 0}$ are generated by a curve. 
When restricted to such driving functions, 
the Loewner transform~\eqref{eq:Loewner transform} is not continuous as a map 
to $(\Xpathscl,\dXpaths)$ 
(see~\cite[Lemma~6.3]{Kemppainen:SLE_book}). 
However, the \emph{Loewner inverse transform} 
$\LLoewnerTransform[T]^{-1}(\filling{\gamma[0,T]}) := W \in \funs[\Xpaths]{T]}$, 
given by the driving function $W$ of the curve $\gamma \in \Xpathscl[T]$, is continuous on $\Xpathscl[T]$. 
Continuity properties of the Loewner transforms are summarized below: 
\begin{center}
\hspace*{-2mm}
\begin{tabular}{cccc}
\multicolumn{4}{c}{Is $\LLoewnerTransform$ continuous?}                                                                                            \\ \hline
\multicolumn{1}{|c|}{}                  & \multicolumn{1}{c|}{$(\Khulls, \cT_{\textnormal{Car}})$} & \multicolumn{1}{c|}{$(\Ccompsets,\dCcompsets)$} & \multicolumn{1}{c|}{$(\Xpathscl,\dXpaths)$} \\ \hline
\multicolumn{1}{|c|}{$(\funsNodom{},\normUniform{\cdot})$}           & \multicolumn{1}{c|}{Yes}         & \multicolumn{1}{c|}{No}             & \multicolumn{1}{c|}{--}           \\ \hline
\multicolumn{1}{|c|}{$(\funsNodom{\Xpaths},\normUniform{\cdot})$} & \multicolumn{1}{c|}{Yes}         & \multicolumn{1}{c|}{No}             & \multicolumn{1}{c|}{No}           \\ \hline
\end{tabular}
\begin{tabular}{ccc}
\multicolumn{3}{c}{Is $\LLoewnerTransform^{-1}$ continuous?}                                              \\ \hline
\multicolumn{1}{|c|}{}              & \multicolumn{1}{c|}{$(\funsNodom{},\normUniform{\cdot})$} & \multicolumn{1}{c|}{$(\funsNodom{\Xpaths},\normUniform{\cdot})$} \\ \hline
\multicolumn{1}{|c|}{$(\Khulls, \cT_{\textnormal{Car}})$}   & \multicolumn{1}{c|}{Yes}      & \multicolumn{1}{c|}{--}                 \\ \hline
\multicolumn{1}{|c|}{$(\Xpathscl,\dXpaths)$} & \multicolumn{1}{c|}{Yes}      & \multicolumn{1}{c|}{Yes}                \\ \hline
\end{tabular} 
\end{center}
In these tables, the domains are in rows, the codomains in the columns, 
and ``--'' indicates that the map is not well-defined. 
For a counterexample showing discontinuity of $\LLoewnerTransform$ with respect to the Hausdorff metric $\dCcompsets$, one can consider driving functions of hulls that converge to a semi-circle in $\Ccompsets$ and to a semi-disk in the Carath\'eodory topology (cf.~\cite[Section~2]{Peltola-Wang:LDP}).
\end{remark}

For $\kappa > 0$, we will denote by $\BMmeasure{\kappa}$ the law for $\sqrt{\kappa}B$, 
where $B$ is the standard Brownian motion. 
The growing Loewner hulls associated to $W = \sqrt{\kappa}B$ are referred to as 
\emph{Schramm-Loewner evolution}, $\SLE[\kappa]$, hulls. 
They are generated by a curve, $\sqrt\kappa B \in \funs[\Xpaths]{\infty)}$, almost surely
\cite[Theorem~5.1]{Rohde-Schramm:Basic_properties_of_SLE}.
Since we are interested in the limit $\kappa \to 0$, 
and we know that the curve is almost surely simple whenever $\kappa \le 4$ 
\cite[Theorem~6.1]{Rohde-Schramm:Basic_properties_of_SLE},  
we shall assume $\kappa \le 4$ throughout, and view the $\SLE[\kappa]$ hulls 
as elements in any of the spaces $\Xpaths$, $\Ccompsets$, or $\Khulls$, interchangeably.
(We shall, however, keep careful track of the topology when needed.)

We denote by $\SLEmeasure{D;x,y}{\kappa}$ the law of an $\SLE[\kappa]$ curve in a simply connected domain $D \subsetneq \bC$ 
between two distinct marked points $x \in \partial D$ and $y \in \overline{D}$ (or prime ends).
By conformal invariance, $\SLEmeasure{D;x,y}{\kappa}$ 
can be pulled back via a conformal map from a preferred domain, e.g., using the 
uniformizing maps $\varphi_D \colon D \to \bH$ or $\Phi_D \colon D \to \bD$ 
onto domains $(\bD; 1, 0)$ in the radial case (abbreviated $\rSLE[\kappa]$) 
and $(\bH; 0, \infty)$ in the chordal case (abbreviated $\cSLE[\kappa]$): 
\begin{align*}
\rDmeas{\kappa} 
:= \SLEmeasure{\bD; 1, 0}{\kappa} := \BMmeasure{\kappa}\circ\rLoewnerTransform^{-1} 
\qquad \textnormal{and} \qquad
\cHmeas{\kappa} 
:= \SLEmeasure{\bH; 0, \infty}{\kappa} := \BMmeasure{\kappa}\circ\cLoewnerTransform^{-1}.
\end{align*}
In particular, the $\cSLE[\kappa]$ curve in $(\bD; 1, -1)$ has law 
$\cDmeas{\kappa} := \cHmeas{\kappa} \circ \Phi_\bH^{-1} = \SLEmeasure{\bD; 1, -1}{\kappa}$,
where the mapping $\Phi_\bH \colon \bH \to \bD$ sends $\Phi_\bH(0) = 1$ and $\Phi_\bH(2\ii) = 0$.

In fact, the chordal and radial $\SLE[\kappa]$ curves on $\bD$ started at $1 \in \partial \bD$ 
can be explicitly compared to each other,
up to a stopping time (see \Cref{thm:RN-derivative-radial-chordal}) --- 
however, the Radon-Nikodym derivative~\eqref{eqn:radial-chordal-RN-derivative} between these 
chordal and radial $\SLE[\kappa]$ measures degenerates when the curve disconnects the target points $0$ and $-1$, or hits one of them.

For $\BMmeasure{\kappa}$-stopping time $\tau$, we denote by $\BMmeasure[\tau]{\kappa}$ the law for $\sqrt{\kappa}B|_{[0,\tau]}$
(more precisely, the process restricted to its completed right-continuous filtration up to time $\tau$). 
We similarly denote by, e.g., $\rDmeasure[\tau]{\kappa}$ and $\cDmeasure[\tau]{\kappa}$ 
the laws of the $\rSLE[\kappa]$ and $\cSLE[\kappa]$ curves restricted to $\Dpaths[\tau]$.
We write
\begin{align*}
\Xpaths[\ge t] := \Xpaths \smallsetminus \Xpaths[<t] = \bigcup_{s \in [t,+\infty]}\Xpaths[s], \qquad t \ge 0 ,
\end{align*}
and denote by $\prXpaths{t} \colon \Xpaths[\ge t] \to \Xpaths[t]$ the projection map $\prXpaths{t}(\gamma) := \gamma[0,t]$. 
Then, the finite-time SLE measures are related to the infinite-time measures through pushforwards as $\SLEmeasure{\Ddomain;t}{\kappa} = \SLEmeasure{\Ddomain}{\kappa}\circ \prXpaths{t}^{-1}$. 
For example, using the notation $\prXpaths{} \in \{\prDpaths{}, \prHpaths{}\}$ in the radial and chordal cases respectively, we have 
$\rDmeasure[t]{\kappa} = \rDmeas{\kappa} \circ \prDpaths{t}^{-1}$ and $\cDmeasure[t]{\kappa} = \cDmeas{\kappa} \circ \prHpaths{t}^{-1}$,
while $\cHmeasure[t]{\kappa} = \cHmeas{\kappa} \circ \prHpaths{t}^{-1}$.

\begin{lemA} \textnormal{(See~\cite[Section~3]{Lawler:Continuity_of_radial_and_two-sided_radial_SLE_at_the_terminal_point}~or~\cite[Proposition~3.6.2]{Lawler:SLE_book_draft}.)}
\label{thm:RN-derivative-radial-chordal}
Fix $T \in (0,\infty)$ and $\delta \in (0,1)$.  
The radial and chordal $\SLE[\kappa]$ probability measures 
$\rDmeas{\kappa}$ and $\cDmeas{\kappa} := \cHmeas{\kappa} \circ \Phi_\bH^{-1}$ are locally absolutely continuous with Radon-Nikodym derivative 
\begin{align}\label{eqn:radial-chordal-RN-derivative}
\frac{\ud \rDmeasure[t]{\kappa}}{\ud \cDmeasure[t]{\kappa}}(\gamma^\kappa)
=\;& \exp\bigg(\frac{6-\kappa}{4\kappa}\bigg((\kappa-4) t + \int_0^t \frac{\ud s}{\sin^2(\rbessel_s)}\bigg)\bigg) 
\big|\sin (\rbessel_t)\big|^{6/\kappa - 1},
\end{align}
for all $t \leq \tau_\delta \wedge T$, 
where $\tau_\delta := \inf\{ t \geq 0 \colon \sin(\rbessel_t) \leq \delta \}$ and \(\rbessel\) is associated to \(\gamma^\kappa\) as \(e^{2\ii \rbessel_t} = \frac{g_t(-1)}{g_t(\gamma^\kappa(t))}\). 
Moreover, under the measure \(\rDmeasure[t]{\kappa}\), the process
$(\rbessel_t)_{t \in [0,\tau_\delta \wedge T]}$ satisfies the radial Bessel type SDE 
\begin{align}\label{eqn:rbessel-SDE}
\ud \rbessel_t = 2 \cot(\rbessel_t) \ud t + \sqrt{\kappa} \ud B_t , 
\qquad \rbessel_0 = \tfrac{\pi}{2},
\end{align}
with $B = (B_t)_{t \ge 0}$ a standard Brownian motion.
\end{lemA}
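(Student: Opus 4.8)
The plan is to realize $\rDmeas{\kappa}$ and $\cDmeas{\kappa}$ through one and the same Loewner flow and to identify the Radon--Nikodym derivative by Girsanov's theorem, in the spirit of the $\SLE$ coordinate changes of Schramm--Wilson and Lawler (see~\cite[Chapter~4]{Lawler:Conformally_invariant_processes_in_the_plane}). By conformal invariance we may work in $\bD$ with interior target $0$, so $\rDmeas{\kappa}$ is the radial $\SLE[\kappa]$ from $1$ to $0$ and $\cDmeas{\kappa}$ is the chordal $\SLE[\kappa]$ from $1$ to $-1$, both viewed in the logarithmic-capacity parameterization seen from $0$. Encode both curves by the radial Loewner flow $(g_t)_{t \ge 0}$ toward $0$ in~\eqref{eq:Loewner equation}, giving each a driving function $e^{\ii \omega_t}$ on $\partial\bD$. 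Under $\rDmeas{\kappa}$ one has $\omega_t = \sqrt{\kappa}\,B_t$ by definition. Under $\cDmeas{\kappa}$, the coordinate-change theorem identifies the chordal curve, seen in this radial frame, with a radial $\SLE[\kappa](\kappa-6)$ whose force point is the moving image $g_t(-1)$ of the second endpoint; letting $e^{\ii \psi_t} := g_t(-1)$ and taking $\rbessel_t$ to be the relative angle between $g_t(-1)$ and the driving point $e^{\ii\omega_t}$, normalized so that $\rbessel_0 = \tfrac{\pi}{2}$ and $\sin(\rbessel_t) = 0$ precisely when the curve disconnects, or hits, one of the targets $0, -1$, the driving function acquires a drift of the form $\ud\omega_t = \sqrt{\kappa}\,\ud\widetilde B_t + \tfrac{6-\kappa}{2}\cot(\rbessel_t)\,\ud t$ for a $\cDmeas{\kappa}$-Brownian motion $\widetilde B$. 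Combining this with the deterministic identity $\ud\psi_t = \cot(\rbessel_t)\,\ud t$ forced by~\eqref{eq:Loewner equation} --- and keeping careful track of the capacity parameterization --- yields that $(\rbessel_t)$ is exactly the radial Bessel-type diffusion~\eqref{eqn:rbessel-SDE} started at $\tfrac{\pi}{2}$.

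Since the two driving functions differ only by a drift proportional to $\cot(\rbessel_t)$, Girsanov's theorem expresses the left-hand side of~\eqref{eqn:radial-chordal-RN-derivative}, on the $\sigma$-algebra generated by the curve up to capacity $t$, as the Dol\'eans--Dade exponential $\exp\big(\int_0^t \theta_s\,\ud\omega_s - \tfrac12\int_0^t \theta_s^2\,\ud\langle\omega\rangle_s\big)$ of the rescaled drift $\theta_s$ (a constant multiple of $\cot(\rbessel_s)$); this is an exponential local martingale under $\cDmeas{\kappa}$, equal to $1$ at $t = 0$ because $\rbessel_0 = \tfrac{\pi}{2}$. To bring it to closed form, apply It\^o's formula to $\log|\sin(\rbessel_t)|$ using~\eqref{eqn:rbessel-SDE}: this rewrites the stochastic integral $\int_0^t \cot(\rbessel_s)\,\ud\omega_s$ in terms of the boundary value $\log|\sin(\rbessel_t)| - \log|\sin(\rbessel_0)| = \log|\sin(\rbessel_t)|$, the accumulated clock $\int_0^t \tfrac{\ud s}{\sin^2(\rbessel_s)}$, and the capacity $t$, after which $\cot^2 = \sin^{-2} - 1$ allows one to collect all terms. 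Bookkeeping the coefficients of $t$, of $\int_0^t \tfrac{\ud s}{\sin^2(\rbessel_s)}$, and of $\log|\sin(\rbessel_t)|$ then reproduces exactly~\eqref{eqn:radial-chordal-RN-derivative}.

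It remains to justify that the change of measure is valid on the stopped interval $[0, \tau_\delta \wedge T]$. There the constraint $\sin(\rbessel_s) > \delta$ confines $\rbessel_s$ to a compact subinterval of $(0,\pi)$, so $\cot(\rbessel_s)$ is bounded, and since $t \le T < \infty$ the factors $\int_0^t \tfrac{\ud s}{\sin^2(\rbessel_s)}$ and $|\sin(\rbessel_t)|^{6/\kappa - 1}$ stay bounded above and below; hence the exponential local martingale, stopped at $\tau_\delta \wedge T$, is bounded and therefore a genuine uniformly integrable martingale, which makes the Girsanov change of measure and the claimed identity of restricted laws rigorous. (Past $\tau_\delta$ the identity necessarily fails: once the curve separates $0$ from $-1$ or swallows one of them, the radial and chordal laws become mutually singular, consistently with the degeneration of~\eqref{eqn:radial-chordal-RN-derivative} as $\sin(\rbessel_t) \to 0$.) The delicate point of this scheme is the first step --- setting up the coordinate change in a common Loewner frame and correctly reconciling the two natural capacity parameterizations of the two curves, so that $\rbessel$ and the Girsanov exponent come out in the normalization stated in~\eqref{eqn:rbessel-SDE} and~\eqref{eqn:radial-chordal-RN-derivative}; the remaining steps are routine It\^o calculus together with the above integrability estimate.
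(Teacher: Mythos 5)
Your strategy --- the Schramm--Wilson coordinate change, a Girsanov comparison of the two driving laws, It\^o's formula applied to $\log|\sin\rbessel_t|$, and a boundedness argument on $[0,\tau_\delta\wedge T]$ --- is the standard way to derive this identity from scratch, and it is genuinely different from what the paper does: the paper treats the formula as a quoted result of Lawler, only explaining the definition $e^{2\ii\rbessel_t} = g_t(-1)/g_t(\gamma^{\kappa}(t))$, flagging the normalization discrepancy with Lawler's convention in a footnote, and verifying the single fact it needs, namely that the right-hand side of \eqref{eqn:radial-chordal-RN-derivative} is bounded away from $0$ and $\infty$ up to $\tau_\delta\wedge T$; your final paragraph is exactly that step and is fine.

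However, the pivotal step of your sketch is wrong as written. You claim that your ingredients identify $(\rbessel_t)$ \emph{under the chordal measure} $\cDmeas{\kappa}$ as the diffusion \eqref{eqn:rbessel-SDE}. First, \eqref{eqn:rbessel-SDE} is the law of $\rbessel$ under the \emph{radial} measure, not under $\cDmeas{\kappa}$: its drift $2\cot(\rbessel_t)$ repels $\rbessel$ from $\{0,\pi\}$ (for $\kappa\le 4$ it never reaches them), whereas under $\cDmeas{\kappa}$ the angle must degenerate, since the chordal curve eventually disconnects $0$ from $-1$; this is also how the paper uses the SDE later, e.g.\ in \Cref{thm:radial-harmonic-measure-gets-small-estimate}, where \emph{radial} probabilities are controlled via \eqref{eqn:rbessel-SDE}. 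Second, plugging in your own formulas --- $2\rbessel_t=\psi_t-\omega_t$, $\ud\psi_t=\cot(\rbessel_t)\ud t$, $\ud\omega_t=\sqrt{\kappa}\,\ud\widetilde B_t+\tfrac{6-\kappa}{2}\cot(\rbessel_t)\ud t$ --- gives, in the paper's speed-one parameterization \eqref{eq:Loewner equation}, $\ud\rbessel_t=\tfrac{\kappa-4}{4}\cot(\rbessel_t)\ud t-\tfrac{\sqrt{\kappa}}{2}\ud\widetilde B_t$ under $\cDmeas{\kappa}$ and $\ud\rbessel_t=\tfrac12\cot(\rbessel_t)\ud t+\tfrac{\sqrt{\kappa}}{2}\ud B_t$ under $\rDmeas{\kappa}$; neither is \eqref{eqn:rbessel-SDE}, which is stated in Lawler's normalization (precisely the issue the paper's footnote alludes to). Since this identification is the input to the Girsanov exponent, and the subsequent ``bookkeeping of the coefficients'' is asserted rather than carried out, the argument does not actually arrive at \eqref{eqn:radial-chordal-RN-derivative}: the ``delicate point'' you defer --- fixing one normalization and reconciling the two capacity parameterizations --- is exactly where it currently breaks. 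To repair it, fix a single normalization, derive the SDE for $\rbessel$ under each of the two measures in that normalization, and only then write the Dol\'eans--Dade exponential and do the It\^o computation; alternatively, do as the paper does and quote Lawler's formula, proving only the boundedness of the density up to $\tau_\delta\wedge T$, which is all that is needed downstream.
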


Note that we use a different normalization for the Loewner equation in the present article compared to~\cite{Lawler:Continuity_of_radial_and_two-sided_radial_SLE_at_the_terminal_point}, 
which can however be related via a direct computation. The local martingale on the right-hand side of~\eqref{eqn:radial-chordal-RN-derivative} 
is uniformly bounded away from zero and infinity up and including the time $\tau_\delta \wedge T$, by constants that depend on $\kappa$, $\delta$, and $T$.

\section{Exponential tightness and finite time LDPs} 
\label{sec:exponential-tightness}
In this section, we consider the $\SLE[\kappa]$ law $\SLEmeasure{D;x,y}{\kappa}$ on $\Xpaths(D; x, y)$ 
restricted to the space of simple capacity-parameterized curves started at $x$ up to a fixed time $T \in (0,\infty)$ 
(so they will not yet reach the target point $y$). 
The key result of this section is \Cref{thm:radial-LDP-finite},  
a~finite-time variant of \Cref{thm:radial-LDP}. 
By conformal invariance, it suffices to prove it for 
the preferred domains $(\bD;1,0)$ and $(\bH;0,\infty)$ in the radial and chordal cases, respectively.

\subsection{Finite-time LDPs}\label{sec:large-deviations-preliminaries}

Note that the $\SLE[\kappa]$ law is a pushforward of the law of a scaled Brownian motion by the Loewner transform~\eqref{eq:Loewner transform}. 
From Schilder's theorem (\Cref{thm:Schilders-theorem}) and 
the contraction principle (\Cref{thm:contraction-principle}\ref{item:Contraction_principle}), 
one can thus guess a finite-time LDP for $\SLE[0+]$. 
However, the Loewner transform is not always continuous 
(as discussed in \Cref{rem:continuity-properties-of-loewner-transform}), which restricts the applicability of 
the contraction principle to the Carath\'eodory topology only.

Our aim is then to incrementally improve the topology in the LDP. 
To this end, we first show that the measures $\SLEmeasure{\Ddomain;x,y}{\kappa}$ (for small $\kappa > 0$) are supported on 
a set where the two topologies agree, and which contains all finite-energy curves (see \Cref{thm:LDP-compact}). 
This does not really improve the topology, however, but rather shows that the choice of the topology does not make a difference. 
To actually improve the topology, 
one can observe that changing the topology of a measure space to a finer one can be seen as a pullback by an inclusion. 
Therefore, we can apply the inverse contraction principle (\Cref{thm:contraction-principle}\ref{item:Inverse_contraction_principle}) 
(or, the generalized contraction principle (\Cref{thm:contraction-principle-generalized})) 
to the inclusion map (see \Cref{thm:radial-LDP-finite}).

\begin{lemA} \textnormal{(See, e.g.,~\cite[Theorems~4.2.3 and~4.2.4]{Dembo-Zeitouni:Large_deviations_techniques_and_applications}.)}
\label{thm:contraction-principle} 
Let $X$ and $Y$ be Hausdorff topological spaces. 
Suppose that the family $(P^\varepsilon)_{\varepsilon>0}$ of probability measures on $X$ satisfies an LDP with good rate function $I \colon X \to [0,+\infty]$. 
\begin{enumerate}[leftmargin=*, label=\textnormal{(\roman*)}]
\item \label{item:Contraction_principle}
\textnormal{(Contraction principle).} 
If $f \colon X \to Y$ is continuous, then the family $(P^\varepsilon\circ f^{-1})_{\varepsilon > 0}$ 
of pushforward measures satisfies an LDP with good rate function $J \colon Y \to [0,+\infty]$, 
\begin{align*}
J(y) := I(f^{-1}\{y\}) 
:= \inf_{x \in f^{-1}\{y\}} I(x) , \qquad y \in Y .
\end{align*}
		
\item \label{item:Inverse_contraction_principle}
\textnormal{(Inverse contraction principle).} 
Let $(Q^\varepsilon)_{\varepsilon > 0}$ be an exponentially tight family of probability measures on $Y$, that is, 
for each $M \in [0,\infty)$, there exists a compact set $\compact = \compact(M) \subset Y$ only depending on $M$ such that
\begin{align*}
\limsup_{\varepsilon \to 0+}\varepsilon \log Q^\varepsilon[Y \smallsetminus \compact] \le -M.
\end{align*}
Suppose $P^\varepsilon = Q^\varepsilon \circ g^{-1}$ for some continuous bijection $g \colon Y \to X$. 
Then, the family $(Q^\varepsilon)_{\varepsilon > 0}$ satisfies an LDP with good rate function $J = I \circ g \colon Y \to [0,+\infty]$.\end{enumerate}
\end{lemA}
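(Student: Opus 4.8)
The statement is the classical contraction principle~\ref{item:Contraction_principle} together with its inverse~\ref{item:Inverse_contraction_principle}, so in the paper one simply invokes~\cite[Theorems~4.2.3 and~4.2.4]{Dembo-Zeitouni:Large_deviations_techniques_and_applications}; here is how I would carry out the arguments. For~\ref{item:Contraction_principle} I would first verify that $J$ is a good rate function by establishing the set identity $J^{-1}[0,M] = f\big(I^{-1}[0,M]\big)$: the inclusion ``$\supseteq$'' is immediate from $J(f(x)) \le I(x)$, while for ``$\subseteq$'' one takes $x_n \in f^{-1}\{y\}$ with $I(x_n) \to J(y) \le M$, uses compactness of $I^{-1}[0,M+1]$ to extract a limit $x_n \to x$ along a subsequence, and concludes $I(x) \le J(y)$ and $f(x) = y$ from lower semicontinuity of $I$ and continuity of $f$. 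Thus $J^{-1}[0,M]$ is the continuous image of a compact set, hence compact, and in particular $J$ is lower semicontinuous. The LDP bounds then transfer verbatim: for open $G$ the preimage $f^{-1}(G)$ is open, for closed $F$ the preimage $f^{-1}(F)$ is closed, and $I(f^{-1}(G)) = J(G)$, $I(f^{-1}(F)) = J(F)$, so applying the LDP for $(P^\varepsilon)$ to $f^{-1}(G)$ and $f^{-1}(F)$ gives the claim for $(P^\varepsilon \circ f^{-1})$.

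For~\ref{item:Inverse_contraction_principle} the upper bound on compact sets comes for free: if $K \subseteq Y$ is compact, then $g(K)$ is compact, hence closed in $X$, so $\limsup_{\varepsilon \to 0+} \varepsilon \log Q^\varepsilon[K] = \limsup_{\varepsilon \to 0+} \varepsilon \log P^\varepsilon[g(K)] \le -I(g(K)) = -J(K)$. Exponential tightness then upgrades this to an arbitrary closed set $F$ in the usual way: writing $Q^\varepsilon[F] \le Q^\varepsilon[F \cap \compact(M)] + Q^\varepsilon[Y \smallsetminus \compact(M)]$, taking $\limsup$ yields $\le \max\{-J(F), -M\}$, and one lets $M \to \infty$. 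The remaining, and genuinely delicate, step is the lower bound for open $G \subseteq Y$, since $g^{-1}$ need not be continuous and so $g(G)$ need not be open. Here I would fix $y_0 \in G$ with $J(y_0) < \infty$, pick $\alpha > J(y_0)$ together with a compact $K = \compact(\alpha)$ as in the exponential tightness hypothesis, and set $K' := K \cup \{y_0\}$, which is compact and on which $g$ restricts to a homeomorphism onto $g(K')$ (a continuous bijection from a compact space to a Hausdorff space). Then $g(G \cap K')$ is relatively open in $g(K')$ and contains $x_0 := g(y_0)$, so $g(G \cap K') = V \cap g(K')$ for some open $V \subseteq X$ with $x_0 \in V$, and since $g$ is a bijection, $Y \smallsetminus K' = g^{-1}(X \smallsetminus g(K'))$; therefore
\[
Q^\varepsilon[G] \;\ge\; Q^\varepsilon[G \cap K'] \;=\; P^\varepsilon[g(G \cap K')] \;\ge\; P^\varepsilon[V] - Q^\varepsilon[Y \smallsetminus K'] .
\]
Since $\limsup_{\varepsilon \to 0+} \varepsilon \log Q^\varepsilon[Y \smallsetminus K'] \le \limsup_{\varepsilon \to 0+} \varepsilon \log Q^\varepsilon[Y \smallsetminus K] \le -\alpha < -J(y_0) \le -I(V) \le \liminf_{\varepsilon \to 0+} \varepsilon \log P^\varepsilon[V]$, the subtracted term is exponentially negligible, so $\liminf_{\varepsilon \to 0+} \varepsilon \log Q^\varepsilon[G] \ge \liminf_{\varepsilon \to 0+} \varepsilon \log P^\varepsilon[V] \ge -J(y_0)$; taking the supremum over admissible $y_0 \in G$ gives the lower bound. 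Goodness of $J = I \circ g$ (which is lower semicontinuous as a composition of a lower semicontinuous map with a continuous one) then follows from exponential tightness together with the just-established LDP, e.g.~\cite[Lemma~1.2.18]{Dembo-Zeitouni:Large_deviations_techniques_and_applications}.

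The main obstacle is exactly the open-set lower bound in~\ref{item:Inverse_contraction_principle}: everything else reduces to formal manipulation of preimages under continuous maps, but bounding $Q^\varepsilon[G]$ from below forces one to localize to a compact set on which $g$ is bicontinuous, and it is precisely the exponential tightness of $(Q^\varepsilon)$ that renders the resulting ``boundary error'' $Q^\varepsilon[Y \smallsetminus K']$ exponentially irrelevant --- which is why this hypothesis cannot be dropped.
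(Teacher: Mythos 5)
The paper gives no proof of this lemma at all---it is quoted as a known result with a pointer to~\cite[Theorems~4.2.3 and~4.2.4]{Dembo-Zeitouni:Large_deviations_techniques_and_applications}---and your argument is correct and essentially reconstructs that textbook proof: the level-set identity $J^{-1}[0,M]=f(I^{-1}[0,M])$ for the contraction principle, the compact-to-closed upgrade via exponential tightness, the localization to a compact set $K'=\compact(\alpha)\cup\{y_0\}$ on which $g$ is bicontinuous for the open-set lower bound, and goodness of $J$ from exponential tightness plus the lower bound. The only cosmetic points are that in a general (possibly non-metrizable) Hausdorff space the compactness extraction in part~(i) should use subnets rather than subsequences, and that $J(y)$ should be read as $+\infty$ when $f^{-1}\{y\}=\emptyset$; neither affects the argument.
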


Let us now consider the $\SLE[\kappa]$ law restricted to a finite time window $[0,T]$. 
As before, we use the notation 
$\Xpaths[T] = \Dpaths[T]$ in the radial case
and $\Xpaths[T] = \Hpaths[T]$ in the chordal case, and we denote by 
$\Xmeasure[T]{\kappa} = \rDmeasure[T]{\kappa}$ and $\Xmeasure[T]{\kappa} = \cHmeasure[T]{\kappa}$ 
the restricted laws of $\rSLE[\kappa]$ and $\cSLE[\kappa]$ curves, respectively. 
We consider the $T$-\emph{truncated Loewner energy}  
defined as the Dirichlet energy $\BMenergy[T](W)$ of the associated driving function $W$
(defined in~\eqref{eq:Dirichlet energy}) up to time $T$: 
\begin{align}\label{eqn:truncated-energy-definition}
\begin{split}
\Xlenergy[T](\gamma) 
:= \; & 
\begin{cases}
\lenergy{\bH}{0, \infty}(\gamma[0,T]) \\
\lenergy{\bD}{1, 0}(\gamma[0,T])
\end{cases}
\hspace*{-2mm} = \; 
\begin{cases}
\cenergy[T](\gamma) , & \textnormal{ chordal case},\\
\renergy[T](\gamma) , & \textnormal{ radial case},
\end{cases}
\\ := \; & 
\BMenergy[T](W) 
\end{split}
\end{align}
where $\gamma$ is either a chordal curve in $(\bH; 0, \infty)$ parameterized by half-plane capacity and with Loewner driving function $W( = \lambda)$, 
or a radial curve in $(\bD ;1, 0)$
parameterized by logarithmic capacity and with Loewner driving function $W( = \omega)$. 
For $T= \infty$, we set $\Xlenergy[](\gamma) = \Xlenergy[\infty](\gamma) := \lenergy{\Ddomain}{\beginpoint,\targetpoint}(\gamma)$, 
where $(\Ddomain;\beginpoint,\targetpoint) \in \{ (\bH;0,\infty), (\bD;1,0) \}$ in these two cases.
The Loewner energy in a general domain $(D;x,y)$ is defined conformally invariantly via~\eqref{eqn:radial-energy-definition}.

Thanks to~\Cref{thm:caratheodory-vs-hausdorff-convergence}, from Schilder's theorem (\Cref{thm:Schilders-theorem}) one can derive 
a finite-time LDP for $\SLE[0+]$ as a measure on $(\Ccompsets,\dCcompsets)$ with the Hausdorff metric.
For convenience, we formulate it for the curve space $\Xpaths[T] \subset \Ccompsets$ with the subspace topology inherited from $\Ccompsets$.

\begin{lem}\label{thm:LDP-compact}
Fix $T \in (0,\infty)$. 
The family $(\Xmeasure[T]{\kappa})_{\kappa > 0}$ of laws of the $\SLE[\kappa]$ curves viewed as probability measures on 
$(\Xpaths[T],\dCcompsets)$ 
satisfy an LDP with good rate function $\Xlenergy[T]$. 
\end{lem}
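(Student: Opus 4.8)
The plan is to adapt the strategy of~\cite[Proposition~5.3]{Peltola-Wang:LDP}, treating the radial and chordal cases simultaneously via the combined notation of~\Cref{sec:preliminaries}; by conformal invariance it suffices to argue in the preferred domain $(\Ddomain;\beginpoint,\targetpoint)$. \emph{Step 1: a coarse LDP and identification of its rate function.} Since $\Xmeasure[T]{\kappa}$ is the image of the law of $\sqrt\kappa B|_{[0,T]}$ under the Loewner transform, and $\LLoewnerTransform[T]\colon(\funs{T]},\normUniform{\cdot})\to(\Khulls,\cT_{\textnormal{Car}})$ is continuous (recalled in~\Cref{sec:Loewner-flows-and-SLEs}), Schilder's theorem (\Cref{thm:Schilders-theorem}) and the contraction principle (\Cref{thm:contraction-principle}\ref{item:Contraction_principle}) give an LDP for $(\Xmeasure[T]{\kappa})_{\kappa>0}$ on $(\Khulls,\cT_{\textnormal{Car}})$ with good rate function $\widehat I_T(K):=\inf\{\BMenergy[T](W)\colon \LLoewnerTransform[T](W)=K\}$. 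I would then identify $\widehat I_T$: if $\widehat I_T(K)<\infty$, some finite-Dirichlet-energy driver $W$ generates $K$, so by (the finite-time form of)~\Cref{thm:finite-energy-is-simple} the hulls $(\LLoewnerTransform[t](W))_{t\le T}$ are generated by a simple curve $\gamma$; hence $K=\gamma[0,T]\in\Xpaths[T]$, and since a capacity-parameterized simple curve has a unique driving function, the infimum is attained and $\widehat I_T(K)=\BMenergy[T](W)=\Xlenergy[T](\gamma)$ by~\eqref{eqn:truncated-energy-definition}. Conversely, every $\gamma\in\Xpaths[T]$ has a unique driver $W$ with $\LLoewnerTransform[T](W)=\gamma[0,T]$, so $\widehat I_T(\gamma)=\Xlenergy[T](\gamma)$ (allowing the value $+\infty$). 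Thus $\widehat I_T$ equals $\Xlenergy[T]$ on $\Xpaths[T]$ and $+\infty$ on $\Khulls\smallsetminus\Xpaths[T]$.

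\emph{Step 2: the two topologies agree on energy sublevel sets.} For $M\ge0$ set $L_M:=\{\widehat I_T\le M\}=\{\gamma\in\Xpaths[T]\colon\Xlenergy[T](\gamma)\le M\}$, which is $\cT_{\textnormal{Car}}$-compact because $\widehat I_T$ is a good rate function. The claim is that $\dCcompsets$ and $\cT_{\textnormal{Car}}$ induce the same topology on $L_M$, so that $L_M$ is also $\dCcompsets$-compact. It suffices to show that $\id\colon(L_M,\cT_{\textnormal{Car}})\to(L_M,\dCcompsets)$ is continuous, as it is then a homeomorphism (compact source, Hausdorff target). Given $K^n\to K$ in $(L_M,\cT_{\textnormal{Car}})$, it is enough to check that every $\dCcompsets$-limit point $\tilde K$ of $(K^n)$ equals $K$: by~\Cref{thm:caratheodory-vs-hausdorff-convergence} we have $\Ddomain\smallsetminus K=\component{\Ddomain\smallsetminus\tilde K}{\targetpoint}$; on the other hand the drivers $W^n$ of the $K^n$ obey $\BMenergy[T](W^n)\le M$, hence are equi-H\"older-$1/2$ by Cauchy--Schwarz, so along a subsequence they converge uniformly and the corresponding simple curves $\gamma^n$ converge uniformly to a simple curve $\hat\gamma$ --- using that finite-energy Loewner chains up to time $T$ are slits depending continuously, in the uniform curve metric, on their drivers (cf.~\cite{Wang:Energy_of_deterministic_Loewner_chain,Friz-Shekhar:Finite_energy_drivers} and~\Cref{thm:finite-energy-is-simple}). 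Then $K^n=\gamma^n[0,T]\to\hat\gamma[0,T]$ in both $\dCcompsets$ and $\cT_{\textnormal{Car}}$ along that subsequence, whence $\hat\gamma[0,T]=K$ and $\tilde K=K$. This step --- the analogue of the topological core of~\cite[Proposition~5.3]{Peltola-Wang:LDP}, now also needed in the radial setting --- is the main obstacle: the delicate input is the uniform-over-$L_M$ modulus-of-continuity estimate for finite-energy traces, which I would extract from the H\"older-$1/2$ bound on the drivers together with Marshall--Rohde/Lind-type Loewner-chain estimates (as used in~\cite{Wang:Energy_of_deterministic_Loewner_chain}), or alternatively deduce from the finite-time results of~\cite{Krusell:in_prep}.

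\emph{Step 3: transferring the LDP.} The measures $\Xmeasure[T]{\kappa}$ are supported on $\Xpaths[T]$ (the $\SLE[\kappa]$ curve is simple for $\kappa\le4$, by~\cite[Theorem~6.1]{Rohde-Schramm:Basic_properties_of_SLE}), and since $\{\widehat I_T\ge M\}\supseteq\Khulls\smallsetminus L_M$ is $\cT_{\textnormal{Car}}$-closed, the coarse upper bound gives $\limsup_{\kappa\to0+}\kappa\log\Xmeasure[T]{\kappa}[\Khulls\smallsetminus L_M]\le-M$, i.e.\ exponential tightness along the $\dCcompsets$-compact sets $L_M$ of Step 2. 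A standard argument (as in the proof of~\cite[Proposition~5.3]{Peltola-Wang:LDP}; cf.~\cite[Section~4.2]{Dembo-Zeitouni:Large_deviations_techniques_and_applications}) then upgrades the $\cT_{\textnormal{Car}}$-LDP to a $\dCcompsets$-LDP on $\Xpaths[T]$: for $\dCcompsets$-closed $F$ one bounds $\Xmeasure[T]{\kappa}[F]\le\Xmeasure[T]{\kappa}[F\cap L_M]+\Xmeasure[T]{\kappa}[\Khulls\smallsetminus L_M]$, notes that $F\cap L_M$ is $\dCcompsets$-compact hence (Step 2) $\cT_{\textnormal{Car}}$-closed, applies the coarse upper bound, and lets $M\to\infty$; for $\dCcompsets$-open $G$ and $\gamma\in G$ with $\Xlenergy[T](\gamma)<\infty$, one picks $M>\Xlenergy[T](\gamma)$ and a $\cT_{\textnormal{Car}}$-open $U\subseteq\Khulls$ with $U\cap L_M=G\cap L_M$ (possible since the topologies agree on $L_M$), and combines the coarse lower bound $\liminf_{\kappa\to0+}\kappa\log\Xmeasure[T]{\kappa}[U]\ge-\widehat I_T(\gamma)$ with the mass bound for $\Khulls\smallsetminus L_M$, using $\Xmeasure[T]{\kappa}[G]\ge\Xmeasure[T]{\kappa}[U]-\Xmeasure[T]{\kappa}[\Khulls\smallsetminus L_M]$. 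Recalling from Step 1 that $\widehat I_T|_{\Xpaths[T]}=\Xlenergy[T]$ concludes the proof.
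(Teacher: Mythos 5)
Your Step 1 is essentially the paper's own first move (Schilder plus the contraction principle in the Carath\'eodory topology, then restriction to simple curves via \Cref{thm:finite-energy-is-simple}), and your Step 2 is sound modulo its flagged input: the continuity of the Loewner transform on drivers of uniformly bounded Dirichlet energy, into the uniform curve metric, is precisely the fact the paper itself invokes elsewhere through the proof of \cite[Lemma~2.7]{Peltola-Wang:LDP} (beware only that the Marshall--Rohde/Lind quasislit route requires the H\"older-$1/2$ norm to be below $4$, hence covers only $M<8$ directly; the \cite{Peltola-Wang:LDP}-type argument is the right reference for general $M$). A minor point in Step 1: the infimum defining $\widehat{I}_T(K)$ runs over \emph{all} drivers whose time-$T$ hull is $K$, not only the canonical driver of the curve, so you should note that any finite-energy competitor generates a simple curve tracing $K$ and therefore coincides, by uniqueness of the capacity parameterization of an arc from $\beginpoint$, with the canonical one.

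The genuine gap is in Step 3. The estimate $\limsup_{\kappa\to0+}\kappa\log\Xmeasure[T]{\kappa}[\Khulls[T]\smallsetminus L_M]\le -M$ does not follow from the reason you give: the superlevel set $\{\widehat{I}_T\ge M\}$ is \emph{not} $\cT_{\textnormal{Car}}$-closed. Lower semicontinuity closes the sublevel sets, not the superlevel sets; indeed every Carath\'eodory neighborhood of a finite-energy curve contains capacity-$T$ hulls of infinite energy (attach a tiny bubble near the tip that swallows a small disk), so $\{\widehat{I}_T<M\}$ has empty Car-interior and $\{\widehat{I}_T\ge M\}$ is dense rather than closed; replacing $\Khulls[T]\smallsetminus L_M$ by its closure does not help either, since a lower semicontinuous rate function can have small infimum on that closure. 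Both your upper- and lower-bound transfers explicitly use this mass bound, so Step 3 collapses as written; what you are asserting there is in effect exponential tightness of $(\Xmeasure[T]{\kappa})_{\kappa>0}$ in the Hausdorff topology on the non-compact space $\Xpaths[T]$, which is part of the substance of the lemma and not a soft consequence of the Carath\'eodory LDP. The repair --- and the route taken in \cite[Proposition~5.3]{Peltola-Wang:LDP}, on which the paper's proof rests --- is to work first in the compact ambient space $(\Ccompsets,\dCcompsets)$, where exponential tightness is automatic: prove the LDP there by the weak-upper-bound/covering argument, whose local estimates use exactly your Step-2 compactness and closedness of the energy sublevel sets in the Hausdorff metric, and only then restrict to $\Xpaths[T]$. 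This restriction to a non-closed, full-measure subspace is legitimate because the rate function is $+\infty$ off it (by \Cref{thm:finite-energy-is-simple}): for $F$ closed and $G$ open in the subspace one passes to the ambient closure of $F$, respectively intersects an ambient open set with the subspace, without changing either the probabilities or the infima of the rate function.
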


A chordal case of \Cref{thm:LDP-compact} in the compact space $(\Ccompsets,\dCcompsets)$ was proven in~\cite[Proposition~5.3]{Peltola-Wang:LDP}, and the radial case could be proven similarly. 
However, to prove \Cref{thm:radial-LDP}
which strengthens the topology to the space of capacity-parameterized curves, 
we will need this result in the \emph{non-compact} space of curves $(\Xpaths[T],\dCcompsets) \subset (\Ccompsets,\dCcompsets)$.
For this reason, even in the chordal case the statement is slightly stronger than~\cite[Proposition~5.3]{Peltola-Wang:LDP}. 
For \Cref{thm:LDP-compact} it is essential that finite-energy curves are simple (cf.~\Cref{thm:finite-energy-is-simple}), as this allows us to restrict the LDP to the set $(\Xpaths[T], \dCcompsets)$.
Below we summarize a more conceptual strategy, which was used to prove an analogous result in~\cite[Theorem~1.14]{AHP:Large_deviations_of_DBM_and_multiradial_SLE}.

\begin{lemA}\label{thm:finite-energy-is-simple}
Let $\gamma \in \Khulls[\infty]$. If $\Xlenergy[](\gamma) < \infty$, then $\gamma$ is a simple curve\footnote{In fact, in the present work we only need to use a finite-time version of \Cref{thm:finite-energy-is-simple}.
This would also follow from the results in the recent~\cite{Krusell:in_prep}, which imply that the truncated radial energy is finite if and only if the truncated chordal energy is finite.}.
\end{lemA}
Strictly speaking, we have only defined the Loewner energy for curves, 
but one can extend the definition to more general hulls by setting $\Xlenergy[](K) := \infty$ 
if $K \in \Khulls[\infty] \smallsetminus \Xpaths[\infty]$.  

\begin{proof}
In the chordal case, this was proven in~\cite[Theorem~2(ii)]{Friz-Shekhar:Finite_energy_drivers},
and similar arguments work for the radial case --- 
see~\cite[Theorem~3.9]{AHP:Large_deviations_of_DBM_and_multiradial_SLE}.
Another approach is to show that finite-energy curves are quasislits, 
as was done in the chordal case in~\cite[Proposition~2.1]{Wang:Energy_of_deterministic_Loewner_chain}, 
a consequence from~\cite{Marshall-Rohde:The_loewner_differential_equation_and_slit_mappings,LMR:Collisions_and_spirals_of_Loewner_traces}. 
In the radial case, the quasiconformal analysis becomes much more elaborate, however.
We will return to it in \cite{AP:In_Prep}.
\end{proof} 

\begin{proof}[Proof summary for \Cref{thm:LDP-compact}]
This comprises the first two steps in the diagram in \Cref{fig:steps}. 
First, one pushes the finite-time LDP for Brownian motion from Schilder's theorem (\Cref{thm:Schilders-theorem}) 
via the contraction principle (\Cref{thm:contraction-principle}\ref{item:Contraction_principle})
to a finite-time LDP for $(\Xmeasure[T]{\kappa})_{\kappa > 0}$ on $\Khulls[T]$ with the Carath\'eodory topology. 
When $\kappa \le 4$, the measures $(\Xmeasure[T]{\kappa})_{\kappa > 0}$ also restrict to $\Xpaths[T]$.
Moreover, as finite-energy curves are simple by \Cref{thm:finite-energy-is-simple}, 
the restricted measures also satisfy an LDP with the Carath\'eodory topology. 
Finally, by \Cref{thm:caratheodory-vs-hausdorff-convergence}, on $\Xpaths[T]$ the Carath\'eodory topology agrees with that induced by the Hausdorff metric.
\end{proof}

The inclusion $(\Xpaths[T],\dXpaths) \to (\Xpaths[T],\dCcompsets)$ is a continuous bijection, but not a homeomorphism. 
Therefore, in order to further improve the topology on $\Xpaths[T]$, 
we will proceed via the inverse contraction principle (\Cref{thm:contraction-principle}\ref{item:Inverse_contraction_principle}).
For this purpose, however, we must prove exponential tightness 
for the measures $(\Xmeasure[T]{\kappa})_{\kappa > 0}$ in $\Xpaths[T]$ 
(see \Cref{thm:exponential-tightness})\footnote{By \Cref{thm:topology-independent-of-parameterization}, 
\Cref{thm:radial-LDP-finite} equivalently holds on $(\Xpaths[T],\dXpathsEucl)$.}.
This is the third and most substantial step depicted in \Cref{fig:steps}, 
whose proof comprises most of this section. 

\begin{prop}\label{thm:radial-LDP-finite}
Fix $T \in (0,\infty)$. 
The family $(\Xmeasure[T]{\kappa})_{\kappa > 0}$ of laws of the $\SLE[\kappa]$ curves viewed as probability measures on $(\Xpaths[T],\dXpaths)$ 
satisfy an LDP with good rate function $\Xlenergy[T]$. 
\end{prop}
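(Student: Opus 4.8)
The plan is to upgrade the finite-time LDP on $(\Xpaths[T],\dCcompsets)$ from \Cref{thm:LDP-compact} to the stronger topology induced by $\dXpaths$ by applying the inverse contraction principle (\Cref{thm:contraction-principle}\ref{item:Inverse_contraction_principle}) to the continuous bijection $\id \colon (\Xpaths[T],\dXpaths) \to (\Xpaths[T],\dCcompsets)$. Concretely: let $g = \id$ be this map. It is a continuous bijection (finer topology to coarser), and the $\SLE[\kappa]$ laws on the two spaces are related by pushforward under $g$, i.e.~the law on $(\Xpaths[T],\dCcompsets)$ is the pushforward of the law on $(\Xpaths[T],\dXpaths)$. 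By \Cref{thm:LDP-compact} the pushed-forward family satisfies an LDP with good rate function $\Xlenergy[T]$. Hence, provided the family $(\Xmeasure[T]{\kappa})_{\kappa>0}$ is exponentially tight on $(\Xpaths[T],\dXpaths)$, the inverse contraction principle yields an LDP on $(\Xpaths[T],\dXpaths)$ with good rate function $\Xlenergy[T] \circ \id = \Xlenergy[T]$, which is precisely the claim. (By \Cref{thm:topology-independent-of-parameterization}, the same statement then holds equally for $\dXpathsEucl$ and $\dXpathsUnparam$ on $\Xpaths[T]$, since these induce the same topology on the finite-capacity curves; in particular it suffices to establish exponential tightness in any one of the three equivalent metrics.)

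The only missing ingredient is therefore exponential tightness of the $\SLE[\kappa]$ curves as parameterized simple curves up to time $T$: for each $M \in [0,\infty)$ one must exhibit a $\dXpaths$-compact set $\compact = \compact(M) \subset \Xpaths[T]$ with $\limsup_{\kappa\to 0+}\kappa \log \Xmeasure[T]{\kappa}[\Xpaths[T]\smallsetminus \compact] \le -M$. This is exactly \Cref{thm:exponential-tightness}, whose proof occupies the bulk of \Cref{sec:exponential-tightness}; granting it, the argument above is immediate. The natural candidate compact sets are modulus-of-continuity type sets: curves $\gamma \in \Xpaths[T]$ whose modulus of continuity (in the $d_\Ddomain$ metric, as functions of capacity time) is controlled by a fixed gauge function, intersected with a uniform bound keeping the curve away from $\partial\Ddomain$ on $[0, T-\epsilon]$; by Arzelà–Ascoli such sets are precompact, and one must check the limit set still consists of simple curves so it lies in $\Xpaths[T]$ (this uses that finite-energy curves are simple, \Cref{thm:finite-energy-is-simple}, together with the structure of the approximating modulus-of-continuity bounds). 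The exponential smallness of the complement is then reduced, in the chordal case, to controlling the derivative of the Loewner uniformizing map via (improvements of) Guskov's estimates and, for the sharper simple-curve statement, to uniform-in-space-and-time Bessel process estimates; the radial case is obtained by conformally concatenating short chordal pieces, using that the chordal flow approximates the radial flow exponentially well for small times and that conformal concatenation is continuous on simple curves (\Cref{thm:convergence-of-concatenations}).

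The main obstacle is entirely contained in \Cref{thm:exponential-tightness} — establishing exponential tightness for \emph{simple} curves (rather than merely for hulls in the Carathéodory or Hausdorff topology), uniformly as $\kappa \to 0+$. Everything else in the proof of the Proposition is a routine application of the abstract inverse contraction principle. I would therefore present the proof of \Cref{thm:radial-LDP-finite} in two short lines — cite \Cref{thm:LDP-compact}, cite \Cref{thm:exponential-tightness}, invoke \Cref{thm:contraction-principle}\ref{item:Inverse_contraction_principle} with $g=\id$ — and defer all the real work to the exponential tightness statement proved later in the section.
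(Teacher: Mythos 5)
Your proposal matches the paper's proof exactly: the paper proves \Cref{thm:radial-LDP-finite} by applying the inverse contraction principle (\Cref{thm:contraction-principle}\ref{item:Inverse_contraction_principle}) to the inclusion $(\Xpaths[T],\dXpaths) \to (\Xpaths[T],\dCcompsets)$, citing \Cref{thm:LDP-compact} for the Hausdorff-metric LDP and deferring all substantive work to the exponential tightness statement \Cref{thm:exponential-tightness}. Your identification of the exponential tightness for \emph{simple} parameterized curves as the sole nontrivial ingredient, and your sketch of how it is later established, are consistent with the paper.
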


\begin{proof} 
\Cref{thm:contraction-principle}\ref{item:Inverse_contraction_principle} 
applied to the inclusion $(\Xpaths[T],\dXpaths) \to (\Xpaths[T],\dCcompsets)$ 
yields the desired finite-time LDP on $(\Xpaths[T],\dXpaths)$: indeed, 
\Cref{thm:LDP-compact} gives an LDP on $(\Xpaths[T],\dCcompsets)$, 
and \Cref{thm:exponential-tightness} (stated below) shows that the family 
$(\Xmeasure[T]{\kappa})_{\kappa > 0}$ on $(\Xpaths[T],\dXpaths)$ is exponentially tight. 
\end{proof}

\subsection{Exponential tightness of SLE in the capacity parameterization}\label{subsec:exponential-tightness-outline}

The key to finishing the proof of \Cref{thm:radial-LDP-finite} 
is the exponential tightness:  

\begin{prop} \label{thm:exponential-tightness}
Fix $T \in (0,\infty)$. 
The family $(\Xmeasure[T]{\kappa})_{\kappa > 0}$ of laws of the $\SLE[\kappa]$ curves viewed as probability measures on $(\Xpaths[T],\dXpaths)$ is exponentially tight:
for each $M \in [0,\infty)$, there exists a compact set $\compact = \compact(M) \subset \Xpaths[T]$ such that
\begin{align*}
\limsup_{\kappa \to 0+}\kappa\log\Xmeasure[T]{\kappa}[\Xpaths[T] \smallsetminus \compact] < -M.
\end{align*}
\end{prop}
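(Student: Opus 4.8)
The goal is to produce, for every $M$, a compact set $\compact = \compact(M) \subset \Xpaths[T]$ capturing all but exponentially (rate $M$) little of the $\SLE[\kappa]$ mass. The first thing I would do is identify what compactness in $(\Xpaths[T],\dXpaths)$ actually requires. Since $\overline{\Ddomain}$ is compact in $\dDdomain$, a family of capacity-parameterized curves $\gamma \colon [0,T] \to \overline{\Ddomain}$ is precompact in the uniform metric $\dXpaths$ essentially by Arzelà–Ascoli: one needs uniform equicontinuity, i.e.\ control of the modulus of continuity $\sup_{|s-t|\le\delta} \dDdomain(\gamma(s),\gamma(t))$. But the limit of such curves need not be a \emph{simple} curve, so the candidate compact set must be a closed subset of $\Xpaths[T]$ — which by \Cref{thm:finite-energy-is-simple} (finite-time version) and the fact that finite-energy curves are simple, is naturally achieved by intersecting the equicontinuity set with a \emph{sublevel set of the truncated energy} $\{\Xlenergy[T] \le R\}$ for a suitably large $R = R(M)$. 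Concretely, I would take $\compact(M) := \{\gamma \in \Xpaths[T] : \Xlenergy[T](\gamma) \le R\} \cap E_\delta$ where $E_\delta$ is an equicontinuity set (possibly a countable intersection over a sequence $\delta_k \to 0$ of moduli bounds), and check closedness and precompactness.

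\textbf{Reduction via the chordal case.} Following the strategy sketched in the introduction, I would first establish \Cref{thm:exponential-tightness} in the \emph{chordal} case on $(\Hpaths[T], \dHpaths)$, and then transfer to the radial case. For the transfer, the point is that over a very short capacity interval $[0,\epsilon]$ the radial and chordal Loewner flows started at $1 \in \partial\bD$ are exponentially close — quantitatively via the Radon–Nikodym derivative of \Cref{thm:RN-derivative-radial-chordal}, which on $[0,\tau_\delta \wedge \epsilon]$ is bounded by a constant tending to $1$ as $\epsilon \to 0$, uniformly in $\kappa$ small. One then writes the radial curve on $[0,T]$ as a conformal concatenation of $\lceil T/\epsilon\rceil$ pieces, each of which (after uniformizing the complement) is distributed like a chordal piece up to an exponentially-good correction, and invokes continuity of conformal concatenation \emph{for simple curves} (\Cref{thm:convergence-of-concatenations}) to conclude that the concatenation of chordal exponentially-tight pieces is radially exponentially tight. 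This is why the chordal statement must be proven for simple curves, not merely for hulls.

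\textbf{The core chordal estimate.} The heart of the matter is a quantitative bound, uniform over $\kappa \in (0,1]$ (say), on the modulus of continuity of the chordal $\SLE[\kappa]$ curve in $\dHpaths$ up to time $T$, of the form
\begin{align*}
\Hmeasure[T]{\kappa}\Big[\sup_{\substack{s,t \in [0,T] \\ |s-t| \le \delta}} d_\bH(\gamma(s),\gamma(t)) > \eta \Big] \;\le\; \exp\!\Big(\! -\frac{c(\eta,\delta)}{\kappa}\Big),
\end{align*}
with $c(\eta,\delta) \to \infty$ as $\delta \to 0$ for fixed $\eta$. Combined with the Euclidean estimates on the derivative of the Loewner uniformizing map in the spirit of Guskov~\cite{Guskov:LPD_for_SLE_in_the_uniform_topology}, but upgraded to hold for \emph{simple} curves and with explicit $\kappa$-dependence, one then has that $\Hmeasure[T]{\kappa}[\Hpaths[T] \smallsetminus \compact(M)]$ is exponentially small. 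The two ingredients feeding this estimate are: (i) large-deviation control of the driving Brownian motion (modulus of continuity of $\sqrt{\kappa}B$, where $\sup_{|s-t|\le\delta}|\sqrt\kappa(B_s - B_t)| > \eta$ has probability $\le \exp(-c\eta^2/(\kappa\delta))$), and (ii) uniform-in-space-and-time estimates on Bessel-type processes controlling how far the curve has travelled, i.e.\ bounds on $|g_t(\gamma(t)) - \lambda_t|$-type quantities and on $g_t'$, as flagged in \Cref{sec:proof-of-exponentially-tight-function-set}. The main obstacle — and the reason the authors devote most of the section to it — is precisely ingredient (ii) for \emph{simple} curves: one must show the curve does not travel macroscopically in small capacity time with overwhelming probability, uniformly as $\kappa \to 0+$, which requires delicate Bessel process tail estimates uniform in the spatial starting point. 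Once the modulus-of-continuity bound is in hand, the inverse contraction principle in the proof of \Cref{thm:radial-LDP-finite} closes the finite-time picture.
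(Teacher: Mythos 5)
Your core construction of the compact set is fatally flawed. You propose $\compact(M) := \{\gamma \in \Xpaths[T] : \Xlenergy[T](\gamma) \le R\} \cap E_\delta$, with the energy sublevel set serving to guarantee that limits stay simple. But for every fixed $\kappa > 0$ the driving function $\sqrt{\kappa}B$ is almost surely not absolutely continuous, so $\Xlenergy[T](\gamma^\kappa) = \infty$ almost surely; the sublevel set $\{\Xlenergy[T] \le R\}$ is a null set for each $\Xmeasure[T]{\kappa}$, hence $\Xmeasure[T]{\kappa}[\Xpaths[T] \smallsetminus \compact] = 1$ for all $\kappa$ and no exponential tightness bound can come out of this choice. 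This is the standard pitfall: in exponential tightness the compacts must carry all but exponentially little mass for each \emph{fixed} $\kappa$, whereas sublevel sets of the (candidate) rate function carry none of it. In the paper, \Cref{thm:finite-energy-is-simple} is used only to restrict the Hausdorff-topology LDP of \Cref{thm:LDP-compact} to the simple-curve space; it plays no role in building the compact sets.

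What the paper does instead, and where your sketch also drifts from it: simplicity inside the compact set is enforced \emph{probabilistically}, by constructing a closed driver event $X = X(M)$ (\Cref{thm:exponentially-tight-function-set}) of probability $\ge 1 - e^{-M/\kappa}$ on which no point of $\bR \smallsetminus \{0\}$ and no point of the past curve is ever swallowed; this is exactly what the uniform-in-space-and-time Bessel estimates of \Cref{sec:proof-of-exponentially-tight-function-set} deliver — their role is to rule out swallowing (self- or boundary-touching), not to control ``how far the curve travels in small capacity time,'' as you describe. Compactness is likewise obtained at the level of drivers rather than via an Arzel\`a--Ascoli/modulus-of-continuity estimate for the curves: one intersects Guskov's weak-H\"older and derivative-bound events $H(\vec n) \cap L(\vec n)$ (\Cref{thm:chordal-event}, \Cref{thm:compact-sets}), which form a compact driver set on which the Loewner transform is continuous into the curve space (via Tran's piecewise-linear approximation), and sets $\compact = \cLoewnerTransform[1](H(\vec n) \cap L(\vec n) \cap X)$, a compact set of \emph{simple} curves. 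Your curve-level modulus-of-continuity route is not obviously wrong, but once the energy sublevel set is removed you still need a replacement device ensuring that uniform limits remain simple — i.e.\ precisely an event like $X$ — so the hard part is untouched. Your radial transfer has the right architecture (short-time comparison, conformal concatenation of simple pieces, \Cref{thm:convergence-of-concatenations}), matching the paper, but the quantitative claim that the Radon--Nikodym derivative of \Cref{thm:RN-derivative-radial-chordal} is ``bounded by a constant tending to $1$'' on short time intervals uniformly in small $\kappa$ is false: on the event $S(\delta)$ it is of size $\exp(c\,t/(\delta^2 \kappa))$, which blows up as $\kappa \to 0$ for any fixed $t$; the paper absorbs this factor by running the chordal estimates at the boosted rate $M' = M + 3T/(2\delta^2)$, not by making the derivative close to one.
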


Let us already stress here the need of exponential tightness in the space of \emph{simple} curves $(\Hpaths[T], \dHpaths)$ in the chordal case. 
Namely, the radial case of \Cref{thm:exponential-tightness} will be proven by comparing the radial measure $\rDmeasure[T]{\kappa}$ to a conformal concatenation of chordal measures $\cDmeasure[t]{\kappa}$, for a suitable $t < T$. 
In turn, the conformal concatenation is guaranteed to preserve compactness only for simple curves (see \Cref{thm:convergence-of-concatenations} in \Cref{app:conformal concatenation}).

The rest of this section is devoted to the proof of \Cref{thm:exponential-tightness}. 
We continue to use the notation from \Cref{sec:preliminaries};
in particular, $\Xpaths \in \{\Hpaths,\Dpaths\}$ in the chordal and radial cases, and 
$\Phi_\bH \colon \bH \to \bD$ is
the M\"obius map satisfying $\Phi_\bH(0) = 1$ and $\Phi_\bH(2\ii) = 0$. 
This choice is made to ensure that $\Phi_\bH(\Hpaths[1]) \subset \Dpaths[<\infty]$, so that \Cref{thm:topology-independent-of-parameterization} is directly applicable when transferring parameterized curves from $\Hpaths[1]$ to $\Dpaths[<\infty]$ using $\Phi_\bH$.
We use the following proof strategy.

\begin{itemize}[leftmargin=*]
\item 
In Sections~\ref{subsec:exp_tight_chordal}--\ref{sec:proof-of-exponentially-tight-function-set}, 
we prove \Cref{thm:exponential-tightness} in the chordal case. 
For the non-simple curve space $(\Hpathscl[1], \dHpaths)$, this immediately 
follows from the derivative estimates for the Loewner uniformizing map~\cite{Lawler-Viklund:Optimal_Holder_exponent_for_the_SLE_path, Tran:Convergence_of_an_algorithm_simulating_Loewner_curves}. 
To improve the exponential tightness to the simple curves $(\Hpaths[1], \dHpaths)$ (\Cref{thm:exponential-tightness-chordal}), which is crucial for deriving the radial case from the chordal case, 
we will roughly speaking need to construct nice enough events of exponentially high probability
where the curve avoids its past (see \Cref{thm:exponentially-tight-function-set}, whose proof is rather elaborate).
By scale-invariance, this then implies exponential tightness of $\cSLE[\kappa]$ for any finite time, yielding \Cref{thm:exponential-tightness} in the chordal case.
 
\item 
We can then make a comparison of radial and chordal SLEs. 
For this purpose, we also need to control the Radon-Nikodym derivative appearing in \Cref{thm:RN-derivative-radial-chordal}. 
To this end, utilizing Bessel estimates, we can find $\delta > 0$ 
and $S(\delta) \subset \Dpaths[T]$ 
such that the radial probabilities 
$\rDmeasure[T]{\kappa}[\Dpaths[T] \smallsetminus S(\delta)]$ 
decay exponentially fast with rate $M$ as $\kappa \to 0+$ 
(\Cref{thm:radial-harmonic-measure-gets-small-estimate}), 
and where the exponential growth rate of the Radon-Nikodym derivative~\eqref{eqn:radial-chordal-RN-derivative} as $\kappa \to 0+$ 
is uniformly controlled in terms of $\delta$ and $T$
(see \Cref{eqn:exponential-tightness-step}). 

\item 
Next, using the finite-time Hausdorff metric LDP for $\cSLE[0+]$ (\Cref{thm:LDP-compact}),  
for each $M' > 0$ we can find $t = t(M') > 0$ such that $\cHmeasure[1]{\kappa}[\Hpaths[1] \cap \Phi_\bH^{-1}(\Dpaths[< t])]$ decays exponentially fast with rate $M'$ as $\kappa \to 0+$ (\Cref{thm:chordal-crad-probability}). 
By decreasing the value of $t$ if necessary, we may take $t = T/N$ for some $N = N(M') \in \bN$.
We then apply the union bound to a suitable concatenation of curves (\Cref{thm:convergence-of-concatenations}) 
to find an exponentially tight compact set $\compact \subset \Dpaths[T]$ 
for the chordal measures $(\cDmeasure[T]{\kappa})_{\kappa > 0}$ on $(\Dpaths[T],\dDpaths)$ in the disk. 

\item Combining these estimates, we treat the radial case in  \Cref{thm:exponential-tightness-radial} in \Cref{subsec:exp_tight_radial}.
\end{itemize}

\subsection{Proof of exponential tightness --- the chordal case}
\label{subsec:exp_tight_chordal}

For $\bH$-hulls $K \in \Hhulls$, we write $f_K := g_K^{-1} \colon \bH\to\bH\smallsetminus K$ for the inverse of the conformal map $g_K \colon \bH \smallsetminus K \to \bH$,
normalized (hydrodynamically) via $g_K(z)-z \to 0$ as $|z|\to\infty$. 
For driving functions $\lambda \in \funs{1]}$ and $t \in [0,1]$, 
we write $\hat f_{\lambda,t}(z) := f_{\cLoewnerTransform[t](\lambda)}(z+\lambda_t)$. 

Also, for positive constants $c_1, c_2, c_3 > 0$ and $\beta \in (0, 1)$, and for $n \in \bN$, set 
\begin{align*}
\psi(n) = \psi_{c_1, c_2}(n) 
:= c_1(1+\log n)^{c_2} 
\qquad \textnormal{and} \qquad 
\phi(\delta) = \phi_{c_3}(\delta) 
:= c_3\sqrt{\delta\log(1/\delta)} ,
\end{align*}
and 
\begin{align*}
H(n) = H_{c_3}(n) 
:=\;& \Big\{\lambda \in \funs{1]} \condbig
\sup_{s,t \in [0,1], \; |t-s| \leq 2/n} |\lambda_t - \lambda_s| 
\le \phi(\tfrac{2}{n}) \Big\} , \\
L(n) = L_{c_1, c_2, \beta}(n) 
:= \;& \Big\{\lambda \in \funs{1]} \condbig|\hat f'_{\lambda,t}(\ii y)| \le \psi(n) \, y^{-\beta}, \, \textnormal{ for } y \in [0, \tfrac{1}{\sqrt{n}} ], \, t \in [0,1] \Big\}  .
\end{align*} 
Driving functions in $H(n)$ are weakly H\"older-$1/2$ in the same sense as Brownian paths, while driving functions in $L(n)$ admit additional regularity to ensure that hulls driven by $\lambda \in H(n) \cap L(n)$ 
are generated by a curve in $\Hpathscl[1]$; 
see~\cite[Section~3, Proposition~3.8]{Lawler-Viklund:Optimal_Holder_exponent_for_the_SLE_path} (see also~\cite[Theorem~2.2]{Tran:Convergence_of_an_algorithm_simulating_Loewner_curves}). 
Consider the piecewise linear interpolation
\begin{align*}
B_t^n := n \big( B_{k/n}  - B_{(k-1)/n} \big)
\big( t - \tfrac{k-1}{n} \big) + B_{(k-1)/n} , 
\qquad t \in \big[ \tfrac{k-1}{n}, \tfrac{k}{n} \big] , \; k \in \{1,2,\ldots,n\} ,
\end{align*}
of scaled Brownian motion $\lambda = \sqrt{\kappa} \, B$,
and let $\eta^n \in \Hpathscl[1]$ denote the curve driven by $\sqrt{\kappa} \,B^n$, for $n \in \bN$. 
Tran showed in~\cite[Theorem~2.2]{Tran:Convergence_of_an_algorithm_simulating_Loewner_curves} 
that for $\kappa \ne 8$, the curves $\eta^n$ almost surely converge to $\eta \in \Hpathscl[1]$ in the sup-norm: 
$\dHpathsEucl(\eta, \eta^n) \to 0$ as $n \to \infty$.

By inspection of the proofs in~\cite{Lawler-Viklund:Optimal_Holder_exponent_for_the_SLE_path, Tran:Convergence_of_an_algorithm_simulating_Loewner_curves}, 
one can check that the constants $c_1,c_2,c_3$ can be chosen independently of $\kappa$ so that the following estimate holds.

\begin{lem}[{See~\cite[Proposition~5.1]{Guskov:LPD_for_SLE_in_the_uniform_topology}}] \label{thm:Hn-cap-Ln-bound}
Fix $\beta = \frac{1}{2}$. Then, the constants $c_1, c_2, c_3 > 0$ can be chosen so that 
there exist $\zeta, N > 0$ such that for any $n \ge N$, 
we have\footnote{\label{fn:lesssim}By ``$\lesssim$'' we indicate the existence of an upper bound up to a universal constant (independent of $\kappa,n,\ldots$).}
\begin{align}\label{eqn:continuity-probability-estimate}
\BMmeasure[1]{\kappa}\big[ \funs{1]} \smallsetminus (H(n) \cap L(n)) \big] \lesssim \Big( \frac{n}{2} \Big)^{2-\frac{1}{2\kappa}},
\qquad \textnormal{for } \kappa \in (0, 1/8) ,
\end{align}
and the event $\{\dHpathsEucl(\eta, \eta^n) \le n^{-\zeta}\}$ occurs in the intersection $H(n) \cap L(n)$.
\end{lem}

\begin{proof}
Using the modulus of continuity of Brownian motion~\cite[Theorem~3.2.4]{Lawler-Limic:Random_walk_modern_introduction}, 
we obtain
\begin{align*}
\BMmeasure[1]{\kappa}[\funs{1]}\smallsetminus H(n) ] \lesssim (2/n)^{ 1/\kappa }. 
\end{align*}
Choosing $c_1, c_2 > 0$ so that the dyadic decomposition in~\cite[Proposition~4.1]{Guskov:LPD_for_SLE_in_the_uniform_topology} holds,~\cite[Equation~(19)]{Guskov:LPD_for_SLE_in_the_uniform_topology} implies that for
$\beta \in (0, 1)$ and $\kappa \in (0, \tfrac{\beta}{2})$, we have 
\begin{align*}
\BMmeasure[1]{\kappa}\big[ \funs{1]} \smallsetminus L(n) \big] \le \frac{n^{2-\frac{\beta}{\kappa}}}{1-4^{-(\frac{\beta}{\kappa}-2)}} 
+ J_\kappa(n) \, \underbrace{2^{4n+1}n^{-\frac{2^{2n-1}}{\kappa}}}_{\lesssim \; n^{-\beta/\kappa}} ,
\end{align*}
where $J_\kappa(n) = \overset{\infty}{\underset{m=0}{\sum}} 2^{4m}n^{-\frac{(4^m-1)4^n}{2\kappa}} \le J_{\beta/4}(2) < \infty$ for $n \ge 2$ and $\kappa \in (0, \tfrac{\beta}{4})$, 
so choosing $\beta = \frac{1}{2}$, these estimates together with the union bound yield~\eqref{eqn:continuity-probability-estimate}. 
Finally, the last claim for sufficiently large $n \in \bN$ follows from the proof of \cite[Theorem~2.2]{Tran:Convergence_of_an_algorithm_simulating_Loewner_curves}.
\end{proof}

\begin{lem} \label{thm:compact-sets}
For any $n \in \bN$ and an increasing sequence $\vec n = (n_j)_{j \in \bN}$ of natural numbers, the following holds.
Write $H(\vec n) := \underset{j \in \bN}{\bigcap} H(n_j) \subset \funs{1]}$ and $L(\vec n) := \underset{j \in \bN}{\bigcap} L(n_j) \subset \funs{1]}$.
\begin{enumerate}[leftmargin=*, label=\textnormal{(\roman*)}]
\item Each $L(n)$ is closed in $(\funs{1]},\normUniform{\cdot})$, 
\item the intersection $H(\vec n)$ is compact in $(\funs{1]},\normUniform{\cdot})$, and
\item\label{item:continuity} 
the Loewner transform $\cLoewnerTransform[1]$ maps 
$(H(\vec n) \cap L(\vec n),\normUniform{\cdot})$ continuously to $(\Hpathscl[1],\dHpaths)$.
\end{enumerate}
Furthermore, for each $M \in [0,\infty)$, there exists $\vec n = (n_j)_{j \in \bN}$ such that
\begin{align} \label{eq:HL_bound}
\BMmeasure[1]{\kappa}[ \funs{1]} \smallsetminus (H(\vec n) \cap L(\vec n))] \le e^{-M/\kappa}, \qquad \kappa \in (0, 1/8).
\end{align}
\end{lem}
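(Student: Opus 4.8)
The plan is to dispatch the four assertions one at a time; the continuity in part~\ref{item:continuity} is the substantive point, while the closedness/compactness statements and the final probability estimate are soft. For part~(i), the key observation is that for each fixed $t \in [0,1]$ and $y > 0$ the functional $\lambda \mapsto \hat f'_{\lambda,t}(\ii y)$ is continuous on $(\funs{1]},\normUniform{\cdot})$: if $\lambda^m \to \lambda$ uniformly then, writing $K^m := \cLoewnerTransform[t](\lambda^m)$ and $K := \cLoewnerTransform[t](\lambda)$, continuity of the Loewner transform gives $K^m \to K$ in the Carath\'eodory topology, so by \Cref{thm:caratheodory-kernel-convergence} the hydrodynamically normalized maps $f_{K^m} = g_{K^m}^{-1} \colon \bH \to \bH \smallsetminus K^m$ converge to $f_K$ locally uniformly on $\bH$, hence so do all their derivatives, and together with $\lambda^m_t \to \lambda_t$ this yields $\hat f'_{\lambda^m,t}(\ii y) \to \hat f'_{\lambda,t}(\ii y)$. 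Consequently
\[
L(n) \;=\; \bigcap_{t \in [0,1]} \; \bigcap_{y \in (0,\, 1/\sqrt{n}]} \; \big\{ \lambda \in \funs{1]} \;:\; |\hat f'_{\lambda,t}(\ii y)| \le \psi(n)\, y^{-\beta} \big\}
\]
is an intersection of preimages of closed half-lines under continuous functionals (at $y = 0$ there is no constraint, since $y^{-\beta} = +\infty$), hence closed; intersecting over the $n_k$ shows $L(\vec n)$ is closed too. Likewise $\lambda \mapsto \sup_{|t-s| \le 2/n}|\lambda_t - \lambda_s|$ is $2$-Lipschitz, so each $H(n)$, and hence $H(\vec n)$, is closed. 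For part~(ii), functions in $H(\vec n)$ vanish at $0$, are uniformly bounded, and are equicontinuous --- given $\varepsilon > 0$, pick $k$ with $\phi(2/n_k) < \varepsilon$ (possible since $\phi(\delta) \to 0$ and $n_k \to \infty$), so that $2/n_k$ is a common modulus of continuity --- whence by the Arzel\`a--Ascoli theorem $\overline{H(\vec n)}$ is compact, and a closed subset of a compact set is compact.

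For part~\ref{item:continuity}, note first that $H(\vec n) \cap L(\vec n) \subset H(n_1) \cap L(n_1)$, so by the regularity result~\cite[Proposition~3.8]{Lawler-Viklund:Optimal_Holder_exponent_for_the_SLE_path} we have $\cLoewnerTransform[1](\lambda) \in \Hpathscl[1]$ for every $\lambda$ in this set, and the map is well defined. For continuity, fix $\lambda^m \to \lambda$ in $H(\vec n) \cap L(\vec n)$ and $k \in \bN$, and write $\Pi_n\lambda$ for the piecewise linear interpolation of $\lambda$ with nodes at $j/n$. The deterministic part of \Cref{thm:chordal-event} --- the inclusion $H(n_k) \cap L(n_k) \subset \{\dHpathsEucl(\eta,\eta^{n_k}) \le n_k^{-\zeta}\}$ --- gives $\dHpathsEucl(\cLoewnerTransform[1](\mu),\cLoewnerTransform[1](\Pi_{n_k}\mu)) \le n_k^{-\zeta}$ for every $\mu \in H(n_k) \cap L(n_k)$, in particular for $\mu = \lambda$ and for $\mu = \lambda^m$. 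On the other hand $\Pi_{n_k}\lambda^m \to \Pi_{n_k}\lambda$ uniformly as $m \to \infty$, and the restriction of $\cLoewnerTransform[1]$ to piecewise linear drivers with nodes at $j/n_k$ is continuous into $(\Hpathscl[1],\dHpathsEucl)$: such drivers generate slit curves depending continuously on their finitely many node values (see, e.g.,~\cite{Tran:Convergence_of_an_algorithm_simulating_Loewner_curves, Marshall-Rohde:The_loewner_differential_equation_and_slit_mappings}). Hence $\dHpathsEucl(\cLoewnerTransform[1](\Pi_{n_k}\lambda^m),\cLoewnerTransform[1](\Pi_{n_k}\lambda)) \to 0$, and the triangle inequality gives $\limsup_{m \to \infty} \dHpathsEucl(\cLoewnerTransform[1](\lambda^m),\cLoewnerTransform[1](\lambda)) \le 2\, n_k^{-\zeta}$. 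Letting $k \to \infty$ shows $\cLoewnerTransform[1](\lambda^m) \to \cLoewnerTransform[1](\lambda)$ in $\dHpathsEucl$, hence also in $\dHpaths$, since these two metrics induce the same topology on curves of finite capacity by \Cref{thm:topology-independent-of-parameterization}.

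For the probability estimate I would use a union bound: $\BMmeasure[1]{\kappa}[\funs{1]} \smallsetminus (H(\vec n) \cap L(\vec n))] \le \sum_{k} \BMmeasure[1]{\kappa}[\funs{1]} \smallsetminus (H(n_k) \cap L(n_k))]$, and once $n_k$ exceeds the universal threshold $N$ in~\eqref{eqn:continuity-probability-estimate} each summand is at most $c\,(n_k/2)^{1 - 1/(2\kappa)}$. For $\kappa \le 1/4$ one has $\tfrac{1}{2\kappa} - 1 = \tfrac{1-2\kappa}{2\kappa} \ge \tfrac{1}{4\kappa}$, so this is bounded by $\exp(-\tfrac{1}{4\kappa}\log(n_k/2))$. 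It then suffices to take $\vec n$ to be any strictly increasing integer sequence with $n_k \ge \max\{N,\, 2\,e^{4(M + k + 1 + |\log c|)}\}$: then, uniformly over $\kappa \in (0,1/4)$, the $k$-th summand is at most $c\, e^{-(M + k + 1 + |\log c|)/\kappa} \le 2^{-(k+1)}\, e^{-M/\kappa}$ (using $c\, e^{-|\log c|/\kappa} \le 1$ and $1/\kappa > 1$, so that $c\, e^{-(k+1+|\log c|)/\kappa} \le e^{-(k+1)} \le 2^{-(k+1)}$). Summing the geometric series over $k$ gives $\BMmeasure[1]{\kappa}[\funs{1]} \smallsetminus (H(\vec n) \cap L(\vec n))] \le e^{-M/\kappa}$, as required.

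I expect the main obstacle to be part~\ref{item:continuity}: the Loewner transform is only continuous into the Carath\'eodory topology (cf.~\Cref{rem:continuity-properties-of-loewner-transform}), and upgrading this to convergence of the parameterized curves in $\dHpaths$ on $H(\vec n) \cap L(\vec n)$ genuinely requires the quantitative curve-generation estimates of Lawler--Viklund and Tran packaged in \Cref{thm:chordal-event}; the remaining parts are routine applications of continuity of the Loewner flow, the Arzel\`a--Ascoli theorem, and a union bound.
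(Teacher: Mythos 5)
Your proof is correct and follows essentially the same route as the paper: closedness of $L(n)$ via convergence of the conformal maps and their derivatives, Arzel\`a--Ascoli for $H(\vec n)$, the triangle inequality through piecewise-linear interpolations combined with the deterministic inclusion in \Cref{thm:chordal-event} for part~\ref{item:continuity}, and a union bound over a rapidly growing sequence $\vec n$ for~\eqref{eq:HL_bound}. The only (immaterial) deviation is that for the middle term you invoke continuity of the Loewner transform on piecewise-linear drivers with fixed nodes, whereas the paper justifies this step by continuity on uniformly-bounded-energy drivers (proof of~\cite[Lemma~2.7]{Peltola-Wang:LDP}), which is the cleaner citation for that claim.
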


\begin{proof}\ 
\begin{enumerate}[leftmargin=*, label=\textnormal{(\roman*)}]
\item Since the map $\lambda \mapsto \hat f_{\lambda, t}$ is continuous (for the locally uniform topology), 
a sequence $(\lambda^{k})_{k\in\bN}$ converges only if $\hat f_t^{k} := \hat f_{\lambda^{k},t}$ converge to some $\hat f_t$, and since $\hat f_t^{k}$ are conformal maps, by Hurwitz's theorem and the Cauchy formula, 
either the derivatives $(\hat f_t^{k})'$ also converge to $\hat f_t'$, or $\hat f_t' \equiv 0$. 
In either case, if $\lambda^{k} \in L(n)$ for all $k$, then we have
$|\hat f_t'(\ii y)| \le \underset{n \to \infty}{\lim} \, |(\hat f^{k}_t)'(\ii y)| \le \psi(n) \, y^{-\beta}$ for every $y \in [0, \tfrac{1}{\sqrt{n}} ]$ and $t \in [0,1]$, so $\lambda \in L(n)$.

\item For $\delta < 2/n_1$, set $j(\delta) := \inf\{j\in\bN \colon 2/n_j \le \delta\}$. 
Then, for every $\lambda \in H(\vec n)$, the modulus of continuity $\varphi_\lambda$ of $\lambda$ is bounded by $\varphi_\lambda(\delta) \le \varphi(2/n_{j(\delta)}) \xrightarrow{\delta \to 0} 0$. 
This shows that $H(\vec n)$ is a uniformly equicontinuous collection of bounded functions (bounded since $\lambda_0 = 0$). 
By Arzel\`a-Ascoli theorem, we conclude that $H(\vec n)$ is compact.

\item 
Let $\lambda^m \in H(\vec n) \cap L(\vec n)$ be a sequence of driving functions converging to $\lambda \in H(\vec n) \cap L(\vec n)$,
and $(\gamma^m)_{m \in \bN}$ and $\gamma$ the corresponding hull-generating curves in $\Hpathscl[1]$. 
Then, 
\begin{align} \label{eq:triangle}
\dHpathsUnparam(\gamma, \gamma^m) \le \dHpathsEucl(\gamma, \eta^{n_j}) + \dHpathsUnparam(\eta^{n_j},\eta^{m,n_j}) + \dHpathsEucl(\eta^{m,n_j},\gamma^m) , \qquad m,j \in \bN ,
\end{align}
where $\eta^{n_j} \in \Hpathscl[1]$ (resp.~$\eta^{m,n_j} \in \Hpathscl[1]$) is the curve driven by the piecewise linear interpolation of 
$\lambda$ (resp.~$\lambda^m$) by equal parts of size $\tfrac{1}{n_j}$ on the interval $[0,1]$: 
\begin{align*}
\lambda_t^{n_j}
:= n_j \big( \lambda_{\ell/n_j}  - \lambda_{(\ell-1)/n_j} \big)
\big( t - \tfrac{\ell-1}{n_j} \big) + \lambda_{(\ell-1)/n_j} , 
\qquad t \in \big[ \tfrac{\ell-1}{n_j}, \tfrac{\ell}{n_j} \big] , \; \ell \in \{1,\ldots,n_j\} ,
\end{align*}
(resp.~$\lambda^{m,n_j}_t := n_j \big( \lambda^m_{\ell/n_j}  - \lambda^m_{(\ell-1)/n_j} \big)
\big( t - \tfrac{\ell-1}{n_j} \big) + \lambda^m_{(\ell-1)/n_j}$). 
On the one hand, in the limit $m \to \infty$, 
the piecewise linear driving functions $\lambda^{m,n_j}$ converge to $\lambda^{n_j}$ in $(\funs{1]},\normUniform{\cdot})$, each having finite Dirichlet energy. Hence, by continuity of the Loewner transform on the space of finite energy curves (see the proof of \cite[Lemma~2.7]{Peltola-Wang:LDP}), the middle term of~\eqref{eq:triangle} vanishes in the limit $m \to \infty$. 
On the other hand, by \Cref{thm:Hn-cap-Ln-bound} we can find $\zeta, N > 0$ such that for $j$ large enough so that $n_j \ge N$, 
the first and last terms in~\eqref{eq:triangle} are bounded from above by $n_j^{-\zeta}$. 

In conclusion, we obtain
\begin{align*}
\lim_{m \to \infty} \dHpathsUnparam(\gamma, \gamma^m) \le 2n_j^{-\zeta} 
\; \xrightarrow{j \to \infty} \; 0 .
\end{align*}
This shows that the Loewner transform $\cLoewnerTransform[1]$ maps the set
$(H(\vec n) \cap L(\vec n),\normUniform{\cdot})$ continuously to $(\Hpathscl[1],\dHpathsUnparam)$, 
and also to $(\Hpathscl[1],\dHpaths)$ by \Cref{thm:topology-independent-of-parameterization}.
\end{enumerate}

Finally, fix $M \in [0,\infty)$ and let $n_j = 2\lfloor e^{2Mj-2} \rfloor$. 
Let $\vec n = (n_j)_{j \ge j_0}$, where $j_0 \in \bN$ is chosen such that $n_{j_0} > N$ in \Cref{thm:Hn-cap-Ln-bound}.
Then, the union bound together with~\eqref{eqn:continuity-probability-estimate} gives 
\begin{align*}
\BMmeasure[1]{\kappa}[ \funs{1]} \smallsetminus (H(\vec n) \cap L(\vec n)) ] 
\le\;& \sum_{j=j_0}^\infty 
\BMmeasure[1]{\kappa}[ \funs{1]} \smallsetminus (H(n_j) \cap L(n_j)) ] \\
\lesssim \;& \sum_{j=j_0}^\infty \lfloor e^{2Mj-2} \rfloor^{2-\frac{1}{2\kappa}} 
\; \lesssim \; e^{-4(j_0-1)M}e^{-M/\kappa} , \qquad \kappa \in (0, 1/8).
\end{align*}
Increasing the value of $j_0$ if needed 
yields the bound~\eqref{eq:HL_bound} and concludes the proof. 
\end{proof}

\Cref{thm:compact-sets} gives exponential tightness of the chordal measures $(\cHmeasure[1]{\kappa})_{\kappa > 0}$ in $\Hpathscl[1]$, 
which would already imply a version of \Cref{thm:radial-LDP-finite} in the chordal case. 
To treat the radial case, we will have to improve the exponential tightness to hold in the space $\Hpaths[1]$ of \emph{simple curves} 
(since conformal concatenation of curves is a continuous operation only for simple curves, cf.~\Cref{rem:convergence-of-concatenations} in \Cref{app:conformal concatenation}, 
and we will use this to derive the radial exponential tightness from the chordal case). 
We establish this by intersecting the closed set $H(\vec n) \cap L(\vec n) \subset \funs{1]}$ with another closed set 
$X \subset \funs{1]}$, constructed in 
\Cref{sec:proof-of-exponentially-tight-function-set}.

\begin{restatable}{lem}{tightFunctionset}\label{thm:exponentially-tight-function-set}
For each $M \in [0,\infty)$, there exists a closed set $X = X(M) \subset \funs{1]}$ such~that
\begin{align}\label{eq:exponentially-tight-function-set}
\limsup_{\kappa \to 0+}\kappa\log\BMmeasure[1]{\kappa}[ \funs{1]} \smallsetminus X] \le -M 
\end{align}
and such that if the hulls $(\cLoewnerTransform[t](\lambda|_{[0,t]}))_{t \in [0,1]}$ for $\lambda \in X$ are generated by a curve $\gamma$, 
then $\gamma \in \Hpaths[1]$, i.e., the curve $\gamma$ is simple: 
injective in $\overline{\bH}$ and only touches $\bR$ at its starting point. 
\end{restatable}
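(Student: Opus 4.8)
The plan is to build $X$ as the event that the Loewner flow of $\lambda$ never ``re‑closes'': a Loewner‑generated curve $\gamma$ in $\overline{\bH}$ started at $0$ is simple and meets $\bR$ only at $0$ precisely when no boundary point, and no previously visited point of $\gamma$, gets swallowed a second time, and the relevant bookkeeping processes are Bessel processes of dimension $d(\kappa)=1+4/\kappa$, strongly transient as $\kappa\to 0+$. It is the transience exponent $d(\kappa)-2=4/\kappa-1\to\infty$ that produces a bound of order $e^{-M/\kappa}$. Concretely, for a dyadic time $\sigma\in[0,1)$ run the Loewner flow of $\lambda$ restarted at time $\sigma$ (in the coordinates uniformized at time $\sigma$) and let $Z^{\sigma,\pm}_t(\lambda)$, $t\in[\sigma,1]$, be the left/right boundary endpoints of the hull grown over $[\sigma,t]$, recentered at $\lambda_t$ --- equivalently $g_t\big(g_\sigma^{-1}(\lambda_\sigma^{\pm})\big)-\lambda_t$, the images of the two prime ends of the tip at time $\sigma$, when a curve exists. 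These are defined from $\lambda$ alone, equal $0$ at $t=\sigma$, and under $\BMmeasure[1]{\kappa}$ each $|Z^{\sigma,\pm}|/\sqrt\kappa$ is, after the standard time change, a Bessel$(1+4/\kappa)$ process from $0$. Similarly, for a rational $q\in\bR\smallsetminus\{0\}$ put $Z^{q}_t(\lambda):=g_t(q)-\lambda_t$, a Bessel$(1+4/\kappa)$ from $|q|/\sqrt\kappa$. The point is that if $\gamma$ exists and is not simple, then either $\gamma$ touches $\bR$ away from $0$, forcing some $Z^{q}$ to hit $0$, or $\gamma$ re‑touches an earlier point $\gamma(\sigma_0)$ --- enclosing a genuine region, since a Loewner chain cannot retrace a slit --- forcing $Z^{\sigma_0,\pm}$ to reach $0$ and, by continuity of $\gamma$ and of the flow near the tip, $Z^{\sigma,\pm}$ to become very small for dyadic $\sigma$ near $\sigma_0$ as well.

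\textbf{Definition of $X$ and the two easy points.} Fix a small $\rho=\rho(M)\in(0,1)$ and let $X$ be the set of $\lambda\in\funs{1]}$ such that, for every $j\ge 0$: for every dyadic $\sigma\in 2^{-j}\bZ\cap[0,1)$ and every $t\in[\sigma+2^{-j-1},1]$ one has $|Z^{\sigma,\pm}_t(\lambda)|\ge\rho\,2^{-3j/2}$ (read as violated once the endpoint has been swallowed), together with the analogous $\rho$‑proportional lower bounds for the $Z^{q}$ along a scale‑adapted grid of rationals $q$ in a fixed ball $|q|\le 3$. This is a countable intersection of closed conditions: away from swallowing times the Loewner flow depends continuously on $\lambda$ in $\normUniform{\cdot}$ (Gr\"onwall applied to the Loewner ODE), and swallowing times are lower semicontinuous in $\lambda$, so each condition passes to uniform limits and $X$ is closed. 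For the probability bound, a union bound over $j$ and over the $\le 2^{j}$ dyadic $\sigma$ of level $j$ reduces matters to estimates of the form $\mathbb{P}\big[\inf_{s\ge 2^{-j-1}}V_s<\rho\,2^{-3j/2}/\sqrt\kappa\big]$ with $V$ a Bessel$(1+4/\kappa)$ from $0$; self‑similarity of Bessel‑from‑$0$ and the transient hitting formula $\mathbb{P}_a[\text{hit }b]=(b/a)^{d-2}$ (together with $\Gamma$‑function bounds for the law of $V$ at a fixed time) bound each such term by $\lesssim(\rho\,2^{-j})^{4/\kappa-1}$, and $\sum_j 2^{j}(\rho\,2^{-j})^{4/\kappa-1}=\sum_j 2^{j(2-4/\kappa)}\rho^{4/\kappa-1}$ converges for $\kappa$ small to at most $C\rho^{-1}e^{-(4\log(1/\rho))/\kappa}$; choosing $\rho$ with $4\log(1/\rho)>M$ yields \eqref{eq:exponentially-tight-function-set}. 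The real‑point terms are summed the same way, with thresholds shrinking across the grid to keep the double sum (over location scale and resolution scale) finite. Also, a swallowed rational $q$ gives $\inf_t|Z^{q}_t|=0$, which contradicts $\lambda\in X$, so a curve from a driver in $X$ never touches $\bR\smallsetminus\{0\}$.

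\textbf{The main obstacle.} The delicate implication is that $\lambda\in X$ together with the existence of a generating curve $\gamma$ forces $\gamma$ to be injective (hence $\gamma\in\Hpaths[1]$). One must convert an actual self‑touch of $\gamma$ at a \emph{possibly non‑dyadic} time $\sigma_0$ into a genuine violation of the threshold $\rho\,2^{-3j/2}$ at some dyadic level $j$; this requires controlling \emph{how fast} $Z^{\sigma,\pm}$ approaches $0$ as $\sigma$ ranges over dyadics approaching $\sigma_0$ --- equivalently, bounding the harmonic measure / conformal distortion of a short sub‑arc of $\gamma$ near the tip --- \emph{uniformly over all scales}, and it is exactly here that the space‑ and time‑uniform Bessel estimates (with the accompanying distortion bounds) referred to in the introduction are needed. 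Simultaneously tuning the threshold exponents ($2^{-3j/2}$, the waiting time $2^{-j-1}$, and the rational grid) so that both this step and the probability estimate go through is the crux; the continuity and measurability bookkeeping around these processes is routine by comparison.
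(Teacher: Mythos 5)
Your setup is close in spirit to the paper's (track Bessel-type images of boundary data under the Loewner flow and forbid them from approaching $0$, with the exponent $4/\kappa-1$ supplying the $e^{-M/\kappa}$ decay), but the proposal is incomplete at precisely the step that carries the content of the lemma: the deterministic implication that $\lambda\in X$ together with the existence of a generating curve forces $\gamma\in\Hpaths[1]$. You explicitly label this ``the main obstacle'' and ``the crux'' and defer it to unspecified uniform Bessel/distortion estimates and a tuning of the exponents $2^{-3j/2}$, $2^{-j-1}$; no argument is given that a self-touch at a non-dyadic time violates your dyadic thresholds, and everything else in your outline is a routine Bessel computation. Moreover, the difficulty you describe (controlling \emph{how fast} $Z^{\sigma,\pm}$ approaches $0$ as dyadic $\sigma$ approaches the touch time) is not the right one: if $\gamma(a)=\gamma(b)$ with $\gamma$ simple before $b$, then for \emph{every} $\sigma\in(a,b)$ the side of the tip facing the Jordan domain bounded by $\gamma[a,b]$ is genuinely swallowed by time $b\le 1$ (its image has $\liminf$ equal to $0$), so with your own convention ``violated once swallowed'' no quantitative rate is needed at all --- but this prime-end/swallowing argument, together with a first-bad-time reduction, the closedness of the event under that convention, and the check that the convention costs nothing probabilistically, is exactly what would have to be written out, and it is absent.

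For comparison, the paper avoids your obstacle by not discretizing: its set $X$ is defined (see \eqref{eqn:Xset}) by requiring $|\bessel[s]^x_u|\ge\delta_n$ uniformly over \emph{all} restart times $s\in[0,1]$, \emph{all} $x\in\bR\smallsetminus\{0\}$, and all $u\ge 1/n$. Then the implication is immediate: a self-intersection $\gamma(a)=\gamma(b)$ yields, for an intermediate time $s\in(a,b)$, a real point $x=g_s(\gamma(b))-\lambda_s\ne 0$ whose flow hits $0$ at time $b-s$, violating every positive threshold once $1/n<b-s$; a touch of $\bR\smallsetminus\{0\}$ is the case $s=0$. The entire burden is thereby shifted to the probabilistic side --- an estimate uniform over the continuum of $(s,x)$ --- which the paper proves (\Cref{thm:harmonic-measure-is-small-estimate}) via the Markov property on a grid of restart times of spacing $t/2$, monotonicity in $x$, and two complementary Bessel bounds: the process cannot stay below $\epsilon$ for a macroscopic time uniformly in $x$ (\Cref{thm:harmonic-measure-stays-small-estimate}), and after exceeding $\epsilon$ it cannot return below $\epsilon e^{-M/2}$ (\Cref{thm:harmonic-measure-gets-small-estimate}). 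Your countable (dyadic/rational) event makes closedness and the union bound easy, but as submitted the central implication is missing, so the proof does not go through; either complete it along the swallowing argument sketched above, or adopt the paper's continuum formulation and prove the uniform estimate instead.
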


We defer the proof of \Cref{thm:exponentially-tight-function-set} to \Cref{sec:proof-of-exponentially-tight-function-set}, where we analyze the behavior of suitable Bessel processes. 
As a technical point, let us emphasize that the set $X$, 
constructed in \Cref{sec:proof-of-exponentially-tight-function-set}, 
may contain driving functions whose corresponding hulls are not generated by curves 
(e.g., spirals~\cite[Section~5.4, Figure~5.9]{Beliaev:Conformal_maps_and_geometry}). 
If we were to restrict $X$ to those driving functions giving rise to hulls generated by curves, 
the corresponding simple curves may not form a closed set in $\Hpaths[1]$ 
(viz.~the ``Christmas tree'' example in~\cite[Figure~6.1]{Kemppainen:SLE_book}). 
Nevertheless, the above \Cref{thm:compact-sets} provides just enough continuity properties of the Loewner transform for us to use \Cref{thm:exponentially-tight-function-set} 
to improve the exponential tightness of $(\cHmeasure[1]{\kappa})_{\kappa > 0}$ to simple curves.

\begin{prop} \label{thm:exponential-tightness-chordal}
\textnormal{(\Cref{thm:exponential-tightness}, chordal case){\bf.}}
Fix $T \in (0,\infty)$. 
The family $(\cHmeasure[T]{\kappa})_{\kappa > 0}$ of laws of the $\cSLE[\kappa]$ curves viewed as probability measures on $(\Hpaths[T],\dHpaths)$ is exponentially tight:
for each $M \in [0,\infty)$, there exists a compact set $\compact = \compact(M) \subset \Hpaths[T]$ such that
\begin{align*}
\limsup_{\kappa \to 0+}\kappa\log\cHmeasure[T]{\kappa}[\Hpaths[T] \smallsetminus \compact] < -M.
\end{align*}
\end{prop}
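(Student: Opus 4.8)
The plan is to prove the statement first for $T=1$ and then transfer it to an arbitrary finite time by Brownian scale invariance of chordal $\SLE[\kappa]$. For $T=1$, the strategy is to intersect the two families of drivers already produced in this section --- the compactness-carrying set $H(\vec n)\cap L(\vec n)$ of \Cref{thm:compact-sets}, on which $\cLoewnerTransform[1]$ is continuous into $(\Hpathscl[1],\dHpaths)$, and the simplicity-forcing closed set $X$ of \Cref{thm:exponentially-tight-function-set} --- and to push the intersection through the Loewner transform. Concretely, fixing $M\in[0,\infty)$, I would invoke \Cref{thm:compact-sets} at level $M+1$ to obtain an increasing sequence $\vec n$ with
\[
\BMmeasure[1]{\kappa}\big[\funs{1]}\smallsetminus(H(\vec n)\cap L(\vec n))\big]\;\le\; e^{-(M+1)/\kappa}, \qquad \kappa\in(0,1/4),
\]
and \Cref{thm:exponentially-tight-function-set} at level $M+1$ to obtain a closed set $X=X(M+1)\subset\funs{1]}$ with $\limsup_{\kappa\to 0+}\kappa\log\BMmeasure[1]{\kappa}[\funs{1]}\smallsetminus X]\le-(M+1)$. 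I then set $A:=H(\vec n)\cap L(\vec n)\cap X$; this is closed and contained in the compact set $H(\vec n)$, hence compact, and $\compact:=\cLoewnerTransform[1](A)$ is a compact subset of $(\Hpathscl[1],\dHpaths)$ as the continuous image (by \Cref{thm:compact-sets}\ref{item:continuity}) of a compact set.

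Next I would verify that every element of $\compact$ is a \emph{simple} curve, i.e., lies in $\Hpaths[1]$. For $\lambda\in A$ one has $\lambda\in H(n_k)\cap L(n_k)$ for all $k$, so the Lawler--Viklund/Tran regularity recalled before \Cref{thm:chordal-event} guarantees that the hulls driven by $\lambda$ are generated by a curve in $\Hpathscl[1]$; since also $\lambda\in X$, \Cref{thm:exponentially-tight-function-set} forces that curve to be simple. Hence $\compact\subset\Hpaths[1]$, and a compact subset of $(\Hpathscl[1],\dHpaths)$ lying inside $\Hpaths[1]$ is compact for the subspace topology. For the probability estimate I would use that, since $\kappa\le 4$, the $\cSLE[\kappa]$ curve up to capacity $1$ is a.s.\ a simple curve generated by the hulls of $\sqrt\kappa B|_{[0,1]}$, so $\cHmeasure[1]{\kappa}=\BMmeasure[1]{\kappa}\circ\cLoewnerTransform[1]^{-1}$ on $\Hpaths[1]$; since $\lambda\in A$ implies $\cLoewnerTransform[1](\lambda)\in\compact$,
\begin{align*}
\cHmeasure[1]{\kappa}[\Hpaths[1]\smallsetminus\compact] \;\le\;& \BMmeasure[1]{\kappa}[\funs{1]}\smallsetminus A] \\
\;\le\;& \BMmeasure[1]{\kappa}\big[\funs{1]}\smallsetminus(H(\vec n)\cap L(\vec n))\big] + \BMmeasure[1]{\kappa}[\funs{1]}\smallsetminus X] ,
\end{align*}
and taking $\limsup_{\kappa\to0+}\kappa\log(\cdot)$, using that $\limsup_{\kappa\to0+}\kappa\log(a_\kappa+b_\kappa)=\max(\limsup_{\kappa\to0+}\kappa\log a_\kappa,\limsup_{\kappa\to0+}\kappa\log b_\kappa)$ together with the two bounds above, gives $\limsup_{\kappa\to0+}\kappa\log\cHmeasure[1]{\kappa}[\Hpaths[1]\smallsetminus\compact]\le-(M+1)<-M$, which is the claim for $T=1$.

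To pass to a general $T\in(0,\infty)$, I would use the dilation $\sigma_T\colon\gamma\mapsto\big(t\mapsto\sqrt T\,\gamma(t/T)\big)$, which is a bijection $\Hpaths[1]\to\Hpaths[T]$: it reparametrizes time and replaces the driving function $\lambda$ of $\gamma$ by $t\mapsto\sqrt T\,\lambda_{t/T}$, multiplying half-plane capacity by $T$; and since $\sqrt T\,(\sqrt\kappa B)_{\cdot/T}$ has the same law as $\sqrt\kappa B$, one gets $\cHmeasure[T]{\kappa}=\cHmeasure[1]{\kappa}\circ\sigma_T^{-1}$. The spatial map $z\mapsto\sqrt T z$ is an automorphism of $\overline\bH$ fixing $0$ and $\infty$; conjugating it by $\Phi_\bH$ yields a M\"obius automorphism of $\bD$, which is bi-Lipschitz on $\overline\bD$, so $z\mapsto\sqrt T z$ is bi-Lipschitz for $\dH$ on $\overline\bH$, and hence $\sigma_T$ is bi-Lipschitz for $\dHpaths$. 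In particular $\sigma_T(\compact)$ is a compact subset of $\Hpaths[T]$ with $\cHmeasure[T]{\kappa}[\Hpaths[T]\smallsetminus\sigma_T(\compact)]=\cHmeasure[1]{\kappa}[\Hpaths[1]\smallsetminus\compact]$, so the $T=1$ bound transfers verbatim, with $\compact(M):=\sigma_T(\compact)$.

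I expect the only genuinely delicate point of the whole argument to be located entirely inside \Cref{thm:exponentially-tight-function-set}: its (deferred) proof must produce a \emph{closed}, exponentially-high-probability set of drivers whose induced hulls, whenever they are generated by curves at all, are \emph{simple} --- and one should keep in mind that such an $X$ may still contain drivers producing non-curve hulls (e.g., spirals), which is precisely why the intersection with $H(\vec n)\cap L(\vec n)$, guaranteeing curve-generation, is needed rather than any attempt to characterize $X$ in terms of curves directly. Given that input, the present proposition reduces to the bookkeeping above, together with the compactness and continuity properties of the Loewner transform already assembled in \Cref{thm:compact-sets} and the elementary scaling relation.
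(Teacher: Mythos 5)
Your proposal is correct and follows essentially the same route as the paper's proof: intersect $H(\vec n)\cap L(\vec n)$ (compactness and continuity of $\cLoewnerTransform[1]$ from \Cref{thm:compact-sets}) with the closed set $X$ from \Cref{thm:exponentially-tight-function-set}, push forward, check simplicity and compactness, apply the union bound, and reduce general $T$ to $T=1$ by scale invariance. The only differences are cosmetic: you run the estimates at level $M+1$ to secure the strict inequality and you spell out the scaling map explicitly, both of which the paper leaves implicit.
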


\begin{proof}
By scale-invariance of $\cSLE[\kappa]$, it suffices to show the exponential tightness for $T=1$.
Using \Cref{thm:compact-sets}, we can find an increasing sequence $\vec n = (n_k)_{k \in \bN}$ such that
\begin{align*}
\limsup_{\kappa \to 0+}\kappa\log\BMmeasure[1]{\kappa}[ \funs{1]} \smallsetminus (H(\vec n) \cap L(\vec n))] \le -M.
\end{align*}
Let $X = X(M)$ be as in \Cref{thm:exponentially-tight-function-set}, and define
\begin{align*}
\compact := \cLoewnerTransform[1]( H(\vec n) \cap L(\vec n) \cap X).
\end{align*}
By \Cref{thm:compact-sets}, the hulls with driving functions in $H(\vec n) \cap L(\vec n)$ are generated by a curve.
Hence, by \Cref{thm:exponentially-tight-function-set} we deduce that $\compact \subset \Hpaths[1]$. 
As $H(\vec n) \cap L(\vec n) \cap X$ is a closed subset of the compact set $H(\vec n) \cap L(\vec n)$, 
by \Cref{thm:compact-sets}\ref{item:continuity} we conclude that $\compact$ is a compact subset of $\Hpaths[1]$. 
Finally, note that
\begin{align*}
\cHmeasure[1]{\kappa}[\Hpaths[1] \smallsetminus \compact] 
\; \le \; \BMmeasure[1]{\kappa}[ \funs{1]} \smallsetminus (H(\vec n) \cap L(\vec n))] 
\; + \; \BMmeasure[1]{\kappa}[ \funs{1]} \smallsetminus X],
\end{align*} 
where both probabilities on the right decay exponentially with rate $M$ as $\kappa \to 0$.
\end{proof}

\subsection{Bessel processes and the proof of \Cref{thm:exponentially-tight-function-set}} 
\label{sec:proof-of-exponentially-tight-function-set}

The main observation to construct the closed set $X \subset \funs{1]}$ for 
\Cref{thm:exponentially-tight-function-set} is that 
a curve $\gamma \in \Hpathscl[1]$ not being simple is equivalent with 
the existence of a time $t \in (0,1]$ such that $\gamma(t) \in \bR\cup\gamma[0,t)$. After a possible time-shift, 
this can be interpreted in terms of a suitable Bessel process $(\bessel^x_t)_{t \geq 0}$ started at some $\bessel^x_0 = x \in \bR \smallsetminus \{0\}$ hitting zero 
(i.e.,~the growing hulls swallowing the point $x$).  
For basic properties of Bessel processes and their applications to SLE theory, see, e.g., the textbooks~\cite{Lawler:Conformally_invariant_processes_in_the_plane,Kemppainen:SLE_book, Lawler:SLE_book_draft}. 

For a chordal Loewner flow $(g_t)_{t \ge 0}$ solving
the second equation in~\eqref{eq:Loewner equation} with driving function $W = \lambda$, 
we write $\hat g_t(\cdot) := g_t(\cdot) - \lambda_t$ for the \emph{centered Loewner map}, 
and $\hat f_t = \hat g_t^{-1}$ for its inverse. 
For each $x \in \bR$, up to its swallowing time 
\begin{align*} 
T_x := \inf \Big\{t \ge 0 \colon \liminf_{s \to t-} |g_s(x) - \lambda_s|  = 0  \Big\}
= \inf \Big\{t \ge 0 \colon \liminf_{s \to t-} |\bessel^x_s|  = 0  \Big\} ,
\end{align*}
we consider the process $\bessel^x_t := \hat g_t(x)$. 
When $\lambda = - \sqrt{\kappa} B$, it satisfies the \emph{Bessel SDE}
\begin{align*}
\ud\bessel^x_t = \frac{2}{\bessel^x_t}\ud t + \sqrt\kappa\ud B_t, \qquad \bessel^x_0 = x ,
\end{align*}
where $B = (B_t)_{t \ge 0}$ is a standard Brownian motion\footnote{Note that $- \sqrt{\kappa} B$ and $+ \sqrt{\kappa} B$ equal in distribution, the latter being the driving process of $\SLE[\kappa]$. 
The sign here is included for ease, as is customary for the usage of Bessel processes to analyze SLE curves.}. 
Note that the behavior of the process $\bessel^x = (\bessel^x_t)_{t \ge 0}$ depends heavily\footnote{For example, $\bessel^x$ almost surely hits zero when $\kappa > 4$ and avoids zero when $\kappa \leq 4$, which is key to proving the simple/non-simple curve phase transition for $\SLE[\kappa]$ in~\cite{Rohde-Schramm:Basic_properties_of_SLE}.} 
on the parameter $\kappa > 0$. 
In particular, as $\kappa \to 0$, it is exponentially unlikely for the Bessel process started away from zero to get close to zero. 

\begin{lem}\label{thm:harmonic-measure-gets-small-estimate}
Fix $a > 0$ and $x \in \bR \smallsetminus \{0\}$. 
Consider the process 
$\bessel^x = (\bessel^x_t)_{t \ge 0}$ satisfying 
\begin{align} \label{eq:Bessel_general}
\ud\bessel^x_t = \frac{a}{\bessel^x_t}\ud t + \sqrt\kappa\ud B_t, \qquad \bessel^x_0 = x .
\end{align}
For each $M \in [0,\infty)$, we have
\begin{align}\label{eq:harmonic-measure-gets-small-estimate}
\BMmeasure{\kappa}\Big[\inf_{t \in [0,\infty)} |\bessel^x_t| \le |x| \, e^{-M/a}\Big] \le e^{-M/\kappa} , \qquad \kappa \in (0,a] .
\end{align}
\end{lem}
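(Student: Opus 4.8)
The plan is to exploit the scale function of the Bessel-type diffusion $\bessel^x$, which turns the estimate into a classical level-hitting bound for a non-negative local martingale. First I would reduce to $x > 0$: replacing the driving Brownian motion $B$ by $-B$ (same law) and $\bessel^x$ by $-\bessel^x$ shows, via~\eqref{eq:Bessel_general}, that $-\bessel^x$ solves the same SDE started from $|x| > 0$, so $\inf_{t \ge 0} |\bessel^x_t|$ has the same law as $\inf_{t \ge 0} \bessel^{|x|}_t$ and it suffices to take $x > 0$. Since $\kappa \le a$, after the linear time change $t \mapsto t/\kappa$ the process~\eqref{eq:Bessel_general} is a Bessel process of dimension $d = 1 + 2a/\kappa \ge 3$, which is transient and a.s.\ never hits the origin; in particular $\bessel^x_t$ is defined and strictly positive for all $t \ge 0$ (so $|\bessel^x_t| = \bessel^x_t$), $\BMmeasure{\kappa}$-a.s.

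Next, set $p := 1 - \tfrac{2a}{\kappa}$, so that $p \le -1 < 0$ for all $\kappa \in (0,a]$. Itô's formula applied to $\theta \mapsto \theta^p$ along~\eqref{eq:Bessel_general} (with $\ud \langle \bessel^x \rangle_t = \kappa \, \ud t$) gives
\begin{equation*}
\ud \big( (\bessel^x_t)^p \big)
\; = \; p \, (\bessel^x_t)^{p-2} \Big( a + \tfrac{\kappa}{2}(p-1) \Big) \ud t
\; + \; p \sqrt{\kappa} \, (\bessel^x_t)^{p-1} \ud B_t ,
\end{equation*}
and the drift coefficient $a + \tfrac{\kappa}{2}(p-1)$ vanishes by the choice of $p$. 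Hence $\big( (\bessel^x_t)^p \big)_{t \ge 0}$ is a continuous local martingale, and being non-negative it is a supermartingale started from $x^p$. (This $\theta^p$ is, of course, just the scale function of the corresponding Bessel process.)

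Finally, fix $\ell \in (0, x)$ and let $\tau_\ell := \inf\{t \ge 0 \colon \bessel^x_t = \ell\}$. Optional stopping for the non-negative supermartingale $(\bessel^x_t)^p$ at the bounded time $\tau_\ell \wedge t$, together with $(\bessel^x_{\tau_\ell})^p = \ell^p$ on $\{\tau_\ell \le t\}$ and then $t \to \infty$, yields $\BMmeasure{\kappa}[\tau_\ell < \infty] \le (x/\ell)^p = (\ell/x)^{2a/\kappa - 1}$. By continuity of $\bessel^x$ and the intermediate value theorem, $\{\inf_{t} |\bessel^x_t| \le |x| e^{-M/a}\} \subseteq \bigcap_{\varepsilon > 0} \{\tau_{|x| e^{-M/a} + \varepsilon} < \infty\}$, so applying the hitting bound with $\ell = |x| e^{-M/a} + \varepsilon$ and sending $\varepsilon \to 0+$ gives
\begin{equation*}
\BMmeasure{\kappa}\Big[ \inf_{t \in [0,\infty)} |\bessel^x_t| \le |x| \, e^{-M/a} \Big]
\; \le \; \big( e^{-M/a} \big)^{2a/\kappa - 1}
\; = \; e^{-2M/\kappa + M/a}
\; \le \; e^{-M/\kappa} ,
\end{equation*}
where the last step uses $\kappa \le a$, hence $M/a \le M/\kappa$ (the case $M = 0$ being trivial). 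This is a short classical computation; I do not expect a real obstacle, only minor care in passing from the local martingale to a genuine supermartingale (via non-negativity) and in the limiting argument that replaces the open level-hitting events by the closed event in the statement.
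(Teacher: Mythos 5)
Your proof is correct and follows essentially the same route as the paper: reduce to the hitting probability of the level $\delta = |x|e^{-M/a}$ for the Bessel-type process and conclude with the same computation $(e^{-M/a})^{2a/\kappa-1} \le e^{-M/\kappa}$ using $\kappa \le a$. The only difference is that the paper simply cites the classical formula $\BMmeasure{\kappa}[\tilde\tau^x_\delta < \infty] = (\delta/|x|)^{2a/\kappa-1}$ (after a time change to a standard Bessel process), whereas you re-derive the (one-sided) bound via the scale function, Itô's formula, and optional stopping for the non-negative supermartingale $(\bessel^x_t)^{1-2a/\kappa}$ --- a self-contained substitute for the cited fact.
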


\begin{proof}
For $\delta < |x|$, consider the stopping time 
$\tilde\tau^x_\delta := \inf\{t \ge 0 \colon |\tilde \bessel^x_t| = \delta\}$ for 
\begin{align*}
\tilde \bessel^x_t = \bessel^x_{t/\kappa}
= x + \int_0^t \frac{a/\kappa}{\bessel^x_s}\ud s + \int_0^t  \ud \tilde B_s,
\end{align*}
where $\tilde B_s = \sqrt{\kappa}B_{s/\kappa}$ is a standard Brownian motion. 
Since 
$\BMmeasure{\kappa}[\tilde\tau^x_\delta < \infty] = \big(\delta/|x|\big)^{\frac{2a}{\kappa}-1}$ for all $\kappa < 2a$ 
(see, e.g.,~\cite{Lawler:Conformally_invariant_processes_in_the_plane,Kemppainen:SLE_book, Lawler:SLE_book_draft}), 
we see that choosing $\delta = |x| \, e^{-M/a}$ gives 
\begin{align*}
\BMmeasure{\kappa}\Big[\inf_{t \in [0,\infty)}|\bessel^x_t| \le \delta\Big] \; \le \; \BMmeasure{\kappa}[\tilde\tau^x_\delta < \infty] 
\; \le \; e^{-M/\kappa} ,
\qquad \kappa \in (0,a] ,
\end{align*}
which is the claimed inequality.
\end{proof}

To analyze the event that no point is swallowed by the Loewner evolution, 
we will have to consider arbitrary starting points $x \in \bR \smallsetminus \{0\}$. 
In particular, there is, of course, no hope of controlling 
the probability of $\bessel^x$ coming close to zero uniformly on $x$. 
Note, however, that if $0 < |x| \leq |y|$, then $|\bessel^x_t| \leq |\bessel^y_t|$ for all $t \geq 0$.
Moreover, for small $\kappa$, the process $\bessel^x$ will escape away from zero with very high probability.

\begin{lem}\label{thm:harmonic-measure-stays-small-estimate}
For each $\epsilon > 0$ and $t \in (\tfrac{\epsilon^2}{2} ,\infty)$, we have
\begin{align}\label{eq:harmonic-measure-stays-small-estimate}
\BMmeasure{\kappa} \Big[ \max_{s \in [0,t]} |\bessel^x_s| < \epsilon\Big] 
\leq \sqrt{\frac{\kappa t}{2\pi}} \, \bigg(\frac{\epsilon}{2t-\epsilon^2}\bigg) \, \exp \Big(\!\! -\frac{(2t/\epsilon - \epsilon)^2}{2 \kappa t} \Big) , \qquad \kappa \in (0,4] ,
\end{align}
uniformly over $x \in \bR \smallsetminus \{0\}$.
\end{lem}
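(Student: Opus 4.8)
The plan is to translate the event $E := \{\max_{s\in[0,t]}|\bessel^x_s| < \epsilon\}$ into a one-sided bound on the driving Brownian motion at the single time $t$, and then apply a Gaussian tail estimate. First I would reduce to the case $x > 0$ by symmetry: $-\bessel^x$ solves the same SDE as $\bessel^{-x}$ with $-B$ in place of $B$, and $-B$ is again a standard Brownian motion, so $\max_{s\in[0,t]}|\bessel^x_s|$ and $\max_{s\in[0,t]}|\bessel^{-x}_s|$ have the same law. Since $\kappa \le 4$, after the scaling $\bessel^x = \sqrt\kappa\,\rho$ the process $\rho$ is a Bessel process of dimension $1 + 4/\kappa \ge 2$ started at $x/\sqrt\kappa > 0$, hence almost surely never reaches $0$; consequently, on $E$ we have $0 < \bessel^x_s < \epsilon$ for every $s \in [0,t]$.

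The key step is to exploit that on $E$ the positive drift is bounded below: $2/\bessel^x_u > 2/\epsilon$ for all $u \in [0,t]$. Integrating the Bessel SDE $\ud\bessel^x_s = (2/\bessel^x_s)\,\ud s + \sqrt\kappa\,\ud B_s$ then gives, on $E$,
\[
\bessel^x_s \;=\; x + \int_0^s \frac{2}{\bessel^x_u}\,\ud u + \sqrt\kappa\, B_s \;\ge\; \frac{2s}{\epsilon} + \sqrt\kappa\, B_s, \qquad s \in [0,t],
\]
so combining with $\bessel^x_s < \epsilon$ we get $\sqrt\kappa\, B_s < \epsilon - 2s/\epsilon$ for all $s\in[0,t]$; evaluating at $s = t$ and setting $b := 2t/\epsilon - \epsilon$ — which is positive exactly because $t > \epsilon^2/2$ — yields $E \subseteq \{\sqrt\kappa\, B_t < -b\}$. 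It is worth stressing that the only information about $x$ used here is the upper bound $\bessel^x_u < \epsilon$, which is part of the event itself; hence this inclusion, and therefore the resulting estimate, is uniform over $x \in \bR\smallsetminus\{0\}$ (and trivially $E = \emptyset$ when $x \ge \epsilon$).

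For the last step, $\sqrt\kappa\, B_t$ is centred Gaussian with variance $\kappa t$, so the elementary Mills-ratio bound $\BMmeasure{\kappa}[\sqrt\kappa\, B_t > c] \le \tfrac{\sqrt{\kappa t}}{c\sqrt{2\pi}}\,e^{-c^2/(2\kappa t)}$ for $c > 0$, applied with $c = b$, gives
\[
\BMmeasure{\kappa}[E] \;\le\; \BMmeasure{\kappa}[\sqrt\kappa\, B_t < -b] \;=\; \BMmeasure{\kappa}[\sqrt\kappa\, B_t > b] \;\le\; \frac{\sqrt{\kappa t}}{b\sqrt{2\pi}}\, \exp\Big(\! -\frac{b^2}{2\kappa t}\Big),
\]
and substituting $b = (2t-\epsilon^2)/\epsilon = 2t/\epsilon - \epsilon$ (so that $1/b = \epsilon/(2t-\epsilon^2)$ and $b^2 = (2t/\epsilon - \epsilon)^2$) reproduces exactly~\eqref{eq:harmonic-measure-stays-small-estimate}. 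This argument is elementary; the only points needing (routine) care are the positivity of $\bessel^x$ for $\kappa \le 4$, which licenses replacing $|\bessel^x_s|$ by $\bessel^x_s \in (0,\epsilon)$ on $E$, and the observation that the drift lower bound invokes only the event and not the starting point, so that the estimate is genuinely uniform in $x$ — precisely the feature that will be needed when this bound is combined with a union bound over a lattice of swallowing candidates $x$ in the proof of \Cref{thm:exponentially-tight-function-set}.
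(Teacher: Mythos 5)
Your proof is correct and follows essentially the same route as the paper's: reduce to $x>0$ by symmetry, use the drift lower bound $2/\bessel^x_u > 2/\epsilon$ on the event together with $\bessel^x_t < \epsilon$ to deduce $\sqrt{\kappa}B_t < -(2t/\epsilon - \epsilon)$, and conclude with the standard Gaussian tail estimate. The remarks on positivity of the Bessel process for $\kappa\le 4$ and on uniformity in $x$ match the paper's footnote and intended use.
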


\begin{proof}
By symmetry, we may assume that $x > 0$. 
On the event $\smash{\underset{s \in [0,t]}{\max} \, |\bessel^x_s| < \epsilon}$, we have\footnote{Here, we use the assumption $\kappa \le 4$ to ensure that $\bessel^x_s$ is well-defined for all $s \in [0,T]$.} 
\begin{align*}
\epsilon \; > \; \bessel^x_t 
\; = \; x + \int_0^t \frac{2}{\bessel^x_t}\ud s + \sqrt{\kappa} B_t 
\; \ge \; 2t/\epsilon + \sqrt{\kappa} B_t,
\end{align*}
which implies in particular that $N > \frac{2t-\epsilon^2}{\epsilon\sqrt{\kappa t}} > 0$, where $N := -\frac{1}{\sqrt{t}} B_t$ is a standard Gaussian random variable (by Brownian scaling).
The Gaussian tail estimate\footnote{The Gaussian tail estimate states that $\bP[N > x] \le \frac{\exp(-x^2/2)}{x\sqrt{2\pi}}$ for $x > 0$; see, e.g.~\cite[Lemma~12.9]{Morters-Peres:Brownian_motion}.} 
yields
\begin{align*}
\BMmeasure{\kappa}\Big[\max_{s \in [0, t]} |\bessel^x_s| < \epsilon\Big] \le\;& \BMmeasure{\kappa}\Big[ 
N > \frac{2t-\epsilon^2}{\epsilon\sqrt{\kappa t}}\Big] 
\; \le\; \sqrt{\frac{\kappa t}{2\pi}}\bigg(\frac{\epsilon}{2t-\epsilon^2}\bigg)\exp\bigg(\! \! -\frac{(2t/\epsilon - \epsilon)^2}{2 \kappa t}\bigg),
\end{align*}
which is the desired result.
\end{proof}

To consider Bessel processes under the Loewner evolution, 
we use time-shifts. 
For each $s \ge 0$, we write $\bessel[s]_t = \big( \bessel[s]^x_t \big)_{x \in \bR} := \big((\hat g_{t+s} \circ\hat f_s)(x)\big)_{x \in \bR}$, so that $\bessel[0] = (\bessel^x)_{x \in \bR}$ and
\begin{align}\label{eqn:bessel-time-relations}
\big( \bessel[s]^x_{u+(t-s)} \big)_{u \geq 0} 
= \big( \bessel[t]^{\bessel[s]^x_{t-s}}_u \big)_{u \geq 0}
\qquad \textnormal{for } 0 \leq s < t ,
\end{align}
where $\bessel[t]$ is distributed as a Bessel process solving~\eqref{eq:Bessel_general}, 
while by the Markov property, the process $(\bessel[t]_u)_{u \geq 0}$ is independent of the past $\bessel[s]|_{[0,t-s)}$.

The final auxiliary result needed for the proof of \Cref{thm:exponentially-tight-function-set} states that, as $\kappa \to 0$, 
it is exponentially unlikely for the Bessel process started 
at \emph{any} time $s$ and \emph{any} point $x$, 
after evolving a short time $t$, to get close to zero. 
On this event, the Loewner evolution does not swallow any point in its recent past.

\begin{lem}\label{thm:harmonic-measure-is-small-estimate}
Fix $T \in (0,\infty)$. 
For each $M \in [0,\infty)$ and $t \in (0,T)$, 
there exists a constant $\delta = \delta(M,T,t) > 0$ such that
\begin{align}\label{eq:harmonic-measure-is-small-estimate}
\BMmeasure{\kappa}\Big[\inf_{s \in [0,T]} \inf_{x \in \bR \smallsetminus \{0\}} \min_{u \in [t,T-s]} |\bessel[s]^x_u| < \delta \Big] 
		\le 2 e^{-M/\kappa} , \qquad \kappa \in (0,2).
	\end{align}
\end{lem}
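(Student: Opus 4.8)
The plan is to discretize the starting time $s$, transfer the event onto the grid via the time-shift relation~\eqref{eqn:bessel-time-relations}, reduce each piece to a single fresh system of Bessel processes by the Markov property, and then control that single-time event by combining \Cref{thm:harmonic-measure-stays-small-estimate} (the process leaves level $\epsilon$ within time $t$) with \Cref{thm:harmonic-measure-gets-small-estimate} (once at level $\epsilon$ it stays away from $0$ forever, with probability $1-e^{-\tilde M/\kappa}$). Concretely, I would fix $M,T,t$, pick $N \in \bN$ with $N \ge 2T/t$, and set $t' := t - T/N \in [t/2,t)$ and $s_j := jT/N$ for $j = 0,\dots,N$. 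Given $s \in [s_j,s_{j+1})$ and $x \in \bR \smallsetminus \{0\}$, the boundary point $w := \hat f_s(x)$ of $\bH \smallsetminus K_s$ is already a boundary point of $\bH \smallsetminus K_{s_j}$ whenever it was present at time $s_j$; otherwise it was created at a time in $[s_j,s)$, hence lies on the curve (real points are present from time $0$), and since $s - s_j < T/N < t$ it is, after evolving by time at least $t'$, a boundary point of $\bH \smallsetminus K_{s_{j+1}}$. Writing $z$ for the corresponding image of $w$ under $\hat g_{s_j}$ or $\hat g_{s_{j+1}}$, relation~\eqref{eqn:bessel-time-relations} re-expresses $\bessel[s]^x_u$ as $\bessel[s_j]^z_v$ or $\bessel[s_{j+1}]^z_v$ with $z \in \bR \smallsetminus \{0\}$ and evolved time $v \ge t'$ — this is exactly where the margin $T/N \le t - t'$ is spent, since the reference time shifts by at most $T/N$ and $u \ge t$ becomes $v \ge t'$. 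Thus, with $R_j := T - s_j$,
\begin{align*}
\Big\{\inf_{s \in [0,T]}\, \inf_{x \in \bR \smallsetminus \{0\}}\, \min_{u \in [t,T-s]} |\bessel[s]^x_u| < \delta \Big\}
\;\subseteq\; \bigcup_{j=0}^{N} \Big\{\inf_{z \in \bR \smallsetminus \{0\}}\, \min_{v \in [t',R_j]} |\bessel[s_j]^z_v| < \delta \Big\} .
\end{align*}

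For the single-time estimate, fix $j$ and abbreviate $R := R_j \le T$. By the Markov property at $s_j$ (the $s = 0$ instance of~\eqref{eqn:bessel-time-relations}), the family $(\bessel[s_j]^z_v)_{v \ge 0,\, z}$ has the law of a fresh family $(\bessel^z_v)_{v \ge 0,\, z}$ solving~\eqref{eq:Bessel_general} with $a = 2$. Since $0 < |z_1| \le |z_2|$ forces $|\bessel^{z_1}_v| \le |\bessel^{z_2}_v|$ pathwise and $z \mapsto \bessel^z$ is continuous, sending $z \to 0^{\pm}$ gives $\inf_{z \in \bR \smallsetminus \{0\}} \min_{v \in [t',R]} |\bessel^z_v| = \big(\min_{v \in [t',R]} |\bessel^{0+}_v|\big) \wedge \big(\min_{v \in [t',R]} |\bessel^{0-}_v|\big)$, where $\bessel^{0\pm}$ is the Bessel process started from $0^{\pm}$; for $\kappa \le 4$ this is $\sqrt\kappa$ times a Bessel process of dimension $1 + 4/\kappa \ge 2$, hence does not return to $0$ and satisfies the same integral equation, so \Cref{thm:harmonic-measure-stays-small-estimate} applies to it verbatim. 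By $\bessel^{0-} \overset{d}{=} -\bessel^{0+}$ it then suffices to bound $\BMmeasure{\kappa}\big[\min_{v \in [t',R]} \bessel^{0+}_v < \delta\big]$. On the event $\{\max_{v \in [0,t']} \bessel^{0+}_v < \epsilon\}$, \Cref{thm:harmonic-measure-stays-small-estimate} (with $t' > \epsilon^2/2$) bounds the probability by $\sqrt{\tfrac{\kappa t'}{2\pi}}\, \tfrac{\epsilon}{2t'-\epsilon^2}\, \exp\!\big(\!-\tfrac{(2t'/\epsilon - \epsilon)^2}{2\kappa t'}\big)$; on the complementary event, with $\sigma := \inf\{v \ge 0 : \bessel^{0+}_v = \epsilon\} \le t'$, the strong Markov property makes $(\bessel^{0+}_{\sigma + \cdot})$ a Bessel process from $\epsilon$ and $[t',R] \subseteq [\sigma, \infty)$, so \Cref{thm:harmonic-measure-gets-small-estimate} (with $a = 2$, $|x| = \epsilon$) gives probability $\le e^{-\tilde M/\kappa}$ that it ever drops to $\epsilon\, e^{-\tilde M/2}$. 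Setting $\delta := \epsilon\, e^{-\tilde M/2}$ and choosing $\epsilon = \epsilon(\tilde M, t')$ small enough that $\epsilon < \sqrt{2t'}$, that the polynomial prefactor is $\le 1$ for $\kappa \le 2$, and that $\tfrac{(2t'/\epsilon - \epsilon)^2}{2t'} \ge \tilde M$, each right-hand event above has probability $\le 4 e^{-\tilde M/\kappa}$ for $\kappa \in (0,2]$.

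Finally, summing over $j$ gives that the left-hand probability is $\le 4(N+1) e^{-\tilde M/\kappa}$, and taking $\tilde M := M + 2\log(2(N+1))$ — with $N = N(T,t)$ — makes this $\le 2 e^{-M/\kappa}$ for all $\kappa < 2$ (here $2/\kappa > 1$), which in turn fixes $\epsilon = \epsilon(M,T,t)$ and hence $\delta = \delta(M,T,t)$ as above. The delicate step, and the main obstacle, is the first one: \eqref{eqn:bessel-time-relations} rewrites $\bessel[s]^x$ through an earlier grid time only when $w = \hat f_s(x)$ is already a boundary point there, so one must carefully reroute the boundary points born between consecutive grid times through the next grid point, and check that the constraint $u \ge t$ survives the reference-time shift of size $\le T/N$ as $v \ge t' > 0$. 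A minor point is justifying that the limit process $\bessel^{0+}$ is a bona fide Bessel process to which \Cref{thm:harmonic-measure-stays-small-estimate} applies, which is where $\kappa \le 4$ is used; everything else is a union bound together with the two Bessel lemmas.
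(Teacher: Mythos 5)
Your proposal is correct and follows essentially the same route as the paper's proof: discretize the starting time $s$ on a grid of mesh at most $t/2$, transfer each starting point to a nearby grid time via the flow relation~\eqref{eqn:bessel-time-relations} and the (strong) Markov property, apply a union bound over grid times, and control the single-grid-time event by combining \Cref{thm:harmonic-measure-stays-small-estimate} (the process exceeds level $\epsilon$ within a short time) with \Cref{thm:harmonic-measure-gets-small-estimate} with $a=2$ (from level $\epsilon$ it never drops to $\delta = \epsilon e^{-\tilde M/2}$), finally tuning $\tilde M$, $\epsilon$, $\delta$ so the union bound yields $2e^{-M/\kappa}$. The only differences are cosmetic: the paper always reroutes forward to the next grid time (so no two-case analysis of whether the boundary point already existed at the earlier grid time is needed), and it handles the infimum over starting points by symmetry, monotonicity in $x$, and a monotone limit of probabilities as $x \to 0$, rather than by passing to the process started at $0^{\pm}$ as you do.
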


\begin{remark}\label{rem:improvement-of-constant}
Generally speaking, the estimate~\eqref{eq:harmonic-measure-is-small-estimate} concerns a collection $(E(\delta))_{\delta > 0}$ of events under a family $(\BMmeasure{\kappa})_{\kappa > 0}$ of probability measures, and we seek a constant $\kappa_0 > 0$ independent of $M \in [0,\infty)$ and a constant $\delta = \delta(M) > 0$ depending on $M$ such that
\begin{align*}
\BMmeasure{\kappa}[E(\delta)] \le c \, e^{-M/\kappa} , \qquad \kappa \in (0,\kappa_0) ,
\end{align*}
with some universal constant $c \in (0,\infty)$.  
This estimate can be readily improved to the following: 
for each $M' \in [0,\infty)$ and $\varepsilon \in (0,c)$, 
picking $M = M' - \kappa_0 \log (\frac{\varepsilon}{c}) > M'$, we obtain 
\begin{align*}
\BMmeasure{\kappa}[E(\delta')] 
\le \varepsilon \, e^{- M'/\kappa} 
, \qquad \kappa \in (0,\kappa_0) ,
\end{align*}
with the constant $\delta' = \delta_\varepsilon(M') := \delta(M) > 0$.
\end{remark}

\begin{proof}[Proof of \Cref{thm:harmonic-measure-is-small-estimate}]
We split the time interval $[0,T]$ into pieces of length $t/2$ thus:
\begin{align*}
[0,T] \; \subset \bigcup_{n=0}^{\lceil 2T/t\rceil} [t_n, t_{n+1}] , \qquad 
\textnormal{where} \quad t_n := \tfrac{nt}{2} ,
\quad \textnormal{so} \quad t_1 := \tfrac{t}{2} .
\end{align*}
On the event in question, there exists an integer $n \in \{1, \ldots, \lceil 2T/t\rceil\}$, time instances $s \in [t_{n-1}, t_n]$ and  $u \in [t, T-s]$, and a point $x \in \bR \smallsetminus \{0\}$ such that $|\bessel[s]^x_u| < \delta$. 
Therefore, by~\eqref{eqn:bessel-time-relations}, setting $y := \bessel[s]^x_{t_n-s}$ and 
$v := u-(t_n-s) \in [\frac{t}{2}, T]$ we thus obtain $|\bessel[t_n]^y_v| = |\bessel[s]^x_u| < \delta$. 
This implies that $\smash{\underset{y \in \bR\smallsetminus\{0\}}{\inf} \, \underset{v \in [\frac{t}{2}, T]}{\min} \, |\bessel[t_n]^y_v| < \delta}$. 
The union bound and (strong) Markov property then yield 
\begin{align*}
\;
\BMmeasure{\kappa}\Big[\inf_{s \in [0,T]} \inf_{x \in \bR \smallsetminus \{0\}} \min_{u \in [t,T-s]} |\bessel[s]^x_u| < \delta \Big] 
\; \le\;&  \sum_{n = 1}^{\lceil 2T/t\rceil}
\BMmeasure{\kappa}\Big[\inf_{y \in \bR \smallsetminus \{0\}} \min_{v \in [t/2,T]} |\bessel[t_n]^y_v| < \delta \Big] 
\\
=\;& \lceil 2T/t \rceil \, \BMmeasure{\kappa} \Big[\inf_{y \in \bR \smallsetminus \{0\}} \min_{v \in [t/2,T]} |\bessel^y_v| < \delta \Big] .
\end{align*}
Using the symmetry $\bessel^x_u \sim \bessel^{-x}_u$ and the monotonicity of $x \mapsto \bessel^x_u$ for $x > 0$, 
the union bound and monotone convergence theorem then yield
\begin{align}\label{eqn:harmonic-is-small}
	\BMmeasure{\kappa}\Big[\inf_{s \in [0,T]} \inf_{x \in \bR \smallsetminus \{0\}} \min_{u \in [t,T-s]} |\bessel[s]^x_u| < \delta \Big]  \le 2\,\lceil 2T/t \rceil\,\lim_{x \to 0}\BMmeasure{\kappa}\Big[\min_{u \in [t/2,T]} \bessel^x_u < \delta \Big].
\end{align}
It thus remains to bound the probability of the event on the right-hand side of~\eqref{eqn:harmonic-is-small} uniformly for small $x > 0$.
For $\epsilon > 0$, 
consider the stopping time $\tau^x_{\epsilon} := \inf\{s \ge 0 \colon |\bessel^x_s| \geq \epsilon\}$. 
Note that for  $0 < x < \epsilon$, we have 
$\tau^x_{\epsilon} = \inf\{s \ge 0 \colon |\bessel^x_s| = \epsilon\}$, 
and
\begin{align*}
\{ \tau^x_{\epsilon} > t/2 \}
= 
\Big\{ \max_{u \in [0,t/2]} |\bessel^x_u| < \epsilon \Big\} .
\end{align*}
Splitting the event in~\eqref{eqn:harmonic-is-small} into the cases where
$\smash{\underset{u \in [0,t/2]}{\max} \, |\bessel^x_u| < \epsilon}$
and $\smash{\underset{u \in [0,t/2]}{\max} \, |\bessel^x_u| \geq \epsilon}$ gives
\begin{align}
	\;&
	\BMmeasure{\kappa}\Big[\min_{u \in [t/2,T]} \bessel^x_u < \delta \Big] 
	\nonumber
	\\
	\le\;& \lceil 2T/t \rceil \, \bigg(
	\BMmeasure{\kappa} \Big[ \underbrace{ \max_{u \in [0,t/2]} |\bessel^x_u| < \epsilon}_{=: \, E^x_{t/2}(\epsilon)} \Big] 
	\; + \; 
	\BMmeasure{\kappa} \bigg[ \Big\{ \min_{u \in [t/2,T]} |\bessel^x_u| < \delta \Big\} \cap \Big\{ \tau^x_{\epsilon} \leq t/2 \Big\} \bigg] 
	\bigg)
	\nonumber
	\\
	\le\;& 
	\lceil 2T/t \rceil \, \bigg(
	\BMmeasure{\kappa} [ E^x_{t/2}(\epsilon) ] 
	\; + \; 
	\BMmeasure{\kappa} \Big[ \underbrace{ \min_{u \in [\tau^x_{\epsilon},T]} |\bessel^x_u| < \delta }_{=: \, A^x_T(\delta,\epsilon)} \Big] 
	\bigg) .
	\label{eq:chain_of_ineq}
\end{align}
To estimate the second term $\BMmeasure{\kappa}[A^x_T(\delta,\epsilon)]$  in~\eqref{eq:chain_of_ineq}, we use the following inclusion of events: 
\begin{align*}
\Big\{ \min_{u \in [\tau^x_{\epsilon},T]} |\bessel^x_u| < \delta \Big\}
= \Big\{ \min_{s \in [0,T-\tau^x_{\epsilon})} |\bessel[\tau^x_\epsilon]^{\bessel^x_{\tau^x_{\epsilon}}}_{s}| < \delta \Big\} 
\subset \Big\{ \inf_{s \in [0,\infty)} |\bessel[\tau^x_\epsilon]^{\epsilon}_{s}| < \delta \Big\} ,
\end{align*}
where $\bessel[\tau^x_\epsilon]^\epsilon \sim \bessel^\epsilon$ by the (strong) Markov property.
By \Cref{thm:harmonic-measure-gets-small-estimate} (with $a=2$), we have
\begin{align*}
\BMmeasure{\kappa}[A_T(\delta,\epsilon)] 
\le e^{-M'/\kappa} , \qquad
\textnormal{for }
M'>0, \; 
\delta \in (0,\epsilon \, e^{-M'/2}), \textnormal{ and } \kappa \in (0,2] .
\end{align*}
Choosing $M' \geq M + 2 \log \big( 2 \lceil 2T/t \rceil \big) > 0$ large enough, we obtain
\begin{align}\label{eq:chain_of_ineq_bound2}
\BMmeasure{\kappa}[A_T(\delta_\epsilon,\epsilon)] 
\le \frac{e^{-M/\kappa}}{\lceil 2T/t \rceil} , \qquad
\textnormal{for }
\delta_\epsilon = \delta_\epsilon(M,T,t) :=
\frac{\epsilon \, e^{-M/2}}{4 \lceil 2T/t \rceil} \textnormal{ and } \kappa \in (0,2] .
\end{align}
To estimate the first term  
in~\eqref{eq:chain_of_ineq}, 
\Cref{thm:harmonic-measure-stays-small-estimate} shows that for 
$\epsilon < \sqrt t$, we have
\begin{align*}
	\BMmeasure{\kappa}[ E^x_{t/2}(\epsilon)  ] \le \sqrt{\frac{ 2t}{\pi}}\bigg(\frac{\epsilon}{t-\epsilon^2}\bigg)\exp\bigg( \!\! -\frac{(t/\epsilon - \epsilon)^2}{\kappa t}\bigg), \qquad \kappa \in (0,4].
\end{align*}
Choosing $\epsilon_0 > 0$ small enough so that $\sqrt{\frac{2t}{\pi}}\big(\frac{\epsilon_0}{t-\epsilon_0^2}\big) < \lceil 2T/t \rceil^{-1}$ and $(t/\epsilon_0 - \epsilon_0)^2 > M$ yields
\begin{align}\label{eq:chain_of_ineq_bound1}
	\BMmeasure{\kappa}\Big[ \max_{u \in [0,t/2]} |\bessel^x_u| < \epsilon \Big] \le \frac{e^{-M/\kappa}}{\lceil 2T/t \rceil} , \qquad
	\textnormal{for }
	\epsilon \in (0,\epsilon_0] \textnormal{ and } 
	\kappa \in (0, 4] .
\end{align}
We obtain the sought bound~\eqref{eq:harmonic-measure-is-small-estimate} with $\delta(M,T,t) = \delta_{\epsilon_0}(M,T,t)$ from 
(\ref{eq:chain_of_ineq},~\ref{eq:chain_of_ineq_bound2},~\ref{eq:chain_of_ineq_bound1}).
\end{proof}

The goal of this section is to prove \Cref{thm:exponentially-tight-function-set}:

\tightFunctionset*

Let us observe that the time $T = 1$ is not special. 
Indeed, by Brownian scaling of Bessel processes, the result can be readily extended to any finite time $T > 0$.

\begin{proof}[Proof of \Cref{thm:exponentially-tight-function-set}]
Fix $M \in [0,\infty)$ and $T = 1$. 
By \Cref{thm:harmonic-measure-is-small-estimate} 
 and \Cref{rem:improvement-of-constant}, 
we find $\delta_n := \delta_{\varepsilon_n}(M,1,1/n) > 0$ for each $t = 1/n$, with $n \in \bN$, such that  
\begin{align*}
\BMmeasure{\kappa}\Big[\inf_{s \in [0,1]} \inf_{x \in \bR \smallsetminus \{0\}} \min_{u \in [\frac{1}{n},1-s]} |\bessel[s]^x_u| < \delta_n \Big] 
\le 2^{-n} \, e^{-M/\kappa} , \qquad \kappa \in (0,2].
\end{align*}
Set 
\begin{align}\label{eqn:Xset}
X := \bigcap_{n \in \bN}
\Big\{\inf_{s \in [0,1]} \inf_{x \in \bR \smallsetminus \{0\}} \min_{u \in [\frac{1}{n},1-s]} |\bessel[s]^x_u| \geq \delta_n\Big\} .
\end{align}
Then, $X$ is a closed subset of $\funs{1]}$, and the union bound yields~\eqref{eq:exponentially-tight-function-set}.

It remains to check that if $\lambda \in X$ and the associated hulls 
$(K_t)_{t \in [0,1]} = (\cLoewnerTransform[t](\lambda|_{[0,t]}))_{t \in [0,1]}$ are generated by a curve $\gamma$, then the curve $\gamma$ is simple. 
If $\gamma$ is not simple, then either (i): $\gamma$ hits $\bR\smallsetminus\{0\}$, 
or (ii): there exist times $0 \leq a < b \leq 1$ such that $\gamma(a) = \gamma(b)$. 
In Case~(ii), $\gamma(b) = \gamma(a) \in \gamma[0,s] \subset K_s$ for some $s \in (a, b)$, 
and consequently $x := g_s(\gamma(b)) \in \bR\smallsetminus\{0\}$. 
In this scenario, we have $\bessel[s]^x_{b-s} = 0$, 
so there exists $n \in \bN$ such that $\frac{1}{n} < b-t$, 
and it follows that $\lambda \notin X$. 
Case~(i) can be regarded as a variant of Case~(ii): 
there exists a time $t > 0$ such that $\gamma(t) \in \bR\smallsetminus\{0\}$, 
satisfying the above properties with $s = 0$ and $b = t$. 
Hence, also Case~(i) implies that $\lambda \notin X$. 
This shows that neither Case~(i) nor Case~(ii) is possible.
\end{proof}

\subsection{Proof of exponential tightness --- the radial case}
\label{subsec:exp_tight_radial}

Next, we will use the chordal case to obtain the exponential tightness in the radial case. 
To begin, we can control the Radon-Nikodym derivative appearing in \Cref{thm:RN-derivative-radial-chordal} as follows.

\begin{lem}\label{thm:radial-harmonic-measure-gets-small-estimate}
For each $M \in [0,\infty)$, there exists $\delta = \delta(M) > 0$ such that 
\begin{align} \label{eq:bound_proba_S}
\limsup_{\kappa \to 0+}\kappa\log\rDmeasure[T]{\kappa}[\Dpaths[T] \smallsetminus S(\delta)] \le -M , \qquad T \in (0,\infty) ,
\end{align} 
where
$S(\delta) := \Big\{\gamma \in \Dpaths[T] \condbig\underset{s \in [0,T]}{\min} \, \sin(\rbessel_s) \ge \delta \Big\}$, 
and $\rbessel$ is the solution to~\eqref{eqn:rbessel-SDE} in \Cref{thm:RN-derivative-radial-chordal}. 
\end{lem}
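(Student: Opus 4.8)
The plan is to transfer the event to the space of driving Brownian paths of the auxiliary process $\rbessel$ and apply Schilder's theorem, after which the claim reduces to a uniform‑in‑$T$ lower bound on a Dirichlet energy.

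First I would set up the pull‑back. By \Cref{thm:RN-derivative-radial-chordal}, under $\rDmeasure[T]{\kappa}$ the process $\rbessel$ is the strong solution of the radial Bessel‑type SDE~\eqref{eqn:rbessel-SDE}, so $\rbessel = F(\sqrt\kappa B)$, where $B$ is a standard Brownian motion and $F$ sends a continuous path $V \in \funs{T]}$ to the solution of the integral equation $\rbessel_s = \tfrac\pi2 + V_s + \int_0^s 2\cot(\rbessel_u)\,\ud u$. With $\tau_\delta := \inf\{s \ge 0 \colon \sin(\rbessel_s) \le \delta\}$ as in \Cref{thm:RN-derivative-radial-chordal}, stopping at $\tau_\delta$ confines $\rbessel$ to the compact interval $\{\theta \in [0,\pi] \colon \sin\theta \ge \delta\}$, on which $2\cot$ is Lipschitz; hence, by a Gronwall argument, $V \mapsto (\rbessel_{s \wedge \tau_\delta})_{s \in [0,T]}$ is continuous from $(\funs{T]},\normUniform\cdot)$ to $C([0,T])$, and so is $V \mapsto \min_{s \in [0,T]} \sin(\rbessel_{s \wedge \tau_\delta})$. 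Since $\sin(\rbessel_{s\wedge\tau_\delta}) \ge \delta$ always, with equality for some $s \le T$ exactly when $\tau_\delta \le T$, the event $\{\tau_\delta \le T\}$ is the preimage of the closed set $\{\delta\}$ under this continuous map; call it $A = A_\delta \subset \funs{T]}$, a closed set.

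Next, Schilder's theorem (\Cref{thm:Schilders-theorem}) applied to $(\sqrt\kappa B|_{[0,T]})_{\kappa>0}$ together with the large deviations upper bound (\Cref{def:LDP}) for the closed set $A$ give, using $\Dpaths[T]\smallsetminus S(\delta) \subseteq \{\tau_\delta \le T\}$,
\begin{align*}
\limsup_{\kappa\to0+}\kappa\log\rDmeasure[T]{\kappa}[\Dpaths[T]\smallsetminus S(\delta)] \; \le \; -\inf_{V\in A}\BMenergy[T](V) .
\end{align*}
It remains to bound $\inf_{V\in A}\BMenergy[T](V)$ below, uniformly in $T$. Let $V\in A$ with $\BMenergy[T](V)<\infty$; then $V$ is absolutely continuous, whence $\rbessel$ is absolutely continuous on $[0,\tau_\delta]$ with $\sin(\rbessel_0)=1$, $\sin(\rbessel_{\tau_\delta})=\delta$, and $\tfrac{\ud}{\ud s}V_s = \tfrac{\ud}{\ud s}\rbessel_s - 2\cot(\rbessel_s)$ for a.e.\ $s \in [0,\tau_\delta]$. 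Using $\big(\tfrac{\ud}{\ud s}\rbessel_s\big)^2 + 4\cot^2(\rbessel_s) \ge 0$, we get
\begin{align*}
\Big(\tfrac{\ud}{\ud s}V_s\Big)^2 \; = \; \Big(\tfrac{\ud}{\ud s}\rbessel_s - 2\cot(\rbessel_s)\Big)^2 \; \ge \; -4\,\cot(\rbessel_s)\,\tfrac{\ud}{\ud s}\rbessel_s \; = \; -4\,\tfrac{\ud}{\ud s}\log\sin(\rbessel_s) ,
\end{align*}
and therefore
\begin{align*}
\BMenergy[T](V) \; \ge \; \tfrac12\int_0^{\tau_\delta}\Big(\tfrac{\ud}{\ud s}V_s\Big)^2\ud s \; \ge \; -2\big(\log\sin(\rbessel_{\tau_\delta}) - \log\sin(\rbessel_0)\big) \; = \; -2\log\delta .
\end{align*}
Hence $\inf_{V\in A}\BMenergy[T](V) \ge 2\log(1/\delta)$ for every $T\in(0,\infty)$, and choosing $\delta = \delta(M) := \min\{e^{-M/2},\tfrac12\}$ yields~\eqref{eq:bound_proba_S} with no dependence on $T$.

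The main obstacle is the pull‑back step: verifying that the stopped solution map $V \mapsto (\rbessel_{s\wedge\tau_\delta})_{s\in[0,T]}$ is continuous and that the relevant event is genuinely closed. The stopping at $\tau_\delta$ is essential here — a general continuous (non‑absolutely‑continuous) driver $V$ could push $\rbessel$ to the boundary $\{0,\pi\}$, where $F$ is undefined, even though the energy bound above shows that no finite‑energy driver can do so. An alternative route, avoiding Schilder, would estimate $\BMmeasure[T]{\kappa}[\tau_\delta \le T]$ directly using the scale function $\theta \mapsto \int_{\pi/2}^\theta(\sin u)^{-4/\kappa}\,\ud u$ of~\eqref{eqn:rbessel-SDE} and a union bound over successive excursions of $\rbessel$ away from $\tfrac\pi2$; but since $\rbessel$ is recurrent on $(0,\pi)$, controlling the number of such excursions up to time $T$ uniformly in $T$ is delicate, so I expect the large deviations argument to be the cleaner one.
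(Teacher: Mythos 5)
Your argument is correct, and it takes a genuinely different route from the paper. The paper's proof is purely probabilistic: it dominates $\rbessel$ from below by a process with the modified drift $2\cot(x)\wedge\tfrac1x$, uses the strong Markov property to decompose time into i.i.d.\ excursions of that process between the levels $x_0$ and $2x_0$, bounds the probability of reaching $\delta$ during a single excursion by the scale-function estimate of \Cref{thm:harmonic-measure-gets-small-estimate} (with $a=1$), and controls the number of excursions completed before time $T$ by observing that each one forces the driving Brownian motion to drop by $x_0$, an event whose probability vanishes as $\kappa\to0$, so the resulting geometric series is bounded by $1$ for small $\kappa$; the choice of $\delta$ comes only from the single-excursion estimate, which is how the $T$-uniformity is achieved. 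You instead pull the event back to driver space, apply Schilder's upper bound to a closed set, and prove the deterministic estimate that any finite-energy driver forcing $\sin(\rbessel)$ down to $\delta$ has Dirichlet energy at least $2\log(1/\delta)$, uniformly in $T$ --- the same ``finite-energy drivers cannot reach the degenerate boundary'' mechanism that underlies \Cref{thm:finite-energy-is-simple}. Your route is shorter, makes the uniformity in $T$ transparent, and gives a sharper constant ($\delta=e^{-M/2}$, whereas the paper's weaker comparison drift costs a factor in the exponent); the paper's route stays at the level of elementary hitting estimates and strong Markov arguments, avoiding any topological discussion of events in driver space, and reuses exactly the toolkit it needs anyway for \Cref{thm:harmonic-measure-is-small-estimate}.

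One technical imprecision, which is fixable and not a gap: the stopped solution map $V\mapsto(\rbessel_{s\wedge\tau_\delta})_{s\in[0,T]}$ is \emph{not} continuous at a driver whose solution touches the level $\sin=\delta$ tangentially and returns to the bulk (the stopped path freezes there, while nearby unstopped solutions keep evolving). What you actually need is only that $A_\delta=\{\tau_\delta\le T\}$ is closed in $(\funs{T]},\normUniform{\cdot})$, and this does hold: if the solution driven by $V$ satisfies $\min_{[0,T]}\sin\ge\delta+\epsilon$, then the Gronwall comparison you invoke, localized to the region $\{\sin\ge\delta\}$ where the drift is Lipschitz, shows that every $V'$ with $\normUniform{V'-V}$ small enough (depending on $\epsilon$, $\delta$, $T$) also keeps its solution strictly above level $\delta$ on $[0,T]$; hence the complement of $A_\delta$ is open. (Equivalently, the truncated functional $V\mapsto\min_{s\le T}\sin(\rbessel_{s\wedge\tau_\delta})$ is continuous even though the stopped path map is not.) With that adjustment your proof is complete.
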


\begin{proof}
Since $\rbessel \sim \pi-\rbessel$ in law, writing $\delta = \arcsin(\tilde\delta)$, the union bound yields
\begin{align}\label{eqn:sintheta-theta-estimate}
\rDmeasure[T]{\kappa} [\Dpaths[T] \smallsetminus S(\tilde\delta)] 
\; \le \; 2 \rDmeasure[T]{\kappa} \Big[\min_{s \in [0,T]}\rbessel_s < \delta\Big] ,
\qquad \tilde\delta \in (0,1) .
\end{align}
Set $q(x) = 2\cot(x) \wedge \frac{1}{x}$, and consider the process $Z = (\Zprocess_t)_{t \ge 0}$ satisfying the SDE
\begin{align}\label{eqn:Zt-SDE}
\ud \Zprocess_t = q(\Zprocess_t) \ud t + \sqrt{\kappa} \ud B_t, \qquad \Zprocess_0 = \tfrac{\pi}{2},
\end{align}
where $\sqrt{\kappa}B$ is the $\BMmeasure{\kappa}$-Brownian motion of~\eqref{eqn:rbessel-SDE}. 
Comparing the SDEs~\eqref{eqn:Zt-SDE} and~\eqref{eqn:rbessel-SDE}, 
we see that as $q(x) \le 2\cot(x)$ for every $x \in (0,\pi)$, 
we have $\Zprocess_t \le \rbessel_t$ for every $t \ge 0$.
Hence, 
\begin{align*}
\rDmeasure[T]{\kappa} \Big[\min_{s \in [0,T]} \Theta_s < \delta\Big] 
\; \le \; \BMmeasure[T]{\kappa}\Big[\min_{s \in [0,T]} \Zprocess_s < \delta\Big] .
\end{align*}
Since $x \cot(x) \to 1$ as $x \to 0$, 
there exists $x_0 > 0$ such that $\frac{1}{x} \le 2\cot(x)$ for every $x \in (0,2 x_0]$. 
In this range, we have $q(x) = \frac{1}{x}$. 
Let $\tau_0 := 0$, and define 
inductively the stopping times
\begin{align*}
\sigma_{n} :=\;& \inf\{s \ge \tau_n \colon \Zprocess_{s} = 2x_0\},\\
\rho_{n} :=\;& \inf\{s \ge \tau_n \colon \Zprocess_{s} = \delta\}, \\
\tau_{n+1} :=\;& \inf\{s \ge \sigma_n \colon \Zprocess_{s} = x_0\} , \qquad n \in \bN \cup \{0\}.
\end{align*}
Note that for $n \ge 1$, the intervals $(\tau_n, \sigma_n)$ are disjoint 
and $\Zprocess_{\tau_n} = x_0$ is independent of $n$.
Thus, using the strong Markov property, 
we see that the time-shifted processes $\Zprocess[\tau_n]_t := \Zprocess_{\tau_n + t}$ 
are independent and identically distributed (i.i.d.), and satisfy the SDE 
\begin{align}\label{eqn:Znt-SDE}
\ud \Zprocess[\tau_n]_t = \frac{\ud t}{\Zprocess[\tau_n]_t} + \sqrt{\kappa} \ud B_{\tau_n + t}, 
\quad t \in [0, \sigma_n-\tau_n] , 
\qquad \textnormal{and} \qquad \Zprocess[\tau_n]_{0} = x_0 .
\end{align}
Note now that $\smash{\underset{s \in [0,T]}{\min} \, \Zprocess_s < \delta}$ can happen 
only if $\rho_n < \sigma_n$ for some $n \ge 1$ for which $\tau_n \le T$. 
We obtain 
\begin{align}
\BMmeasure[T]{\kappa} \Big[\min_{s \in [0,T]} \Zprocess_s < \delta\Big] 
\le\;& \sum_{n = 1}^\infty \BMmeasure{\kappa}[\tau_n \le T, \rho_n < \sigma_n] 
&& [\textnormal{monotonicity + union bound}] \nonumber\\
=\;& \sum_{n = 1}^\infty \BMmeasure{\kappa}[\tau_n \le T] \; \BMmeasure{\kappa}[\tilde \rho_n  < \tilde \sigma_n] 
&& [\textnormal{independence}]\nonumber\\
=\;& \BMmeasure{\kappa}[\tilde \rho_1 < \tilde \sigma_1] \sum_{n = 1}^\infty \BMmeasure{\kappa}[\tau_n \le T] ;
&& [(\tilde \rho_n, \tilde \sigma_n) \textnormal{ identically distributed}] 
\label{eqn:min-estimate-helper}
\end{align}
the strong Markov property implies that 
the increments $\tilde \sigma_n := \sigma_n-\tau_n$ and $\tilde \rho_n := \rho_n - \tau_n$ 
are stopping times of $\Zprocess[\tau_n]$ and thus independent of $\tau_n$,
and they are identically distributed.

Next, since the process $\Zprocess[\tau_1]$ up to the stopping time $\tilde \rho_1 \wedge \tilde \sigma_1 \le \sigma_1-\tau_1$ 
satisfies the Bessel SDE~\eqref{eqn:Znt-SDE}, 
by \Cref{thm:harmonic-measure-gets-small-estimate} (with $a=1$),  
we may choose $\delta > 0$ such that 
\begin{align*}
\limsup_{\kappa \to 0+} \kappa\log\BMmeasure{\kappa}[\tilde \rho_1 < \tilde \sigma_1] 
= \limsup_{\kappa \to 0+}\kappa\log\BMmeasure{\kappa} \Big[\inf_{s \in [0,\tilde \sigma_1]} \Zprocess[\tau_1]_s < \delta \Big] \le -M.
\end{align*}
It remains to bound the sum $\sum_{n = 1}^\infty \BMmeasure{\kappa}[\tau_n \le T]$ in \Cref{eqn:min-estimate-helper} 
uniformly for $\kappa \geq 0$ small enough. 
Again, as $\Zprocess_{\sigma_{n-1}} = 2x_0$ is independent of $n \ge 1$, and the time intervals $(\sigma_{n-1}, \tau_n)$ are disjoint, 
using the strong Markov property we see that the time increments
\begin{align*}
\Delta_n := \tau_{n}-\sigma_{n-1}
\end{align*}
are i.i.d. random variables satisfying $\tau_n \ge \sum_{k=1}^n \Delta_k$, and
\begin{align*}
\BMmeasure{\kappa} [\tau_n \le T] 
\; \le \; \BMmeasure{\kappa}\bigg[\bigcap_{k=1}^n \{\Delta_k \le T\}\bigg] 
= \big( \BMmeasure{\kappa}[\Delta_1 \le T] \big)^n.
\end{align*}
As $\Zprocess_{\sigma_1} = 2x_0$ and $\Zprocess_{\sigma_1 + \Delta_1} = \Zprocess_{\tau_1} = x_0$, 
due to the positive drift for $\Zprocess_t \in [x_0, 2x_0]$ in the SDE~\eqref{eqn:Zt-SDE} we may further estimate
\begin{align*}
\BMmeasure{\kappa}[\Delta_1 \le T] 
\; \le \; \BMmeasure{\kappa} \Big[\inf_{t \in [0,T]} \sqrt{\kappa} (B_{\sigma_1+t} - B_{\sigma_1}) \le -x_0\Big] 
\; = \; \BMmeasure{\kappa} \Big[\inf_{t \in [0,T]} \sqrt{\kappa} B_t \le -x_0\Big] 
\; \xrightarrow{\kappa \to 0} \;  0.
\end{align*}
We thus conclude that
\begin{align*}
\sum_{n=1}^\infty \BMmeasure{\kappa}[\tau_n \le T] 
\; \le \; \sum_{n=1}^\infty \big( \BMmeasure{\kappa}[\Delta_1 \le T] \big)^n
\; = \; \frac{\BMmeasure{\kappa}[\Delta_1 \le T]}{1-\BMmeasure{\kappa}[\Delta_1 \le T]} 
\; \xrightarrow{\kappa \to 0} \;  0,
\end{align*}
which shows that there exists $\kappa_0 > 0$ 
such that $\sum_{n=1}^\infty \BMmeasure{\kappa}[\tau_n \le T] \le 1$
for all $\kappa \in [0,\kappa_0]$. 
\end{proof}

We also need to prove that, up to sufficiently small times, 
$\rSLE[\kappa]$ curves are exponentially well approximated by $\cSLE[\kappa]$ as $\kappa \to 0$. This is the content of the next result. 

\begin{lem}\label{thm:chordal-crad-probability}
For each $M \in [0,\infty)$, there exists $t = t(M) > 0$ such that
\begin{align*}
\limsup_{\kappa \to 0+}\kappa\log\cHmeasure[1]{\kappa}[\Hpaths[1] \cap \Phi_\bH^{-1}(\Dpaths[<t])] \le -M .
\end{align*}
\end{lem}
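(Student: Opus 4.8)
The plan is to derive the estimate from the finite-time Hausdorff-metric LDP for chordal $\SLE[0+]$, namely \Cref{thm:LDP-compact} with $T = 1$, by showing that the relevant event is energetically expensive for small $t$. For a curve $\gamma \in \Hpaths[1]$, set $c(\gamma) := \rcap(\Phi_\bH(\gamma))$; by conformal covariance of the conformal radius and $\Phi_\bH(2\ii) = 0$ one has $c(\gamma) = \log 4 - \log\crad{\bH\smallsetminus\gamma}{2\ii}$ (with $c(\gamma) = +\infty$ when $2\ii \in \gamma$), so that $\Hpaths[1] \cap \Phi_\bH^{-1}(\Dpaths[<t]) = \{\gamma \in \Hpaths[1] : c(\gamma) < t\}$. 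Since $\kappa \le 4$, the measure $\cHmeasure[1]{\kappa}$ is carried by $\Hpaths[1]$. Granting (see below) that $c \colon (\Hpaths[1],\dCcompsets) \to (0,+\infty]$ is lower semicontinuous, the sublevel set $\{c \le t\}$ is closed, so the upper bound in \Cref{thm:LDP-compact} gives
\[
\limsup_{\kappa \to 0+} \kappa \log \cHmeasure[1]{\kappa}\big[ \Hpaths[1] \cap \Phi_\bH^{-1}(\Dpaths[<t]) \big] \;\le\; - \inf \big\{ \cenergy[1](\gamma) : \gamma \in \Hpaths[1], \; c(\gamma) \le t \big\} ,
\]
and it then suffices to prove that, for each $M \in [0,\infty)$, there is $t = t(M) > 0$ with $\cenergy[1](\gamma) \ge M$ for every $\gamma \in \Hpaths[1]$ satisfying $c(\gamma) \le t$.

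First I would establish the two properties of $c$. Strict positivity, $c > 0$ on $\Hpaths[1]$, is the Schwarz lemma: $\gamma \in \Hpaths[1]$ has $\hcap(\gamma) = 2 > 0$, hence $\bH \smallsetminus \gamma \subsetneq \bH$ and $\crad{\bH\smallsetminus\gamma}{2\ii} < \crad{\bH}{2\ii} = 4$. For lower semicontinuity, let $\gamma_k \to \gamma^\ast$ in $(\Hpaths[1],\dCcompsets)$. As $\gamma^\ast$ is a bounded hull, the $\gamma_k$ eventually lie in a fixed ball, so the mapping-out functions $g_{\gamma_k}$ form a normal family; by \Cref{thm:caratheodory-vs-hausdorff-convergence}, any subsequential locally uniform limit is the mapping-out function of a hull $\hat K$ with $\bH \smallsetminus \hat K = \component{\bH\smallsetminus\gamma^\ast}{\infty}$, and since $\gamma^\ast$ is a simple curve attached to $\partial\bH$ its complement is connected, so $\component{\bH\smallsetminus\gamma^\ast}{\infty} = \bH\smallsetminus\gamma^\ast$ and $\hat K$ carries the same mapping-out function as $\gamma^\ast$. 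Hence $g_{\gamma_k} \to g_{\gamma^\ast}$ locally uniformly; if $2\ii \notin \gamma^\ast$ this yields $\crad{\bH\smallsetminus\gamma_k}{2\ii} \to \crad{\bH\smallsetminus\gamma^\ast}{2\ii}$, i.e.\ $c(\gamma_k) \to c(\gamma^\ast)$, whereas if $2\ii \in \gamma^\ast$ then $\dist(2\ii,\gamma_k) \to 0$ and the Koebe bound $\crad{\bH\smallsetminus\gamma_k}{2\ii} \le 4\,\dist(2\ii,\gamma_k)$ forces $c(\gamma_k) \to +\infty$; in both cases $c(\gamma^\ast) \le \liminf_k c(\gamma_k)$. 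With this in hand, suppose the energy bound failed for some $M$: for each $k$ there would be $\gamma_k \in \Hpaths[1]$ with $c(\gamma_k) \le 1/k$ and $\cenergy[1](\gamma_k) < M$; since $\cenergy[1]$ is a good rate function by \Cref{thm:LDP-compact}, its sublevel set $\{\cenergy[1] \le M\}$ is $\dCcompsets$-compact in $\Hpaths[1]$, so along a subsequence $\gamma_k \to \gamma^\ast \in \Hpaths[1]$, and then $0 < c(\gamma^\ast) \le \liminf_k c(\gamma_k) \le \liminf_k 1/k = 0$ --- a contradiction. This produces $t(M)$ and, combined with the displayed LDP bound, proves the lemma (the case $\{c \le t(M)\} = \emptyset$ being trivial).

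The main obstacle is the lower semicontinuity of $c = \rcap \circ \Phi_\bH$ on $\Hpaths[1]$: the conformal radius, hence $c$, is genuinely discontinuous on the space $\Ccompsets$ of compact sets in the Hausdorff metric --- a sequence of curves can converge in $\dCcompsets$ to a set whose complement "fills in" in the Carath\'eodory sense (e.g.\ chordal polygonal approximations of a semicircle) --- so it is essential that we work inside the space of \emph{simple} curves, where complements stay connected, in order that \Cref{thm:caratheodory-vs-hausdorff-convergence} pins down the Carath\'eodory limit from the Hausdorff limit; the borderline case in which the limiting curve reaches the target point $2\ii$ is absorbed by the Koebe estimate.
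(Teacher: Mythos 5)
Your proposal is correct, but it reaches the crucial ``small radial capacity forces large chordal energy'' step by a genuinely different mechanism than the paper. Both arguments feed a closed superset of the event $\Hpaths[1]\cap\Phi_\bH^{-1}(\Dpaths[<t])$ into the upper bound of the finite-time Hausdorff-metric LDP (\Cref{thm:LDP-compact}); the difference lies in how the energy infimum over that set is bounded below. The paper argues geometrically and quantitatively: small radial capacity makes the chordal curve avoid a neighborhood $D_s=\Phi_\bH^{-1}(s\bD)$ of $2\ii$, a curve of half-plane capacity $2$ avoiding $D_s$ must cross a fixed circle around the origin at a point of argument outside $[\theta,\pi-\theta]$, and \cite[Proposition~3.1]{Wang:Energy_of_deterministic_Loewner_chain} then gives the explicit bound $\cenergy[1]\ge-8\log\sin\theta$, with $\theta$ tuned so that $-8\log\sin\theta=M$; closedness of the avoidance event in $\dCcompsets$ is immediate. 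You instead identify the event as a sublevel set of the pulled-back capacity $c=\rcap\circ\Phi_\bH$, prove that $c$ is lower semicontinuous on $(\Hpaths[1],\dCcompsets)$ via precompactness of uniformly bounded hulls in the Carath\'eodory topology combined with \Cref{thm:caratheodory-vs-hausdorff-convergence} (correctly exploiting that simple curves have connected complement, and absorbing the case $2\ii\in\gamma$ by the convention $c=+\infty$ and the Koebe bound), and then run a soft compactness argument: goodness of $\cenergy[1]$ on $(\Hpaths[1],\dCcompsets)$, as asserted in \Cref{thm:LDP-compact}, plus strict positivity of $c$ on $\Hpaths[1]$ rule out bounded-energy curves with $c\to0$. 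What each buys: the paper's route is quantitative and needs no continuity property of the capacity functional, while yours is purely qualitative but dispenses with Wang's pointwise energy estimate and the geometric case analysis, at the price of the Carath\'eodory-vs-Hausdorff continuity argument and of leaning on the goodness of the rate function on the non-compact space of simple curves --- which is exactly the strengthened finite-time statement the paper establishes before this lemma, so there is no circularity. Two minor points you should flesh out in a final write-up: the existence of subsequential Carath\'eodory limits of the $g_{\gamma_k}$ (standard precompactness of uniformly bounded hulls, most cleanly run through the inverse maps), and a one-line justification that all subsequential limits coincide so that the whole sequence of mapping-out functions converges; neither affects the validity of the argument.
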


\begin{proof}
By Koebe 1/4-theorem, every $\gamma \in \Dpaths[<t]$ satisfies 
$\dist(0, \gamma) \ge \frac{e^{-t}}{4} =: s$. 
Hence, if we write $D_s = \Phi_\bH^{-1}(s\bD)$ and $E(s) = \{K \in \Hcompsets \cond K \cap D_s = \emptyset\}$, it suffices to find $s < 1$ satisfying
\begin{align*}
\limsup_{\kappa \to 0+}\kappa\log\cHmeasure[1]{\kappa}[E(s)] \le -M.
\end{align*}
Fix $r>0$ such that every $\gamma \in \Hpaths[1]$ satisfies $\gamma \cap \partial \Bmetric{0}{r} \ne \emptyset$. 
As the sets $D_s$ are increasing to $\bH$ as $s \to 1$, 
for each angle $\theta \in (0,\pi/2)$, we can find $s < 1$ such that $\arg(z) \notin [\theta, \pi-\theta]$ for every $z \in \partial\Bmetric{0}{r} \smallsetminus D_s$. Therefore, any $\gamma \in \Hpaths[1]$ not intersecting $D_s$ passes through a point $z$ satisfying $\arg(z) \notin [\theta, \pi-\theta]$, which by \cite[Proposition~3.1]{Wang:Energy_of_deterministic_Loewner_chain} implies that $\cenergy[1](\gamma) \ge -8\log\sin(\theta)$. 
Since $E(s) \subset \Dcompsets$ is a closed subset in the Hausdorff metric, we obtain
\begin{align*}
\limsup_{\kappa \to 0+}\kappa \log \cHmeasure[1]{\kappa}[\Phi_\bH^{-1}(\Dpaths[<t])] 
\; \le \; \limsup_{\kappa \to 0+}\kappa\log\cHmeasure[1]{\kappa}[E(s)] \; \le \; - \cenergy[1](\gamma)
\; \le \; 8\log\sin(\theta) 
\end{align*} 
from \Cref{thm:LDP-compact}. 
Choosing $\theta \in (0,\pi/2)$ such that $8\log\sin(\theta) = -M$ gives the result.
\end{proof}

Now, we are ready to finish the proof of the radial \Cref{thm:exponential-tightness}.

\begin{prop} \label{thm:exponential-tightness-radial}
\textnormal{(\Cref{thm:exponential-tightness}, radial case){\bf.}}
Fix $T \in (0,\infty)$. 
The family $(\rDmeasure[T]{\kappa})_{\kappa > 0}$ of laws of the $\rSLE[\kappa]$ curves viewed as probability measures on $(\Dpaths[T],\dDpaths)$ is exponentially tight:
for each $M \in [0,\infty)$, there exists a compact set $\compact = \compact(M) \subset \Dpaths[T]$ such that
\begin{align*}
\limsup_{\kappa \to 0+}\kappa\log\rDmeasure[T]{\kappa}[\Dpaths[T] \smallsetminus \compact] < -M.
\end{align*}
\end{prop}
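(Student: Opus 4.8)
The plan is to deduce the radial exponential tightness from the chordal one (\Cref{thm:exponential-tightness-chordal}) via the Radon--Nikodym comparison of \Cref{thm:RN-derivative-radial-chordal}. By conformal invariance it suffices to work in the preferred domain $(\bD;1,0)$; fix $M \in [0,\infty)$ and $T \in (0,\infty)$, and look for a compact $\compact = \compact(M) \subset \Dpaths[T]$ with $\rDmeasure[T]{\kappa}[\Dpaths[T] \smallsetminus \compact]$ decaying at rate strictly above $M$. First I would apply \Cref{thm:radial-harmonic-measure-gets-small-estimate} at rate $M_1 := M + 1$ to fix $\delta = \delta(M_1) > 0$, so that $\rDmeasure[T]{\kappa}[\Dpaths[T] \smallsetminus S(\delta)]$ decays at rate $M_1$, where $S(\delta) = \{\gamma \in \Dpaths[T] : \min_{s \in [0,T]} \sin(\rbessel_s) \ge \delta\}$. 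On $S(\delta)$ we have $\tau_\delta > T$, so \Cref{thm:RN-derivative-radial-chordal} applies with $t = T$; bounding $\sin^2(\rbessel_s) \ge \delta^2$ on $[0,T]$, $(\kappa - 4)T \le 0$, $\tfrac{6-\kappa}{4\kappa} \le \tfrac{3}{2\kappa}$, and $|\sin(\rbessel_T)|^{6/\kappa - 1} \le 1$ (all valid for $\kappa \le 4$), the Radon--Nikodym derivative~\eqref{eqn:radial-chordal-RN-derivative} satisfies
\begin{align} \label{eq:RN_radial_sketch}
\frac{\ud\rDmeasure[T]{\kappa}}{\ud\cDmeasure[T]{\kappa}}\bigg|_{S(\delta)} \, \le \, \exp\Big( \frac{C_0}{\kappa} \Big) , \qquad C_0 := \frac{3T}{2\delta^2} ,
\end{align}
for all small $\kappa > 0$; here $C_0$ is a \emph{fixed} constant, since $\delta$ depends only on $M$ and $T$.

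\textbf{The chordal compact set.} The bulk of the argument is to construct a compact set $\compact \subset \Dpaths[T]$ that is exponentially tight at rate $M_2 := M + C_0 + 1$ for the \emph{chordal} measures $\cDmeasure[T]{\kappa}$ --- the laws of the $\cSLE[\kappa]$ curves from $1$ to $-1$ in $\bD$, truncated at logarithmic capacity $T$. Here I would follow the outline in \Cref{subsec:exponential-tightness-outline}: fix a large rate $M'$ and use \Cref{thm:chordal-crad-probability}, together with scale invariance of $\cSLE[\kappa]$, to produce $t \in (0,T)$ which, after shrinking, equals $T/N$ for some $N \in \bN$, such that a $\cSLE[\kappa]$ segment of unit half-plane capacity has logarithmic capacity (seen from $0$) at least $t$ off an event of probability at most $e^{-M'/\kappa}$. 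Decompose a curve $\gamma$ from $1$ to $-1$ in $\bD$ into its increments $\gamma[(j-1)t,jt]$ in the logarithmic-capacity parameterization; by the domain Markov property of $\cSLE[\kappa]$ and conformal covariance, the $j$-th increment, read off through the radial Loewner map $g^{\mathrm{rad}}_{(j-1)t}$ (which fixes $0$ and has additive logarithmic capacity), is a $\cSLE[\kappa]$ between two boundary points of $\bD$, run up to logarithmic capacity $t$ from $0$. By conformal and scale invariance, \Cref{thm:exponential-tightness-chordal} and \Cref{thm:chordal-crad-probability} transfer uniformly over the compact family of admissible endpoint configurations --- after discarding, exactly as in \Cref{thm:chordal-crad-probability}, the exponentially rare degenerate configurations in which this transfer would fail --- so that each increment lies in a \emph{fixed} compact set of simple curves off an event of probability at most $e^{-M'/\kappa}$. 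Concatenating the $N$ resulting compact sets conformally via \Cref{thm:convergence-of-concatenations} --- which is precisely where the simplicity of the curves, hence the strengthening \Cref{thm:exponentially-tight-function-set}, is indispensable, as conformal concatenation is continuous only on simple curves --- yields a compact set $\compact \subset \Dpaths[T]$, and a union bound over the $N$ increments gives $\limsup_{\kappa \to 0+} \kappa \log \cDmeasure[T]{\kappa}[\Dpaths[T] \smallsetminus \compact] \le -M'$. Choosing $M' > M_2$ makes this at most $-M_2$.

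\textbf{Conclusion and main obstacle.} Finally, on $\Dpaths[T]$ we split
\begin{align*}
\rDmeasure[T]{\kappa}[\Dpaths[T] \smallsetminus \compact] \, \le \, \rDmeasure[T]{\kappa}[\Dpaths[T] \smallsetminus S(\delta)] \, + \, \rDmeasure[T]{\kappa}\big[ (\Dpaths[T] \smallsetminus \compact) \cap S(\delta) \big] ,
\end{align*}
the first term decaying at rate $M_1 > M$ by \Cref{thm:radial-harmonic-measure-gets-small-estimate}, while by~\eqref{eq:RN_radial_sketch} the second is bounded by $e^{C_0/\kappa} \, \cDmeasure[T]{\kappa}[\Dpaths[T] \smallsetminus \compact]$, which decays at rate $M_2 - C_0 = M + 1 > M$; hence $\limsup_{\kappa \to 0+} \kappa \log \rDmeasure[T]{\kappa}[\Dpaths[T] \smallsetminus \compact] < -M$, as required. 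I expect the main obstacle to be the middle step: reconciling the half-plane-capacity parameterization --- in which the chordal exponential tightness is available --- with the logarithmic-capacity parameterization forced on us by the comparison \Cref{thm:RN-derivative-radial-chordal} with the radial curve. This reconciliation requires chopping the curve into many short increments, controlling each uniformly through \Cref{thm:chordal-crad-probability} and the conformal invariance of \Cref{thm:exponential-tightness-chordal}, and re-gluing them --- an operation continuous only for simple curves, which is exactly why the chordal exponential tightness had to be upgraded from $\Hpathscl[1]$ to the space of simple curves $\Hpaths[1]$.
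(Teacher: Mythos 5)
Your proposal follows the paper's route: restrict to $S(\delta)$ via \Cref{thm:radial-harmonic-measure-gets-small-estimate} and bound the Radon--Nikodym derivative of \Cref{thm:RN-derivative-radial-chordal} by $\exp\big(\tfrac{3T}{2\delta^2\kappa}\big)$ there; pick $t=T/N$ via \Cref{thm:chordal-crad-probability}; build the compact set in $\Dpaths[T]$ as an $N$-fold conformal concatenation (\Cref{thm:convergence-of-concatenations}) of compact sets of short \emph{simple} chordal pieces coming from \Cref{thm:exponential-tightness-chordal}; and close with a union bound over the $N$ increments. Even the constants agree with the paper's choice $M'=M+\tfrac{3T}{2\delta^2}$.

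The one step that is not sound as written is your per-increment bound. Under the centered radial maps $\hat g_{(j-1)t}$, the $j$-th increment of the chordal curve in $(\bD;1,-1)$ is a chordal $\SLE[\kappa]$ from $1$ to the \emph{random} boundary point $x_j=\hat g_{(j-1)t}(-1)$, and $x_j$ can come arbitrarily close to $1$; so the family of ``admissible endpoint configurations'' is not compact, and the relevant degeneracy is not the one excluded by \Cref{thm:chordal-crad-probability}, which controls the logarithmic capacity of a half-plane-capacity-one segment, not the location of the target point. Hence ``discarding, exactly as in \Cref{thm:chordal-crad-probability}, the exponentially rare degenerate configurations'' does not justify the uniform transfer of \Cref{thm:exponential-tightness-chordal}; the natural input for such a uniformization would in fact be the $S(\delta)$ restriction you already made (on $S(\delta)$ the process $\rbessel_s$ stays in $[\arcsin\delta,\pi-\arcsin\delta]$, so $x_j$ keeps angular distance of order $\delta$ from the tip), and even then a uniform-in-endpoint version of the chordal exponential tightness would still have to be proved. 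The paper avoids formulating any uniform-in-endpoint statement: it fixes $\tilde\compact:=\prDpaths{t}\big(\Phi_\bH(\compact'\cap\Phi_\bH^{-1}(\Dpaths[\ge t]))\big)$, bounds only the single probability $\cDmeasure[t]{\kappa}[\Dpaths[t]\smallsetminus\tilde\compact]$ (target $-1$) by the two chordal estimates through the identity $\tilde\compact=\prDpaths{t}(\Phi_\bH(\prHpaths{1}^{-1}\compact_0))$ and the pushforward relations between $\cDmeasure[t]{\kappa}$, $\cDmeas{\kappa}$ and $\cHmeasure[1]{\kappa}$, and then multiplies by $N$ via the conformal Markov property and the union bound. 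So your instinct that the varying target point is the delicate feature of the concatenation step is right, but the resolution you propose is the wrong tool; to close the argument you should either carry out the $S(\delta)$-based uniformization honestly or adopt the paper's bookkeeping, which reduces everything to the single increment with target $-1$.
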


\begin{proof}
Fix $M \in [0,\infty)$. 
Using \Cref{thm:radial-harmonic-measure-gets-small-estimate}, 
we fix $\delta = \delta(M) > 0$ such that the bound~\eqref{eq:bound_proba_S} holds for the set $S(\delta)$.
Set $M' = M'(M,T) := M+\frac{3T}{2\delta^2}$. 
Using \Cref{thm:chordal-crad-probability} and \Cref{thm:exponential-tightness-chordal}, 
we can find a time $t = t(M') > 0$ and a compact set $\compact' \subset \Hpaths[1]$ such that
\begin{align} \label{eq:bound_proba_chord}
\begin{split}
\limsup_{\kappa \to 0+}\kappa\log\cHmeasure[1]{\kappa}[\Hpaths[1] \smallsetminus \compact'] < -M' ,
\\
\limsup_{\kappa \to 0+}\kappa\log\cHmeasure[1]{\kappa}[\Hpaths[1] \cap \Phi_\bH^{-1}(\Dpaths[<t])] \le -M' .
\end{split}
\end{align}
By decreasing the value of $t$ if necessary, we may without loss of generality assume 
that $t = T/N$ for some $N \in \bN$. 
Recalling the setup of \Cref{thm:chordal-crad-probability}, 
we consider the sets\footnote{\label{fn:proj}Here, we use the notation $\prXpaths{} \in \{\prDpaths{}, \prHpaths{}\}$ for the radial and chordal projection map $\gamma \mapsto \gamma[0,t]$, respectively.}
\begin{align*}
\compact_0 :=\; & \compact' \cap \Phi_\bH^{-1}(\Dpaths[\ge t]) \subset \Hpaths[1], 
\\
\tilde{\compact} := \;& \prDpaths{t}(\Phi_\bH(\compact_0)) = \prDpaths{t}(\Phi_\bH(\compact') \cap \Dpaths[\ge t]) \subset \Dpaths[t], 
 \\
\compact :=  \; & \Big\{\overset{N}{\underset{j=1}{\concat}} \gamma_j \cond \gamma_j \in \tilde{\compact} \textnormal{ for all } j\Big\} \subset \Dpaths[T] ,
\end{align*}
where $\compact$ consists of concatenation ``$\concat$'' of curves in $\tilde{\compact}$ defined in~\eqref{eqn:concatenation} in \Cref{app:conformal concatenation}.
We claim that $\compact$ is a compact subset of $\Dpaths[T]$. 
Indeed, note that the induced map $\gamma \mapsto \Phi_\bH(\gamma)$ 
from $(\Hpaths[1],\dHpathsUnparam)$ to $(\Dpaths[<\infty],\dDpathsUnparam)$ is continuous; 
$(\Dpaths[\ge t],\dDpaths)$ is a closed subset of $(\Dpaths[<\infty],\dDpaths)$;
and the projection $\prDpaths{t} \colon (\Dpaths[\ge t],\dDpaths) \to (\Dpaths[t],\dDpaths)$ is continuous.
Combining these facts with \Cref{thm:topology-independent-of-parameterization}, 
we see that $\tilde{\compact}$ is a compact subset of $\Dpaths[t]$. 
Hence, we conclude that $\compact \subset \Dpaths[T]$ is compact as a conformal concatenation of compact sets by \Cref{thm:convergence-of-concatenations} (from \Cref{app:conformal concatenation}).

It remains to estimate $\rDmeasure[T]{\kappa}[\Dpaths[T] \smallsetminus \compact]$. 
First we bound this probability as follows:
\begin{align*}
\rDmeasure[T]{\kappa}[\Dpaths[T] \smallsetminus \compact] 
\; \le \; \rDmeasure[T]{\kappa}[(\Dpaths[T] \smallsetminus \compact) \cap S(\delta)] 
\; + \; \rDmeasure[T]{\kappa}[\Dpaths[T] \smallsetminus S(\delta)] .
\end{align*}
Using~\eqref{eq:bound_proba_S}, we see that the second term decays exponentially fast with rate $M$ as $\kappa \to 0$. 
Hence, it remains to bound the first term. 
Using \Cref{thm:RN-derivative-radial-chordal} and the choice of $S(\delta)$, we have
\begin{align}
\rDmeasure[T]{\kappa}[(\Dpaths[T] \smallsetminus \compact) \cap S(\delta)] 
\; \le\; \; & e^{\frac{6-\kappa}{4\kappa} ((\kappa-4)T 
+ T/\delta^2 )} \, \cDmeasure[T]{\kappa} [(\Dpaths[T] \smallsetminus \compact) \cap S(\delta)] \nonumber \\
\; \le\;\; & c \, \exp\bigg(\frac{3T}{2\delta^2\kappa}\bigg) \, \cDmeasure[T]{\kappa} [\Dpaths[T] \smallsetminus \compact] ,
\label{eqn:exponential-tightness-step}
\end{align}
where $c > 0$ is a constant independent of $\kappa$.
Next, note that we have $\gamma \in \Dpaths[T] \smallsetminus \compact$ if and only if 
$\gamma_j := \hat g_{(j-1)t} (\gamma[(j-1)t, jt]) \notin \tilde \compact$ for some $j \in \{1, \dots, N\}$, where $\hat g_s(\cdot) := e^{-\ii \omega_s} \, g_s(\cdot)$ is the centered radial Loewner map~\eqref{eq:Loewner equation}, 
where $W_s = \omega_s$ is the driving function of $\gamma$.
In particular, by the conformal Markov property of SLE, the union bound yields
\begin{align*}
	\cDmeasure[T]{\kappa} [\Dpaths[T] \smallsetminus \compact]
	\; \le \; & N \, \cDmeasure[t]{\kappa} [\Dpaths[t] \smallsetminus \tilde{\compact}].
\end{align*}
It remains to bound the probability on the right hand side.
We first show the identity${}^{\ref{fn:proj}}$ 
\begin{align}\label{eqn:tildeF}
\tilde F = \prDpaths{t}(\Phi_\bH(\prHpaths{1}^{-1} \compact_0)).
\end{align}
Since $\compact_0 \subset \prHpaths{1}^{-1} \compact_0$, the inclusion $\tilde F \subset \prDpaths{t}(\Phi_\bH(\prHpaths{1}^{-1} \compact_0))$ is clear. 
In order to prove the other inclusion, take $\gamma \in \Phi_\bH(\prHpaths{1}^{-1}\compact_0)$, and let $\eta := (\Phi_\bH \circ \prHpaths{1} \circ \Phi_\bH^{-1})(\gamma) \in \Phi_\bH(\compact_0) \subset \Dpaths[\ge t]$. 
Note that since $\gamma \supset \eta \in \Dpaths[\ge t]$, we may project both $\eta$ and $\gamma$ to the time interval $[0,t]$. 
It follows that 
$\prDpaths{t}(\gamma) = \prDpaths{t}(\eta) \in \prDpaths{t}(\Phi_\bH(F_0)) = \tilde F$, proving the desired~\eqref{eqn:tildeF}.
We obtain the estimate
\begin{align*}
	\cDmeasure[t]{\kappa}[\Dpaths[t]\smallsetminus \tilde \compact]
	=\;&  \cDmeasure[t]{\kappa}[\Dpaths[t] \smallsetminus \prDpaths{t}(\Phi_\bH(\prHpaths{1}^{-1} \compact_0))] 
	&& \textnormal{[by~\eqref{eqn:tildeF}]}
	\\
	\le\;& \cDmeas{\kappa}[\Dpaths \smallsetminus \Phi_\bH(\prHpaths{1}^{-1}\compact_0)] 
	&& \textnormal{[as $\cDmeasure[t]{\kappa} = \cDmeas{\kappa}\circ \prDpaths{t}^{-1}$]}
	\\
	=\;& \cHmeasure[1]{\kappa}[\Hpaths[1] \smallsetminus F_0] 
	&& \textnormal{[as $\cHmeasure[1]{\kappa} = \cDmeas{\kappa}\circ \Phi_\bH^{-1} \circ \prHpaths{1}^{-1}$]}
	\\
	\le\;& \cHmeasure[1]{\kappa}[\Hpaths[1] \smallsetminus F'] + \cHmeasure[1]{\kappa}[\Hpaths[1] \smallsetminus \Phi_\bH^{-1}(\Dpaths[\ge 1])] 
	&& \textnormal{[as $\compact_0 = \compact' \cup \Phi_\bH^{-1}(\Dpaths[\ge 1])$]}
\end{align*}
Using~\eqref{eq:bound_proba_chord}, we see that both terms decay exponentially fast with rate $M'$ as $\kappa \to 0$. 
Collecting everything back to \eqref{eqn:exponential-tightness-step} thus yields
\begin{align*}
\limsup_{\kappa \to 0+}\kappa\log \rDmeasure[T]{\kappa}[(\Dpaths[T] \smallsetminus \compact) \cap S(\delta)] 
\; \le \;  \frac{3T}{2\delta^2} - M' = M .
\end{align*}
This concludes the proof.
\end{proof}

\section{Large deviations of SLEs: infinite time} 
\label{sec:LDP-infinite-time}
We finish the proofs of our main results in this section.  
Note that it suffices to prove them for 
the preferred domains $(\bD;1,0)$ and $(\bH;0,\infty)$ in the radial and chordal cases, respectively.
Indeed, by conformal invariance of the $\SLE[\kappa]$ laws $(\SLEmeasure{D;x,y}{\kappa})_{\kappa > 0}$, 
the contraction principle (\Cref{thm:contraction-principle}\ref{item:Contraction_principle}) 
then transports the LDPs to other domains via the uniformizing maps\footnote{Note that the uniformizing maps $\Ddomain \to D$ with $\Ddomain \in \{\bD, \bH\}$ sending 
$x \mapsto \beginpoint \in \{0, 1\}$ and $y \mapsto \targetpoint \in \{0, \infty\}$
induce continuous maps from $\Xpaths(\Ddomain;\beginpoint,\targetpoint)$ to $\Xpaths(D;x,y)$ 
in the topology of capacity-parameterized curves (induced by the metric $\dXpaths$~\eqref{eq:dXpaths}), 
because these maps are conformal on $\Ddomain$ and they extend continuously to neighborhoods of the curve endpoints, 
due to our assumption that $\partial \Ddomain$ is smooth near the curve endpoints.}.

\subsection{Escaping curves and exponentially good approximations}

The collection $(\Xpaths[t])_{t > 0}$ of capacity-parameterized simple curves forms a projective system with  
restriction maps $\prXpaths{s,t} \colon (\Xpaths[t],\dXpaths) \to (\Xpaths[s],\dXpaths)$ 
given by $\prXpaths{s,t}(\gamma[0,t]) := \gamma[0,s]$, 
which are continuous and satisfy $\prXpaths{s,r} = \prXpaths{s,t} \circ \prXpaths{t,r}$ whenever $s \le t \le r$. 
Its projective limit, 
\begin{align*}
\limXpaths := \varprojlim \Xpaths[t]
= \Big\{ \big( \gamma[0,t] \big)_{t > 0} \condbig \gamma[0,s] = \prXpaths{s,t} (\gamma[0,t]) \textnormal{ for all } 0 < s \le t \Big\} 
\; \subset \; \prod_{t > 0} \Xpaths[t] ,
\end{align*} 
carries the subspace topology $\smash{\cev \cT_{\dXpaths}}$ inherited from $\smash{\underset{t > 0}{\prod} \,  (\Xpaths[t] ,\dXpaths)}$, induced by the projections 
\begin{align}\label{eq:projections}
\prXpaths{s} \colon \limXpaths \to \Xpaths[s] , \qquad 
\prXpaths{s} \big(\big( \gamma[0,t] \big)_{t > 0}\big) = \gamma[0,s] .
\end{align} 
As the measures $(\Xmeasure[t]{\kappa})_{t > 0}$ are compatible with the projective system in the sense that $\Xmeasure[s]{\kappa} = \Xmeasure[t]{\kappa}\circ\prXpaths{s,t}^{-1}$ for all $0 < s \le t$, 
we see that the $\SLE[\kappa]$ measure $\SLEmeasure{\Ddomain;\beginpoint,\targetpoint}{\kappa}$ with $\kappa \leq 4$ 
can be viewed as a Borel probability measure $\limXmeasure{\kappa}$ on $\limXpaths$. 
Hence, we obtain the following LDP:

\begin{prop}\label{thm:projective-LDP}
The family $(\limXmeasure{\kappa})_{\kappa > 0}$ of laws of the $\SLE[\kappa]$ curves viewed as probability measures on $(\limXpaths, \smash{\cev \cT_{\dXpaths}})$ satisfy an LDP with good rate function $\smash{\cev \Xlenergy} \colon \smash{\cev X} \to [0,+\infty]$, 
\begin{align*}
\smash{\cev \Xlenergy}(\limembedding(\gamma)) := \sup_{t > 0}\Xlenergy[t](\gamma[0,t]) , \qquad \limembedding(\gamma) = \big( \gamma[0,t] \big)_{t > 0} \in \limXpaths .
\end{align*}
\end{prop}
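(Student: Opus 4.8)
The plan is to derive the asserted LDP as an immediate application of the Dawson--G\"artner theorem (\Cref{thm:Dawson-Gartner}) to the projective system $(\Xpaths[t])_{t>0}$ equipped with the continuous restriction maps $\prXpaths{s,t}$. As a preliminary, by conformal invariance of the $\SLE[\kappa]$ laws, I would reduce to the preferred domains $(\bD;1,0)$ and $(\bH;0,\infty)$. I would then record the two ingredients already assembled above: first, the index set $(0,\infty)$ ordered by $\leq$ is right-filtering and the connecting maps are continuous, so the projective limit $\limXpaths = \varprojlim \Xpaths[t]$ carrying the topology $\smash{\cev\cT_{\dXpaths}}$ is well defined; second, using the almost-sure existence of the $\SLE[\kappa]$ curve at all capacity times \cite[Theorem~5.1]{Rohde-Schramm:Basic_properties_of_SLE} together with the compatibility $\Xmeasure[s]{\kappa} = \Xmeasure[t]{\kappa}\circ\prXpaths{s,t}^{-1}$ for $0 < s \le t$, the law $\SLEmeasure{\Ddomain;\beginpoint,\targetpoint}{\kappa}$ (with $\kappa \le 4$) descends to a Borel probability measure $\limXmeasure{\kappa}$ on $\limXpaths$ whose pushforward under the canonical projection $\prXpaths{t}\colon \limXpaths \to \Xpaths[t]$ equals the finite-time measure $\Xmeasure[t]{\kappa} = \limXmeasure{\kappa}\circ\prXpaths{t}^{-1}$.

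With this setup in place, the main input is \Cref{thm:radial-LDP-finite}: for each fixed $t \in (0,\infty)$ it gives that $(\Xmeasure[t]{\kappa})_{\kappa>0}$ satisfies an LDP on $(\Xpaths[t],\dXpaths)$ with good rate function $\Xlenergy[t]$, simultaneously in the radial and the chordal case. Since every $\Xlenergy[t]$ is a good rate function, I would invoke the Dawson--G\"artner theorem to conclude that $(\limXmeasure{\kappa})_{\kappa>0}$ satisfies an LDP on $(\limXpaths,\smash{\cev\cT_{\dXpaths}})$ with good rate function $x \mapsto \sup_{t>0}\Xlenergy[t](\prXpaths{t}(x))$.

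It then remains to identify this rate function with $\smash{\cev\Xlenergy}$. Every $x\in\limXpaths$ has the form $x=\limembedding(\gamma)=(\gamma[0,t])_{t>0}$ for the full capacity-parameterized curve $\gamma$ obtained as the increasing limit of the consistent family, so $\prXpaths{t}(x)=\gamma[0,t]$ and hence $\sup_{t>0}\Xlenergy[t](\prXpaths{t}(x))=\sup_{t>0}\Xlenergy[t](\gamma[0,t])=\smash{\cev\Xlenergy}(\limembedding(\gamma))$, as claimed; since the truncated Dirichlet energy is nondecreasing in $t$ by~\eqref{eqn:truncated-energy-definition}, the supremum is in fact a monotone limit as $t\to\infty$. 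I do not expect a genuine obstacle here: the only steps needing care are the measure-theoretic compatibility that makes $\limXmeasure{\kappa}$ well defined --- immediate from the conformal Markov property already built into the projective system --- and checking the hypotheses of \Cref{thm:Dawson-Gartner} level by level; all the substantive content has been absorbed into \Cref{thm:radial-LDP-finite}, so this proposition is merely the formal ``gluing'' step of the diagram in \Cref{fig:steps}. The one mild point to double-check is that \Cref{thm:Dawson-Gartner} is applicable with the uncountable index set $(0,\infty)$; if preferred, one may instead work over the cofinal countable subset $\{t\in\bN\}$, which yields a homeomorphic projective limit.
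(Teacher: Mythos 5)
Your proposal is correct and follows exactly the paper's argument: the paper likewise deduces the claim by applying the Dawson--G\"artner theorem (\Cref{thm:Dawson-Gartner}) to the finite-time LDPs of \Cref{thm:radial-LDP-finite}, using the compatibility $\Xmeasure[t]{\kappa} = \limXmeasure{\kappa}\circ\prXpaths{t}^{-1}$. The extra details you supply (identification of the rate function via $\limembedding$ and the remark on the directed index set) are consistent with the paper's setup and do not change the route.
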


The chordal case of \Cref{thm:projective-LDP}
is the content of~\cite[Theorem~1.1]{Guskov:LPD_for_SLE_in_the_uniform_topology}.

\begin{proof}
By \Cref{thm:radial-LDP-finite}, the families $(\limXmeasure{\kappa} \circ \prXpaths{t}^{-1})_{\kappa > 0} = (\Xmeasure[t]{\kappa})_{\kappa > 0}$ satisfy an LDP on $(\Xpaths[t],\dXpaths)$ with good rate function
$\Xlenergy[t]$. 
The Dawson-G\"artner theorem (\Cref{thm:Dawson-Gartner}) implies the claim.
\end{proof}

\begin{lemA}[{Dawson-G\"artner theorem,~\cite[Theorem~4.6.1]{Dembo-Zeitouni:Large_deviations_techniques_and_applications}}] 
\label{thm:Dawson-Gartner}
Let $(X_j)_{j \in J}$ be a projective system and $\smash{\cev X}$ its projective limit with projections $p_j \colon \smash{\cev X} \to X_j$. 
Let $(P^\varepsilon)_{\varepsilon > 0}$ be a family of probability measures on $\smash{\cev X}$.
Suppose that for every $j \in J$, the family $(P^\varepsilon\circ p_j^{-1})_{\varepsilon > 0}$ 
of pushforward probability measures satisfies an LDP with good rate function $I_j \colon X_j \to [0,+\infty]$.
Then, the family $(P^\varepsilon)_{\varepsilon > 0}$ 
satisfies an LDP with good rate function $I \colon \smash{\cev X} \to [0,+\infty]$, 
\begin{align*}
I(x) := \sup_{j \in J}I_j(p_j(x)) , \qquad x \in \smash{\cev X} .
\end{align*}
\end{lemA}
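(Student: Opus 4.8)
The plan is to verify the three requirements of \Cref{def:LDP} for $(P^\varepsilon)_{\varepsilon>0}$ on $\cev X$ directly from the per-coordinate hypotheses, exploiting only the projective-limit structure: the projections $p_j$ and the bonding maps $p_{ij}\colon X_j\to X_i$ (for $i\le j$, with $p_{ij}\circ p_j=p_i$) are continuous, $J$ is directed, and $\cev X$ carries the subspace topology from $\prod_{j\in J}X_j$. As a preliminary I would record that the rate functions are \emph{consistent} with the system: applying the contraction principle (\Cref{thm:contraction-principle}\ref{item:Contraction_principle}) to the continuous map $p_{ij}$ shows that $(P^\varepsilon\circ p_i^{-1})_{\varepsilon>0}=(P^\varepsilon\circ p_j^{-1})\circ p_{ij}^{-1}$ satisfies an LDP with rate function $x_i\mapsto\inf\{I_j(x_j):p_{ij}(x_j)=x_i\}$, which by uniqueness of the rate function equals $I_i$; in particular $I_i\circ p_{ij}\le I_j$, so $p_{ij}(\{I_j\le M\})\subseteq\{I_i\le M\}$ for every $M\ge0$.

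The easy ingredients come first. For goodness of $I$: each $x\mapsto I_j(p_j(x))$ is lower semicontinuous (continuous $p_j$ followed by the l.s.c.\ $I_j$), so $I=\sup_j I_j\circ p_j$ is a rate function; and $\{x\in\cev X:I(x)\le M\}=\cev X\cap\prod_{j\in J}\{I_j\le M\}$ is closed in $\prod_{j\in J}\{I_j\le M\}$, which is compact by Tychonoff's theorem (each $\{I_j\le M\}$ being compact as $I_j$ is good), hence the level set is compact. For the lower bound: given $G\subseteq\cev X$ open and $x\in G$, pick a finite $S\subseteq J$ and open $U_i\subseteq X_i$ with $x\in\bigcap_{i\in S}p_i^{-1}(U_i)\subseteq G$; choosing $j\ge i$ for all $i\in S$ and putting $\tilde U:=\bigcap_{i\in S}p_{ij}^{-1}(U_i)$, which is open in $X_j$, one gets $x\in p_j^{-1}(\tilde U)\subseteq G$ with $p_j(x)\in\tilde U$, so $P^\varepsilon(G)\ge(P^\varepsilon\circ p_j^{-1})(\tilde U)$ and thus $\liminf_{\varepsilon\to0+}\varepsilon\log P^\varepsilon(G)\ge-I_j(\tilde U)\ge-I_j(p_j(x))\ge-I(x)$; taking the supremum over $x\in G$ gives the lower bound.

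The main obstacle is the upper bound. For closed $F\subseteq\cev X$ I would first prove the identity $F=\bigcap_{j\in J}p_j^{-1}(\overline{p_j(F)})$ (``$\subseteq$'' is clear; for ``$\supseteq$'', if $x\notin F$ then a basic neighbourhood $p_j^{-1}(\tilde U)$ of $x$ as above misses $F$, whence $\tilde U\cap p_j(F)=\emptyset$ and $p_j(x)\notin\overline{p_j(F)}$). Since $F\subseteq p_j^{-1}(\overline{p_j(F)})$ with $\overline{p_j(F)}$ closed in $X_j$, the per-coordinate upper bound gives $\limsup_{\varepsilon\to0+}\varepsilon\log P^\varepsilon(F)\le-I_j(\overline{p_j(F)})$ for each $j$, hence $\limsup_{\varepsilon\to0+}\varepsilon\log P^\varepsilon(F)\le-L$ where $L:=\sup_j I_j(\overline{p_j(F)})$. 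It then remains to show $L\ge I(F)$, i.e.\ to lift the per-coordinate data --- which only controls the closures $\overline{p_j(F)}$ --- to a genuine point of $F$, and this is the delicate step. If $L=\infty$ it is trivial; if $L<\infty$, fix $M>L$ and set $G_j:=\overline{p_j(F)}\cap\{I_j\le M\}$, which is nonempty (as $I_j(\overline{p_j(F)})\le L<M$) and compact. By the consistency above together with $p_{ij}(\overline{p_j(F)})\subseteq\overline{p_i(F)}$, the family $(G_j)_{j\in J}$ is a projective system of nonempty compact Hausdorff spaces over the directed set $J$, so its projective limit is nonempty (a classical fact). Any $x=(x_j)\in\varprojlim G_j\subseteq\cev X$ then has $x_j\in\overline{p_j(F)}$ for all $j$, so $x\in F$ by the identity, and $I(x)=\sup_j I_j(x_j)\le M$; hence $F\cap\{I\le M\}\ne\emptyset$, so $I(F)\le M$, and letting $M\downarrow L$ yields $L\ge I(F)$. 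The crux is thus the interplay of directedness of $J$, goodness of the $I_j$, and consistency of the rate functions, which is precisely what makes this lifting possible.
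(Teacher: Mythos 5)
The paper does not prove this statement itself --- it is quoted from Dembo--Zeitouni \cite[Theorem~4.6.1]{Dembo-Zeitouni:Large_deviations_techniques_and_applications} --- and your argument is correct and essentially reproduces that textbook proof: goodness via Tychonoff and closedness of the projective limit in the product, the lower bound via basic neighborhoods and directedness, and the upper bound via the identity $F=\bigcap_{j}p_j^{-1}(\overline{p_j(F)})$ together with nonemptiness of the projective limit of the nonempty compact sets $\overline{p_j(F)}\cap\{I_j\le M\}$. The ingredients you leave implicit --- closedness of the projective limit inside $\prod_{j}X_j$, uniqueness of good rate functions on Hausdorff spaces underlying your consistency claim $I_i\circ p_{ij}\le I_j$, and nonemptiness of inverse limits of nonempty compacta over a directed index set --- are standard and unproblematic.
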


Note that the topology on $(\limXpaths, \smash{\cev \cT_{\dXpaths}})$ coincides with the topology of uniform convergence on compact intervals on $\Xpaths[\infty]$.
Because this topology fails to see whether or not the curves escape far away from their target point 
after already reaching its small vicinity (see \Cref{rem:bump}), 
in order to prove \Cref{thm:radial-LDP}, we will first have to address such tail behavior.

\begin{remark}\label{rem:bump}
The projections~\eqref{eq:projections} 
induce a continuous bijection 
\begin{align*}
\limembedding \colon \Xpaths[\infty] \to \limXpaths , \qquad 
\limembedding(\gamma) := \big( \gamma[0,t] \big)_{t > 0} .
\end{align*}
However, $\limembedding$ is not a homeomorphism from $(\Xpaths[\infty], \dXpaths)$ to $(\limXpaths, \smash{\cev \cT_{\dXpaths}})$: 
for example, if $(\gamma^{n})_{n \in \bN}$ is a sequence of piecewise linear paths in the disk $\overline{\bD}$ 
from $1$ to $\frac{1}{n}$ to $\frac{\ii}{2}$ to $0$, 
then the set $A := \{\gamma^{n} \colon n \in \bN\} \cup \{[0,1]\} \subset \Dpaths[\infty]$ is not closed in the metric $\dDpaths$ of capacity-parameterized curves defined in~\eqref{eq:dXpaths}, 
but its image $\limembedding(A) \subset \limDpaths := \varprojlim \Dpaths[t]$ is closed in the product topology.
\end{remark}

In fact, escaping curves such as those in \Cref{rem:bump} are the only reason for the difference between the topology 
$\smash{\cev \cT_{\dXpaths}}$ and the topology of capacity-parameterized curves induced by
the metric $\dXpaths$
(see \Cref{thm:limembedding-homeomorphism}).
In other words, while the bijection $\limembedding$ is not a homeomorphism 
from $(\Xpaths[\infty], \dXpaths)$ to $(\limXpaths, \smash{\cev \cT_{\dXpaths}})$,
restricting it to a set $\Eset \subset \Xpaths[\infty]$ avoiding 
the ``escaping tail behavior'' gives a homeomorphism onto a subset of $\limXpaths$. 
Moreover, because for $\SLE[\kappa]$ curves with small $\kappa$, the escaping tail behavior is extremely unlikely, 
it turns out that the $\SLE[\kappa]$ measures on the whole $(\Xpaths[\infty], \dXpaths)$ are
exponentially well approximated by such restrictions (see \Cref{thm:return-estimates}).
We will obtain \Cref{thm:radial-LDP} by combining these facts in \Cref{subsec:radialLDP}.

\begin{figure}
\includegraphics[width=\textwidth]{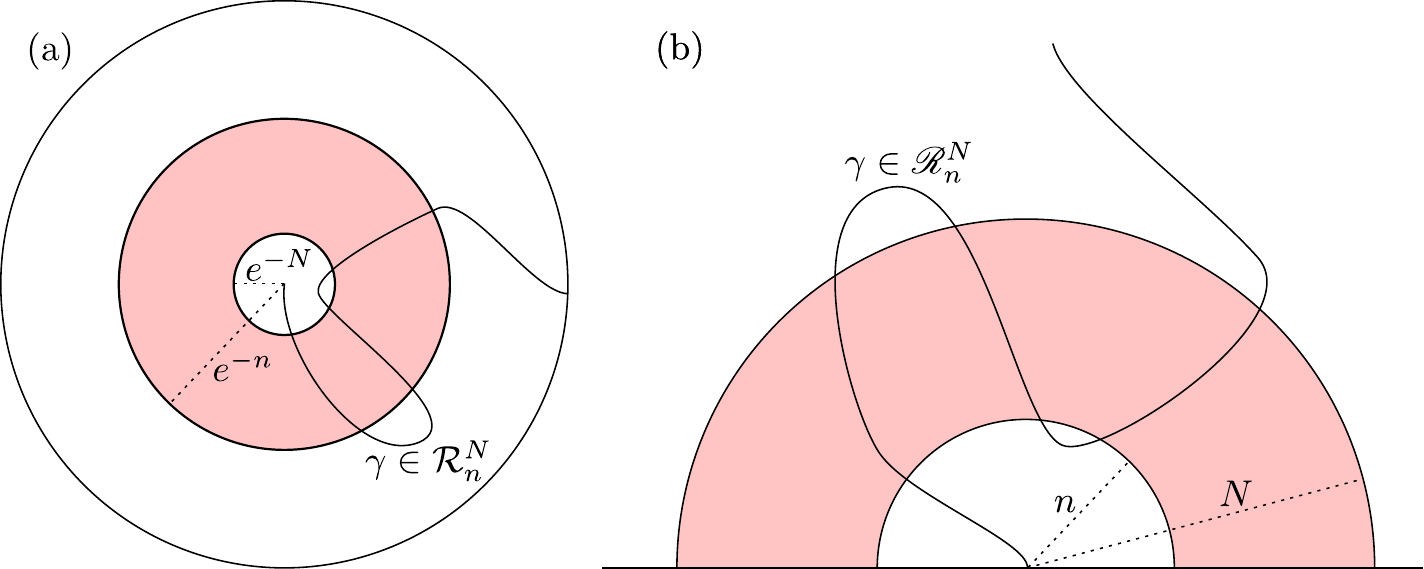}
\caption{A sample from the escape event $\xreturnevent{n}{N}$ defined in~\eqref{eqn:return-event} in (a):~the radial case ($\gamma \in \returnevent{n}{N}$) and (b):~the chordal case ($\gamma \in \creturnevent{n}{N}$).}
\label{fig:return-event}
\end{figure}

Denote $\bD_m := e^{-m}\bD$ for $m \in \bN$. 
For two integers $N>n>0$, we consider the sets $\returnevent{n}{N}$ and $\creturnevent{n}{N}$ 
comprising curves which exit $\overline\bD_n$ after entering $\bD_N$ in the radial case,
and which enter $n\bD$ after exiting $N\overline\bD$ in the chordal case (see also~\Cref{fig:return-event}): 
\begin{align}\label{eqn:return-event}
\begin{split}
\returnevent{n}{N} 
:= \; & \{\gamma \in \Dpaths[\infty] \cond \gamma[\rhittingtime[N], \infty) \not\subset \overline{\bD}_n\} 
\; \subset \; \Dpaths[\infty] ,
\\
\creturnevent{n}{N} 
:= \; & \{\gamma \in \Hpaths[\infty] \cond \gamma[\chittingtime[N], \infty) \cap n\bD \ne \emptyset\} 
\; \subset \; \Hpaths[\infty] , 
\end{split}
\end{align}
where (by the monotonicity~\eqref{eq:monotonicity_capacity} of the capacity) 
\begin{align}\label{eqn:return-event-time}
\rhittingtime[N] = \rhittingtime[N](\gamma)
:= \; & \inf\{t \ge 0 \colon |\gamma(t)| < e^{-N} \} \; \le \; \rcap(\overline{\bD} \smallsetminus \bD_N) = N , 
\\
\chittingtime[N] = \chittingtime[N](\gamma)
:= \; & \inf\{t \ge 0 \colon |\gamma(t)| > N\} \; \le \; \tfrac{1}{2} \hcap(N\overline{\bD} \cap \overline{\bH}) .
\end{align}
We write $\xreturnevent{n}{N} \in \{\returnevent{n}{N}, \creturnevent{n}{N}\}$ to consider the radial and chordal cases simultaneously.

\begin{lem}\label{thm:limembedding-homeomorphism}
Fix $\Eset \subset \Xpaths[\infty]$. If for every $n \in \bN$, there exists an integer $N > n$ 
such that $\Eset \subset \Xpaths[\infty] \smallsetminus \xreturnevent{n}{N}$, 
then $\limembedding|_\Eset$ defined as $\limembedding(\gamma) := \big( \gamma[0,t] \big)_{t > 0}$ 
is a homeomorphism onto its image.
\end{lem}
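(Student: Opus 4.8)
The plan is to use that, by \Cref{rem:bump}, $\limembedding \colon \Xpaths[\infty] \to \limXpaths$ is already a continuous bijection, so that $\limembedding|_\Eset$ is automatically a continuous bijection from $(\Eset,\dXpaths)$ onto its image $\limembedding(\Eset) \subset \limXpaths$; the only thing left to establish is continuity of the inverse map. Since the projective-limit topology on $\limXpaths$ is induced by the countably many projections $\prXpaths{m}$, $m \in \bN$ (cofinal among all $\prXpaths{s}$, $s > 0$), the space $\limXpaths$, hence also $\limembedding(\Eset)$, is metrizable, so it suffices to check that the inverse is \emph{sequentially} continuous. Concretely, I would take $\gamma^k, \gamma \in \Eset$ with $\limembedding(\gamma^k) \to \limembedding(\gamma)$ in $\limXpaths$ --- which, unwinding the definition of the product topology, means $\sup_{t \in [0,s]} \dDdomain(\gamma^k(t), \gamma(t)) \to 0$ as $k \to \infty$ for every fixed $s > 0$ (i.e., uniform convergence on compact time intervals) --- and show $\dXpaths(\gamma^k, \gamma) = \sup_{t \ge 0} \dDdomain(\gamma^k(t), \gamma(t)) \to 0$.

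The key ingredient I would extract from the hypothesis is a tail estimate that is \emph{uniform over $\Eset$}. Fix $n \in \bN$ and let $N = N(n) > n$ be such that $\Eset \subset \Xpaths[\infty] \smallsetminus \xreturnevent{n}{N}$. Put $V_n := \overline{\bD}_n$ in the radial case and $V_n := \{ z \in \overline{\bH} \colon |z| \ge n \} \cup \{\targetpoint\}$ in the chordal case; in both cases $V_n$ is a neighborhood of the target point $\targetpoint$, the sets $V_n$ decrease to $\{\targetpoint\}$ in the spherical metric, and since $\dDdomain$ induces the spherical topology on $\overline{\Ddomain}$, we get $\epsilon_n := \diam[\dDdomain](V_n) \to 0$ as $n \to \infty$. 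Next, by the capacity bounds recorded in~\eqref{eqn:return-event-time}, for every $\gamma \in \Xpaths[\infty]$ the relevant hitting time is finite and bounded by a constant depending only on $N(n)$: $\rhittingtime[N](\gamma) \le N(n) =: t_n$ in the radial case, and $\chittingtime[N](\gamma) \le \tfrac12 \hcap(N(n)\overline{\bD} \cap \overline{\bH}) =: t_n$ in the chordal case (finiteness also follows since curves in $\Xpaths[\infty]$ have infinite capacity, hence by monotonicity~\eqref{eq:monotonicity_capacity} eventually enter $\bD_N$ resp.\ exit $N\overline{\bD}$). Then the definition~\eqref{eqn:return-event} of $\xreturnevent{n}{N}$ forces every $\gamma \in \Eset$ to satisfy $\gamma(t) \in V_n$ for all $t \ge t_n$, whence $\sup_{t \ge t_n} \dDdomain(\gamma(t), \targetpoint) \le \epsilon_n$ uniformly in $\gamma \in \Eset$. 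With this, given $\varepsilon > 0$ I would choose $n$ with $\epsilon_n < \varepsilon/3$, estimate $\dDdomain(\gamma^k(t), \gamma(t)) \le \dDdomain(\gamma^k(t), \targetpoint) + \dDdomain(\targetpoint, \gamma(t)) \le 2\epsilon_n < 2\varepsilon/3$ for all $k$ and all $t \ge t_n$, and control the remaining window $t \in [0, t_n]$ by the convergence on compact intervals (applied with $s = t_n$); adding the two bounds gives $\dXpaths(\gamma^k, \gamma) < \varepsilon$ for all large $k$.

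The hard part will be making the uniform tail estimate airtight: verifying that the hitting times $\rhittingtime[N]$, $\chittingtime[N]$ are finite and uniformly bounded on $\Eset$ (relying on~\eqref{eqn:return-event-time} together with the infinite-capacity property of curves in $\Xpaths[\infty]$), and checking the metric-geometry input that the neighborhoods $V_n$ shrink to $\targetpoint$ in the metric $\dDdomain$ --- both soft but requiring care, especially in the chordal case where $\targetpoint = \infty$ and $\dDdomain$ is the pullback of the Euclidean metric on $\bD$ under a M\"obius map. It is crucial that the escape event $\xreturnevent{n}{N}$ is excluded for \emph{all} $\gamma \in \Eset$ at once, so that the threshold $t_n$ and the error $\epsilon_n$ do not depend on the curve; without this uniformity one would only recover uniform convergence on compact time intervals, which is precisely the weaker topology of $\limXpaths$ and would not give a homeomorphism. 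Everything else (metrizability of $\limXpaths$, the triangle-inequality split over $[0,t_n]$ and $[t_n,\infty)$) is routine bookkeeping.
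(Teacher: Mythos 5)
Your proposal is correct and follows essentially the same route as the paper's proof: the key step in both is to use the hypothesis to get, for each $n$, a single $N(n)$ valid for \emph{all} curves in $\Eset$, combine it with the deterministic capacity bound $\rhittingtime[N] \le N$ (resp.\ $\chittingtime[N] \le \tfrac12\hcap(N\overline{\bD}\cap\overline{\bH})$) from~\eqref{eqn:return-event-time} to confine all tails beyond a fixed time $t_n$ in a set of small $\dDdomain$-diameter around $\targetpoint$, and control $[0,t_n]$ by the finite-time projection. The only difference is cosmetic: the paper verifies directly that $\limembedding|_\Eset$ is an open map using the basic neighborhoods $\prXpaths{N}^{-1}(\Bmetric[\dXpaths]{\gamma[0,N]}{\varepsilon})$, whereas you phrase it as sequential continuity of the inverse after observing that the projective-limit topology is metrizable (via the countable cofinal family of projections), which is a valid but unnecessary detour.
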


\begin{proof}
We prove the claim in the radial case $\Xpaths = \Dpaths$; the chordal case follows similarly.  
Since $\limembedding$ is a continuous bijection, it suffices to show that $\limembedding|_\Eset \colon \Eset \to \cev \Eset := \limembedding(\Eset)$ is an open map. 
For this, let $U \subset \Dpaths[\infty]$ be an open set, take $\gamma \in U\cap \Eset$, 
and let $\varepsilon > 0$ be such that $\Bmetric[{\dDpaths}]{\gamma}{\varepsilon} \subset U$.
Also, take $n \in \bN$ such that $\diam(\bD_n) < \varepsilon$. 
Then, by assumption, there exists an integer $N > n$ such that $\gamma \in \Dpaths[\infty] \smallsetminus \returnevent{n}{N}$. 
Consider the open neighborhood 
\begin{align*}
\limembedding(\gamma) \in \cev V_{\gamma} := \prDpaths{N}^{-1} \big( \Bmetric[\dDpaths]{\gamma[0,N]}{\varepsilon} \big) \subset \limDpaths .
\end{align*}
Since $\rhittingtime[N](\gamma) \leq N$, we see that for each $\big( \eta[0,t] \big)_{t > 0} \in \cev V_{\gamma} \cap \cev \Eset$, 
we have $\dDpaths(\eta[0,N], \gamma[0,N]) < \varepsilon$, 
and $\gamma[N, \infty) \cup \eta[N, \infty) \subset \bD_{n}$.
We thus obtain
\begin{align*}
\dDpaths(\gamma, \eta) \le \max \{ \dDpaths(\eta[0,N], \gamma[0,N]) , \, \diam(\bD_n) \} < \varepsilon,
\end{align*}
which implies that $\eta \in U$. 
We conclude that $\cev V_{\gamma} \cap \cev \Eset \subset \limembedding(U \cap \Eset)$, 
and consequently, $\limembedding(U\cap \Eset) = \underset{\gamma \in U}{\bigcup} \cev V_{\gamma} \cap \cev \Eset$ is an open set in $\cev \Eset$.
This concludes the proof (of the radial case).
\end{proof}

It follows immediately from \Cref{thm:limembedding-homeomorphism} that, for any sequence $\vec N = (N_n)_{n \in \bN}$ of integers with $N_n > n$,
the map $\limembedding$ is a homeomorphism to its image when restricted to the set 
\begin{align*}
\Eset_{\vec N} := \bigcap_{n \in \bN} (\Xpaths[\infty] \smallsetminus \xreturnevent{n}{N_n}) .
\end{align*}
We will use this in the proof of \Cref{thm:radial-LDP} in \Cref{subsec:radialLDP}. 
It is important to note that the set $\Eset_{\vec N}$ 
in question is closed (see \Cref{rem:R_open_E_closed}).
Namely,
instead of the classical contraction principle (\Cref{thm:contraction-principle}), we will use a generalized version (\Cref{thm:contraction-principle-generalized}, given in~\Cref{app:contraction-principle-generalized}).
A crucial difference here compared to the inverse contraction principle (\Cref{thm:contraction-principle}\ref{item:Inverse_contraction_principle}) 
is that the exponential bounds can be taken over \emph{closed continuity sets}, 
in particular, the sets $\Eset_{\vec N}$. 
It thus remains to find exponential upper bounds for the probabilities of $\Eset_{\vec N}$, 
which are provided by the escape estimates in \Cref{thm:return-estimates} given below.

\begin{remark}\label{rem:R_open_E_closed}
Importantly, 
$\invxreturnevent{n}{N} := \limembedding(\xreturnevent{n}{N})$ is an open subset of $(\limXpaths, \smash{\cev \cT_{\dXpaths}})$,
so $\cev \Eset_{\vec N} := \limembedding (\Eset_{\vec N})$ is a closed subset of $(\limXpaths, \smash{\cev \cT_{\dXpaths}})$.
Indeed, in the radial case, for $\limembedding(\gamma) \in \invreturnevent{n}{N}$, 
there are $0 \le t < T$ and $\varepsilon > 0$ 
such that $|\gamma(t)| < e^{-N}+\varepsilon$ and $|\gamma(T)| > e^{-n}-\varepsilon$, 
so the set $\prDpaths{T}^{-1} \big( \Bmetric[\dDpaths]{\gamma[0,T]}{\varepsilon} \big) \subset \invreturnevent{n}{N}$ is an open neighborhood of $\limembedding(\gamma)$. 
The chordal case follows similarly.
\end{remark}

\begin{remark}\label{rem:clreturnevent}
$\invxreturnevent{n}{N}$ 
is dense in $(\limXpaths, \smash{\cev \cT_{\dXpaths}})$, 
while in $(\Xpaths[\infty],\dXpaths)$ the closure of $\returnevent{n}{N}$ (resp.~$\creturnevent{n}{N}$)  
comprises curves which exit $\bD_n$ after entering $\overline\bD_N$ 
(resp.~enter $n\overline\bD$ after exiting $N\bD$).  
\end{remark}

\subsection{Large deviations for single-curve SLE variants}\label{subsec:radialLDP}

For $\kappa \leq 4$, let $\XmeasureReference{\kappa}$ denote the $\SLE[\kappa]$ measure $\SLEmeasure{\Ddomain;\beginpoint,\targetpoint}{\kappa}$ on $(\Xpaths[\infty],\dXpaths)$ (equivalently\footnote{\label{fn:zeroset}Recall that $\Xpaths = \underset{T \in [0,\infty]}{\bigcup} \, \Xpaths[T]$,
so $\Xpaths[\infty] \subset \Xpaths$ --- and in particular, we have
$\SLEmeasure{\Ddomain;\beginpoint,\targetpoint}{\kappa}[\Xpaths \smallsetminus \Xpaths[\infty]] = 0$.}, on $(\Xpaths,\dXpaths)$). 
The next result shows that the measures $\XmeasureReference{\kappa}$ are
exponentially well approximated by their restrictions to $\Xpaths[\infty] \smallsetminus \xreturnevent{n}{N}$. 
We will prove it in \Cref{subsec:return-estimates}.

\begin{restatable}{thm}{returnestimates}\label{thm:return-estimates}
There exists a universal constant $c \in (0, \infty)$ such that
for each $M \in [0,\infty)$ 
and $n \in \bN$, there exists an integer $N = N_n(M) > n$ such~that 
\begin{align}\label{eq:return-estimates}
\SLEmeasure{}{\kappa}[\xreturnevent{n}{N}] \le c \, e^{-M/\kappa} , \qquad \kappa \in (0, 4].
\end{align}
\end{restatable}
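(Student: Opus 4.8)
By the reduction to preferred domains, we may work with $\rDmeas{\kappa}$ on $(\bD;1,0)$ in the radial case and $\cHmeas{\kappa}$ on $(\bH;0,\infty)$ in the chordal case. In both cases the first step is the strong Markov property: the time $\rhittingtime[N]$ (resp.~$\chittingtime[N]$) from~\eqref{eqn:return-event-time} is a stopping time, and by the conformal Markov property of $\SLE[\kappa]$, conditionally on $\gamma[0,\rhittingtime[N]]$ (resp.~$\gamma[0,\chittingtime[N]]$) the remaining curve, mapped out by the corresponding Loewner map, is a fresh $\SLE[\kappa]$ of the same type in the slit domain. Under this map the event $\xreturnevent{n}{N}$ becomes a pure ``escape/return'' event for the fresh $\SLE[\kappa]$: it must travel back from the immediate vicinity of its target point to a macroscopic scale. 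It thus suffices to bound these escape/return probabilities uniformly in $\kappa < \kappa_0$, which we do in the spirit of Lawler's transience and terminal-point continuity estimates~\cite{Lawler:Continuity_of_radial_and_two-sided_radial_SLE_at_the_terminal_point,Field-Lawler:Escape_probability_and_transience_for_SLE}, but tracking all constants as $\kappa \to 0+$.

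\textbf{The chordal case.} By scale invariance of $\cSLE[\kappa]$, the probability $\cHmeas{\kappa}[\creturnevent{n}{N}]$ depends only on the ratio $N/n$; after rescaling by $1/n$ it equals the probability $p_R(\kappa)$, $R = N/n$, that the $\cSLE[\kappa]$ curve in $(\bH;0,\infty)$ first exits $R\overline{\bD}$ and then returns to $\bD$. We bound $p_R(\kappa)$ by iterating over the dyadic scales $2^k$ with $1 \le 2^k \le R$: using the strong Markov property at the successive hitting times of the circles $\{|z|=2^k\}$, the conditional probability that the curve, having reached scale $2^{k+1}$, ever comes back inside scale $2^k$ is at most a constant $q(\kappa)$. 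The crucial new input over~\cite{Field-Lawler:Escape_probability_and_transience_for_SLE} (where $\kappa$ is fixed) is that $q(\kappa) \le e^{-b_0/\kappa}$ for a \emph{universal} $b_0 > 0$ and all $\kappa < \kappa_0$; this follows from the Bessel estimate \Cref{thm:harmonic-measure-gets-small-estimate}, which has exactly the form $e^{-M/\kappa}$, once ``a given scale is swallowed out of order'' is encoded in terms of the process $\bessel^x$ (with $a=2$) started at the corresponding scale $x \asymp 2^k$, in the manner of \Cref{sec:proof-of-exponentially-tight-function-set}. Composing over the $\asymp \log_2 R$ scales yields $p_R(\kappa) \le c_0\, R^{-b_0/\kappa}$ with universal $c_0,\kappa_0,b_0$, and choosing $N = N_n(M) := n\lceil e^{M/b_0}\rceil > n$ gives $\cHmeas{\kappa}[\creturnevent{n}{N}] \le c_0\, e^{-M/\kappa}$ for $\kappa < \kappa_0$.

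\textbf{The radial case.} The skeleton is the same, but radial $\SLE[\kappa]$ is not scale invariant, so after the strong Markov step we must control the conformal geometry of the slit domain $D := \bD\smallsetminus\gamma[0,\rhittingtime[N]]$. By the Koebe distortion theorem and~\eqref{eqn:return-event-time} one has $\crad{D}{0}\asymp e^{-N}$, and the uniformizing map $\phi\colon D\to\bD$ with $\phi(0)=0$ distorts the Euclidean scale $e^{-n}$ around $0$ into a boundary layer of width $\asymp e^{-(N-n)/2}$ near $\partial\bD$, away from the image of the slit tip. Hence $\returnevent{n}{N}$, pushed forward by $\phi$, is contained in the event that the fresh radial $\SLE[\kappa]$ in $\bD$ started at $\phi(\gamma(\rhittingtime[N]))\in\partial\bD$ and targeted at $0$ enters this boundary layer away from its starting point --- i.e.\ returns from the terminal point to a definite conformal distance. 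We estimate this following~\cite{Lawler:Continuity_of_radial_and_two-sided_radial_SLE_at_the_terminal_point,Field-Lawler:Escape_probability_and_transience_for_SLE}, iterating over conformal scales; the additional input is again uniform-in-$\kappa$ control of the one-scale return probability, obtained from explicit Bessel/SDE estimates as in \Cref{sec:proof-of-exponentially-tight-function-set} and \Cref{thm:harmonic-measure-gets-small-estimate}. This gives a bound $c_0\, e^{-b_0(N-n)/\kappa}$, and choosing $N = N_n(M) := n + \lceil M/b_0\rceil$ completes the proof.

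\textbf{Main obstacle.} The hardest step is the uniform-in-$\kappa$ one-scale estimate in the radial case: lacking scale invariance, one cannot normalize the configuration, so the per-scale return probability must be controlled uniformly both in $\kappa<\kappa_0$ and over all conformal types of the already-drawn curve, and the constants in Lawler's terminal-point analysis --- originally obtained for a fixed $\kappa$ --- must be made explicit as $\kappa\to0+$. We emphasize that, although the finite-time Hausdorff LDP (\Cref{thm:LDP-compact}) together with the energy lower bound used in the proof of \Cref{thm:chordal-crad-probability} already yields a $\limsup$-type version of these estimates, the threshold on $\kappa$ it provides would depend on the scale; securing a single universal $\kappa_0$ valid simultaneously for all $M$ and $n$ is precisely why we argue via the explicit SDE estimates above.
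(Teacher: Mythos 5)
Your overall skeleton (strong Markov at $\chittingtime[N]$/$\rhittingtime[N]$, a Beurling-type distortion statement saying the return region becomes conformally tiny, then a uniform-in-$\kappa$ hitting estimate iterated over scales) matches the shape of the paper's argument, but the step you lean on everywhere --- that the per-scale return probability is bounded by $e^{-b_0/\kappa}$ ``once the event is encoded in terms of the process $\bessel^x$ as in \Cref{sec:proof-of-exponentially-tight-function-set}'' --- is a genuine gap. The Bessel estimate \Cref{thm:harmonic-measure-gets-small-estimate} controls the conformal quantity $\hat g_t(x)$ for a \emph{real boundary point} $x$, i.e.\ (near-)swallowing of $x$ or the curve hitting $\bR$/its own past; the events $\creturnevent{n}{N}$ and $\returnevent{n}{N}$ are of a different nature: the curve must re-enter a Euclidean neighbourhood of the origin (resp.\ exit $\overline\bD_n$ after entering $\bD_N$), which it can do without swallowing any boundary point and without $\hat g_t(x)$ becoming small for any relevant $x$ --- mere Euclidean proximity to a boundary point does not force its Bessel process down. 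What is actually needed is a crosscut-hitting estimate with exponent $8/\kappa-1$ and a constant of the form $e^{O(1)/\kappa}$, uniformly over the conformal type of the already-drawn hull. In the chordal case such an estimate exists off the shelf (it is \Cref{thm:crosscut-half-plane}, i.e.\ \cite[Lemma~A.2]{Peltola-Wang:LDP}, proved via a derivative/martingale argument, not via \Cref{thm:harmonic-measure-gets-small-estimate}), and the paper simply imports \cite[Eq.~(A.10)]{Peltola-Wang:LDP}; your sketch re-derives this incorrectly, though the conclusion is salvageable by citation.

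In the radial case the gap is fatal as written, because there \emph{is} no off-the-shelf uniform analogue of Lawler's crosscut estimate \cite[Lemma~4.2]{Lawler:Continuity_of_radial_and_two-sided_radial_SLE_at_the_terminal_point}: making its constant $c_t(\kappa)$ explicit as $\kappa\to0+$ is precisely the hard part, and it is not a ``Bessel/SDE estimate''. The paper does it by comparing radial to chordal SLE via the Radon--Nikodym derivative of \Cref{thm:RN-derivative-radial-chordal} up to a stopping time $\sigma\wedge\rho$ chosen so that $\Theta_s$ stays bounded away from $0$ and $\pi$ (giving $R^\kappa\lesssim e^{c/\kappa}$), bounding the normalization $1-p^\kappa=\rDmeas{\kappa}[\sigma\le\rho]$ from below by $e^{-(\renergy[\ell](U)+1)/\kappa}$ using the finite-time Hausdorff LDP (\Cref{thm:LDP-compact}) together with \Cref{thm:open-sets-have-finite-energy}, and then applying the uniform chordal bound \Cref{thm:crosscut-half-plane}; the geometric input you describe (your ``boundary layer of width $\asymp e^{-(N-n)/2}$'') is the content of \Cref{thm:crosscut-estimate}, plus a sum over later time windows because the radial crosscut estimate only covers a finite time interval. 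Note also that your stated reason for avoiding the LDP input is misplaced: the paper invokes it for a \emph{single fixed} event at a fixed scale $\ell$ (inside the constant $c_t(\kappa)$), so only one threshold $\kappa_0$ arises; by discarding this tool without supplying a substitute for the uniform one-scale estimate, your radial argument has no valid core.
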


While the chordal case of \Cref{thm:return-estimates} can be derived directly from the proof of~\cite[Corollary~A.4]{Peltola-Wang:LDP}, 
the radial case is substantially more complicated 
and the approach used in~\cite[Appendix~A]{Peltola-Wang:LDP} (following~\cite{Lawler:Minkowski_content_of_intersection_of_SLE_curve_with_real_line}) seems not amenable to the radial case as such. 
In order to address the radial case, we will 
establish a refinement of results from~\cite{Lawler:Continuity_of_radial_and_two-sided_radial_SLE_at_the_terminal_point, Field-Lawler:Escape_probability_and_transience_for_SLE}. 
(Let us mention in passing that in the recent \cite{Krusell:in_prep}, an analogous result appears for $\SLE(\kappa, \rho)$ processes, but with non-uniform constants which do not give the result we need.) 

\medskip

Given \Cref{thm:return-estimates}, we obtain the LDP for $\SLE[0+]$. 

\radialLDP*

\begin{proof}
Without loss of generality, we work with the space 
$\Xpaths = \Xpaths(\Ddomain;\beginpoint,\targetpoint)$ of capacity-parameterized curves in the preferred reference domain $(\Ddomain;\beginpoint,\targetpoint)$. 
Fix $M \in [0,\infty)$. 
By \Cref{thm:return-estimates} and \Cref{rem:improvement-of-constant}, there exists 
a sequence $\vec N = (N_n)_{n \in \bN}$ of integers with $N_n > n$ (depending on both $n$ and $M$) such that 
\begin{align*}
\SLEmeasure{}{\kappa}[\Xpaths \smallsetminus \xreturnevent{n}{N_n}] \le 2^{-n}e^{-M/\kappa}, \qquad 
\textnormal{for all } \kappa \in (0, 4]
\textnormal{ and } n \in \bN.
\end{align*}
By \Cref{thm:limembedding-homeomorphism}, the inverse map $\limembedding^{-1} \colon (\cev \Eset_{\vec N}, \smash{\cev \cT_{\dXpaths}}) \to (\Eset_{\vec N}, \dXpaths)$ is continuous on the closed set
\begin{align*}
\cev \Eset_{\vec N} := \limembedding (\Eset_{\vec N}) \subset \limXpaths , \qquad 
\textnormal{where}
\qquad \Eset_{\vec N} := \bigcap_{n \in \bN} (\Xpaths[\infty] \smallsetminus \xreturnevent{n}{N_n}) .
\end{align*}
Furthermore, 
for $\kappa \leq 4$ 
the union bound yields${}^{\ref{fn:zeroset}}$
\begin{align*}
\limXmeasure{\kappa}[\limXpaths \smallsetminus \limembedding(\Eset_{\vec N})]
\; = \; \SLEmeasure{}{\kappa}[\Xpaths \smallsetminus \Eset_{\vec N}]
\; \le \; \sum_{n \in \bN} \SLEmeasure{}{\kappa}[\Xpaths \smallsetminus \xreturnevent{n}{N_n}]
\; \le \; \sum_{n \in \bN} 2^{-n} e^{-M/\kappa}
\; = \; e^{-M/\kappa},
\end{align*}
so in particular, $\underset{\kappa \to 0}{\limsup} \, \kappa\log\SLEmeasure{}{\kappa}[\Xpaths \smallsetminus \Eset_{\vec N}] \le -M$.
We thus obtain the claimed LDP for $(\SLEmeasure{}{\kappa})_{\kappa > 0}$ by combining \Cref{thm:projective-LDP} and the
generalized contraction principle, 
\Cref{thm:contraction-principle-generalized}\ref{item:LDP_gen}.  
\end{proof}

\unparamLDP*

\begin{proof}
Consider the projection $(\Xpaths, \dXpaths) \to (\unparampaths, \dXpathsUnparam)$ sending a capacity-parameterized curve 
to its unparameterized equivalence class. 
It is a contraction, thus continuous. 
Applying the contraction principle (\Cref{thm:contraction-principle}\ref{item:Contraction_principle}) to the LDP from \Cref{thm:radial-LDP} gives the claim.
\end{proof} 

\subsection{Escape estimates}\label{subsec:return-estimates}

To finish the proof of \Cref{thm:radial-LDP}, it remains to prove the escape probability estimate (\Cref{thm:return-estimates}), whose cornerstone is \Cref{thm:crosscut-estimate}. 
In the radial case, this is a refinement of~\cite[Proposition~4.3]{Lawler:Continuity_of_radial_and_two-sided_radial_SLE_at_the_terminal_point},
keeping track of the $\kappa$-dependence of the constants appearing in the estimates as $\kappa \to 0+$ 
(the much easier chordal case was treated in~\cite[Theorem~A.1~\&~Lemma~A.2]{Peltola-Wang:LDP} using the results of~\cite{Alberts-Kozdron:Intersection_probabilities_for_chordal_SLE_path_and_semicircle}).  

Consider a domain $D \subsetneq \bC$ with smooth boundary. 
For two disjoint subsets $\Gamma_1, \Gamma_2 \subset \partial D$ of the boundary, 
the \emph{Brownian excursion measure} between $\Gamma_1, \Gamma_2$ in $D$ is defined as
\begin{align} \label{eq:BLM}
\mathcal{E}_D(\Gamma_1,\Gamma_2) 
:= \int_{\Gamma_1} \int_{\Gamma_2} P_{D;x,y} |\ud x \,| |\ud y \,| ,
\end{align}
where the density is the Poisson excursion kernel $P_{D;x,y}$ in $D$ between the points $x$ and $y$. 
By conformal invariance,~\eqref{eq:BLM} is also well-defined in the case where $\Gamma_1$ and $\Gamma_2$ are not smooth. 
For two disjoint sets $\Gamma_1, \Gamma_2 \subset \bC$, we shall also write $\mathcal{E}_D(\Gamma_1, \Gamma_2)$ for $\mathcal{E}_C(\Gamma_1,\Gamma_2)$, where $C$ is the connected component\footnote{If $\Gamma_1 \cap \overline{D}$ and $\Gamma_2  \cap \overline{D}$ belong to different connected components of $\overline{D}$, then $\mathcal{E}_D(\Gamma_1, \Gamma_2) = 0$.} 
of $D \smallsetminus (\Gamma_1 \cup \Gamma_2)$ such that $\Gamma_1 \cap \partial C \neq \emptyset$ and $\Gamma_2 \cap \partial C \neq \emptyset$. 

A \emph{crosscut} of $D$ is a simple curve $\eta$ connecting two distinct points $x,y \in \partial D$ in $\overline{D}$ touching $\partial D$ only at its endpoints. We parameterize it so that $x = \eta(0)$ and $y = \eta(1)$.

\begin{prop}\label{thm:crosscut-estimate}
There exist universal constants $c, \, c' \in (0,\infty)$ and $\zeta \in \bR$ such that for any simple curve $\gamma$ from $\beginpoint$ to $\targetpoint$ in $\Ddomain$ and any crosscut $\eta$ of $\Ddomain$ disjoint from $\gamma$,
we have 
\begin{align} \label{eq:crosscut-estimate}
\begin{split}
\SLEmeasure{\Ddomain;\beginpoint,\targetpoint}{\kappa}[\gamma^{\kappa} \cap \eta \ne \emptyset] 
\; \le \; \; & c' e^{\zeta/\kappa} \, \min \bigg\{ \frac{\diam(\eta)}{\dist(\eta,\beginpoint)}, 1 \bigg\}^{8/\kappa - 1}
\\
\; \le \; \; & c\,e^{\zeta/\kappa} \, \mathcal{E}_{\Ddomain}(\eta,\gamma)^{8/\kappa - 1} 
, \qquad\qquad \kappa \in [0, 4].
\end{split}
\end{align}
\textnormal{(}Here, $\gamma^{\kappa} \sim \SLEmeasure{\Ddomain;\beginpoint,\targetpoint}{\kappa}$ denotes the $\SLE[\kappa]$ random curve in $(\Ddomain;\beginpoint,\targetpoint)$ with law $\SLEmeasure{\Ddomain;\beginpoint,\targetpoint}{\kappa}$.\textnormal{)}
\end{prop}

\begin{remark} \label{rem:fixing_FL} 
In the setup of \Cref{thm:crosscut-estimate}, it holds in fact that for any $\kappa \in (0,8)$, there are constants $c(\kappa), \, c'(\kappa) \in (0,\infty)$ such that 
\begin{align*} 
\SLEmeasure{\Ddomain;\beginpoint,\targetpoint}{\kappa}[\gamma^{\kappa} \cap \eta \ne \emptyset] 
\; \le \; c'(\kappa) \, \min \bigg\{ \frac{\diam(\eta)}{\dist(\eta,\beginpoint)}, 1 \bigg\}^{8/\kappa - 1}
\le \; c(\kappa) \, \mathcal{E}_{\Ddomain}(\eta,\gamma)^{8/\kappa - 1} .
\end{align*}
This follows from~\cite[Proposition~3.1]{Field-Lawler:Escape_probability_and_transience_for_SLE} and its proof, 
which uses~\cite{Alberts-Kozdron:Intersection_probabilities_for_chordal_SLE_path_and_semicircle} in the chordal case, 
and~\cite[Proposition~4.3]{Lawler:Continuity_of_radial_and_two-sided_radial_SLE_at_the_terminal_point} in the radial case. 
Let us cautiously mention that while the statement is correct, 
the proof of~\cite[Proposition~4.3]{Lawler:Continuity_of_radial_and_two-sided_radial_SLE_at_the_terminal_point} 
relies on a flawed claim, \cite[Lemma~2.9]{Lawler:Continuity_of_radial_and_two-sided_radial_SLE_at_the_terminal_point} --- 
a corrected modification of which appears in~\cite[Lemma~5.5]{Field-Lawler:Escape_probability_and_transience_for_SLE}.
In the proof below, we will follow the strategy of the proof of~\cite[Proposition~4.3]{Lawler:Continuity_of_radial_and_two-sided_radial_SLE_at_the_terminal_point},  
replacing the use of \cite[Lemma~2.9]{Lawler:Continuity_of_radial_and_two-sided_radial_SLE_at_the_terminal_point} by~\cite[Lemma~5.5]{Field-Lawler:Escape_probability_and_transience_for_SLE} instead. 
Because we find it valuable to the community, we also discuss these lemmas in \Cref{app:gap_comments} in more detail.
\end{remark}

\begin{proof} 
\textbf{Chordal case.} 
This is a consequence of~\cite[Lemma~A.2]{Peltola-Wang:LDP} and its proof.  
 
\textbf{Radial case, finite time, $(\Ddomain;\beginpoint,\targetpoint) = (\bD;1,0)$.}
Fix $T > 0$. By~\cite[Lemma~4.2]{Lawler:Continuity_of_radial_and_two-sided_radial_SLE_at_the_terminal_point}, 
there are constants $c_T'(\kappa) \in (0,\infty)$ depending on $\kappa$ and $T$ such that for any crosscut $\eta$ of $\bD$ not disconnecting $0$ from $1$ in $\bD$, we have 
\begin{align}\label{eqn:return-probability-finite}
\rDmeas{\kappa}[\gamma^\kappa[0,T] \cap \eta \ne \emptyset] 
\le c_T'(\kappa) 
\bigg(\frac{\diam(\eta)}{\dist(\eta,1)}
\bigg)^{8/\kappa-1} , \qquad \kappa \in (0,8) .
\end{align}
We follow the proof of \cite[Lemma~4.2]{Lawler:Continuity_of_radial_and_two-sided_radial_SLE_at_the_terminal_point} to extract the $\kappa$-dependence of 
$c_T'(\kappa)$ for small enough $\kappa$.
To this end, we fix $\ell \in \bN$ such that $\rho := \rhittingtime[\ell] > T$ and consider the stopping time 
\begin{align*}
\sigma := \sigma(\gamma^{\kappa})
:= \; & \inf\{t \ge 0 \colon \Re (\gamma^{\kappa}(t)) = e^{-2\ell} \} ,
\qquad \textnormal{writing} \qquad 
p^{\kappa} := \rDmeas{\kappa}[\rho < \sigma]
\end{align*}
(see also~\Cref{fig:return-estimate-constructions}). 
Using the Radon-Nikodym derivative~\eqref{eqn:radial-chordal-RN-derivative}, 
we also define
\begin{align}\label{eq:Radon-Nikodym_estimate}
\begin{split}
R^{\kappa}
:=\;& \sup_{\chi \in \Dpaths[\rho\wedge\sigma]} 
\frac{\ud \rDmeasure[\rho\wedge\sigma]{\kappa}}{\ud \cDmeasure[\rho\wedge\sigma]{\kappa}}(\chi) 
\\
= \;& \sup_{\Theta} \;
\exp\bigg(\frac{6-\kappa}{4\kappa}\bigg((\kappa-4) (\rho\wedge\sigma) 
+ \int_0^{\rho\wedge\sigma} \frac{\ud s}{\sin^2(\Theta_s)}\bigg)\bigg) 
\big|\sin (\Theta_{\rho\wedge\sigma})\big|^{6/\kappa - 1} ,
\end{split}
\end{align} 
where $\Theta$ is associated to $\chi$ via 
$e^{2\ii\Theta_s} = \frac{g_s(-1)}{g_s(\chi(s))}$, 
the radial Loewner map $g_s \colon \bD \smallsetminus \chi[0,s] \to \bD$.

For any crosscut $\beta$ of $\bD$ not disconnecting $0$ from $1$,
let $U_{\beta} := \component{\bD\smallsetminus \beta}{0}$ be the connected component of its complement containing the target point $\targetpoint=0$ of 
$\gamma^\kappa$. 
We partition its boundary as $\partial U_{\beta}  = \partial^R_\beta \cup \partial^L_\beta\cup\{1\}\cup\beta$,  
where $\partial^R_\beta = \arc{1}{\beta(0)} \subset \partial \bD$ and 
$\partial^L_\beta = \arc{\beta(1)}{1} \subset \partial \bD$ are the boundary arcs between $1$ and the two endpoints $\beta(0)$ and $\beta(1)$, and we assume without loss of generality that $0 < \arg(\beta(0)) < \arg(\beta(1)) < 2\pi$.
Let $\harmonicmeasure{U_{\beta}}{z}{\cdot}$ denote the harmonic measure in $U_{\beta}$ seen from $z \in U_{\beta}$.
By~\cite[Claims~3~\&~4, page~120]{Lawler:Continuity_of_radial_and_two-sided_radial_SLE_at_the_terminal_point}, 
we have\footnote{In fact, the estimate~\eqref{eqn:crosscut-probability} holds for all $\kappa \in (0,8)$, but for our purposes, it suffices to restrict to the range $\kappa \in (0,4]$, where $\SLE[\kappa]$ curves are almost surely simple.} 
\begin{align}\label{eqn:crosscut-probability}
\sup_{\beta} \rDmeas{\kappa}\big[\gamma^\kappa[0,T] \cap \beta \ne \emptyset\big] 
\, \leq \, \frac{2 R^\kappa}{p^\kappa} \, \sup_{\beta} \cDmeas{\kappa}[\gamma^\kappa[0,\sigma \wedge \rho] \cap \beta \ne \emptyset] , \qquad \kappa \in (0,8) ,
\end{align}
where the supremum is taken over all crosscuts $\beta$ of $\bD$ satisfying
\begin{align*}
\begin{cases}
\mu(\beta) := 
\harmonicmeasure{U_{\beta}}{0}{\beta}
\leq r \, \mu(\partial_\beta) , \\
\mu(\partial_\beta) \leq \delta ,
\end{cases}
\qquad \textnormal{where} \qquad 
\begin{cases}
\mu(\partial_\beta) := 
\min \{\mu(\partial^R_\beta) , \mu(\partial^L_\beta) \} ,
\\
\mu(\partial^R_\beta) := \harmonicmeasure{U_{\beta}}{0}{\partial^R_\beta} ,
\\
\mu(\partial^L_\beta) := \harmonicmeasure{U_{\beta}}{0}{\partial^L_\beta} ,
\end{cases}
\end{align*}
as illustrated in \Cref{fig:return-estimate-constructions}, and $0 < r < \delta$ are small enough.

{\bf Estimating the Radon-Nikodym derivative $R^{\kappa}$~\eqref{eq:Radon-Nikodym_estimate} in~\eqref{eqn:crosscut-probability}.} 
For $\chi \in \Dpaths[\rho\wedge\sigma]$, we write $D_s = \bD \smallsetminus \chi[0,s]$, 
and we partition its boundary as $\partial D_s = \partial_+\cup \partial_-\cup\partial_s^+\cup\partial_s^-$, 
where $\partial_\pm = \arc{\pm 1}{\mp 1}$ are the counterclockwise arcs between $\pm 1$ and $\mp 1$,
and $\partial_s^+$ and $\partial_s^-$ are the right and left boundaries of $\chi(0,s)$ (as prime ends; see \Cref{fig:return-estimate-constructions}). 
Then, we have
\begin{align*}
\Theta_s 
= \; & \tfrac{1}{2}\big(\arg(g_s(-1))-\arg(g_s(\chi(s)))\big) 
\\
= \; & \pi \, \harmonicmeasure{\bD}{0}{\arc{g_s(\chi(s))}{g_s(-1)}} 
= \pi \, \harmonicmeasure{D_s}{0}{\partial_+\cup\partial_s^+},
\end{align*}
using conformal invariance of the harmonic measure. 
Note that when $s \le \rho \wedge \sigma$, we have 
$D' := \component{\bD\smallsetminus(e^{-2\ell}+\ii\bR)}{0} \subset D_s$. 
Since $\arc{\ii}{-1} \subset \partial D' \cap \partial_+$, we obtain the uniform bound
\begin{align*}
\Theta_s \ge  \pi \, \harmonicmeasure{D'}{0}{\arc{\ii}{-1}} =: \epsilon > 0 , \qquad s \in [0,\rho \wedge \sigma) .
\end{align*}
We similarly obtain $\pi - \Theta_s \ge \epsilon > 0$. 
Since $\rho \le \ell$, with~\eqref{eq:Radon-Nikodym_estimate} it thus follows that 
\begin{align}\label{eq:Radon-Nikodym_estimate_exp}
R^{\kappa} \lesssim \;& e^{c''/\kappa} , \qquad \kappa \in (0,4] ,
\end{align} 
where $c'' \in (0,\infty)$ is a universal constant${}^{\ref{fn:lesssim}}$.

{\bf Estimating the probability $p^{\kappa}:= \rDmeas{\kappa}[\rho < \sigma]$ in~\eqref{eqn:crosscut-probability}.} 
On the event $\{\rho < \sigma\}$, the curve $\gamma^\kappa$ hits $A := \{z \in \partial\bD_\ell \cond \Re(z) \ge e^{-2\ell}\}$ before hitting $L := \{z \in \bD \cond \Re(z) = e^{-2\ell}\}$ (as illustrated in \Cref{fig:return-estimate-constructions}).
Since $\rho \le \ell$, the space $\cX_\ell$ contains a straight line segment 
which is a subset of $[0,1]$ and has an open neighborhood $\mathcal{V} \subset \Dpaths[\ell]$ whose every member hits $A$ before hitting $L$. 
From the continuity properties of the Loewner transform (\Cref{rem:continuity-properties-of-loewner-transform}), 
we see that $\rLoewnerTransform[\ell]$ is open as a map from $\Dpaths[\ell]$ to $(\funsNodom{\Dpaths}[0,\ell],\normUniform{\cdot})$. 
We can thus find $\varepsilon > 0$ such that $\rLoewnerTransform[\ell]^{-1}(\omega) \in \mathcal{V}$ for every $\omega \in \funsNodom{\Dpaths}[0, \ell]$ such that $\normUniform{\omega} < \varepsilon$. 
Hence, we obtain
\begin{align}\label{eqn:p-kappa_estimate}
\inf_{\kappa \in [0,4]}p^\kappa \ge \inf_{\kappa \in [0, 4]}\BMmeasure{} \Big[\sup_{t \in [0,\ell]}|\sqrt{\kappa} B_t| < \varepsilon\Big] = \BMmeasure{}\big[\sup_{t \in [0, \ell]} |2B_t| < \varepsilon\big] > 0,
\end{align}
where $B \sim \BMmeasure{}$ is the standard Brownian motion ($\sqrt{\kappa} B$ almost surely generates a curve).

\begin{figure}
\begin{center}
\includegraphics[width=.9\textwidth]{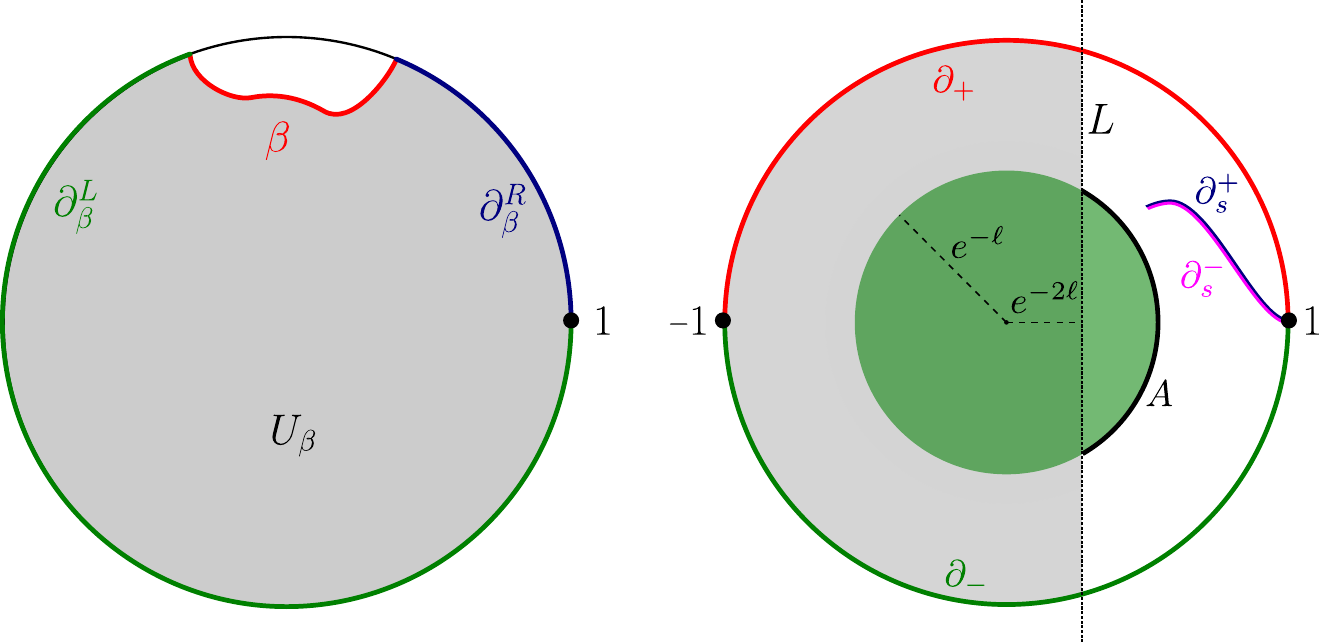}
\end{center}
\caption{ 
The constructions used in the proof of the radial case in \Cref{thm:return-estimates}. 
Left: the domain $U_\beta := \component{\bD\smallsetminus\beta}{0}$ and the partition of its boundary $\partial U_\beta = \partial^R_\beta \cup \partial^L_\beta\cup\{1\}\cup\beta$. 
Right: The stopping time $\rho$ is the first time when the curve hits the green circle, while $\sigma$ is the first time the curve hits the vertical dashed line $L$. 
In particular, for the depicted curve in $\Dpaths[s]$ whose left and right boundaries $\partial^-_s$ and $\partial^+_s$ are denoted by fuchsia and blue respectively, we have $s < \sigma \wedge \rho$. 
On the event $\{\rho < \sigma\}$, the curve hits $A$ before hitting $L$.
}
\label{fig:return-estimate-constructions}
\end{figure}

{\bf Estimating the probability $\cDmeas{\kappa}[\gamma^\kappa[0,\sigma \wedge \rho] \cap \beta \ne \emptyset]$ in~\eqref{eqn:crosscut-probability}.}
It remains to bound the probability on the right-hand side of~\eqref{eqn:crosscut-probability}. 
If $\beta \subset D'$, then this probability is zero, so we may assume that $\beta \not\subset D'$. 
By choosing $\delta > 0$ small enough, 
we may without loss of generality assume\footnote{By ``$\asymp$'' we indicate the existence of both lower and upper bounds up to a universal constant.} 
that
$\mu(\beta) \asymp \diam(\beta) < 1/2$, so that $\beta \subset \component{\bD\smallsetminus\{-1/2 + \ii\bR\}}{0} =: D''$. 
Since the derivative $(\Phi_\bH^{-1})'$ is uniformly bounded away from $0$ and $\infty$ on $D''$,
we see that there exists a universal constant $\hat c \in [0,\infty)$ such that
\begin{align*}
\frac{\diam(\Phi_\bH^{-1}(\beta))}{\dist(0, \Phi_\bH^{-1}(\beta))} \le \hat c \, \bigg(\frac{\diam(\beta)}{\dist(\beta,1)}\bigg) .
\end{align*}
We thus obtain the upper bound
\begin{align}\label{eqn:crosscut-estimate-helper2}
\begin{split}
\cDmeas{\kappa}\big[\gamma^\kappa[0,\sigma \wedge \rho] \cap \beta \ne \emptyset\big] 
\; \le \;\; & \cHmeas{\kappa}\big[\gamma^\kappa \cap \Phi_\bH^{-1}(\beta) \ne \emptyset\big] 
 \\
\; \lesssim \;\; & \hat c^{8/\kappa} \, e^{\zeta/\kappa} \, \bigg(\frac{\diam(\beta)}{\dist(\beta,1)}\bigg)^{8/\kappa - 1}
, \qquad \kappa \in (0,4] ,
\end{split}
\end{align}
using conformal invariance and the already established chordal estimate~\eqref{eq:crosscut-estimate}. 
In particular, for every crosscut $\tilde\eta$ of $\bD$ such that $\mu(\partial_{\tilde\eta}) \leq \delta$ and $\mu(\tilde\eta) < \delta \, \mu(\partial_{\tilde \eta})$,
choosing $r = r_{\tilde\eta} := \frac{\mu(\tilde\eta)}{\mu(\partial_{\tilde \eta})}$, 
after combining~\eqref{eqn:crosscut-estimate-helper2} with the estimates in~(\ref{eqn:crosscut-probability},~\ref{eq:Radon-Nikodym_estimate_exp},~\ref{eqn:p-kappa_estimate}), 
we obtain 
\begin{align}\label{eqn:crosscut-estimate-helper}
\rDmeas{\kappa}\big[\gamma^\kappa[0,T] \cap \tilde\eta \ne \emptyset\big] 
\lesssim e^{\hat{\xi}_T / \kappa} \sup_{\beta} \bigg(\frac{\diam(\beta)}{\dist(\beta,1)}\bigg)^{8/\kappa - 1}, \qquad \kappa \in (0,4]  ,
\end{align}
where the supremum is taken over crosscuts $\beta$ satisfying 
$\mu(\beta) \leq r_{\tilde\eta} \, \mu(\partial_{\beta})$ and $\mu(\partial_{\beta}) \leq \delta$, 
and where $\hat{\xi}_T \in \bR$ is a universal constant depending only on $T$. Because
\begin{align*}
\frac{\diam(\beta)}{\dist(\beta,1)} 
\; \asymp \; \frac{\mu(\beta)}{\mu(\partial_\beta)} 
\; \le \; 
\frac{\tfrac{\mu(\tilde\eta)}{\mu(\partial_{\tilde\eta})} \mu(\partial_\beta)}{\mu(\partial_\beta)} = \; \frac{\mu(\tilde\eta)}{\mu(\partial_{\tilde\eta})} 
\; \asymp \; 
\frac{\diam(\tilde\eta)}{\dist(\tilde\eta,1)} 
\end{align*}
up to universal constants, we deduce that
\begin{align*}
\rDmeas{\kappa}\big[\gamma^\kappa[0,T] \cap \tilde\eta \ne \emptyset\big] 
\lesssim e^{\hat{\xi}_T / \kappa}  \bigg(\frac{\diam(\tilde\eta)}{\dist(\tilde\eta,1)}\bigg)^{8/\kappa - 1}, \qquad \kappa \in (0,4] ,
\end{align*}
for every crosscut $\tilde\eta$ as above. 
If $\diam(\partial_{\tilde\eta}) > \delta$ or $\smash{\frac{\diam(\tilde\eta)}{\dist(\tilde\eta,1)} \asymp \frac{\mu(\tilde\eta)}{\mu(\partial_{\tilde\eta})} \geq \delta}$,
we may argue as in~\cite[Claim~4, page~120]{Lawler:Continuity_of_radial_and_two-sided_radial_SLE_at_the_terminal_point} 
to see that by increasing the value of the constant $\hat{\xi}_T$, 
the constant $c_T'(\kappa)$ in~\eqref{eqn:return-probability-finite} can be chosen to have the form $c_T'(\kappa) = c_T' \, e^{\xi_T/\kappa}$ for every $\kappa \in (0,4]$,  
where $c_T' \in (0,\infty)$ and $\xi_T \in \bR$ are universal constants depending only on $T$.

\textbf{Radial case, infinite time.} 
We follow the proof of \cite[Proposition~4.3]{Lawler:Continuity_of_radial_and_two-sided_radial_SLE_at_the_terminal_point}. 
First, we suppose $\diam(\eta) \le 1/2$, so that $\eta \cap \tfrac{1}{2}\bD = \emptyset$ (for~\cite[Lemma~5.5]{Field-Lawler:Escape_probability_and_transience_for_SLE} to be applicable). 
Recalling the notation~\eqref{eqn:return-event-time}, 
we denote by $\eta_m := \hat g_{\rhittingtime[4m]}(\eta)$ 
the image of the crosscut $\eta$ under the centered radial Loewner map $\hat g_{\rhittingtime[4m]} \colon \bD\smallsetminus \gamma^\kappa[0,\rhittingtime[4m]] \to \bD$ defined as $\hat g_s(\cdot) := e^{-\ii \sqrt{\kappa} B_s} \, g_s(\cdot)$ from~\eqref{eq:Loewner equation} with $\omega_t = \sqrt{\kappa} B_s$.
Note that the random time increments $\rhittingtime[4(m+1)] - \rhittingtime[4m]$, for $m \in \bN$, 
are uniformly bounded from above by some deterministic constant $T > 0$. 
Combining the finite-time estimates established above, we obtain
\begin{align*}
\rDmeas{\kappa}\big[\gamma^\kappa \cap \eta \ne \emptyset\big] \le\;& \sum_{m=0}^\infty\rDmeas{\kappa}\big[\gamma^\kappa[\rho_{4m}, \rho_{4m} + T] \cap \eta \ne \emptyset \cond \gamma^\kappa[0, \rho_{4m}] \cap \eta = \emptyset\big]\\
=\;&\sum_{m=0}^\infty \rDmeas{\kappa}\big[\gamma^\kappa[0,T] \cap \eta_m \ne \emptyset\big] 
&&\hspace{-5em} [\textnormal{by conf.~Markov property}]\\
\le\;&c'_T(\kappa) \sum_{m = 0}^\infty \bigg(\frac{\diam(\eta_m)}{\dist(\eta_m, 1)}\bigg)^{8/\kappa-1} 
&&\hspace{-5em} [\textnormal{by~\eqref{eqn:return-probability-finite}}]\\
\le\;& c'_T(\kappa) \sum_{m=0}^\infty \big(C \, e^{-2m}\diam(\eta)\big)^{8/\kappa-1} 
&&\hspace{-5em} [\textnormal{by~\cite[Lemma~5.5]{Field-Lawler:Escape_probability_and_transience_for_SLE}}]\\
=\;& c_T'(\kappa) \, \frac{(2C)^{8/\kappa-1}}{1-e^{-2(8/\kappa-1)}}\bigg(\frac{\diam(\eta)}{\dist(\eta, 1)}\bigg)^{8/\kappa-1}
&&\hspace{-5em} [\textnormal{since $\dist(\eta, 1) \leq 2$}]\\
=\;& c' e^{\xi/\kappa}\bigg(\frac{\diam(\eta)}{\dist(\eta, 1)}\bigg)^{8/\kappa-1}, \qquad\qquad  \kappa \in (0,4], 
\end{align*}
writing $c'_T(\kappa) = c'_T \, e^{\xi_T/\kappa}$ as before, 
and 
$c' := \frac{c'_T}{2C} (1-e^{-2})^{-1}$ and $\xi := \xi_T + 8\log(2C)$.

By increasing the value of $\xi$ if necessary, we may ensure that
\begin{align*}
c' e^{\xi/\kappa}\bigg(\frac{\diam(\eta)}{\dist(\eta, 1)}\bigg)^{8/\kappa-1}\ge 1
\end{align*}
whenever $\diam(\eta) > e^{-1}$, which proves the first inequality in~\eqref{eq:crosscut-estimate} for every crosscut $\eta$.
The second inequality in~\eqref{eq:crosscut-estimate} then follows from the bound~\cite[Corollary~5.2]{Field-Lawler:Escape_probability_and_transience_for_SLE},
\begin{align*}
\min \bigg\{ \frac{\diam(\eta)}{\dist(\eta,1)}, 1 \bigg\} 
\; \lesssim \; 
\mathcal{E}_{\bD}(\eta,\gamma) .
\end{align*}
This concludes the proof. 
\end{proof}

\returnestimates*

\begin{proof} 
{\bf Chordal case.}
In our notation, the result in \cite[Equation~(A.10)]{Peltola-Wang:LDP} implies that 
\begin{align*}
\cHmeas{\kappa}[\creturnevent{n}{N}] \le a(\kappa) \, \Big( \frac{n}{N} \Big)^{8/\kappa-1} 
\qquad \textnormal{and} \qquad a := \sup_{\kappa \in (0,4]}\kappa\log a(\kappa) \in (-\infty, \infty)
\end{align*}
for integers $N > n$ --- the finiteness of $a$ follows from the fact that the map $\kappa \mapsto a(\kappa)$ is continuous for $\kappa \in (0, 4]$ and satisfies 
$ \smash{\underset{\kappa \to 0}{\limsup}\kappa\log a(\kappa) \in (-\infty,\infty)}$. 
We thus obtain 
\begin{align*}
\cHmeas{\kappa}[\creturnevent{n}{N}] 
\; \le \; e^{a/\kappa} \, \Big( \frac{n}{N} \Big)^{4/\kappa} 
\; = \; \exp \Big(\tfrac{a \, + \, 4\, (\log n - \log N)}{\kappa} \Big), \qquad \kappa \in (0,4]. 
\end{align*}
since for $\kappa \leq 4$ we have $\frac{8}{\kappa}-1 \ge \frac{4}{\kappa}$, so that $\smash{(\frac{n}{N})^{8/\kappa-1} \le (\frac{n}{N})^{4/\kappa}}$ for $N > n$. 
Choosing $N > n$ large enough gives the result (with $c = 1$).

{\bf Radial case.}
Recalling the notation~\eqref{eqn:return-event-time},  
note that $\partial \bD_n \smallsetminus \gamma^{\kappa}[0,\rhittingtime[N]]$ is a union of countably many crosscuts $\eta_j \subset \partial \bD_n$. In particular, the event $\returnevent{n}{N}$ is equivalent with the existence of some $\eta_j$ such that $\gamma^{\kappa}[\rhittingtime[N], \infty) \cap \eta_j \ne \emptyset$. 
Denote $D := \bD \smallsetminus \gamma^\kappa[0,\rhittingtime[N]]$ and let $\hat \gamma \subset \overline{\bD}_N$ be the straight line segment from $\gamma^\kappa(\rhittingtime[N])$ to $0$. 
Using properties of the Brownian excursion measure, for each $\kappa \leq 4$ we have 
\begin{align*}
\sum_{j=1}^\infty\mathcal{E}_{D}(\eta_j,\hat \gamma)^{8/\kappa - 1}  
\le\;& 
\sum_{j=1}^\infty\mathcal{E}_{D}(\eta_j,\partial \bD_N)^{8/\kappa - 1}  
&& \textnormal{[by monotonicity of $\mathcal{E}$ in~\eqref{eq:BLM}]}\\
\le\;& 
\bigg( \sum_{j=1}^\infty\mathcal{E}_{D}(\eta_j,\partial \bD_N)
\bigg)^{8/\kappa - 1}  && \textnormal{[by $L^p$-norm monotonicity, since $\kappa \leq 4$]}\\
\le\;& 2^{8/\kappa - 1}   \, \mathcal{E}_{D}(\partial \bD_n,\partial \bD_N)^{8/\kappa - 1}  
&& \textnormal{[from~\cite[Lemma~3.3]{Field-Lawler:Escape_probability_and_transience_for_SLE}]}
\\
\le \;& \hat{c}^{8/\kappa} 
(e^{-(N-n)/2})^{8/\kappa - 1} ,
&& \textnormal{[by~\cite[Equation~(2.5)]{Field-Lawler:Escape_probability_and_transience_for_SLE} and scaling]}
\end{align*}
where $\hat{c} \in (0,\infty)$ is a universal constant. 
Using \Cref{thm:crosscut-estimate}, we thus obtain
\begin{align*}
\rDmeas{\kappa}[\returnevent{n}{N}] 
\;\le\; \rDmeas{\kappa} \Big[ \gamma^\kappa[\rhittingtime[N], \infty) \cap \bigcup_{j=0}^\infty \eta_j \ne \emptyset \Big] 
\;\lesssim\; \hat{c}^{8/\kappa} 
\exp \Big( \tfrac{2\zeta - (N-n)(8 - \kappa)}{2\kappa}\Big) , \qquad \kappa \in (0,4].
\end{align*}
Choosing $N > n$ large enough gives the result (with some universal constant $c$).
\end{proof}

Since the LDP of \Cref{thm:radial-LDP}  holds in the (strong) 
topology of (un)parameterized curves, escape energy estimates follow as a consequence of the escape probability estimates.

\begin{cor}\label{thm:energy-return-estimate}
For each $M \in [0,\infty)$ and $n \in \bN$, there exists an integer $N = N_n(M) > n$ such~that 
$\lenergy{\Ddomain}{\beginpoint,\targetpoint}(\xreturnevent{n}{N}) \ge M$.
\end{cor}

\begin{proof}
Fix $M \in [0,\infty)$ and let $n \in \bN$ and $N = N_n(M) > n$ be integers as in \Cref{thm:return-estimates}.
The lower bound in the LDP for the radial SLE from \Cref{thm:radial-LDP} yields
\begin{align*}
- \lenergy{\Ddomain}{\beginpoint,\targetpoint}(\xreturnevent{n}{N_n}) 
\leq
\liminf_{\kappa \to 0+}\kappa\log\SLEmeasure{\Ddomain;\beginpoint,\targetpoint}{\kappa}[\xreturnevent{n}{N_n}]
\leq - M.
\end{align*}
This proves the claim. 
\end{proof}

\subsection{Large deviations for multichordal SLE}
\label{subsec:multichordal}

We endow $\Hpaths[\alpha] = \Hpaths[\alpha](D;x_1, \ldots, x_{2n})$ with
either the metric $\dHpaths[\alpha]$ 
induced from the product metric on $\prod_{j=1}^n\Hpaths(D;x_{a_j},x_{b_j})$ defined in~\eqref{eq:dXpaths}, writing $(\Hpaths[\alpha],\dHpaths[\alpha])$,
or with $\dHpathsUnparamMulti[\alpha]$ induced from the product metric on $\prod_{j=1}^n\unparampathsH(D;x_{a_j},x_{b_j})$ defined in~\eqref{eq:dXpathsUnparam}, 
writing $(\unparampathsHmulti[\alpha],\dHpathsUnparamMulti[\alpha])$.

\multichordalLDP*

\begin{proof}
Examining the proof of~\cite[Theorem~1.5]{Peltola-Wang:LDP}, we see that the proof of~\cite[Theorem~5.11]{Peltola-Wang:LDP} also proves \Cref{thm:multichordal-LDP} verbatim once we establish the following facts: 
\begin{enumerate}[leftmargin=*, label=\textnormal{(\roman*)}]
\item \label{item_multi1}
\Cref{thm:multichordal-LDP} in the case of $n=1$; 

\item \label{item_multi2}
continuity of the Brownian loop measure term $\loopterm{D} \colon \Hpaths[\alpha] \to \bR$ appearing in~\eqref{eqn:multichordal-energy-definition}; 

\item \label{item_multi3}
and the goodness of the multichordal Loewner energy $\mcenergy{D}{\alpha}$ defined in~\eqref{eqn:multichordal-energy-definition}. 
\end{enumerate}
\Cref{item_multi1} is just the chordal case of \Cref{thm:radial-LDP}. 
\Cref{item_multi2} follows immediately, since by~\cite[Lemma~3.2]{Peltola-Wang:LDP}, 
$\loopterm{D}$ is continuous already when $\Hpaths[\alpha]$ is equipped with the Hausdorff metric,
and the topology we are interested in is finer. 
For \Cref{item_multi3}, it remains to note that the proof of \cite[Proposition~3.13]{Peltola-Wang:LDP} (and of \cite[Lemma~2.7]{Peltola-Wang:LDP})
shows the goodness of $\mcenergy{D}{\alpha}$ also with respect to the topology induced by $\dHpaths$, 
since images of paths under uniformly convergent sequences of maps converge in the metric $\dHpaths$.
This proves \Cref{thm:multichordal-LDP}.
\end{proof}

\multichordalLDPunparam* 

\begin{proof}
This is similar to \Cref{thm:multichordal-LDP}, using \Cref{cor:unparam_LDP}.
Alternatively, one could begin with  \Cref{thm:multichordal-LDP} and apply the contraction principle as in the proof of~\Cref{cor:unparam_LDP}.
\end{proof}


\appendix

\section{Equivalent topologies for finite-time curves}
\label{app:lemma_topo}
\topology*

\begin{proof}
Note that curves on $\Xfinpathscl$ are bounded, so the topologies induced by $\dXpathsEucl$ and $\dXpaths$ coincide. 
Since $\dXpathsUnparam \le \dXpathsEucl$, it remains to prove that for each time $T \in [0,\infty)$, 
curve $\eta \in \Xpathscl[T]$ and radius $\varepsilon > 0$ there exists a radius  $\delta > 0$ such that 
\begin{align*}
\Bmetric[\dXpathsUnparam]{\eta}{\delta} \cap \Xfinpathscl \subset \Bmetric[\dXpathsEucl]{\eta}{\varepsilon} . 
\end{align*}
Let $\delta > 0$, and take $\gamma \in \smash{\Bmetric[\dXpathsUnparam]{\eta}{\delta} \cap \Xfinpathscl}$ with parameterization $\gamma \colon [0,T_\gamma] \to \overline \Ddomain$, 
so that there exists a reparameterization $\sigma = \sigma_\gamma \colon [0,T_\gamma] \to [0,T]$ 
with $\normUniform{\eta-\gamma\circ\sigma} < \delta$. Write
\begin{align*}
u_\gamma \; := \; \sup_{s \in [0,T_\gamma]}|\sigma(s)-s| = \sup_{s \in [0,T]} |s - \sigma^{-1}(s)|
\; \geq \; |T_\gamma - \sigma(T_\gamma)|
\; = \; |T_\gamma - T|.
\end{align*}
Our aim is to bound the Euclidean distance $\dXpathsEucl(\gamma, \eta)$ in terms of $\delta$ and $u_\gamma$, 
and then show that as $\delta \to 0$, we have $u_\gamma \to 0$ uniformly in $\gamma \in \smash{\Bmetric[\dXpathsUnparam]{\eta}{\delta} \cap \Xfinpathscl}$. 
Setting $\tilde{T} = T \wedge T_\gamma$, $\tilde{\gamma} = \gamma|_{[0,\tilde{T}]}$, and $\tilde\eta = \eta|_{[0, \tilde{T}]}$, we have
\begin{align}\label{eq:bound_dXpathsEucl}
\dXpathsEucl(\gamma, \eta)
 \; \le \;  \osc(\gamma, u_\gamma)  \; + \;  \normUniform{\tilde\eta - \tilde{\gamma}}  \; + \;  \osc(\eta, u_\gamma),
\end{align}
where $\osc(\eta, u) := \underset{|t-s| \le u}{\sup} \, |\eta(s)-\eta(t)|$, for $u > 0$.

\begin{itemize}[leftmargin=*]
\item 
Let us first bound $\osc(\gamma, u_\gamma)$. 
For each $s, t \in [0,T_\gamma]$ with $|t-s| < u_\gamma$, we have
\begin{align*}
|\gamma(t)-\gamma(s)| \; \le \;
\underbrace{|\gamma(t)-\eta(\sigma^{-1}(t))|}_{< \, \delta} 
\; + \; |\eta(\sigma^{-1}(t)) - \eta(\sigma^{-1}(s))| 
\; + \; \underbrace{|\eta(\sigma^{-1}(s)) - \gamma(s)|}_{< \, \delta}.
\end{align*}
To bound the second term, note that
\begin{align*}
|\sigma^{-1}(t)-\sigma^{-1}(s)|  \; \le \;  |\sigma^{-1}(t)-t|  \; + \;  |t-s|  \; + \;  |s-\sigma^{-1}(s)|  \; \le \;  3u_\gamma ,
\end{align*}
so $|\eta(\sigma^{-1}(t)) - \eta(\sigma^{-1}(s))| \le \osc(\eta, 3u_\gamma)$. 
Plugging these bounds back to \Cref{eq:bound_dXpathsEucl} and taking 
the supremum over $s, t \in [0,T_\gamma]$ with $|t-s| \le u_\gamma$ then yields
\begin{align*}
\osc(\gamma, u_\gamma)  \; \le \;  2\delta  \; + \;  \osc(\eta, 3u_\gamma).
\end{align*}

\item 
Let us next bound $\normUniform{\tilde\eta - \tilde{\gamma}}$. For each $s, t \in [0,\tilde{T}]$, we have
\begin{align*}
|\gamma(\sigma(t)) - \gamma(\sigma(s))| 
\; \le \;\; & 
\underbrace{|\gamma(\sigma(t))-\eta(t)|}_{< \, \delta}  
\; + \; |\eta(t)-\eta(s)| 
\; + \; \underbrace{|\eta(s)-\gamma(\sigma(s))|}_{< \, \delta}  
\\
\; < \;\; & 2\delta \; + \; \osc(\eta, |s-t|) .
\end{align*}
Hence, for each $t \in [0,\tilde{T}]$, we have
\begin{align*}
|\eta(t)-\gamma(t)| 
\; \le \;\; & 
\underbrace{|\eta(t)-\gamma(\sigma(t))|}_{< \, \delta}  
\; + \; |\gamma(\sigma(t)) - \gamma(\sigma(\sigma^{-1}(t)))| 
\\
\; < \;\; & 3\delta + \osc(\eta, u_\gamma),
\end{align*}
which implies that $\normUniform{\tilde\eta-\tilde{\gamma}} \le 3\delta + \osc(\eta, u_\gamma)$. 
All in all, we thus have
\begin{align*}
\dXpathsEucl(\gamma, \eta) \; \le \; 5\delta \; + \; 2\,\osc(\eta, u_\gamma) \; + \; \osc(\eta, 3u_\gamma) ,
\end{align*}
and it remains to show that 
$u_\gamma \to 0$ as $\delta \to 0$, uniformly in $\gamma \in \smash{\Bmetric[\dXpathsUnparam]{\eta}{\delta} \cap \Xfinpathscl}$. 
\end{itemize}

\begin{enumerate}[leftmargin=*, label=\textnormal{(\roman*)}]
\item {\bf Radial case.}
Assume the case $\Xpaths = \Dpaths$. For $\xi \in \Dpathscl$ and $t \ge 0$, write
\begin{align*}
D^\xi_t := (\bD\smallsetminus\xi_{[0,t]})_0, \qquad g^\xi_t = g_{\xi[0,t]} \colon D^\xi_t \to \bD, \qquad \textnormal{and} \qquad f^\xi_t = (g^\xi_t)^{-1} \colon \bD \to D^\xi_t.
\end{align*}
Using Koebe estimates with $\xi = \gamma$ 
(see, e.g.,~\cite[Theorem~1.3~\&~Corollary~1.4]{Pommerenke:Boundary_behaviour_of_conformal_maps}), 
we see that 
\begin{align*}
\dist(f^\gamma_t(z), \partial D^\gamma_t) 
\; \ge & \;\; \frac{1}{4}(1-|z|^2) \, |(f^\gamma_t)'(z)| 
\; \ge \; \frac{|(f^\gamma_t)'(0)|}{4}\frac{(1-|z|^2)(1-|z|)}{(1+|z|^3)} 
\\
\; \ge & \;\; \frac{(1-|z|)^2}{8\,\dist(0, \partial D^\gamma(t))} , \qquad t \in [0, T_\gamma] , \; z \in \bD .
\end{align*}
Since $\gamma \in \smash{\Bmetric[\dXpathsUnparam]{\eta}{\delta} \cap \Xfinpathscl}$, 
we can estimate $\dist(0,\partial D^\gamma_t) \le \dist(0, \eta[0,T]) + \delta$, so
\begin{align*}
\dist(f^\gamma_t(r\bD), \gamma[0,t]) 
\; \ge \; \dist(f^\gamma_t(r\bD), \partial D^\gamma_t) 
\; \ge \; \frac{(1-r)^2}{8\,(\dist(0,\eta) + \delta)} , \qquad r \in (0,1) .
\end{align*}
Taking $r = r(\delta) = 1 - 2\sqrt{2 \delta (\dist(0,\eta) + \delta)}$ (positive for small enough $\delta$), we obtain 
\begin{align*}
\dist(f^\gamma_t(r\bD), \gamma[0,t]) \ge 2\delta.
\end{align*} 
To show the uniformity in $\gamma$, 
consider another $\tilde\gamma \in \Bmetric[\dXpathsUnparam]{\eta}{\delta}$ parameterized by capacity, 
with reparameterization $\tilde \sigma \colon [0,T_{\tilde\gamma}] \to [0,T]$ so that $\normUniform{\tilde\gamma\circ\tilde\sigma - \eta} < \delta$. 
Then, we have $\normUniform{\tilde\gamma\circ\tilde\sigma - \gamma\circ\sigma} < 2\delta$, which implies that
\begin{align*}
\tilde\gamma[0,\sigma(t)] 
\; \subset \; \Bmetric{\gamma[0,\sigma(t)]}{2\delta} \cap \overline\bD 
\; \subset \; \overline\bD\smallsetminus f^\gamma_{\sigma(t)}(r\bD) 
\; =: \; A^\gamma_{\sigma(t)}(r).
\end{align*}
Note that $A^\gamma_t(r) = \filling{\gamma[0,t]}\cup f^\gamma_t(A^\gamma_0(r))$, where $A^\gamma_0(r) = A(r) = \overline\bD\smallsetminus r\bD$. 
Thus, we have $g_{A^\gamma_t(r)}(\cdot) = (g_{A(r)} \circ g^\gamma_t)(\cdot) = r^{-1} g^\gamma_t(\cdot)$. 
By monotonicity, we thus obtain
\begin{align*}
e^{\tilde\sigma(t)} = (g^{\tilde\gamma}_{\tilde\sigma(t)})'(0) \le (g_{A^\gamma_{\sigma(t)}(r)})'(0) = r^{-1}e^{\sigma(t)}.
\end{align*}
Taking logarithms of both sides and rearranging yields $\tilde\sigma(t) - \sigma(t) \le - \log r$. 
By symmetry, we have $|\sigma(t)-\tilde\sigma(t)| \le -\log r$. 
Choosing $\tilde\gamma = \eta$ and $\tilde\sigma = \id$ thus gives
\begin{align*}
\sup_{s \in [0,T]}|s - \sigma(s)| 
\; \le \; -\log r 
\quad \xrightarrow{\delta \to 0} \quad 0 , 
\qquad \textnormal{uniformly for } \gamma \in \Bmetric[\dXpathsUnparam]{\eta}{\delta}.
\end{align*}

\item {\bf Chordal case.} 
Next, assume the case $\Xpaths = \Hpaths$. 
By~\cite[Proposition~6.5]{Berestrycki-Norris:Lecture_notes_on_SLE},  we have
\begin{align*}
\hcap(\Bmetric{\gamma[0,t]}{\delta}) 
\; \le \; t  \; + \;  \frac{16}{\pi}\rad(\Bmetric{\gamma[0,t]}{\delta})^{3/2}\delta^{1/2}, \qquad t \in [0,T_\gamma] ,
\end{align*}
where $\rad(K) := \inf\{R > 0 \colon K \subset \Bmetric{x}{R} \textnormal{ for some } x \in \bR\}$. 
As $\gamma \in \smash{\Bmetric[\dXpathsUnparam]{\eta}{\delta} \cap \Xfinpathscl}$,
we can estimate 
\begin{align*}
\rad(\Bmetric{\gamma[0,t]}{\delta}) 
\; \le \; \rad(\Bmetric{\gamma[0,T]}{\delta}) 
\; \le \; \rad(\eta[0,T]) +  2\delta,
\end{align*}
from which we obtain the uniform bound
\begin{align*}
\hcap\big(\Bmetric{\gamma[0,t]}{\delta}\big) 
\; \le \; t  \; + \;  \underbrace{\frac{16}{\pi}\big(\rad(\eta[0,T] + 2\delta)\big)^{3/2}\delta^{1/2}}_{=: \, r(\delta)} 
, \qquad \gamma \in \Bmetric[\dXpathsUnparam]{\eta}{\delta} .
\end{align*}
Finally, using monotonicity of the capacity, we deduce that 
\begin{align*}
2\sigma(t) \; =  \;\; & \hcap(\gamma[0,\sigma(t)])) 
\; \le \; \hcap\big(\Bmetric{\eta[0,T]}{\delta}\big) 
\; \le \; t + r(\delta),
\\
2 t = \;\; & \hcap\big(\eta[0,T]\big) 
\; \le \; \hcap\big(\Bmetric{\gamma[0,\sigma(t)]}{\delta}\big) 
\; \le \; \sigma(t) + r(\delta) .
\end{align*}
This implies that $|\sigma(t)-t| \le r(\delta) \to 0$ as $\delta \to 0$, 
uniformly for $\gamma \in \smash{\Bmetric[\dXpathsUnparam]{\eta}{\delta}}$.
\qedhere
\end{enumerate}
\end{proof}

\section{Conformal concatenation of curves}
\label{app:conformal concatenation}
We now prove that the conformal concatenation of curves is continuous when one of the curves is simple and both have finite capacity (\Cref{thm:convergence-of-concatenations}). 
We will use the notation from \Cref{sec:preliminaries}.
To define the conformal concatenation more generally, we consider a pair $\gamma \in \Xfinpathscl$ and $\eta \in \Xpathscl$ of curves satisfying $\gamma(0) = \eta(0) = \beginpoint$. 
The Loewner flow $(g_t)_{t \ge 0}$ associated to $\gamma \colon [0, T] \to \overline \Ddomain$ 
solves \Cref{eq:Loewner equation} with driving function $W \in \funs{T]}$. 
We write $(\hat g_t)_{t \ge 0}$ for the \emph{centered Loewner map}, 
that is, $\hat g_t(\cdot) := g_t(\cdot) - \lambda_t$ in the chordal case ($W=\lambda$)
and $\hat g_t(\cdot) := e^{-\ii \omega_t} \, g_t(\cdot)$ in the radial case ($W=\omega$).
We write  $\hat f_t = \hat g_t^{-1}$ for its inverse. 
We define (a reparameterization of) the \emph{conformal concatenation} of $\eta$ and $\gamma$
by 
\begin{align}\label{eqn:concatenation}
(\gamma\concat\eta)(t) 
\; := \; 
\begin{cases}
\gamma(t), & t \in [0,T] , \\
\hat f_T \big( \eta(t-T) \big), & t > T .
\end{cases}
\end{align}

When $\gamma$ is simple, we prove the following result. 
(In fact, a truncation argument shows that the claim in \Cref{thm:convergence-of-concatenations} holds for
$\Xpaths[<\infty]\times\Xpathscl$ to $\Xpathscl$ as well.)

\begin{prop}\label{thm:convergence-of-concatenations}
The conformal concatenation $(\gamma, \eta) \mapsto \gamma\concat\eta$ 
defines a continuous mapping from $\Xpaths[<\infty]\times\Xfinpathscl$ to $\Xfinpathscl$ 
with any of the topologices induced by $\dXpaths$, $\dXpathsEucl$, or $\dXpathsUnparam$. 
\end{prop}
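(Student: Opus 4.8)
The plan is to reduce the statement to two ingredients: (1) the centered Loewner map $\hat f_T$ associated to a simple curve $\gamma$ of finite capacity depends continuously on $\gamma$ in a suitable sense, and (2) the tip-map $\gamma \mapsto W_T = g_T(\gamma(T))$ is continuous (which is already recorded in \Cref{sec:preliminaries} for the metric $\dXpaths$). Throughout, by \Cref{thm:topology-independent-of-parameterization} it suffices to work with any one of the metrics $\dXpaths$, $\dXpathsEucl$, $\dXpathsUnparam$ on finite-capacity curves; I would use $\dXpathsEucl$ since it interacts most directly with Euclidean estimates on conformal maps, and transfer back at the end. So suppose $\gamma^n \to \gamma$ in $\Xpaths[<\infty]$ (all simple, capacities $T_n \to T < \infty$) and $\eta^n \to \eta$ in $\Xfinpathscl$. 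We must show $\gamma^n \concat \eta^n \to \gamma \concat \eta$ uniformly (after the natural reparameterization built into~\eqref{eqn:concatenation}).

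First I would handle the ``early'' part: on $[0, T_n \wedge T]$ the concatenation agrees with $\gamma^n$, which converges to $\gamma$ by hypothesis, and the short leftover interval $|T_n - T|$ contributes a vanishing oscillation term exactly as in the proof of \Cref{thm:topology-independent-of-parameterization}. The substance is the ``late'' part, $t > T$: there we must compare $\hat f_{T_n}^n(\eta^n(\cdot - T_n))$ with $\hat f_T(\eta(\cdot - T))$, where $\hat f^n$ denotes the centered Loewner flow of $\gamma^n$. I would split this as
\begin{align*}
\big\| \hat f_{T_n}^n(\eta^n(\variable)) - \hat f_T(\eta(\variable)) \big\|
\;\le\; \big\| \hat f_{T_n}^n(\eta^n(\variable)) - \hat f_{T_n}^n(\eta(\variable)) \big\|
\;+\; \big\| \hat f_{T_n}^n(\eta(\variable)) - \hat f_T(\eta(\variable)) \big\| ,
\end{align*}
with the obvious time shifts understood. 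The first term is controlled by equicontinuity of the family $\{\hat f_{T_n}^n\}$ on a fixed compact neighborhood of the (bounded) curve $\eta$, together with $\eta^n \to \eta$; here one must check that the $\hat f_{T_n}^n$ are uniformly Lipschitz on compacts away from $\partial\Ddomain$ (for the chordal case, use the standard $|\hat f_{t}'(z)| \le 1$-type bounds, cf.\ the estimates behind $L(n)$ in \Cref{subsec:exp_tight_chordal}; for the radial case, use Koebe-type bounds on $\bD \smallsetminus \gamma^n[0,T_n]$ as in the radial part of the proof of \Cref{thm:topology-independent-of-parameterization}). The second term requires $\hat f_{T_n}^n \to \hat f_T$ locally uniformly, which is where simplicity of $\gamma$ enters crucially: $\gamma^n \to \gamma$ in $\Xpaths$ forces Carath\'eodory convergence of the complementary domains $\Ddomain \smallsetminus \gamma^n[0,T_n] \to \Ddomain \smallsetminus \gamma[0,T]$ (by \Cref{thm:caratheodory-vs-hausdorff-convergence}, since $\gamma$ simple means the Hausdorff and Carath\'eodory limits agree), hence $g_{T_n}^n \to g_T$ and so $f_{T_n}^n \to f_T$ locally uniformly by \Cref{thm:caratheodory-kernel-convergence}; the centering $\hat f_{T_n}^n$ then converges because the centers $W^n_{T_n} = g^n_{T_n}(\gamma^n(T_n))$ converge to $W_T$ by the continuity of the tip map recorded in \Cref{sec:preliminaries}. (One should also note $\hat f_T(\eta[0,\cdot]) \in \Xpathscl$, so that the concatenation genuinely lies in $\Xfinpathscl$; this is where we use that $\gamma$ is simple, so $\hat f_T$ extends continuously past the tip on the relevant boundary arcs, together with the fact that concatenating a curve to the tip of a simple curve yields again a curve — a point established essentially in~\cite[Section~6]{Kemppainen:SLE_book}.)

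Finally I would assemble the pieces: combining the early-part estimate, the two late-part estimates, and the vanishing oscillation correction for $|T_n - T| \to 0$, one gets $\dXpathsEucl(\gamma^n \concat \eta^n, \gamma \concat \eta) \to 0$; \Cref{thm:topology-independent-of-parameterization} then upgrades this to convergence in $\dXpaths$ and $\dXpathsUnparam$ as well, on $\Xfinpathscl$. The truncation remark (that the same works for $\Xpaths[<\infty]\times\Xpathscl \to \Xpathscl$) follows by observing that for any finite cutoff $S$, the curve $(\gamma\concat\eta)|_{[0,S]}$ depends only on $\gamma$ and $\eta|_{[0,S-T]}$, so one applies the finite-capacity statement to $\eta^n|_{[0,S-T_n]} \to \eta|_{[0,S-T]}$ for each fixed $S$. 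The main obstacle I expect is the uniform equicontinuity of $\{\hat f_{T_n}^n\}$ near $\eta$ with constants not depending on $n$ — i.e.\ ruling out degeneration of the conformal maps near the tip as $\gamma^n \to \gamma$; for this it is essential that the limit $\gamma$ is a simple curve (so the domains are non-degenerate in the Carath\'eodory sense), and one should phrase the argument so that the bound depends only on $\dist(\eta, \partial\Ddomain)$ and on a fixed bounded region containing all $\gamma^n$ and $\eta$, both of which are uniform in $n$.
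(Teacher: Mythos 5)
There is a genuine gap at the heart of your ``late part'' estimate. Both of your terms are controlled by interior estimates: you invoke equicontinuity/uniform Lipschitz bounds for $\{\hat f^{n}_{T_n}\}$ ``on compacts away from $\partial\Ddomain$'' (with constants depending on $\dist(\eta,\partial\Ddomain)$) and only \emph{locally uniform} convergence $\hat f^{n}_{T_n}\to \hat f_T$ from Carath\'eodory convergence. But the second curve $\eta \in \Xfinpathscl$ is a curve in $\overline{\Ddomain}$ with $\eta(0)=\beginpoint\in\partial\Ddomain$ (and, being a limit of simple curves, it may touch $\partial\Ddomain$ elsewhere as well), so $\dist(\eta,\partial\Ddomain)=0$ and there is no compact neighborhood of $\eta$ staying away from the boundary. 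Worse, the boundary point $\beginpoint$ is exactly the preimage of the tip $\gamma(T)$ under $\hat f_T$, where the conformal maps degenerate: already for a straight slit the map behaves like a square root at that point, so no uniform Lipschitz or derivative bound of the kind you describe holds near $\beginpoint$, and locally uniform convergence in the open domain says nothing about the values $\hat f^{n}_{T_n}(\eta^n(s))$ for $s$ near $0$ or wherever $\eta^n$ approaches $\partial\Ddomain$. What is actually needed — and what the paper proves — is convergence $\hat f^{n}_{T_n}\to\hat f_T$ \emph{uniformly on the closure} $\overline{\Ddomain}$, including boundary values. This is obtained not from Koebe-type interior distortion but from Pommerenke's criterion (\Cref{thm:uniform-convergence}): the complements of the slit domains $\Ddomain\smallsetminus\gamma^n[0,T^n]$ are \emph{uniformly locally connected} because $\gamma^n\to\gamma$ with $\gamma$ simple, and this upgrades pointwise/Carath\'eodory convergence to uniform convergence up to the boundary. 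Your plan never supplies this uniform boundary control, so the step bounding $\|\hat f^{n}_{T_n}(\eta^n)-\hat f^{n}_{T_n}(\eta)\|$ and $\|\hat f^{n}_{T_n}(\eta)-\hat f_T(\eta)\|$ fails as written.

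Two secondary points. First, once you do have uniform convergence on $\overline{\Ddomain}$, the cleaner split is the paper's: compare $\hat f^{n}_{T_n}(\eta^n)$ with $\hat f_T(\eta^n)$ (uniform convergence of the maps on the closure) and $\hat f_T(\eta^n)$ with $\hat f_T(\eta)$ (continuity of the single boundary extension $\hat f_T$, which exists since $\gamma$ is simple); your ordering forces the uniform estimate onto the whole family at once, which is harder to state without the local-connectivity input anyway. Second, in the chordal case $\Ddomain=\bH$ is unbounded and Pommerenke's statement is for bounded domains in the disk, so an additional M\"obius conjugation (choosing a base point away from all the curves $\gamma^n$, $\gamma$ and using uniform bi-Lipschitz control of the conjugating maps) is needed; your outline does not address this, though it is a routine fix compared to the boundary-uniformity issue above.
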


Recall that a family 
$(A^n)_{n\in\bN}$ of closed sets is \emph{uniformly locally connected} if 
for each $\varepsilon > 0$, there exist $\delta = \delta(\varepsilon) > 0$ independent of $n$ such that for all $n \in \bN$, any two points $x^n, y^n \in A^n$ with $| x^n - y^n | < \delta$ can be joined by a continuum $\varsigma^n \subset A^n$ of diameter $\diam(\varsigma^n) < \varepsilon$.

\begin{lemA} \textnormal{(\cite[Corollary~2.4]{Pommerenke:Boundary_behaviour_of_conformal_maps}.)}
\label{thm:uniform-convergence}
Suppose that $(f^n)_{n\in\bN}$ are conformal maps from $\bD$ onto domains $D^n$ such that $f^n(0)=0$ for all $n$, there exist radii $r,R \in (0,\infty)$ such that 
$\Bmetric[]{0}{r} \subsetneq D^n \subset \Bmetric[]{0}{R}$ for all $n$, 
and the family 
$(\bC \smallsetminus D^n)_{n\in\bN}$ of closed sets is uniformly locally connected. 
Then, if $f^n \to f$ as $n \to \infty$ pointwise on $\bD$, 
then the convergence $f^n \to f$ is uniform on $\overline{\bD}$. 
\end{lemA}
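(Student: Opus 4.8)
The plan is to upgrade the hypothesised pointwise convergence on $\bD$ to uniform convergence on $\overline{\bD}$ by an Arzel\`a--Ascoli argument, the heart of which is a uniform modulus of continuity for the family $(f^n)_{n \in \bN}$ on $\overline{\bD}$ depending only on $r$, $R$, and the uniform-local-connectedness modulus of $(\bC \smallsetminus D^n)_{n}$.

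\emph{Step 1 (normality on $\bD$; conformality of $f$).} Since $f^n(\bD) = D^n \subset \Bmetric{0}{R}$, the $f^n$ are uniformly bounded, hence form a normal family on $\bD$; the pointwise convergence then forces locally uniform convergence $f^n \to f$ on $\bD$, so $f$ is holomorphic, $f(0)=0$, and $f' = \lim (f^n)'$ locally uniformly. Applying the Schwarz lemma to the maps $z \mapsto (f^n)^{-1}(rz)$, well defined because $\Bmetric{0}{r} \subset D^n$, gives $|(f^n)'(0)| \geq r$, hence $|f'(0)| \geq r > 0$; thus $f$ is non-constant, and by Hurwitz's theorem it is univalent, i.e.\ conformal onto $f(\bD)$.

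\emph{Step 2 (uniform equicontinuity on $\overline{\bD}$ --- the crux).} Fix $\zeta_0 \in \partial\bD$ and $\delta \in (0,1)$, and for $\rho \in [\delta,\sqrt\delta]$ let $C_\rho := \bD \cap \{z : |z - \zeta_0| = \rho\}$ be the corresponding crosscut of $\bD$. A length--area (Wolff-type) estimate, using only $\mathrm{Area}(D^n) \leq \pi R^2$, yields
\begin{align*}
\int_{\delta}^{\sqrt\delta} \frac{1}{\rho}\,\bigl(\mathrm{length}\, f^n(C_\rho)\bigr)^2 \, \ud\rho \; \le \; \pi^2 R^2 ,
\end{align*}
so that for every $n$ there is a radius $\rho_n \in [\delta,\sqrt\delta]$ with $\mathrm{length}\, f^n(C_{\rho_n}) \le \epsilon_\delta := \pi R \sqrt{2/\log(1/\delta)}$. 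A rectifiable arc has two endpoints, so $\overline{f^n(C_{\rho_n})}$ is a Jordan arc with endpoints $w_1^n, w_2^n \in \partial D^n$ and $|w_1^n - w_2^n| \le \epsilon_\delta$; by the uniform local connectedness of $(\bC \smallsetminus D^n)_n$, these endpoints are joined within $\bC \smallsetminus D^n$ by a continuum $\varsigma^n$ --- which we may take to be an arc --- of diameter $\le \eta(\epsilon_\delta)$, where $\eta(s) \to 0$ as $s \to 0+$. The crosscut $f^n(C_{\rho_n})$ splits $D^n$ into the image $f^n(U_{\rho_n})$ of a small lens region at $\zeta_0$ and the image $f^n(V_{\rho_n})$ of the complementary piece, the latter containing $0 = f^n(0)$; combining the small image $f^n(C_{\rho_n})$ of the crosscut with the small continuum $\varsigma^n$ joining its endpoints inside $\bC \smallsetminus D^n$, a planar separation argument --- exploiting that $\Bmetric{0}{r} \subset D^n$, so that $|w_1^n| \geq r$ and $0$ is bounded away from these small sets, together with the fact that every bounded complementary component of a continuum $K$ has diameter $\le \diam(K)$ --- shows that $\diam f^n(U_{\rho_n})$ is bounded by a quantity $\tau_\delta := \epsilon_\delta + \eta(\epsilon_\delta)$ (up to a universal factor) once $\delta$ is small enough. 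Now, given $\zeta, \zeta' \in \overline{\bD}$ with $|\zeta - \zeta'| \le \delta$, at least one of which is within distance $\sqrt\delta$ of $\partial\bD$, choosing $\zeta_0$ on the short boundary arc between them places $\zeta, \zeta'$ inside the lens $U_{\rho_n}$; reading off the above bound through prime ends (which simultaneously yields the continuous extension of each $f^n$ to $\overline{\bD}$, i.e.\ Carath\'eodory's theorem) gives $|f^n(\zeta) - f^n(\zeta')| \le \tau_\delta$ uniformly in $n$. Pairs of points lying deep inside $\bD$ are handled separately by the Koebe distortion theorem (using $\diam D^n \le 2R$), which supplies a uniform interior modulus. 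Since $\tau_\delta \to 0$ as $\delta \to 0$, the family $(f^n)$ is uniformly equicontinuous on $\overline{\bD}$.

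\emph{Step 3 (conclusion).} By Steps~1--2 the family $(f^n)$ is equibounded and uniformly equicontinuous on the compact set $\overline{\bD}$, hence relatively compact in $C(\overline{\bD})$ by Arzel\`a--Ascoli. Any subsequential uniform-on-$\overline{\bD}$ limit restricts on the dense subset $\bD$ to $f$, and, both being continuous on $\overline{\bD}$, it must equal $f$ there. Therefore every subsequence of $(f^n)$ has a further subsequence converging to $f$ uniformly on $\overline{\bD}$, and consequently $f^n \to f$ uniformly on $\overline{\bD}$, as claimed. The main obstacle is entirely within Step~2: combining the length--area selection of a good radius $\rho_n$, the small continuum joining the endpoints of $f^n(C_{\rho_n})$, and the planar-topology bookkeeping (placing the lens image in a bounded complementary component of a small continuum, and orienting everything using $\Bmetric{0}{r} \subset D^n$) so that the resulting modulus $\tau_\delta$ depends only on $r$, $R$, and $\eta$ --- in particular, uniformly in $n$. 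This is precisely the quantitative, uniform form of Carath\'eodory's continuous-extension theorem.
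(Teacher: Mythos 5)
The paper does not prove this lemma at all — it is imported verbatim as \cite[Corollary~2.4]{Pommerenke:Boundary_behaviour_of_conformal_maps} — so your proposal is a reconstruction of the standard argument behind that citation (length–area selection of a good crosscut, uniform local connectedness of the complements to get uniform equicontinuity on $\overline{\bD}$, then Arzel\`a–Ascoli and identification of the limit). Your Steps~1 and~3, the Wolff-type integral bound and the choice of $\rho_n$, and the use of the continuum $\varsigma^n$ of diameter $\eta(\epsilon_\delta)$ are all correct and are indeed the route Pommerenke takes.

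The genuine gap is exactly the step you label ``planar separation argument''/``planar-topology bookkeeping'': you never prove that $f^n(U_{\rho_n})$ lies in a \emph{bounded} complementary component of the small continuum $S^n:=\overline{f^n(C_{\rho_n})}\cup\varsigma^n$, and the facts you invoke ($|w_i^n|\ge r$, and that bounded complementary components of a continuum have diameter at most $\diam(S^n)$) do not rule out the lens image sitting in the \emph{unbounded} component of $\bC\smallsetminus S^n$ — which is the whole difficulty, i.e.\ the core of Carath\'eodory's continuity theorem. The standard way to close it (and Pommerenke's) is a separation theorem such as Janiszewski's: take $A=S^n$ and $B=\widehat{\bC}\smallsetminus D^n$; then $A\cap B=\varsigma^n$ is connected, $A\cup B$ separates any $w\in f^n(U_{\rho_n})$ from $0$ (they lie in the two components of $D^n\smallsetminus f^n(C_{\rho_n})$), and $B$ alone does not (both points lie in $D^n$); hence $S^n$ itself separates $w$ from $0$, and since $\dist(0,S^n)\ge r-\tau_\delta>\tau_\delta$ the point $0$ lies in the unbounded component, forcing $f^n(U_{\rho_n})$ into a bounded one and giving $\diam f^n(U_{\rho_n})\le \diam S^n$. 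Without some such input the proof is incomplete; note also that your parenthetical ``which we may take to be an arc'' is unjustified (a continuum joining $w_1^n,w_2^n$ need not contain an arc between them), and even granting it, the Jordan curve theorem alone would still not exclude the unbounded component. A further, easily repaired slip: with $|\zeta-\zeta'|\le\delta$ and $\zeta$ only within $\sqrt\delta$ of $\partial\bD$, the two points need not lie in the lens of radius $\rho_n\in[\delta,\sqrt\delta]$; decouple the two scales (or select $\rho_n\in[2\sqrt\delta,\delta^{1/4}]$) so that the boundary case and the Koebe interior estimate are compatible.
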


\begin{proof}[Proof of \Cref{thm:convergence-of-concatenations}]
By \Cref{thm:topology-independent-of-parameterization} it suffices to prove the continuity in the unparameterized metric~$\dXpathsUnparam$. 
Hence, we aim to prove that for 
any sequences $(\gamma^n)_{n \in \bN}$ and $(\eta^n)_{n \in \bN}$ converging as $n \to \infty$ respectively to $\gamma$ in $(\Xpaths[<\infty],\dXpathsUnparam)$ and $\eta$ in $(\Xfinpathscl,\dXpathsUnparam)$,
we also have $\gamma^n \concat \eta^n \to \gamma \concat \eta$ in $(\Xfinpathscl,\dXpathsUnparam)$.  
From the definition~\eqref{eqn:concatenation} of the concatenation, we see that
\begin{align}\label{eqn:paths-concat-ineq}
	\dXpathsUnparam \big(\gamma^n \concat \eta^n, \gamma \concat \eta \big) 
	\; \le \; \max\Big(\underbrace{\dXpathsUnparam \big(\gamma^n, \gamma \big)}_{\underset{n \to \infty}{\longrightarrow} \; 0} ,\; \dXpathsUnparam \big(\hat f^n_{T^n}(\eta^n), \hat f_T(\eta) \big)\Big) ,
\end{align}
where $(\hat f^n_t)_{t\in [0,T^n]}$ is the centered Loewner map associated to $\gamma \colon [0, T^n] \to \overline \Ddomain$. 
(Here, we have $\gamma^n \in \Xpaths[T^n]$ for all $n$, and $T^n \to T$ as $n \to \infty$.) 
Note that $\hat f^n_{T^n} \to \hat f_T$ pointwise on $\Ddomain$, since $\gamma^n \to \gamma$ in $(\Xpaths[<\infty],\dXpathsUnparam)$. 
If the convergence $\hat f^n_{T^n} \to \hat f_T$ also holds uniformly 
on the closure $\overline \Ddomain$, then the second term in~\eqref{eqn:paths-concat-ineq} can be bounded as
\begin{align*}
\dXpathsUnparam \big(\hat f^n_{T^n}(\eta^n), \hat f_T(\eta) \big)
	\; \le \;\; & \dXpathsUnparam \big(\hat f^n_{T^n}(\eta^n), \hat f_T(\eta^n) \big) + \dXpathsUnparam \big(\hat f_T(\eta^n), \hat f_T(\eta) \big) 
	\quad \overset{n \to \infty}{\longrightarrow} \quad 0,
\end{align*}
where the first term converges by the uniform convergence of $\smash{\hat f^n_{T_n}} \to \hat f_T$
on $\overline \Ddomain$, 
and the second term converges since the image of 
the convergent sequence $(\eta^n)_{n \in \bN}$ under the continuous map $\hat f_T$ converges. 
It thus remains to prove the uniform convergence $\smash{\hat f^n_{T_n}} \to \hat f_T$ on $\overline \Ddomain$.

\textbf{Radial case, $\Ddomain = \bD$.} 
Because $\gamma$ is a simple curve, the set $\partial(\bD \smallsetminus \gamma[0,t])$ is locally connected for every~$t$, so by~\cite[Theorem~2.1]{Pommerenke:Boundary_behaviour_of_conformal_maps}  
the map $z \mapsto \hat f_t(z)$ has a continuous extension to $\overline{\bD} \ni z$ for every~$t$; 
in particular, for $t=T$. 
Because 
$\gamma^n \to \gamma$ in $(\Dpaths[<\infty],\dDpathsUnparam)$, 
it follows that the boundaries of the slit domains $(\bD \smallsetminus \gamma^n[0,t \wedge T^n])_{n \in \bN}$ are uniformly locally connected.
Hence, by \Cref{thm:uniform-convergence}, we have $\hat f^n_{T^n} \to \hat f_{T}$ uniformly on $\overline{\bD}$.

\textbf{Chordal case, $\Ddomain = \bH$.} 
For each $a \in \bH$, we denote by $\varphi_a \colon \bH \to \bD$ the conformal map 
normalized by $\varphi_a(a) = 0$ and $\varphi_a(0) = 1$. 
Note that with $\bH$ equipped with the metric $\dH$, 
the function $\varphi_a$ is a bi-Lipschitz map with the bi-Lipschitz constant depending continuously on $\dH(2\ii, a)$. 
Since $\gamma[0,T]$ is a bounded set, we can find a point $a_0 \in \bH\smallsetminus (\gamma[0,T] \cup \underset{n \in \bN}{\bigcup} \, \gamma^n[0,T^n])$.

Write $z := \smash{\hat f_T^{-1}(a_0)}$ and $\smash{z_n := (\hat f^n_{T_n})^{-1}(a_0)}$. 
Note that $z_n \to z$ as $n \to \infty$, 
and consequently, we see that $\varphi_{z_n} \to \varphi_z$ pointwise on $\overline{\bH}$. 
Moreover, as the bi-Lipschitz constants of $(\varphi_{z_n})_{n \in \bN}$ are uniformly bounded, 
the convergence $\varphi_{z_n} \to \varphi_z$ is actually uniform on $\overline \bH$. 
Consider now the maps
\begin{align} \label{eq:composed_maps}
\begin{split}
	\tilde f := \; & \varphi_{a_0} \circ \hat f_T \circ \varphi_z^{-1} \;\, \colon \, \bD \to \bD \smallsetminus \varphi_{a_0}(\gamma(0,T]) ,
	 \\
	 \tilde f^n := \; & \varphi_{a_0} \circ \hat f^n_{T_n} \circ \varphi_{z_n}^{-1} \colon \bD \to \bD \smallsetminus \varphi_{a_0}(\gamma^n(0,T^n]).
\end{split}
\end{align}
Now, $\tilde f^n$ converge to $f$ pointwise on $\bD$, while the curves $\varphi_{a_0}(\gamma^n)$ converge to $\varphi_{a_0}(\gamma)$ in $(\Dpaths[<\infty],\dDpathsUnparam)$. 
It thus follows similarly as in the radial case that $\tilde f^n \to \tilde f$ as $n \to \infty$ uniformly on $\overline \bD$. Finally, since the composed maps in~\eqref{eq:composed_maps} converge uniformly on the closures of their domains, 
$\hat f^n_{T_n} = \varphi_{a_0}^{-1}\circ \tilde f^n \circ \varphi_{z_n}$ 
converge uniformly to $\varphi_{a_0}^{-1} \circ \tilde f \circ \varphi_z = \hat f_T$ on $\overline \bH$.
\end{proof}

\begin{remark}\label{rem:convergence-of-concatenations}
Let us demonstrate why the conformal concatenation $(\gamma, \eta) \mapsto \gamma\concat\eta$  fails to be continuous when 
$\gamma$ is not simple. Consider a crosscut $\gamma \colon [0,1] \to \overline{\bD}\smallsetminus\{0\}$ of $\bD$ with endpoints $\gamma(0) = 1$ and $\gamma(1) \in \partial\bD$, so that $\gamma \in \Dfinpathscl \smallsetminus \Dpaths[<\infty]$.
Fix a point $z_0$ in the interior of $\filling{\gamma[0,1]}$. 
For each $n \in \bN$, define $\gamma^n \colon [0,1] \to \overline{\bD}\smallsetminus\{0\}$ by $\gamma^n(t) := \gamma(\frac{nt}{n+1})$, 
and write $\hat g^n(\cdot) := \hat g_{\gamma^n[0,1]}(\cdot)$ for its centered Loewner map. 
Also, let $\tilde{\eta}^n$ be a simple curve in $\filling{\gamma[0,1]}\smallsetminus \gamma[0, \tfrac{n}{n+1})$ from $\gamma(\tfrac{n}{n+1})$ to $z_0$. 
As the harmonic measure of $\hat\eta^n := \hat g^n(\tilde{\eta}^n)$ vanishes in the limit, 
$\harmonicmeasure{\bD}{0}{\hat \eta^n} \to 0$ as $n \to \infty$, 
the diameter of $\eta^n$ tends to zero as well
--- so in particular $\eta^n$ converges to the constant path $1 \in \Dpaths[0]$ in $(\Dpaths, \dDpaths)$. 
However, by construction, we have $z_0 \in \gamma^n \concat \eta^n$ for every $n$, while $z_0 \notin \gamma = \gamma \concat \eta$.
Hence, the convergence of the conformal concatenations $\gamma^n \concat \eta^n \to \gamma \concat \eta$ cannot hold in $(\Dfinpathscl,\dDpaths)$.
\end{remark}

\section{Generalized contraction principle} 
\label{app:contraction-principle-generalized}
We need to apply the contraction principle also when the associated map is not continuous. 
For this purpose, we use another LDP transfer result, 
which generalizes \Cref{thm:contraction-principle} 
(related to a more technical result using exponential approximations, 
see~\cite[Theorem~4.2.16]{Dembo-Zeitouni:Large_deviations_techniques_and_applications}).

\begin{lem}[{Generalized contraction principle}]\label{thm:contraction-principle-generalized}
Let $X$ and $Y$ be Hausdorff topological spaces and $f \colon X \to Y$ a measurable map. 
Suppose that the family $(P^\varepsilon)_{\varepsilon>0}$ of probability measures on $X$ satisfies an LDP 
with rate function $I \colon X \to [0,+\infty]$.
Consider the pushforward probability measures $Q^\varepsilon := P^\varepsilon\circ f^{-1}$ of $P^\varepsilon$ by $f$. 
Define $J \colon Y \to [0,+\infty]$ by
\begin{align*}
J(y) := I(f^{-1}\{y\}) 
:= \inf_{x \in f^{-1}\{y\}} I(x) , \qquad y \in Y .
\end{align*}

\begin{enumerate}[leftmargin=*, label=\textnormal{(\roman*)}]
\item \label{item:LDP_gen}
Suppose that for each $M \in [0,\infty)$, there exists a closed set $E = E(M) \subset X$ only depending on $M$ 
such that $f$ restricted to $E$ is continuous and
\begin{align} \label{eq:exp_tighness_gen}
\limsup_{\varepsilon\to 0} \varepsilon\log P^\varepsilon[X \smallsetminus E] \le -M.
\end{align}
Then, the measures $(Q^\varepsilon)_{\varepsilon > 0}$ satisfy the LDP bounds: 
we have
\begin{align*}
\liminf_{\kappa \to 0+}\varepsilon\log Q^\varepsilon[G] \ge 
-J(G) 
\qquad \textnormal{and} \qquad
\limsup_{\kappa \to 0+}\varepsilon\log Q^\varepsilon[F] \le 
-J(F),
\end{align*}
for every open $G \subset Y$ and closed $F \subset Y$.

\item \label{item:good}
If furthermore $I$ is a good rate function, then also $J$ is a good rate function.
\end{enumerate}
\end{lem}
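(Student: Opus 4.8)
The two parts can be treated essentially independently, using only the given LDP for $(P^\varepsilon)_{\varepsilon>0}$ together with two elementary facts about exponential rates: the logarithmic-sum rule $\limsup_{\varepsilon\to 0}\varepsilon\log(a_\varepsilon+b_\varepsilon)=\max\big(\limsup_{\varepsilon\to 0}\varepsilon\log a_\varepsilon,\,\limsup_{\varepsilon\to 0}\varepsilon\log b_\varepsilon\big)$ for $a_\varepsilon,b_\varepsilon\ge 0$; and the observation that if $\limsup_{\varepsilon\to 0}\varepsilon\log d_\varepsilon\le -M$, $\liminf_{\varepsilon\to 0}\varepsilon\log c_\varepsilon\ge -L$, and $L<M$, then $\liminf_{\varepsilon\to 0}\varepsilon\log(c_\varepsilon-d_\varepsilon)\ge -L$ (because $c_\varepsilon-d_\varepsilon\ge\tfrac12 c_\varepsilon$ for small $\varepsilon$). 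I would first record the identity $J(A)=I(f^{-1}(A))$ for any $A\subseteq Y$, immediate from the definition of $J$ and of $I(\cdot)$ as an infimum. A crucial preliminary step for part~\ref{item:LDP_gen} is that the hypothesis~\eqref{eq:exp_tighness_gen}, combined with the LDP \emph{lower} bound applied to the open set $X\smallsetminus E(M)$, forces $\inf_{x\notin E(M)}I(x)\ge M$; equivalently, $I^{-1}[0,M)\subseteq E(M)$. This is what allows the closed continuity sets $E(M)$ to substitute for the sublevel sets of $I$.

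\emph{Upper bound in~\ref{item:LDP_gen}.} Fix a closed $F\subseteq Y$ and $M\in[0,\infty)$. Since $E(M)$ is closed in $X$ and $f|_{E(M)}$ is continuous, the set $f^{-1}(F)\cap E(M)$ is closed in $X$. Splitting $Q^\varepsilon[F]=P^\varepsilon[f^{-1}(F)]\le P^\varepsilon[f^{-1}(F)\cap E(M)]+P^\varepsilon[X\smallsetminus E(M)]$, applying the LDP upper bound to the first term, the hypothesis to the second, and the log-sum rule yields $\limsup_{\varepsilon\to 0}\varepsilon\log Q^\varepsilon[F]\le\max\big(-I(f^{-1}(F)\cap E(M)),\,-M\big)\le\max(-J(F),-M)$, and letting $M\to\infty$ gives $\le -J(F)$.

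\emph{Lower bound in~\ref{item:LDP_gen}.} Fix an open $G\subseteq Y$ and any $x_0\in f^{-1}(G)$ with $I(x_0)<\infty$ (if there is none, then $J(G)=\infty$ and there is nothing to prove). Pick $M>I(x_0)$, so that $x_0\in E(M)$ by the preliminary step. Continuity of $f|_{E(M)}$ (for the subspace topology) provides an open set $V\subseteq X$ with $x_0\in V$ and $V\cap E(M)\subseteq f^{-1}(G)$; hence $Q^\varepsilon[G]\ge P^\varepsilon[V\cap E(M)]\ge P^\varepsilon[V]-P^\varepsilon[X\smallsetminus E(M)]$. Applying the LDP lower bound to the open set $V$ gives $\liminf_{\varepsilon\to 0}\varepsilon\log P^\varepsilon[V]\ge -I(V)\ge -I(x_0)$, and since $I(V)\le I(x_0)<M$, the elementary observation above yields $\liminf_{\varepsilon\to 0}\varepsilon\log Q^\varepsilon[G]\ge -I(x_0)$. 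Taking the infimum over all such $x_0$ gives $\ge -J(G)$.

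\emph{Part~\ref{item:good}.} Assume now $I$ is good and $f$ is continuous on $I^{-1}[0,\infty)$. I would show that each sublevel set $J^{-1}[0,M]$ is compact; since $Y$ is Hausdorff, this also gives that $J^{-1}[0,M]$ is closed, hence that $J$ is lower semicontinuous, so $J$ is then a good rate function. The claim is $J^{-1}[0,M]=f\big(I^{-1}[0,M]\big)$. The inclusion ``$\supseteq$'' is clear since $J(f(x))\le I(x)$. For ``$\subseteq$'', if $J(y)\le M$ then $f^{-1}\{y\}\cap I^{-1}[0,M+1]$ is nonempty (as $J(y)=\inf_{x\in f^{-1}\{y\}}I(x)<M+1$) and closed in the compact set $I^{-1}[0,M+1]$ (it equals $(f|_{I^{-1}[0,\infty)})^{-1}\{y\}\cap I^{-1}[0,M+1]$ with $f$ continuous on $I^{-1}[0,\infty)$), hence compact; thus $I$ attains its infimum $J(y)\le M$ over it at some $x$ with $f(x)=y$, so $y\in f(I^{-1}[0,M])$. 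Finally, $I^{-1}[0,M]$ is compact and $f$ is continuous on it, so $f(I^{-1}[0,M])$ is compact.

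\textbf{Main obstacle.} The one delicate point is the lower bound in~\ref{item:LDP_gen}: because $f$ need not be globally continuous, $f^{-1}(G)$ need not be open, so the LDP lower bound cannot be applied to it directly. The remedy is to approximate $f^{-1}(G)$ from inside by the open set $V$ obtained from continuity of $f|_{E(M)}$, and to absorb the error $P^\varepsilon[X\smallsetminus E(M)]$ using that its exponential rate $M$ is chosen strictly larger than $I(x_0)$; everything else is routine bookkeeping with the log-sum rule and the subspace-topology description of continuity of $f$ restricted to $E(M)$.
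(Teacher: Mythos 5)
Your proof is correct and follows essentially the same route as the paper's: the upper bound splits along the closed continuity set $E(M)$ exactly as in the paper, and the lower bound approximates $f^{-1}(G)$ inside $E(M)$ by an open set and absorbs $P^\varepsilon[X\smallsetminus E(M)]$ using its strictly faster decay (the paper takes one global open $G'$ with $G'\cap E = f^{-1}G\cap E$ and $M>J(G)$, while you localize at a near-optimal $x_0$ with $M>I(x_0)$, via the same observation $I(X\smallsetminus E(M))\ge M$). Part~(ii) also rests on the paper's identity $J^{-1}[0,M]=f(I^{-1}[0,M])$; your verification that the fiber infimum is attained, and that compactness of sublevel sets yields lower semicontinuity, is in fact more detailed than the paper's one-line argument.
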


If $f \colon X \to Y$ is continuous, choosing $E = X$ in Item~\ref{item:LDP_gen} 
yields the classical contraction principle (\Cref{thm:contraction-principle}\ref{item:Contraction_principle}). 
The inverse contraction principle (\Cref{thm:contraction-principle}\ref{item:Inverse_contraction_principle}) then also follows, 
since continuous functions map compact sets homeomorphically to compact sets, 
which in Hausdorff topological spaces are closed. 
In general, as $J$ may not be a rate function, we do not claim an actual LDP in \Cref{item:LDP_gen}.

\begin{proof}[Proof of \Cref{thm:contraction-principle-generalized}]
First, take a closed set $F \subset Y$. 
For each $M > 0$, the set $E(M) \cap f^{-1}F$ is a closed subset of $X$, 
so by the union bound and the LDP for $(P^\varepsilon)_{\varepsilon>0}$, we obtain
\begin{align*}
\limsup_{\varepsilon \to 0}\varepsilon\log Q^\varepsilon[F] 
\;\le\;\;& \limsup_{\varepsilon \to 0}\varepsilon\log \Big(P^\varepsilon[E(M) \cap f^{-1}F] + P^\varepsilon[X \smallsetminus E(M)]\Big)\\
\;\le\;\;& - \min \big\{I(E(M) \cap f^{-1}F) , M \big\}
\;\le\; - \min \big\{I(f^{-1}F) , M \big\} \\
\xrightarrow{M \to \infty} \quad \;& -I(f^{-1}F) = -J(F).
\end{align*}
Next, take an open set $G \subset Y$. If $J(G) = \infty$, we trivially have
\begin{align*}
\liminf_{\varepsilon \to 0}\varepsilon\log Q^\varepsilon[G] \ge -\infty = -J(G).
\end{align*}
Hence, we may fix $M > 0$ such that $J(G) < M < \infty$ and write $E = E(M)$.
As $f|_E$ is continuous, we can find an open set $G' \subset X$ such that $G' \cap E = f^{-1}G \cap E$. 
Now, we have
\begin{align}\label{eqn:QG}
Q^\varepsilon(G) \;=\;  P^\varepsilon[f^{-1}G] \; \ge\;  P^\varepsilon[G' \cap E] \; \ge\;  P^\varepsilon[G'] - P^\varepsilon[X \smallsetminus E].
\end{align}
We will soon check that $I(G') \le J(G)$. Assuming this, the LDP for $P^\varepsilon$ gives
\begin{align*}
\liminf_{\varepsilon \to 0}\varepsilon\log P^\varepsilon[G'] 
\;\ge\; -J(G) 
\;>\; -M 
\;\ge\; \limsup_{\varepsilon \to 0}\varepsilon\log P^\varepsilon[X \smallsetminus E],
\end{align*}
which together with~\eqref{eqn:QG} and the LDP for $(P^\varepsilon)_{\varepsilon > 0}$ gives
\begin{align*}
\liminf_{\varepsilon \to 0}\varepsilon\log Q^\varepsilon[G] 
\;\ge\; \liminf_{\varepsilon \to 0}\varepsilon\log(P^\varepsilon[G'] - P^\varepsilon[X \smallsetminus E]) 
\;=\; \liminf_{\varepsilon \to 0}\varepsilon\log P^\varepsilon[G'] 
\;\ge\; -J(G).
\end{align*}
It remains to check that $I(G') \le J(G)$. To this end, note that 
\begin{align*}
J(G) = \min \Big\{ I(f^{-1}G \cap E) , \; \underbrace{I(f^{-1}G \cap (X \smallsetminus E))}_{\;\ge\; I(X \smallsetminus E)} \Big\},
\end{align*} 
and using the LDP for $(P^\varepsilon)_{\varepsilon > 0}$ and the assumption~\eqref{eq:exp_tighness_gen},
we see that $I(X \smallsetminus E) \ge M > J(G)$, 
which implies $I(f^{-1}G \cap E) = J(G)$. Hence, we conclude that
\begin{align*}
I(G') \;\le\; I(G' \cap E) \;=\; I(f^{-1}G \cap E) \;=\; J(G).
\end{align*}
This proves \Cref{item:LDP_gen}. 
To prove \Cref{item:good}, note that the LDP for the measures $(P^\varepsilon)_{\varepsilon > 0}$ together with~\eqref{eq:exp_tighness_gen} imply that
\begin{align*}
I(X\smallsetminus E(M+1)) \ge -\liminf_{\varepsilon \to 0}\varepsilon\log P^\varepsilon[X\smallsetminus E(M+1)] > M, \qquad \forall M \ge 0 ,
\end{align*}
so $I^{-1}[0, M] \subset E(M+1)$. 
Therefore, $J^{-1}[0,M] = f(I^{-1}[0,M])$ is compact as an image of the compact set $I^{-1}[0,M]$ under the continuous map $f|_{E(M+1)}$.
\end{proof}

\section{Some comments on Remark~\ref{rem:fixing_FL}} 
\label{app:gap_comments}
In this appendix, we discuss in detail the issue in the original claim~\cite[Lemma~2.9]{Lawler:Continuity_of_radial_and_two-sided_radial_SLE_at_the_terminal_point}\footnote{Note that the numbering of the results in online versions of~\cite{Lawler:Continuity_of_radial_and_two-sided_radial_SLE_at_the_terminal_point} differ from its published version. For instance, \cite[Lemma~2.9]{Lawler:Continuity_of_radial_and_two-sided_radial_SLE_at_the_terminal_point} and
\cite[Equation~(2.15)]{Lawler:Continuity_of_radial_and_two-sided_radial_SLE_at_the_terminal_point} 
are respectively \cite[Equation~(21)]{Lawler:Continuity_of_radial_and_two-sided_radial_SLE_at_the_terminal_point} 
and \cite[Lemma~2.9]{Lawler:Continuity_of_radial_and_two-sided_radial_SLE_at_the_terminal_point} in the arXiv version (v1). We use the numbering from the published version.} 
(we take $k=1$ here for simplicity\footnote{Also, the statement of~\cite[Lemma~2.9]{Lawler:Continuity_of_radial_and_two-sided_radial_SLE_at_the_terminal_point} actually concerns domains $D$ obtained as complements of curves, but for the sake of our discussion, this is immaterial.}). 
Let $\eta \subset \partial\bD_1$ be a crosscut of a simply connected subset $D \subset \bD$ such that $\bD_{n+1} \subset D$
and $\partial D \cap \overline{\bD}_{n+1} = \{z_0\}$ for some $n \geq 4$, 
and let $U_{\eta} := \component{D\smallsetminus \eta}{0}$ be the (simply) connected component of the complement of $\eta$ containing the origin. 
We partition its boundary as $\partial U_{\eta}  = \partial^R_\eta \cup \partial^L_\eta\cup\{z_0\}\cup\eta$,  
where $\partial^R_\eta$ and $\partial^L_\eta$ are the boundary arcs between $z_0$ and the two endpoints of $\eta$.
The claim of~\cite[Lemma~2.9]{Lawler:Continuity_of_radial_and_two-sided_radial_SLE_at_the_terminal_point} implies
\begin{align}\label{eqn:hmeas-estimate-faulty}
\frac{\harmonicmeasure{U_{\eta}}{0}{\eta}}{\min\big\{\harmonicmeasure{U_{\eta}}{0}{\partial^L_\eta}, \harmonicmeasure{U_{\eta}}{0}{\partial^R_\eta}\big\}}
\lesssim e^{-n/2} \, \harmonicmeasure{\bD_1}{0}{\eta} .
\end{align}
In the proof of~\cite[Lemma~2.9]{Lawler:Continuity_of_radial_and_two-sided_radial_SLE_at_the_terminal_point}, 
in the paragraph above the unnumbered equation above~\cite[Equation~(2.15), page~111]{Lawler:Continuity_of_radial_and_two-sided_radial_SLE_at_the_terminal_point},
the harmonic measure $\harmonicmeasure{U_{\eta} \smallsetminus \overline{\bD}_{n+1}}{z}{\eta}$ for $z \in \bD_{n+1-j}$, where $1 \leq j < n$, is estimated. 
However, this harmonic measure is estimated in the domain $\component{D\smallsetminus \partial\bD_1}{0} \subset \bD_1$, not in the domain $U_{\eta} = \component{D\smallsetminus \eta}{0} \supset \component{D\smallsetminus \partial\bD_1}{0}$:
\begin{align*}
\sup_{z \in \bD_{n+1-j}} \harmonicmeasure{U_{\eta} \smallsetminus \overline{\bD}_{n+1}}{z}{\eta} \lesssim  e^{-n/2}\,\harmonicmeasure{\bD_1}{0}{\eta} .
\end{align*}
Let us investigate the left-hand side. 
The Beurling estimate (e.g.~\cite[Corollary~3.72]{Lawler:Conformally_invariant_processes_in_the_plane}) implies that the probability to reach $\partial \bD_{2}$ without leaving $U_{\eta} \smallsetminus \overline{\bD}_{n+1}$ is $O(e^{-n/2})$.
After using the strong Markov property and disintegration at the hitting time at $\partial \bD_{2}$, one ends up estimating
\begin{align*}
\sup_{x \in \partial \bD_{2}} \harmonicmeasure{U_{\eta} \smallsetminus \overline{\bD}_{n+1}}{x}{\eta} 
\leq \sup_{x \in \partial \bD_{2}} \harmonicmeasure{U_{\eta}}{x}{\eta} .
\end{align*}
To bound the harmonic measure $\harmonicmeasure{U_{\eta}}{x}{\eta}$, we consider a conformal map $f \colon U_{\eta} \cup \bD_1 \to \bD$ fixing the origin, $f(0) = 0$. 
Since $\dist(\partial \bD_2, \eta) \ge \dist(\partial \bD_2, \partial \bD_1) > 0$, there is a constant  $r \in (0,\infty)$ independent of $\eta$ such that $f(\partial\bD_2) \subset \bD_r$. 
Hence, we find that
\begin{align*}
\sup_{x \in \partial \bD_2}\harmonicmeasure{U_{\eta}}{x}{\eta} 
\le \; & \sup_{x \in \partial \bD_2}\harmonicmeasure{U_{\eta} \cup \bD_1}{x}{\eta} 
&& \textnormal{[by domain monotonicity]}
\\
\le \; &  \sup_{x \in \bD_r}\harmonicmeasure{\bD}{x}{f(\eta)} 
&& \textnormal{[by conformal invariance]}
\\ 
\le \; & c(\bD_r, \bD) \,\harmonicmeasure{\bD}{0}{f(\eta)} 
&& \textnormal{[by Harnack inequality]}
\\
= \; & c(\bD_r, \bD) \,\harmonicmeasure{U_{\eta} \cup \bD_1}{0}{\eta} ,
&& \textnormal{[by conformal invariance]}
\end{align*}
where $c(\bD_r, \bD) \in (0,\infty)$ is a constant independent of $\eta$. 
Hence, we obtain
\begin{align*}
\sup_{z \in \bD_{n+1-j}} \harmonicmeasure{U_{\eta} \smallsetminus \overline{\bD}_{n+1}}{z}{\eta} \lesssim  e^{-n/2}\,\harmonicmeasure{U_{\eta} \cup \bD_1}{0}{\eta} ,
\end{align*}
which eventually yields instead of~\eqref{eqn:hmeas-estimate-faulty} an upper bound of the form 
\begin{align*}
\frac{\harmonicmeasure{U_{\eta}}{0}{\eta}}{\min\big\{\harmonicmeasure{U_{\eta}}{0}{\partial^L_\eta}, \harmonicmeasure{U_{\eta}}{0}{\partial^R_\eta}\big\}}
\lesssim e^{-n/2} \, \harmonicmeasure{U_{\eta} \cup \bD_1}{0}{\eta} .
\end{align*}
In order for~\eqref{eqn:hmeas-estimate-faulty} to hold, the quantities $\harmonicmeasure{\bD_1}{0}{\eta}$ and $\harmonicmeasure{U_{\eta} \cup \bD_1}{0}{\eta}$ should be uniformly comparable, which is not true in general. 
In the worst case scenario, the domain $U_{\eta} \cup \bD_1$ fills almost all of $\bD \smallsetminus \eta$.  
For instance, let us compare the segment $\eta := \{e^{\ii t - 1} \colon t \in [0,\theta]\}$ for some (small) $\theta > 0$ 
in the domains $V := \bD \smallsetminus (\eta \cup [e^{-1}, 1])$ and $\bD_1$. 
On the one hand, we have $\theta = 2\pi\,\harmonicmeasure{\bD_1}{0}{\eta} \asymp \diam(\eta)$.
On the other hand, we have $\harmonicmeasure{V}{0}{\eta} \asymp \sqrt{\diam(\eta)}$: 
the upper bound $\harmonicmeasure{V}{0}{\eta} \lesssim \sqrt{\diam(\eta)}$ follows e.g.~from the Beurling estimate (see~\cite[Eq.~(3.17) and Proposition~3.78]{Lawler:Conformally_invariant_processes_in_the_plane}); 
and the lower bound $\harmonicmeasure{V}{0}{\eta} \gtrsim \sqrt{\diam(\eta)}$ follows via an explicit computation. Indeed, using the explicit uniformizing map 
$f \colon \bD \smallsetminus [e^{-1}, 1] \to \bD$, 
\begin{align*}
f(z) = \frac{\sqrt{e}(z+1) - \sqrt{(z-e) (ez-1)}}{\sqrt{e}(z+1) + \sqrt{(z-e) (ez-1)}} ,
\end{align*}
such that $f(0) = 0$ and $f(\eta)$ is a path growing from $1 = f(e^{-1})$, 
we obtain (\Cref{fig: plots})
\begin{align*}
\harmonicmeasure{V}{0}{\eta} = \harmonicmeasure{\bD}{0}{f(\eta)} \asymp \diam(f(\eta)) \geq 
\sqrt{\diam(\eta)} , \qquad \theta \in [0, \pi/2] .
\end{align*}
When $\theta$ is small, the quantities $\diam(\eta) \asymp \theta$ and $\sqrt{\diam(\eta)} \asymp \sqrt{\theta}$ are not comparable. 

\begin{figure}
\includegraphics[width=.45\textwidth]{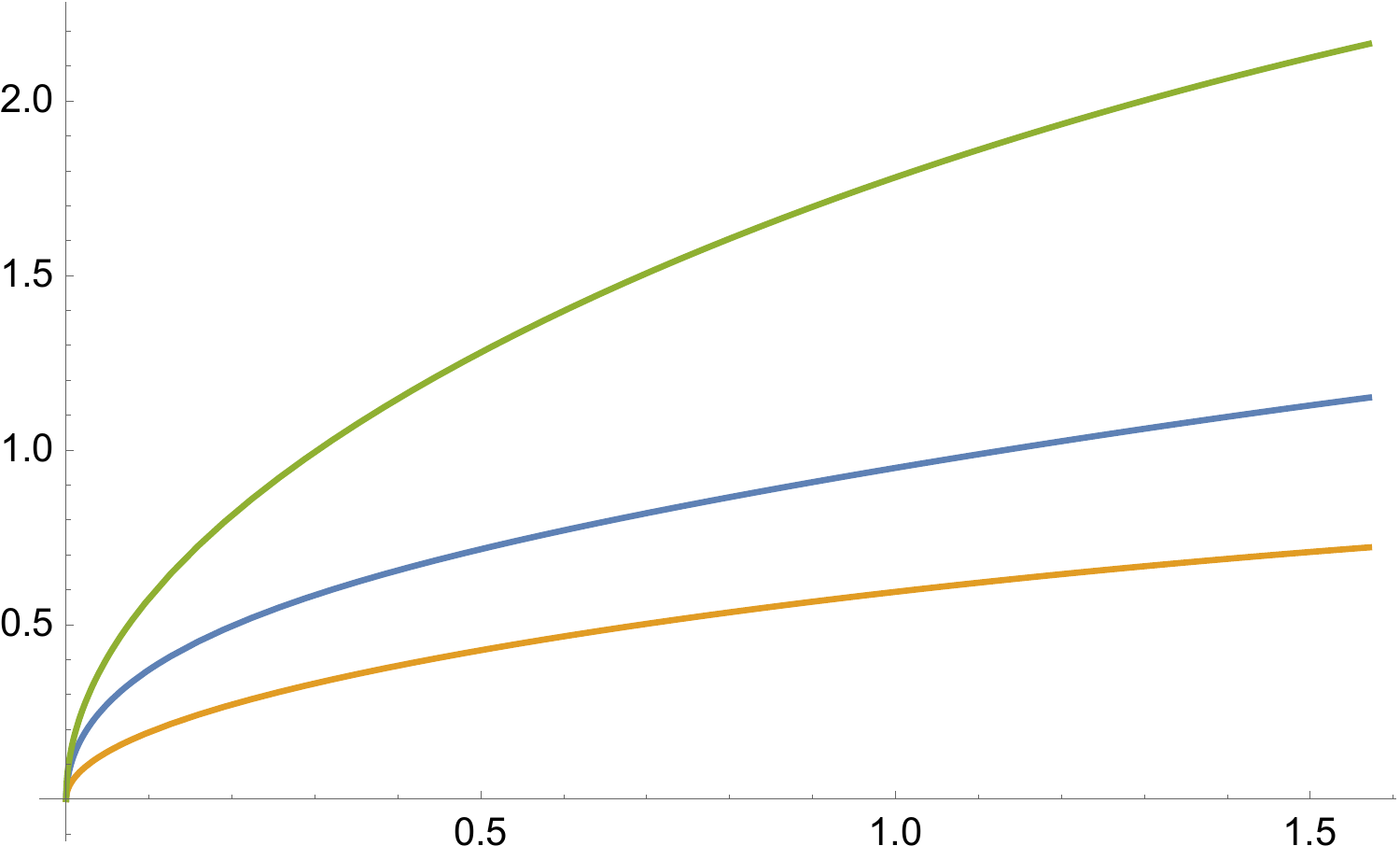}
\; \includegraphics[width=.45\textwidth]{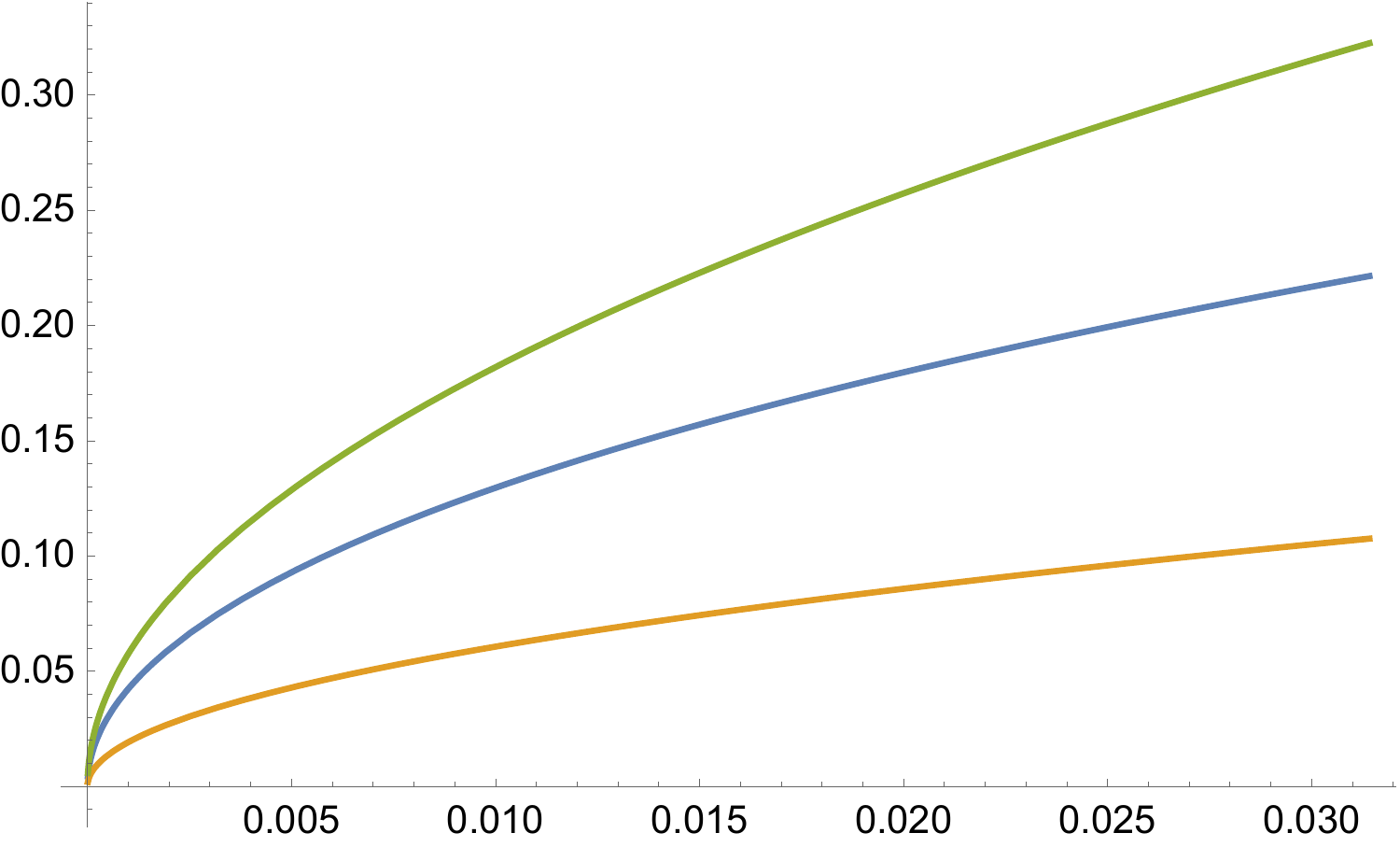}
\caption{Plots on the intervals $[0,\pi/2] \ni \theta$ and $[0,\pi/100] \ni \theta$ 
of the explicit functions $\theta \mapsto |f(e^{\ii \theta - 1})-1|$ (blue; giving $\diam(f(\eta))$), $\theta \mapsto \sqrt{|e^{\ii \theta - 1}-e^{- 1}|}$ (orange; giving $\sqrt{\diam(\eta)}$), and $\theta \mapsto 3\sqrt{|e^{\ii \theta - 1}-e^{- 1}|}$ (green; giving $3\sqrt{\diam(\eta)}$). 
Note that $|f(e^{\ii \theta - 1})-1| \geq \sqrt{|e^{\ii \theta - 1}-e^{- 1}|}$.}
\label{fig: plots}
\end{figure}

\newpage

\bibliographystyle{alpha}

\begin{thebibliography}{ABKM24}

\bibitem[ABKM24]{ABKM:Pole_dynamics_and_an_integral_of_motion_for_multiple_SLE0}
Tom Alberts, Sung-Soo Byun, Nam-Gyu Kang, and Nikolai Makarov.
\newblock Pole dynamics and an integral of motion for multiple {SLE$(0)$}.
\newblock {\em Selecta Math.}, 30(5):Paper No. 92, 2024.

\bibitem[AK08]{Alberts-Kozdron:Intersection_probabilities_for_chordal_SLE_path_and_semicircle}
Tom Alberts and Michael J. Kozdron.
\newblock Intersection probabilities for a chordal {SLE} path and a semicircle.
\newblock {\em Electron. Commun. Probab.}, 13(43):448--460, 2008.

\bibitem[AHP24]{AHP:Large_deviations_of_DBM_and_multiradial_SLE}
Osama Abuzaid, Vivian~O. Healey, and Eveliina Peltola.
\newblock Large deviations of {D}yson {B}rownian motion on the circle and
  multiradial {$\mathrm{SLE}_{0+}$}.
\newblock Preprint in arXiv.2407.13762, 2024.

\bibitem[AHP26+]{AHP:In_prep}
Osama Abuzaid, Vivian~O. Healey, and Eveliina Peltola.
\newblock Multiradial Schramm-Loewner evolution: infinite-time large deviations and transience.
\newblock In preparation.

\bibitem[AP26+]{AP:In_Prep}
Osama Abuzaid and Eveliina Peltola.
\newblock In preparation.

\bibitem[APW20]{APW:Large_deviations_of_radial_SLE_infinity}
Morris Ang, Minjae Park, and Yilin Wang.
\newblock Large deviations of radial {${\mathrm{SLE}_{\infty}}$}.
\newblock {\em Electron. J. Probab.}, 25:1--13, 2020.

\bibitem[BB06]{Bauer-Bernard:2d_growth_processes_SLE_and_Loewner_chains}
Michel Bauer and Denis Bernard.
\newblock {$2D$} growth processes: {$\mathrm{SLE}$} and {L}oewner chains.
\newblock {\em Phys. Rep.}, 432(3-4):115--221, 2006.

\bibitem[BBPW23]{BBVW:Universal_Liouville_action_as_renormalized_volume_and_its_gradient_flow}
Martin Bridgeman, Kenneth Bromberg, Franco~Vargas Pallete, and Yilin Wang.
\newblock Universal {L}iouville action as a renormalized volume and its
  gradient flow.
\newblock {\em Duke Math. J., } 174(13):2821--2876 , 2025.

\bibitem[Bef08]{Beffara:Dimension_of_SLE}
Vincent Beffara.
\newblock The dimension of the {$\mathrm{SLE}$} curves.
\newblock {\em Ann. Probab.}, 36(4):1421--1452, 2008.

\bibitem[Bel19]{Beliaev:Conformal_maps_and_geometry}
Dmitry Beliaev.
\newblock {\em Conformal maps and geometry}.
\newblock Advanced Textbooks in Mathematics. World Scientific, London, 2019.

\bibitem[Bis25]{Bishop:Weil_Petersson_curves_conformal_energies_beta-numbers_and_minimal_surfaces}
Christopher~J. Bishop.
\newblock Weil-{P}etersson curves, conformal energies, {$\beta$}-numbers, and
  minimal surfaces.
\newblock  {\em Ann. Math.}, 202(1): 111--188, 2025.

\bibitem[BJ24]{Baverez-Jego:The_CFT_of_SLE_loop_measures_and_the_Kontsevich-Suhov_conjecture}
Guillaume Baverez and Antoine Jego.
\newblock The {CFT} of {SLE} loop measures and the {K}ontsevich-{S}uhov
  conjecture.
\newblock Preprint in arXiv:2407.09080, 2024.

\bibitem[BN16]{Berestrycki-Norris:Lecture_notes_on_SLE}
Nathanael Berestrycki and James~R. Norris.
\newblock Lectures on {S}chramm-{L}oewner evolution.
\newblock University of Cambridge. Available in
  \url{http://www.statslab.cam.ac.uk/~james/Lectures/sle.pdf}, 2016.

\bibitem[BR87]{Bowick-Rajeev:String_theory_as_the_Kahler_geometry_of_loop_space}
Mark~J. Bowick and Sarada~G. Rajeev.
\newblock String theory as the {K}{\"a}hler geometry of loop space.
\newblock {\em Phys. Rev. Lett.}, 58(6):535--538, 1987.

\bibitem[Car05]{Cardy:SLE_for_theoretical_physicists}
John~L. Cardy.
\newblock {$\mathrm{SLE}$} for theoretical physicists.
\newblock {\em Ann. Physics}, 318(1):81--118, 2005.

\bibitem[DS89]{Deuschel-Stroock:Large_Deviations}
Jean-Dominique Deuschel and Daniel~W. Stroock.
\newblock {\em Large {D}eviations}, volume 342.
\newblock AMS Chelsea Publishing, 1989.

\bibitem[Dub07]{Dubedat:Commutation_relations_for_SLE}
Julien Dub{\'e}dat.
\newblock Commutation relations for {$\mathrm{SLE}$}.
\newblock {\em Comm. Pure Appl. Math.}, 60(12):1792--1847, 2007.

\bibitem[DZ10]{Dembo-Zeitouni:Large_deviations_techniques_and_applications}
Amir Dembo and Ofer Zeitouni.
\newblock {\em Large deviations techniques and applications}, volume~38 of {\em
  Stochastic Modelling and Applied Probability}.
\newblock Springer-Verlag, Berlin, 2010.

\bibitem[FL15]{Field-Lawler:Escape_probability_and_transience_for_SLE}
Laurence~S. Field and Gregory~F. Lawler.
\newblock Escape probability and transience for {SLE}.
\newblock {\em Electron. J. Probab.}, 20(10):1--14, 2015.

\bibitem[FS17]{Friz-Shekhar:Finite_energy_drivers}
Peter~K. Friz and Atul Shekhar.
\newblock On the existence of {SLE} trace: finite energy drivers and
  non-constant {$\kappa$}.
\newblock {\em Probab. Theory Related Fields}, 169(1-2):353--376, 2017.

\bibitem[Gus23]{Guskov:LPD_for_SLE_in_the_uniform_topology}
Vladislav Guskov.
\newblock A large deviation principle for the {S}chramm-{L}oewner evolution in
  the uniform topology.
\newblock {\em Ann. Fenn. Math.}, 48(1):389--410, 2023.

\bibitem[HL21]{Healey-Lawler:N_sided_radial_SLE}
Vivian~O. Healey and Gregory~F. Lawler.
\newblock N-sided radial {S}chramm–{L}oewner evolution.
\newblock {\em Probab. Theory Related Fields}, 181(1-3):451--488, 2021.

\bibitem[Joh22]{Johansson:Strong_Szego_theorem_on_a_Jordan_curve}
Kurt Johansson.
\newblock Strong {S}zeg{\"o} theorem on a {J}ordan curve.
\newblock In Basor et. al., editor, {\em Toeplitz Operators and Random
  Matrices, In Memory of Harold Widom}, Operator Theory Advances and
  Applications. Birkh\"auser, 2022.

\bibitem[Kem17]{Kemppainen:SLE_book}
Antti Kemppainen.
\newblock {\em Schramm-{L}oewner evolution}, volume~24 of {\em SpringerBriefs
  in Mathematical Physics}.
\newblock Springer International Publishing, 2017.

\bibitem[Kru24]{Krusell:in_prep}
Ellen Krusell.
\newblock The {$\rho$}-{L}oewner energy: {L}arge deviations, minimizers, and
  alternative descriptions.
\newblock Preprint in arXiv:2410.08969, 2024.

\bibitem[KS17]{Kemppainen-Smirnov:Random_curves_scaling_limits_and_Loewner_evolutions}
Antti Kemppainen and Stanislav Smirnov.
\newblock Random curves, scaling limits and {L}oewner evolutions.
\newblock {\em Ann. Probab.}, 45(2):698--779, 2017.

\bibitem[Law02]{Lawler:Conformal_invariance_universality_and_the_dimension_of_the_Brownian_frontier}
Gregory~F. Lawler.
\newblock Conformal invariance, universality and the dimension of the
  {B}rownian frontier.
\newblock In {\em Proceedings of the ICM 2002, Beijing, China}, vol.~3, p.~63--72, Beijing, 2002. Higher Education Press.

\bibitem[Law05]{Lawler:Conformally_invariant_processes_in_the_plane}
Gregory~F. Lawler.
\newblock {\em Conformally invariant processes in the plane}, volume 114 of
  {\em Mathematical Surveys and Monographs}.
\newblock American Mathematical Society, 2005.

\bibitem[Law09]{Lawler:Partition_functions_loop_measure_and_versions_of_SLE}
Gregory~F. Lawler.
\newblock Partition functions, loop measure, and versions of {$\mathrm{SLE}$}.
\newblock {\em J. Stat. Phys.}, 134(5-6):813--837, 2009.

\bibitem[Law13]{Lawler:Continuity_of_radial_and_two-sided_radial_SLE_at_the_terminal_point}
Gregory~F. Lawler.
\newblock Continuity of radial and two-sided radial {SLE} at the terminal
  point.
\newblock In {\em ``In the tradition of Ahlfors-Bers, VI''}, volume 590 of {\em
  Contemp. Math.}, p.~101--124. Amer. Math. Soc., Providence, RI, 2013.

\bibitem[Law15]{Lawler:Minkowski_content_of_intersection_of_SLE_curve_with_real_line}
Gregory~F. Lawler.
\newblock Minkowski content of the intersection of a {S}chramm-{L}oewner
  evolution ($\mathrm{SLE}$) curve with the real line.
\newblock {\em J. Math. Soc. Japan}, 67(4):1631--1669, 2015.

\bibitem[Law19]{Lawler:ICM_Conformally_invariant_loop_measures}
Gregory~F. Lawler.
\newblock Conformally invariant loop measures.
\newblock In {\em Proceedings of the ICM 2018, Rio de Janeiro, Brazil},
  vol.~3, p.~669--703. World Scientific, 2019.

\bibitem[Law24]{Lawler:SLE_book_draft}
Gregory~F. Lawler.
\newblock {S}chramm-{L}oewner evolution.
\newblock Draft book, available at
  \url{http://www.math.uchicago.edu/~lawler/bookmaster.pdf}, 2024.

\bibitem[LJV11]{Lawler-Viklund:Optimal_Holder_exponent_for_the_SLE_path}
Gregory~F. Lawler and Fredrik Johansson-Viklund.
\newblock Optimal {H}{\"o}lder exponent for the $\mathrm{SLE}$ path.
\newblock {\em Duke Math. J.}, 159(3):351--383, 2011.

\bibitem[LL10]{Lawler-Limic:Random_walk_modern_introduction}
Gregory~F. Lawler and Vlada Limic.
\newblock {\em Random walk: a modern introduction}, volume 123 of {\em
  Cambridge studies in advanced mathematics}.
\newblock Cambridge University Press, 2010.

\bibitem[LMR10]{LMR:Collisions_and_spirals_of_Loewner_traces}
Joan Lind, Donald~E. Marshall, and Steffen Rohde.
\newblock Collisions and spirals of {L}oewner traces.
\newblock {\em Duke Math. J.}, 154(3):527--573, 2010.

\bibitem[MP10]{Morters-Peres:Brownian_motion}
Peter M{\"orters} and Yuval Peres.
\newblock {\em Brownian motion}.
\newblock Cambridge University Press, 2010.

\bibitem[MP25]{Maibach-Peltola:From_the_conformal_anomaly_to_the_Virasoro_algebra}
Sid Maibach and Eveliina Peltola.
\newblock From the conformal anomaly to the {V}irasoro algebra.
\newblock  {\em Proc. Lond. Math. Soc.}, 130(4):1--45, 2025.

\bibitem[MR05]{Marshall-Rohde:The_loewner_differential_equation_and_slit_mappings}
Donald~E. Marshall and Steffen Rohde.
\newblock The {L}oewner differential equation and slit mappings.
\newblock {\em J. Amer. Math. Soc.}, 18(4):763--778, 2005.

\bibitem[MS16]{Sheffield-Miller:QLE}
Jason Miller and Scott Sheffield.
\newblock Quantum {L}oewner evolution.
\newblock {\em Duke Math. J.}, 165(17):3241--3378, 2016.

\bibitem[Pel19]{Peltola:Towards_CFT_for_SLEs}
Eveliina Peltola.
\newblock Towards a conformal field theory for {S}chramm-{L}oewner evolutions.
\newblock {\em J. Math. Phys.}, 60(10):103305, 2019.
\newblock Special issue (Proc. ICMP, Montreal, July 2018).

\bibitem[Pom92]{Pommerenke:Boundary_behaviour_of_conformal_maps}
Christian Pommerenke.
\newblock {\em Boundary behaviour of conformal maps}, volume 299 of {\em
  Grundlehren der mathematischen Wissenschaften}.
\newblock Springer-Verlag, Berlin Heidelberg, 1992.

\bibitem[PW19]{Peltola-Wu:Global_and_local_multiple_SLEs_and_connection_probabilities_for_level_lines_of_GFF}
Eveliina Peltola and Hao Wu.
\newblock Global and local multiple $\mathrm{SLE}$s for $\kappa \leq 4$ and
  connection probabilities for level lines of {GFF}.
\newblock {\em Comm. Math. Phys.}, 366(2):469--536, 2019.

\bibitem[PW24]{Peltola-Wang:LDP}
Eveliina Peltola and Yilin Wang.
\newblock Large deviations of multichordal {$\mathrm{SLE}_{0+}$}, real rational
  functions, and zeta-regularized determinants of {L}aplacians.
\newblock {\em J. Eur. Math. Soc.}, 26(2):469--535, 2024.

\bibitem[RS05]{Rohde-Schramm:Basic_properties_of_SLE}
Steffen Rohde and Oded Schramm.
\newblock Basic properties of $\mathrm{SLE}$.
\newblock {\em Ann. of Math.}, 161(2):883--924, 2005.

\bibitem[RSS17]{RSS:Quasiconformal_Teichmuller_theory_as_analytical_foundation_for_CFT}
David Radnell, Eric Schippers, and Wolfgang Staubach.
\newblock Quasiconformal {T}eichm{\"u}ller theory as an analytical foundation
  for two-dimensional conformal field theory.
\newblock In {\em Proceedings of the Conference on Lie Algebras, Vertex
  Operator Algebras, and Related Topics (Univ. Notre Dame, Indiana)}, volume
  695 of {\em Contemp. Math.}, pp.~205--238. American Mathematical Society,
  2017.

\bibitem[Sch00]{Schramm:Scaling_limits_of_LERW_and_UST}
Oded Schramm.
\newblock Scaling limits of loop-erased random walks and uniform spanning
  trees.
\newblock {\em Israel J. Math.}, 118(1):221--288, 2000.

\bibitem[Sch06]{Schramm:ICM}
Oded Schramm.
\newblock Conformally invariant scaling limits, an overview and a collection of
  problems.
\newblock In {\em Proceedings of the ICM 2006, Madrid, Spain}, vol.~1, pp.~513--543. European Mathematical Society, 2006.

\bibitem[She23]{Sheffield:ICM_What_is_a_random_surface}
Scott Sheffield.
\newblock What is a random surface?
\newblock In {\em Proceedings of the ICM 2022, Helsinki, Finland}, vol.~2,
  pp.~1202--1258. European Mathematical Society, 2023.

\bibitem[Smi06]{Smirnov:Towards_conformal_invariance_of_2D_lattice_models}
Stanislav Smirnov.
\newblock Towards conformal invariance of {$2D$} lattice models.
\newblock In {\em Proceedings of the ICM 2006, Madrid, Spain}, vol.~2, pp.~1421--1451. European Mathematical Society, 2006.

\bibitem[Tra15]{Tran:Convergence_of_an_algorithm_simulating_Loewner_curves}
Huy Tran.
\newblock Convergence of an algorithm simulating {L}oewner curves.
\newblock {\em Ann. Acad. Sci. Fenn. Math.}, 40:601--616, 2015.

\bibitem[TT06]{Takhtajan-Teo:Weil-Petersson_metric_on_the_universal_Teichmuller_space}
Leon~A. Takhtajan and Lee-Peng Teo.
\newblock Weil-{P}etersson metric on the universal {T}eichm\"uller space.
\newblock {\em Mem. Amer. Math. Soc.}, 183:1--119, 2006.

\bibitem[VW24]{Viklund-Wang:Loewner_Kufarev_energy_and_foliations_of_WP_quasicircles}
Fredrik Viklund and Yilin Wang.
\newblock The {L}oewner-{K}ufarev energy and foliations by {W}eil-{P}etersson
  quasicircles.
\newblock {\em Proc. Lond. Math. Soc.}, 128(2):1--62, 2024.

\bibitem[Wan19a]{Wang:Energy_of_deterministic_Loewner_chain}
Yilin Wang.
\newblock The energy of a deterministic {L}oewner chain: reversibility and
  interpretation via {$\mathrm{SLE}_{0+}$}.
\newblock {\em J. Eur. Math. Soc.}, 21(7):1915--1941, 2019.

\bibitem[Wan19b]{Wang:Equivalent_descriptions_of_Loewner_energy}
Yilin Wang.
\newblock Equivalent descriptions of the {L}oewner energy.
\newblock {\em Invent. Math.}, 218:573--621, 2019.

\bibitem[Wan22]{Wang:SLE_LDP_survey}
Yilin Wang.
\newblock Large deviations of {S}chramm-{L}oewner evolutions: {A} survey.
\newblock {\em Probability Surveys}, 19:351--403, 2022.

\bibitem[Wer03]{Werner:Random_planar_curves_and_SLE}
Wendelin Werner.
\newblock Random planar curves and {S}chramm-{L}oewner evolutions.
\newblock Lecture notes from the {S}aint-{F}lour {S}ummer {S}chool (July 2002),
  2003.

\bibitem[Wer05]{Werner:Conformal_restriction_and_related_questions}
Wendelin Werner.
\newblock Conformal restriction and related questions.
\newblock {\em Probability Surveys}, 2:145--190, 2005.

\end{thebibliography}

\end{document}